\renewcommand{\thesubfigure}{(\roman{subfigure})}
\makeatletter \renewcommand{\@thesubfigure}{\thesubfigure \space}
\renewcommand{\p@subfigure}{} \makeatother
\numberwithin{equation}{section}
\numberwithin{figure}{section}
\theoremstyle{plain}
\newtheorem{thm}{\protect\theoremname}[section]
\theoremstyle{plain}
\newtheorem{prop}[thm]{\protect\propositionname}
\newtheorem{proposition}[thm]{Proposition}
\newtheorem{corollary}[thm]{Corollary}
\theoremstyle{definition}
\newtheorem{defn}[thm]{\protect\definitionname}
\theoremstyle{plain}
\newtheorem{lem}[thm]{\protect\lemmaname}
\newtheorem{lemma}[thm]{Lemma}
\theoremstyle{remark}
\newtheorem{rem}[thm]{\protect\remarkname}
\theoremstyle{definition}
\newtheorem{definition}[thm]{Definition}
\newtheorem*{claim*}{Claim}
\newtheorem{example}[thm]{Example}
\theoremstyle{remark}
\newtheorem{remark}[thm]{Remark}
\theoremstyle{definition}
\newtheorem*{defn*}{\protect\definitionname}
\providecommand{\definitionname}{Definition}
\providecommand{\lemmaname}{Lemma}
\providecommand{\propositionname}{Proposition}
\providecommand{\remarkname}{Remark}
\providecommand{\theoremname}{Theorem}
\newcommand{\Rmnum}[1]{\expandafter\@slowromancap\romannumeral #1@}
\newcommand{\M}{{\mathcal M}}
\newcommand{\N}{{\mathcal N}}
\newcommand{\8}{\infty}
\newcommand{\la}{\langle}
\newcommand{\ra}{\rangle}
\newcommand{\be}{\begin{eqnarray*}}
	\newcommand{\ee}{\end{eqnarray*}}
\newcommand{\beq}{\begin{equation}}
	\newcommand{\eeq}{\end{equation}}
\newcommand{\beqn}{\begin{equation*}}
	\newcommand{\eeqn}{\end{equation*}}
\newcommand{\bs}{\begin{split}}
	\newcommand{\es}{\end{split}}
\begin{document}

	
	
	\title[noncommutative martingale paraproducts and operator-valued commutators]{On the boundedness and Schatten class property of noncommutative martingale paraproducts and operator-valued commutators}
	
	\thanks{{\it 2020 Mathematics Subject Classification:} Primary: 46L53, 30H25, 60G42. Secondary: 46L52, 47B47, 15A66}
	\thanks{{\it Key words:} Martingale Besov spaces, martingale paraproducts, semicommutative and noncommutative martingales, tensor product, Schatten class, CAR algebra, matrix algebra, commutators, singular integral operators, $BMO$ spaces, complex median method}
	
	\author[Zhenguo Wei]{Zhenguo Wei}
	\address{Laboratoire de Math{\'e}matiques, Universit{\'e} de Bourgogne Franche-Comt{\'e}, 25030 Besan\c{c}on Cedex, France}
	\email{zhenguo.wei@univ-fcomte.fr}

	\author[Hao Zhang]{Hao Zhang}
	\address{
	Department of Mathematics, University of Illinois Urbana-Champaign, USA}
	\email{hzhang06@illinois.edu}
	\date{}
	\maketitle
	
	\begin{abstract}
	We study the Schatten class membership of semicommutative martingale paraproducts and use the transference method to describe Schatten class membership of purely noncommutative martingale paraproducts, especially for CAR algebras and $\mathop{\otimes}\limits_{k=1}^{\infty}\mathbb{M}_{d}$ in terms of martingale Besov spaces. 
	
	Using Hyt\"{o}nen's dyadic martingale technique, we also obtain sufficient conditions on the Schatten class membership and the boundedness of operator-valued commutators involving general singular integral operators. 
	
	We establish the complex median method, which is applicable to complex-valued functions. We apply it to get the optimal necessary conditions on the Schatten class membership of operator-valued commutators associated with non-degenerate kernels in Hyt\"{o}nen's sense. This resolves the problem on the characterization of Schatten class membership of operator-valued commutators. Our results are new even in the scalar case. 
	
	Our new approach is built on Hyt\"{o}nen's dyadic martingale technique and the complex median method. Compared with all the previous ones, this new one is more powerful in several aspects: $(a)$ it permits us to deal with more general singular integral operators with little smoothness; $(b)$ it allows us to deal with commutators with complex-valued kernels; $(c)$ it goes much further beyond the scalar case and can be applied to the semicommutative setting.
	
	By a weak-factorization type decomposition, we get some necessary but not optimal conditions on the boundedness of operator-valued commutators. In addition, we give a new proof of the boundedness of commutators still involving general singular integral operators concerning $BMO$ spaces in the commutative setting.
	\end{abstract}

		\tableofcontents{}
	
	\section{Introduction}\label{Introduction}
Hankel operators were first studied by Hankel in \cite{Hankel}. Since then they have become an important class of operators. Later, Nehari  characterized the boundedness of Hankel operators on the Hardy space $H_2(\mathbb{T})$ in terms of the $BMO$ space in \cite{Neha}, and Hartman discussed their compactness by the $VMO$ space in \cite{Hart}. In \cite{V1}, Peller obtained the Schatten $p$-class criterion of Hankel operators by Besov space for $1\leq p<\8$, while the case $0<p<1$ was investigated by Peller in \cite{V2} and Semmes in \cite{SS}, respectively.

In harmonic analysis, commutators involving singular integral and multiplication operators are generalizations of Hankel type operators. So it is certainly worthwhile to study their boundedness, compactness and Schatten class membership. In \cite{CRW}, Coifman, Rochberg and Weiss showed the boundedness of commutators with regards to the $BMO$ space on $\mathbb{R}^n$, which yields a new characterization of $BMO$. Soon after their work, Uchiyama sharpened one of their results and showed the compactness of commutators by virtue of the $CMO$ space in \cite{Uchi}. The Schatten class membership of commutators was developed by Janson and Wolff in terms of Besov spaces in \cite{JW}. Afterwards, Janson and Peetre established a fairly general framework to investigate the boundedness and Schatten class of commutators in \cite{JPe}. 

In addition, Petermichl discovered an explicit representation formula for the one-variable Hilbert transform as an average of dyadic shift operators to investigate Hankel operators with matrix symbol in \cite{SPe}. Nazarov, Treil and Volberg in  \cite{NTV} proved the $T(1)$ and the $T(b)$ theorems based on martingale paraproducts. Later, Hyt\"{o}nen refined in an essential way the method of Nazarov, Treil and Volberg, and finally settled the well-known $A_2$ conjecture \cite{TH1}. Meanwhile, as Hankel type operators, the dyadic operators, such as martingale paraproducts, serve as crucial tools in harmonic analysis.

We would like to highlight that the dyadic operators in \cite{TH1} can be considered as a particular case of martingale paraproducts. In addition to its intrinsic connection with various operators in harmonic analysis, martingale paraproducts are of much interest in their own right as they are martingale variants of operators of Hankel type. The boundedness of martingale paraproducts has been studied in \cite{CL1}. In addition, the compactness and Schatten class membership have been discussed in \cite{CP} for $d$-adic martingales.

Motivated by all this, we aim to establish the Schatten class membership of martingale paraproducts in the noncommutative setting. Via the methodology developed by Hyt\"{o}nen, we also obtain the Schatten class and boundedness characterizations for operator-valued commutators involving singular integral operators and noncommutative pointwise multiplication.

At first, we introduce noncommutative martingale paraproducts. Let $\mathcal{M}$ be a von Neumann algebra equipped with a normal semifinite faithful trace $\tau$. Given a semicommutative $d$-adic martingale $b=(b_k)_{k\in\mathbb{Z}}\in L_2(\mathbb{R}, L_2(\M))$, recall that the martingale paraproduct with symbol $b$ is defined as
\begin{equation*}
	\pi_b(f)=\sum_{k=-\infty}^{\infty}d_kb \cdot f_{k-1}, \quad \forall f=(f_k)_{k\in\mathbb{Z}}\in L_2(\mathbb{R},L_2(\mathcal{M})),
\end{equation*}
where $d_kb=b_k-b_{k-1}$ for any $k\in\mathbb{Z}$. See Subsection \ref{sec2.3} for the definition of semicommutative $d$-adic martingales. When $d=2$, $d$-adic martingales are just dyadic martingales.
 
When $\M=\mathbb{C}$, Chao and Peng described the Schatten class membership of $\pi_b$ by virtue of the martingale Besov spaces. They showed the following theorem (see {\cite[Theorem 3.1]{CP}}):
\begin{thm}\label{lem2.1}
	For $0<p<\infty$ and a locally integrable function $b$, $\pi_b\in S_p(L_2(\mathbb{R}))$ if and only if
	$$ \sum_{k=-\infty}^\infty d^{k}\|d_kb\|_{L_p(\mathbb{R})}^p<\8.  $$
\end{thm}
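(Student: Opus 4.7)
The strategy is to extract from $\pi_b$ a rank-one decomposition with \emph{orthogonal output vectors}, derive the identity $\pi_b^*\pi_b = \sum_I \lambda_I^2|e_I\rangle\langle e_I|$, and then match the Schatten norm against the weighted $\ell_p$-sum $\sum_I \lambda_I^p$, which equals the Besov-type quantity up to a constant depending only on $d$.

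For each $d$-adic interval $I$ of generation $k$, set $e_I := |I|^{-1/2}\mathbf{1}_I$ and $\psi_I := |I|^{-1/2}(d_{k+1}b)|_I$, so that
\[
\pi_b(f) = \sum_I \langle f, e_I\rangle\, \psi_I.
\]
The critical structural fact is that $\{\psi_I\}_I$ is pairwise orthogonal in $L_2(\mathbb{R})$: on nested intervals $I \subsetneq I'$, $\psi_{I'}$ is constant on each child of $I'$ while $\psi_I$ has mean zero on $I$; on disjoint intervals the supports are disjoint. Setting $\lambda_I := \|\psi_I\|_{L_2}$, this orthogonality produces
\[
\pi_b^*\pi_b = \sum_I \lambda_I^2\,|e_I\rangle\langle e_I|.
\]
Since $(d_{k+1}b)|_I$ lies in the $(d-1)$-dimensional space of mean-zero step functions on the children of $I$, a routine equivalence of norms delivers $\lambda_I^p \sim_d |I|^{-1}\|(d_{k+1}b)|_I\|_{L_p}^p$; summing over $I$ at generation $k-1$ then yields $d^{k-1}\|d_k b\|_{L_p}^p$. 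The theorem therefore reduces to proving
\begin{equation}\label{eq:planned-equiv}
\|\pi_b\|_{S_p}^p \sim_d \sum_I \lambda_I^p.
\end{equation}

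For $p=2$, \eqref{eq:planned-equiv} is immediate from $\|\pi_b\|_{S_2}^2 = \mathrm{tr}(\pi_b^*\pi_b) = \sum_I \lambda_I^2$, since $\|e_I\|_{L_2}=1$. For $0 < p \leq 2$, the upper bound follows from the $(p/2)$-quasi-triangle inequality for $\|\cdot\|_{S_{p/2}}^{p/2}$ applied to the identity for $\pi_b^*\pi_b$, using that each $|e_I\rangle\langle e_I|$ is a rank-one projection of unit $S_{p/2}$-norm. By duality, this yields the lower bound in \eqref{eq:planned-equiv} for $p \geq 2$: setting $\hat\psi_I := \psi_I/\lambda_I$ and testing $\pi_b$ against operators $S_\mu := \sum_I \mu_I|\hat\psi_I\rangle\langle e_I|$, the orthogonality of $\{\hat\psi_I\}$ collapses the trace to $\mathrm{tr}(\pi_b S_\mu^*) = \sum_I \lambda_I\overline{\mu_I}$. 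Recognizing $S_\mu$ as a paraproduct $\pi_{b'}$ with a rescaled symbol, the already-established upper bound applies to $\|S_\mu\|_{S_{p'}}$ for $p' \leq 2$, and a standard H\"older optimization in $\mu$ produces the desired lower bound.

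The principal obstacle is the remaining range, namely the \emph{upper} bound for $p > 2$ and the \emph{lower} bound for $0 < p < 2$. For the upper bound at $p > 2$, the idea is to exploit that $\pi_b$ is block strictly lower triangular in the Haar basis when ordered by reverse inclusion, each ``row block'' indexed by an interval $I$ being of rank one with output directions spanned by $\{h_{I,j}\}_{j=1}^{d-1}$; a careful summation over these row blocks, combined with the mutual orthogonality of the output Haar spaces across different $I$, should reduce the $S_p$ estimate to the single weighted $\ell_p$-sum $\sum_I \lambda_I^p$. For the lower bound at $0 < p < 2$, duality is either unavailable ($p < 1$) or not sharp, and I plan to construct finite-rank truncations of $\pi_b$ supported on families of pairwise \emph{incomparable} $d$-adic intervals, on which $\{e_I\}$ becomes orthonormal and the $\lambda_I$ are literally the singular values of the truncation; passing to a limit along an exhaustion of the tree then extracts the full lower bound. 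Combining these estimates and invoking the finite-dimensional norm equivalence completes the proof.
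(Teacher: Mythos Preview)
Your reduction to $\sum_I\lambda_I^p$ via the orthogonality of $\{\psi_I\}$, and the arguments for $p=2$, the upper bound when $p\le 2$, and the duality lower bound when $p\ge 2$, are correct. The gaps are exactly where you flag the ``principal obstacle,'' and neither sketch closes. For the lower bound when $p<2$: restricting to a family $\mathcal F$ of pairwise incomparable intervals does make $\{e_I\}_{I\in\mathcal F}$ orthonormal and yields $\sum_{I\in\mathcal F}\lambda_I^p\le\|\pi_b\|_{S_p}^p$, but no sequence of such families can be summed or exhausted to recover the full $\sum_I\lambda_I^p$---any nested chain of intervals meets each $\mathcal F$ in at most one point, so at best you extract $\sup_k\sum_{|I|=d^{-k}}\lambda_I^p$. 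The paper's proof of the semicommutative generalization (Theorem~1.2) handles necessity quite differently: for $p\ge1$ it composes $\pi_b$ with a one-step Haar shift $R:h_I^i\mapsto h_{\tilde I}^i$ and reads off the Besov data as the block diagonal of $\pi_bR$ in the Haar basis, using that the block-diagonal conditional expectation is contractive on $S_p$; for $0<p<1$ it uses a near-diagonal/off-diagonal splitting $\pi_{b,k}=\pi_{b,k}^{(0)}+\pi_{b,k}^{(1)}$ along a coarse $N$-periodic subfiltration and shows the off-diagonal part is geometrically small while $\pi_{b,k}^{(0)}$ controls the Besov norm.

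For the upper bound when $p>2$: the block-triangular-with-orthogonal-rows observation is precisely your identity $\pi_b^*\pi_b=\sum_I\lambda_I^2|e_I\rangle\langle e_I|$, and the obstruction is that $\{e_I\}$ is highly non-orthogonal along nested chains, so this is not a spectral decomposition. Bounding $\bigl\|\sum_I\lambda_I^2|e_I\rangle\langle e_I|\bigr\|_{S_{p/2}}$ for $p/2>1$ by $(\sum_I\lambda_I^p)^{2/p}$ is a genuine Carleson-embedding-type estimate, not a formal consequence of row-block orthogonality; your ``careful summation'' would have to supply exactly this, and nothing in the sketch does. The paper in fact does not re-prove this direction of Theorem~1.1: it cites the result from Chao--Peng (whose argument goes through Rochberg--Semmes NWO machinery), and its own sufficiency proof for Theorem~1.2 \emph{invokes} the scalar Theorem~1.1 at even integers inside Lemma~3.1. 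To complete your plan you would need either that NWO technology or the $p$-John--Nirenberg route of Pott--Smith that the paper also mentions.
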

	Here $S_p(\mathcal{H})$ denotes the Schatten $p$-class of operators on a Hilbert space $\mathcal{H}$. Chao and Peng's proof invokes some results about Schatten $p$-norms in \cite{RSe}. In \cite{PS}, Pott and Smith gave another proof of Theorem \ref{lem2.1} based on the $p$-John-Nirenberg inequality for $d=2$. They also obtained an equivalent characterization of the Schatten class membership of $\pi_b$ when $\M=B(\mathcal{H})$, still for $d=2$.
	
	Inspired by all this, we are concerned with the Schatten class membership of $\pi_b$ for semicommutative $d$-adic martingales with arbitrary $d$ and arbitrary semifinite von Neumann algebras $\M$. In addition, with the help of martingale paraproducts, we will consider the Schatten class membership of operator-valued commutators.
	
	\

Our first main theorem concerns the Schatten class membership of $\pi_b$ for semicommutative $d$-adic martingales. More specifically, we use the semicommutative $d$-adic martingale Besov spaces $\pmb{B}_p^d(\mathbb{R},\M)$ (see Definition \ref{mbs1}) to characterize $\|\pi_b\|_{L_p(B(L_2(\mathbb{R}))\otimes \mathcal{M})}$:
 \begin{thm}\label{thm1.2}
	For $0<p<\infty$, $\pi_b\in L_p(B(L_2(\mathbb{R}))\otimes \mathcal{M})$ if and only if $b\in \pmb{B}_p^d(\mathbb{R},\M)$. Moreover,
	\begin{equation}\label{thm2222}
		\|\pi_b\|_{L_p(B(L_2(\mathbb{R}))\otimes \mathcal{M})}\approx_{d, p} \|b\|_{ \pmb{B}_p^d(\mathbb{R},\M)}.
	\end{equation}
\end{thm}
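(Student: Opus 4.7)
The plan is to deduce \eqref{thm2222} from a rank-one Haar decomposition of $\pi_b$ together with the observation that the adjoint product $\pi_b^*\pi_b$ collapses into a single positive sum indexed by $d$-adic intervals. Writing $\mathcal{D}_k$ for the set of $d$-adic intervals of generation $k$, fix for each $I\in\mathcal{D}_{k-1}$ an orthonormal basis $\{h_I^\alpha\}_{\alpha=1}^{d-1}$ of the mean-zero scalar step functions on $I$ that are constant on the $d$ children of $I$, and set $\psi_I := |I|^{-1/2}\,1_I$. Writing $b_I^\alpha := \int b\,h_I^\alpha \in L_2(\M)$ for the operator-valued Haar coefficients, the identities $d_k b = \sum_{I\in\mathcal{D}_{k-1},\alpha}b_I^\alpha\, h_I^\alpha$ and $f_{k-1}|_I = \langle f,\psi_I\rangle\, |I|^{-1/2}\,1_I$ give
\[
\pi_b \;=\; \sum_{I,\alpha} |I|^{-1/2}\, b_I^\alpha \otimes \bigl(h_I^\alpha \otimes \overline{\psi_I}\bigr),
\]
where the second factor is the rank-one map $f\mapsto\langle f,\psi_I\rangle h_I^\alpha$ on $L_2(\mathbb{R})$ and $b_I^\alpha$ acts on $L_2(\M)$ by left multiplication.

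Orthogonality of Haar functions at different scales (and of distinct indices at the same scale) forces all cross terms in $\pi_b^*\pi_b$ to vanish, yielding the clean identity
\[
\pi_b^*\pi_b \;=\; \sum_I |I|^{-1}\, B_I \otimes (\psi_I \otimes \overline{\psi_I}), \qquad B_I := \sum_{\alpha=1}^{d-1} (b_I^\alpha)^* b_I^\alpha,
\]
summed over all $d$-adic intervals. For $p=2$ this suffices: applying $\Tr\otimes\tau$ and converting Haar coefficients into child-increments identifies $\|\pi_b\|_2^2$ with $\|b\|_{\pmb{B}_2^d(\mathbb{R},\M)}^2$ up to the factor $d$, establishing the $p=2$ instance of \eqref{thm2222}.

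For the upper bound when $2\leq p<\infty$, I view the scale-pieces $P_k := \sum_{I\in\mathcal{D}_{k-1},\alpha}|I|^{-1/2}\,b_I^\alpha\otimes(h_I^\alpha\otimes\overline{\psi_I})$ as increments of a semicommutative martingale in $L_p(B(L_2(\mathbb{R}))\otimes\M)$ (note that $P_k^*P_l=0$ for $k\neq l$, so $|\pi_b|^2$ is already the column square function of $(P_k)$) and apply the noncommutative Burkholder--Gundy inequality in $L_{p/2}$; the per-scale block-diagonal structure of $P_k$ then matches the resulting estimate with $\|b\|_{\pmb{B}_p^d}$. For $1\leq p<2$ I pass to $\pi_b^*$ and use the complementary half of Burkholder--Gundy. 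For $0<p<1$ I invoke a semicommutative $p$-John--Nirenberg equivalence, extending Pott--Smith \cite{PS} from $d=2$, $\M=\mathbb{C}$ to arbitrary $d$ and semifinite $\M$, which reduces the $L_p$-based Besov norm to the $L_2$-based square function treated above. The reverse inequality $\|b\|_{\pmb{B}_p^d}\lesssim\|\pi_b\|_p$ is obtained by testing $\pi_b$ (or $\pi_b^*\pi_b$) against atoms of the form $\psi_J\otimes m$ and $h_J^\beta\otimes m$: such tests extract exactly the Haar coefficients $b_I^\alpha$ for $I\subseteq J$, and dualising over $m\in L_{p'}(\M)$ and over $J$ recovers $\|b\|_{\pmb{B}_p^d}$ from $\|\pi_b\|_p$.

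The main obstacle is the regime $0<p<2$. Pott--Smith \cite{PS} bypassed this in the scalar dyadic case via $p$-John--Nirenberg, but the semicommutative analogue must negotiate the row/column dichotomy of noncommutative square functions and the failure of the scalar Doob inequality for $p<1$. Consequently the definition of $\pmb{B}_p^d(\mathbb{R},\M)$ in Definition \ref{mbs1} must be tuned---likely as a sum of row and column pieces for $p\geq 2$ and as an infimum over such splittings for $p<2$---so that the chain of estimates above remains internally consistent. A secondary issue is ensuring the constants in \eqref{thm2222} depend only on $d$ and $p$; this should follow from the entirely functorial nature of the Haar decomposition in $\M$.
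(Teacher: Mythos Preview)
Your identity $\pi_b^*\pi_b=\sum_I |I|^{-1}B_I\otimes(\psi_I\otimes\overline{\psi_I})$ is correct and settles $p=2$, but the extension to general $p$ via Burkholder--Gundy does not go through. The rank-one projections $\psi_I\otimes\overline{\psi_I}$ are mutually orthogonal only within a fixed scale; across scales they are nested, so the sum is far from block-diagonal and $\|\pi_b^*\pi_b\|_{p/2}$ cannot be read off as an $\ell_{p/2}$-sum of $\|B_I\|_{p/2}$. Worse, since $P_k^*P_l=0$ already forces $|\sum_k P_k|^2=\sum_k P_k^*P_k$ exactly, the column square function of your decomposition \emph{is} $|\pi_b|$; invoking Burkholder--Gundy on that side is circular. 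The row side $\sum_k P_kP_k^*$ has no such cancellation (the $\psi_I$ are not orthogonal across scales), and there is no reason its $L_{p/2}$-norm should be comparable to the Besov norm, which is a purely diagonal quantity $\sum_{I,\alpha}|I|^{-p/2}\|b_I^\alpha\|_{L_p(\M)}^p$. Your own remark that the Besov norm ``must be tuned'' into row/column pieces confirms the mismatch: the theorem is about the $\ell_p(L_p(\M))$-norm of the coefficients, not about any square function of $b$.

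The paper's route avoids square functions entirely. For the upper bound it proves a general lemma (Lemma~\ref{nonNWOpre}): whenever $e_{I,i},f_{I,i}$ are $L_\infty$-normalised bumps supported on $I$, the operator $\sum_{I,i}\lambda_{I,i}\cdot f_{I,i}\otimes e_{I,i}$ has $L_p(\mathcal{N})$-norm controlled by $(\sum_{I,i}\|\lambda_{I,i}\|_{L_p(\M)}^p)^{1/p}$. The proof expands $\|A\|_p^p$ for even integers $p$, replaces each $\tau(\lambda_{I_1}^*\lambda_{J_1}\cdots)$ by the product of $L_p(\M)$-norms via H\"older, and dominates the remaining scalar sum by the $S_p$-norm of an auxiliary \emph{scalar} paraproduct, for which the commutative Theorem~\ref{lem2.1} is used as a black box; interpolation and the trivial $0<p\le 1$ case finish it. For the lower bound with $p\ge 1$ the paper composes $\pi_b$ with the one-step shift $R:h_I^i\mapsto h_{\tilde I}^i$ and reads off the block diagonal of $\pi_b R$, which is exactly $|I|^{-1/2}b_I^i$ up to a unimodular constant; the conditional-expectation contraction gives $\|b\|_{\pmb{B}_p^d}\lesssim\|\pi_b\|_p$. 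For $0<p<1$ it follows the Pott--Smith scheme directly at the level of $L_p(\mathcal{N})$-norms (splitting $\pi_b$ into pieces $\pi_b^{n,m}=d_{m+1}\pi_b d_{n+1}$, isolating a dominant ``diagonal'' part and showing the off-diagonal remainder is geometrically small), without any John--Nirenberg or row/column machinery.
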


The concrete proof of Theorem \ref{thm1.2} will be presented in Section \ref{Proof of the Necessity of theorem 1.2}. However, we will show a much stronger result than the sufficiency part of Theorem \ref{thm1.2}, i.e. Lemma \ref{nonNWOpre}, by virtue of Theorem \ref{lem2.1}. This new proof of the sufficiency part of Theorem \ref{thm1.2} is much simpler and neater than the original one which is given in \cite{Hao}.
 
 It is much more tempting to study martingale paraproducts for purely noncommutative martingales. Let $b=(b_k)_{k\geq 1}$ be a noncommutative martingale. (see Subsection \ref{sec2.2} for the definition.) The noncommutative martingale paraproduct with symbol $b$ for any noncommutative martingale $f=(f_k)_{k\geq 1}\in L_2(\M)$ is defined by 
$$  \pi_b(f)=\sum_{k=1}^{\infty}d_kb \cdot f_{k-1}.  $$
However, it remains open under which circumstances $\pi_b$ is bounded in $B(L_2(\M))$, which is also deeply related to the operator-valued $T(1)$ problem. The reader is referred to \cite{HM} for more details about this celebrated problem.

 Let $\mathbb{M}_{d}$ be the algebra of $d\times d$ matrices equipped with the normalized trace. In particular, if $\M=L_\8(\mathbb{R})\otimes \mathbb{M}_d$, Katz employed an ingenious stopping time procedure in \cite{KNH} to show
\begin{equation}\label{Katz}
	\|\pi_b\|_{B(L_2(\M))}\lesssim \log (d+1) \|b\|_{BMO(\mathbb{R}, \mathbb{M}_{d})},
\end{equation}
where $BMO(\mathbb{R}, \mathbb{M}_{d})$ is the strong operator $BMO$. We refer the reader to \cite{Mei1} for more information for such $BMO$ spaces. Meanwhile, Nazarov, Treil and Volberg independently obtained  (\ref{Katz}) in \cite{NTV1} by the Bellman method, and they also gave an example to show that for any $d\in \mathbb{N}$ there exists $b$ such that
$$  \|\pi_b\|_{B(L_2(\M))}\gtrsim \sqrt{\log (d+1)} \|b\|_{BMO(\mathbb{R}, \mathbb{M}_{d})}.  $$
This implies that the boundedness of $\pi_b$ cannot be characterized solely by $BMO(\mathbb{R}, \mathbb{M}_{d})$ for infinite-dimensional $\M$.

In \cite{NPTV}, Nazarov, Pisier, Treil and Volberg proved that $\log{(d+1)}$ is the optimal order of the best constant in (\ref{Katz}).  Indeed, Mei showed that in general,  $\|\pi_b\|_{B(L_2(\M))}$ cannot even be dominated by the operator norm $\|b\|_{\M}$ for infinite-dimensional $\M$ in \cite{Mei}.

Even though we do not know how to describe the boundedness of $\pi_b$, surprisingly by the transference method in \cite{PX97} and Theorem \ref{thm1.2}, we get an equivalent characterization of the Schatten class membership of $\pi_b$ for two families of noncommutative martingales, i.e. CAR algebra denoted by $\mathcal{C}$, and $\mathscr{M}=\mathop{\otimes}\limits_{k=1}^{\infty}\mathbb{M}_{d}$.

\

For the CAR algebra, we obtain:
\begin{thm}\label{thm6.1}
	For $0<p<\infty$, $\pi_b\in S_p(L_2(\mathcal{C}))$ if and only if $b\in \pmb{B}_p(\mathcal{C})$. Moreover,
	$$ \|\pi_b\|_{S_p(L_2(\mathcal{C})) }\approx_p \|b\|_{\pmb{B}_p(\mathcal{C})}. $$
\end{thm}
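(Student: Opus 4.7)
The plan is to prove Theorem~\ref{thm6.1} by a transference argument of Pisier-Xu type \cite{PX97}, thereby reducing the purely noncommutative CAR paraproduct problem to the semicommutative dyadic case already handled by Theorem~\ref{thm1.2} with $d=2$. The CAR algebra $\mathcal{C}$ carries a canonical filtration $(\mathcal{C}_k)_{k\ge1}$ in which the dimensions grow by a factor of $2$ at each step (one new pair of Fermionic generators), so the noncommutative martingale Besov space $\pmb{B}_p(\mathcal{C})$ should be defined by weighting the $L_p(\mathcal{C})$ norms of the martingale differences by a factor of order $2^k$, paralleling the weight $d^k$ appearing in Theorem~\ref{lem2.1}.

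The key technical step is to construct, via the Pisier-Xu transference, an isometric correspondence between the CAR martingale differences and those of a semicommutative dyadic martingale on $(L_\infty(\mathbb{R})\bar\otimes\N,\sigma\otimes\tau)$ for a suitable finite von Neumann algebra $\N$, in such a way that (i)~the paraproduct $\pi_b^{\mathcal{C}}$ corresponds to a semicommutative paraproduct $\pi_{\tilde b}$ with the same Schatten $p$-norm up to a constant depending only on $p$, and (ii)~the Besov norms satisfy $\|\tilde b\|_{\pmb{B}_p^2(\mathbb{R},\N)} \approx_p \|b\|_{\pmb{B}_p(\mathcal{C})}$. Once this correspondence is established, the direct application of Theorem~\ref{thm1.2} to $\pi_{\tilde b}$ completes the proof, giving $\|\pi_b^{\mathcal{C}}\|_{S_p(L_2(\mathcal{C}))} \approx_p \|\pi_{\tilde b}\|_{S_p} \approx_{p} \|\tilde b\|_{\pmb{B}_p^2(\mathbb{R},\N)} \approx_p \|b\|_{\pmb{B}_p(\mathcal{C})}$.

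I expect the main obstacle to be the preservation of Schatten $p$-norms of the paraproduct operators, not merely of the $L_p$ norms of the symbols, under the transference. The operator $\pi_b^{\mathcal{C}}$ acts on $L_2(\mathcal{C})$, whereas the transferred operator acts on $L_2(\mathbb{R})\otimes L_2(\N)$, and the two Hilbert space structures are quite different. One must carefully track how the column/row/diagonal decomposition of martingale differences behaves under the transference map, especially in the range $0<p<2$ where the column and row parts contribute separately, and the three regimes $0<p<1$, $1\le p\le 2$, and $2<p<\8$ may need to be treated by slightly different arguments. A secondary subtlety is that the Pisier-Xu method is classically stated at the level of martingale square functions, so some additional work will be required to pass from square function norm equivalences to paraproduct operator norm equivalences; it is precisely here that the precise form of the weighting $2^k$ in the definition of $\pmb{B}_p(\mathcal{C})$ plays its role and must be chosen so that the transference becomes norm-faithful.
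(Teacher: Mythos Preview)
Your transference strategy is exactly the paper's, and the right choice is $\N=\mathcal{C}$ itself with the Walsh system $(\omega_A)_{A\in\mathcal I}$ on $\mathcal G=\{1,-1\}^{\mathbb N}$ and $\tilde b(\theta)=\sigma_\theta(b)$, where $\sigma_\theta$ is the trace-preserving automorphism determined by $c_i\mapsto\varepsilon_i(\theta)c_i$. The obstacles you anticipate, however, do not arise, and your concern about separate treatment of the ranges of $p$ or about column/row decompositions is misplaced.

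The key computation you are missing is this: writing the matrices of $\pi_b$ and $\pi_{\tilde b}$ in the bases $(c_A)_{A\in\mathcal I}$ and $(\omega_A)_{A\in\mathcal I}$ respectively, one finds (using $c_Ac_B^*=\pm c_{A\Delta B}$) that
\[
(\pi_{\tilde b})_{A,B}=(\pi_b)_{A,B}\,c_Ac_B^*,
\]
so that
\[
[\pi_{\tilde b}]=\operatorname{diag}(c_A)_{A\in\mathcal I}\cdot\bigl([\pi_b]\otimes 1_{\mathcal C}\bigr)\cdot\operatorname{diag}(c_B^*)_{B\in\mathcal I}.
\]
The two diagonal factors are unitary in $B(\ell_2(\mathcal I))\bar\otimes\mathcal C$, hence one obtains the \emph{exact} equality
\[
\|\pi_{\tilde b}\|_{L_p(B(L_2(\mathcal G))\bar\otimes\mathcal C)}=\|\pi_b\|_{S_p(L_2(\mathcal C))}
\]
for every $0<p<\infty$ simultaneously, with no interpolation and no square-function detour. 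On the Besov side the argument is equally direct: $(d_k\tilde b)(\theta)=\sigma_\theta(d_kb)$ and $\sigma_\theta$ is an $L_p$-isometry, so $\|d_k\tilde b\|_{L_p(\mathcal G,L_p(\mathcal C))}=\|d_kb\|_{L_p(\mathcal C)}$ for each $k$, whence $\|\tilde b\|_{\pmb{B}_p^2(\mathbb R,\mathcal C)}\approx_p\|b\|_{\pmb{B}_p(\mathcal C)}$. Theorem~\ref{thm1.2} with $d=2$ then closes the loop. Once you carry out the matrix computation, the proof is short and uniform in $p$.
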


Similarly, for $\mathscr{M}=\mathop{\otimes}\limits_{k=1}^{\infty}\mathbb{M}_{d}$, we also have:
\begin{thm}\label{thm7.1}
	For $0<p<\infty$, $\pi_b\in S_p(L_2(\mathscr{M}))$ if and only if $b\in \pmb{B}_p(\mathscr{M})$. Moreover,
	$$ \|\pi_b\|_{S_p(L_2(\mathscr{M})) }\approx_{d,p} \|b\|_{\pmb{B}_p(\mathscr{M})}. $$
\end{thm}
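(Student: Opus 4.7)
The plan is to adapt the transference argument used for Theorem \ref{thm6.1} (the CAR algebra case) to the hyperfinite II$_1$ factor $\mathscr{M}=\bigotimes_{k=1}^{\infty}\mathbb{M}_{d}$. The two settings are strongly parallel: both algebras are tensor products of finite matrix algebras endowed with a uniform tensor product filtration ($\mathscr{M}_{n}=\bigotimes_{k=1}^{n}\mathbb{M}_{d}$, respectively $\mathcal{C}_n$), and in both cases the noncommutative structure is controlled by a single fixed parameter ($d$, respectively $2$). The central tool is the transference method of Pisier--Xu from \cite{PX97}, which reduces the purely noncommutative paraproduct problem on $\mathscr{M}$ to a semicommutative $d$-adic paraproduct problem on $L_{\infty}(\mathbb{R})\otimes\mathcal{N}$, for a suitable tracial von Neumann algebra $\mathcal{N}$, where Theorem \ref{thm1.2} applies.

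Concretely, the plan unfolds in three steps. First, construct an explicit transference relating $(\mathscr{M},(\mathscr{M}_{n}))$ to a semicommutative $d$-adic setting $(L_{\infty}(\mathbb{R})\otimes\mathcal{N},(\mathcal{F}_{n}\otimes\mathcal{N}))$, under which the noncommutative martingale $b$ corresponds to a semicommutative $d$-adic martingale $\tilde{b}$ and the noncommutative paraproduct $\pi_b$ corresponds to the semicommutative paraproduct $\pi_{\tilde{b}}$. The transference should be isometric up to constants depending only on $d$ and $p$, both at the level of the Schatten norm of the paraproduct and at the level of the Besov norm of the symbol, i.e.
\begin{equation*}
\|\pi_b\|_{S_p(L_2(\mathscr{M}))}\approx_{d,p}\|\pi_{\tilde{b}}\|_{L_p(B(L_2(\mathbb{R}))\otimes\mathcal{N})}\quad\text{and}\quad\|b\|_{\pmb{B}_p(\mathscr{M})}\approx_{d,p}\|\tilde{b}\|_{\pmb{B}_p^d(\mathbb{R},\mathcal{N})}.
\end{equation*}
Second, apply Theorem \ref{thm1.2} to $\tilde{b}$ to get $\|\pi_{\tilde{b}}\|_{L_p(B(L_2(\mathbb{R}))\otimes\mathcal{N})}\approx_{d,p}\|\tilde{b}\|_{\pmb{B}_p^d(\mathbb{R},\mathcal{N})}$. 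Third, combine the two equivalences to conclude $\|\pi_b\|_{S_p(L_2(\mathscr{M}))}\approx_{d,p}\|b\|_{\pmb{B}_p(\mathscr{M})}$.

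The main obstacle is the first step: constructing the transference and verifying its intertwining properties rigorously. The map must simultaneously respect (i) the noncommutative product used to define $\pi_b$, (ii) the filtration used in the Besov seminorm, and (iii) the $L_p$ norms across the full range $0<p<\infty$, including the quasi-Banach range $0<p<1$ where standard duality arguments break down. A subtle point is the dimension mismatch between $\mathbb{M}_d$ (of complex dimension $d^{2}$) and the $d$-adic increment on $\mathbb{R}$ (of dimension $d$); the extra factor of $d$ must be absorbed into the auxiliary algebra $\mathcal{N}$, for instance by taking $\mathcal{N}$ to be $\mathscr{M}$ itself or a closely related hyperfinite algebra tailored so that the martingale differences $d_k b\in\mathscr{M}_{k-1}\otimes(\mathbb{M}_d\ominus\mathbb{C}\mathbf{1})$ are faithfully represented by $d$-adic differences valued in $\mathcal{N}$. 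Once the transference is in place, the remaining verifications --- translating $\|\tilde{b}\|_{\pmb{B}_p^d(\mathbb{R},\mathcal{N})}$ back to $\|b\|_{\pmb{B}_p(\mathscr{M})}$ --- reduce to unraveling the definitions and using that the transference is trace-preserving at each filtration level, exactly as in the CAR case.
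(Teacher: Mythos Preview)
Your overall strategy---transference to a semicommutative martingale paraproduct followed by Theorem~\ref{thm1.2}---is exactly the paper's. The one place where your plan diverges, and where it is vague enough to be a gap, is your handling of the dimension mismatch.

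You propose a $d$-adic commutative model and suggest absorbing the extra factor of $d$ into the auxiliary algebra $\mathcal{N}$. The paper instead resolves the mismatch at the source by using a $d^{2}$-adic model. It builds an explicit unitary orthonormal basis $(U_\alpha)_{\alpha\in\mathcal{J}}$ of $L_2(\mathscr{M})$ from Weyl-type unitaries $U_{(i,j)}=\sum_{l}\omega^{il}e_{l,\sigma^{j}(l)}$ in each tensor factor (so $d^{2}$ basis elements per level), together with a matching character basis $(h_\alpha)$ of $L_2(\mathcal{R})$, where $\mathcal{R}=\{e^{2\pi\mathrm{i}k/d^{2}}:1\le k\le d^{2}\}^{\mathbb{N}}$ carries the natural $d^{2}$-adic filtration. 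The transference automorphism is $\sigma_{h(t)}:U_\alpha\mapsto h_\alpha(t)U_\alpha$, and the auxiliary algebra is simply $\mathcal{N}=\mathscr{M}$.

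The payoff of exact dimension matching is that the transference gives \emph{equalities}, not equivalences. Since $U_\alpha U_\beta^{*}=\lambda_{\alpha,\beta}U_{\eta_{\alpha,\beta}}$ with $|\lambda_{\alpha,\beta}|=1$, the matrix entries satisfy $(\pi_{\tilde{b}})_{\alpha,\beta}=(\pi_b)_{\alpha,\beta}\,U_\alpha U_\beta^{*}$, whence $[\pi_{\tilde{b}}]=D\,([\pi_b]\otimes 1_{\mathscr{M}})\,D^{*}$ for the diagonal unitary $D=\operatorname{diag}(U_\alpha)$. This yields $\|\pi_{\tilde{b}}\|_{L_p(B(L_2(\mathcal{R}))\otimes\mathscr{M})}=\|\pi_b\|_{S_p(L_2(\mathscr{M}))}$ exactly for all $0<p<\infty$, with no constants to track and no separate argument for $0<p<1$. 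Likewise $\|d_k\tilde{b}\|_{L_p(\mathcal{R},L_p(\mathscr{M}))}=\|d_k b\|_{L_p(\mathscr{M})}$ since each $\sigma_{h(t)}$ is an $L_p$-isometry. Theorem~\ref{thm1.2} with base $d^{2}$ then closes the argument; note that the weight $d^{2k}$ in Definition~\ref{mbs3} is precisely the $d^{2}$-adic Besov scaling.

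By contrast, a $d$-adic model forces each tensor factor $\mathbb{M}_d$ either to straddle two commutative filtration levels or to push genuinely noncommutative data into $\mathcal{N}$. Either way the clean identity $[\pi_{\tilde{b}}]=D([\pi_b]\otimes 1)D^{*}$ is lost, and with it the alignment of filtrations needed to compare Besov norms level by level. It is not clear your absorption idea can be made to work; in any case, switching to $d^{2}$-adic removes the obstacle entirely.
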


The martingale Besov spaces $\pmb{B}_p(\mathcal{C})$ and $\pmb{B}_p(\mathscr{M})$ in Theorem \ref{thm6.1} and Theorem \ref{thm7.1} are defined in Definition \ref{mbs2} and Definition \ref{mbs3}, respectively. The proofs of Theorem \ref{thm6.1} and Theorem \ref{thm7.1} rely on Theorem \ref{thm1.2} and the transference method.

\

 Nowadays, the study of Schatten class of commutators attracts much attention and is going through vast development. We refer the interested reader to \cite{FLL2023,FLLX2024,FLMSZ2024,GLW2023,LLW2023,LLW2024,LLWW2024}. Motivated by these remarkable works, we aim to investigate Schatten class of semicommutative commutators. We will employ Theorem \ref{thm1.2} to give a characterization of Schatten class membership for operator-valued commutators involving singular integral operators and noncommutative pointwise multiplication, in terms of operator-valued Besov spaces. Our method is based on the dyadic representation of singular integral operators developed by Hyt\"{o}nen in \cite{TH1} and \cite{TH2}. We first provide the setup for singular integral operators. 

Let $T\in B(L_2(\mathbb{R}^n))$ be a singular integral operator with a kernel $K(x, y)$,  i.e. for any $f\in L_2(\mathbb{R}^n)$
\[T(f)(x)=\int_{\mathbb{R}^n}K(x, y)f(y)dy, \quad x\notin \mathrm{supp}f.\]
We assume that $K(x, y)$ is defined for all $x\neq y$ on $\mathbb{R}^n\times \mathbb{R}^n$ and satisfies the following standard kernel estimates:
\begin{equation}\label{standard}
	\begin{cases}
		\displaystyle|K(x,y)|\le \frac{C}{|x-y|^n},\\
		\displaystyle|K(x,y)-K(x',y)|+|K(y,x)-K(y,x')|\le \frac{C |x-x'|^\alpha}{|x-y|^{n+\alpha}}
			\end{cases}
\end{equation}
for all $x,x',y\in\mathbb{R}^n$ with $|x-y|>2|x-x'|>0$ and some fixed $\alpha\in (0, 1]$ and constant $C>0$. 

In particular, if for any $x\neq y$ 
\begin{equation}\label{phi2333}
	K(x, y)= \phi(x-y),
\end{equation}
where $\phi$ is homogeneous of degree $-n$ with mean value zero on the unit sphere, then $T$ is called a Calder\'{o}n-Zygmund transform. 

In the sequel,  $T:L_2(\mathbb{R}^n)\to L_2(\mathbb{R}^n)$ will always be assumed to satisfy the above standard estimates (\ref{standard}) and to be bounded. The celebrated David-Journé $T(1)$ theorem in \cite{DaJou} asserts that for any singular integral operator $T$ satisfying (\ref{standard}) , $T$ is bounded on $L_2(\mathbb{R}^n)$ if and only if $T(1)$ and $T^*(1)$ both belong to $BMO(\mathbb{R}^n)$ and $T$ satisfies the weak boundedness property. We recall that $BMO(\mathbb{R}^n)$ is the space consisting of all locally integrable functions $b$ such that
$$  \|b\|_{BMO(\mathbb{R}^n)}= \sup _{\substack{Q \subset \mathbb{R}^n \\ Q \operatorname{cube}}} \bigg(\frac{1}{m(Q)} \int_Q\Big|b-\big(\frac{1}{m(Q)} \int_Q b\ d m\big)\Big|^2 d m\bigg)^{1/2}<\infty, $$
where $m$ is Lebesgue measure on $\mathbb{R}^n$.

Assume $b\in L_2(\mathbb{R}^n, L_2(\M))$, and let $M_b$ be the pointwise multiplication by $b$. The operator-valued commutator is defined by $C_{T, b}=[T, M_b]=T M_b - M_b T$, that is for any $f\in L_2(\mathbb{R}^n, L_2(\M))$,
$$  C_{T, b}(f)=T(b\cdot f)-b\cdot T(f).  $$
The operator-valued homogeneous Besov space $\pmb{B}_p(\mathbb{R}^n, L_p(\mathcal{M}))$ is defined as the completion of all $b\in \mathcal{S}(L_\8(\mathbb{R}^n)\otimes \M)$ satisfying
\begin{equation}\label{funeq}
	\|b\|_{\pmb{B}_p(\mathbb{R}^n, L_p(\mathcal{M}))}=	\bigg(\int_{\mathbb{R}^n\times\mathbb{R}^n}\frac{\|b(x)-b(y)\|_{L_p(\mathcal{M})}^p}{|x-y|^{2n}}dxdy\bigg)^{\frac{1}{p}}<\infty,
\end{equation}  
with respect to the semi-norm  $\|\cdot \|_{\pmb{B}_p(\mathbb{R}^n, L_p(\mathcal{M}))}$. If $\mathcal{M}=\mathbb{C}$ and $p>n$, $\pmb{B}_p(\mathbb{R}^n, L_p(\mathbb{C}))=\pmb{B}_p(\mathbb{R}^n)$ coincides with the classical Besov space of parameters $(p,p,n/p)$, namely the space $\Lambda_\alpha^{p,q}(\mathbb{R}^n)$ in \cite[Chapter V, \S 5]{ST}.

In the commutative setting, Janson and Wolff obtained the following theorem (see {\cite[Theorem 1]{JW}}):
 \begin{thm}\label{thm1.5}
	Let $T$ be a Calder\'{o}n-Zygmund transform with a kernel $\phi$ defined in \eqref{phi2333}. Assume $\phi$ is $C^{\infty}(\mathbb{R}^n)$ except at the origin and not identically zero. 
	
	Suppose $n \geq 2$ and $0<p<\infty$. For $0<p\leq n$, $ C_{T, b}\in S_p(L_2(\mathbb{R}^n))$ if and only if $b$ is constant. For $p>n$, $ C_{T, b}\in S_p(L_2(\mathbb{R}^n))$ if and only if $b\in \pmb{B}_p(\mathbb{R}^n)$.
\end{thm}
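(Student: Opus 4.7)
The plan is to prove the unified estimate $\|C_{T,b}\|_{S_p(L_2(\mathbb{R}^n))} \approx_p \|b\|_{\pmb{B}_p(\mathbb{R}^n)}$ for all $0 < p < \infty$, from which Theorem \ref{thm1.5} drops out at once. Indeed, the integrand of $\|b\|_{\pmb{B}_p(\mathbb{R}^n)}^p$ behaves like $|\nabla b(x)|^p|x-y|^{p-2n}$ near the diagonal, so a smooth non-constant $b$ with finite $\pmb{B}_p$-norm forces $p>n$; thus in the regime $0<p\le n$ the only representatives in $\pmb{B}_p(\mathbb{R}^n)$ modulo constants are the zero class, and a constant $b$ trivially gives $C_{T,b}=0$.

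For the core equivalence I would invoke Hyt\"{o}nen's dyadic representation theorem, writing $T$ as the expectation, over random translations of the standard dyadic grid on $\mathbb{R}^n$, of a rapidly convergent average of dyadic shifts $S^{ij}$ of complexity $(i,j)$. The commutator $C_{T,b}$ then becomes an expectation of shift commutators $[S^{ij},M_b]$, and each such shift commutator decomposes into finitely many $n$-dimensional semicommutative dyadic paraproduct-type operators covered by Theorem \ref{thm1.2} (with $\mathcal{M}=\mathbb{C}$ and $d=2^n$). The upper bound $\|C_{T,b}\|_{S_p}\lesssim \|b\|_{\pmb{B}_p(\mathbb{R}^n)}$ then follows from this decomposition: Theorem \ref{thm1.2} applied to each paraproduct piece produces a dyadic martingale Besov norm with at most polynomial growth in $(i,j)$, the outer sum converges by Hyt\"{o}nen's exponential decay in $i+j$, and averaging over random dyadic grids converts the dyadic Besov norms into the continuous $\|b\|_{\pmb{B}_p(\mathbb{R}^n)}$.

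The lower bound $\|b\|_{\pmb{B}_p(\mathbb{R}^n)}\lesssim \|C_{T,b}\|_{S_p}$ is the main obstacle. Here I would apply the complex median method developed in the paper: the non-degeneracy of $\phi$ on $\mathbb{R}^n\setminus\{0\}$, immediate for a smooth homogeneous mean-zero $\phi\not\equiv 0$, allows, for every cube $Q\subset\mathbb{R}^n$, the selection of two well-separated sub-cubes $Q_1,Q_2\subset cQ$ with $|Q_1|=|Q_2|\approx |Q|$ and a unimodular complex $\theta_Q$ such that $\operatorname{Re}\bigl(\theta_Q\phi(x-y)\bigr)\gtrsim |Q|^{-1}$ uniformly on $Q_1\times Q_2$. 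Testing $C_{T,b}$ against suitably normalized functions supported on $Q_1$ and $Q_2$ then bounds the oscillation of a complex median of $b$ over $Q$ by a localized quantity of the form $|Q|^{-1}\|1_{Q_1}C_{T,b}1_{Q_2}\|$, and summing the $p$-th powers of these cube-level inequalities reconstructs the double integral defining $\|b\|_{\pmb{B}_p(\mathbb{R}^n)}^p$. The delicate point is that the phases $\theta_Q$ must be chosen measurably and coherently across all scales so that the cube-level lower bounds aggregate into the continuous Besov integral without any logarithmic loss; this is precisely what the complex version of the median method is engineered to achieve, and it is the technical heart that allows the argument to cover genuinely complex-valued kernels rather than only those with a real, positively-oriented direction.
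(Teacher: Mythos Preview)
Your overall outline matches the paper's route---upper bound via Hyt\"onen's representation and the paraproduct estimate of Theorem~\ref{thm1.2}, lower bound via non-degeneracy of the kernel plus the complex median method---but you have misidentified what the complex median method actually does, and this misidentification hides the real work. The selection of a phase $\theta_Q$ making $\operatorname{Re}(\theta_Q\phi(x-y))\gtrsim|Q|^{-1}$ on a pair of separated cubes is \emph{not} the complex median method; that is Lemma~\ref{ball}, a direct consequence of non-degeneracy and the standard kernel estimates, and no coherence across scales is required for it. The complex median method (Theorem~\ref{divideS}) is about the symbol $b$, not the kernel: for complex-valued $b$ one needs to partition the cube $\hat I$ into four sets $F_s^I$ on which $e^{i\theta}(\alpha-b)$ lies in a fixed quadrant, so that when $x\in E_s^I$ and $\hat x\in F_s^I$ the difference $b(x)-b(\hat x)$ has argument confined to a sector and hence $|b(x)-b(\hat x)|\lesssim\operatorname{Re}\bigl(e^{i\theta}(b(x)-b(\hat x))\cdot e^{i\theta_1}K(\hat x,x)\bigr)$. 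This is what allows one to remove the absolute value and pass to $\langle \mathbbm{1}_{F_s^I},[T,M_b]\mathbbm{1}_{E_s^I}\rangle$; without it the argument collapses for complex $b$. Your sentence about phases ``chosen measurably and coherently across all scales'' describes a non-issue and misses the genuine obstacle.

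The aggregation step is also not as you describe. The cube-level inequalities are summed not by any phase coherence but by the Rochberg--Semmes estimate (Lemma~\ref{RSNWO}): the test functions $e_I,f_I$ have the right support and $L_\infty$ normalisation relative to dyadic cubes in auxiliary grids $\mathcal D^{\omega(1)},\dots,\mathcal D^{\omega(n+1)}$ (Lemmas~\ref{Domegan1}--\ref{Domegan2}), and this yields $\sum_I|\langle e_I,[T,M_b]f_I\rangle|^p\lesssim\|[T,M_b]\|_{S_p}^p$ directly. One then obtains $\|b\|_{\pmb B_p^{\omega,2^n}}\lesssim\|[T,M_b]\|_{S_p}$ for each grid, and only afterwards passes to the continuous Besov norm via Proposition~\ref{pdayun}. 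Two smaller points: the paper's lower-bound machinery (Lemma~\ref{conv1}, Theorem~\ref{Converse}) is stated for $1<p<\infty$; the range $0<p\le1$ in Theorem~\ref{thm1.5} is handled by the trivial embedding $S_p\subset S_q$ into some $q\in(1,n]$. And in the upper bound, the commutators $[\pi^\omega_{T(1)},M_b]$ and $[S_\omega^{ij},R_b]$ are not themselves paraproducts covered by Theorem~\ref{thm1.2}; they require Proposition~\ref{T0est} and the explicit block-diagonal computation in the proof of Theorem~\ref{thm6.4}, which is where the restriction $p>n$ (for $n\ge2$) enters.
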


 In the semicommutative setting, we get an analogous result for the following two cases:  $p>n$ when $n\geq 2$, and $p>\frac{2}{1+\alpha}$ when $n=1$. (here $\alpha$ is the fixed constant in (\ref{standard})). The following theorem describes the Schatten class membership of operator-valued commutators.
 \begin{thm}\label{thm6.4}
 	Suppose $p>n$ when $n\geq 2$, or $p>\frac{2}{1+\alpha}$ when $n=1$. Let $T\in B(L_2(\mathbb{R}^n))$ be a singular integral operator with kernel $K(x,y)$ satisfying \eqref{standard}. Suppose that $b$ is a locally integrable $L_p(\M)$-valued function. If $b\in \pmb{B}_p(\mathbb{R}^n, L_p(\mathcal{M}))$, then $ C_{T, b}\in L_p(B(L_2(\mathbb{R}^n))\otimes \mathcal{M})$ and 
 	$$ \| C_{T, b}\|_{L_p(B(L_2(\mathbb{R}^n))\otimes \mathcal{M})}\lesssim_{n, p,T}\big(1+\|T(1)\|_{BMO(\mathbb{R}^n)}+\|T^*(1)\|_{BMO(\mathbb{R}^n)}\big)\|b\|_{\pmb{B}_p(\mathbb{R}^n, L_p(\mathcal{M}))}. $$
 \end{thm}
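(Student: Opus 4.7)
The plan is to combine Hytönen's dyadic representation of singular integral operators with Theorem \ref{thm1.2}. The idea is to realize $C_{T,b}$ as a random average of commutators of $M_b$ with model dyadic operators, treat each model operator separately, and sum up.

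Step 1 (dyadic representation). Let $\mathcal{D}_\omega$ be a random dyadic system on $\mathbb{R}^n$ in Hytönen's sense, with $\omega$ drawn from a probability space. Hytönen's representation theorem gives the expansion
\[T = c\,\mathbb{E}_\omega\bigg(\Pi_{\omega, T(1)} + \Pi_{\omega, T^*(1)}^{*} + \sum_{(i,j)\in\mathbb{N}^2} 2^{-\max(i,j)\alpha/2}\,S_\omega^{i,j}\bigg),\]
where $\Pi_{\omega,h}$ is the dyadic martingale paraproduct with symbol $h$ adapted to $\mathcal{D}_\omega$ and each $S_\omega^{i,j}$ is a cancellative shift of complexity $(i,j)$ with operator norm $\lesssim 1$. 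Applying the commutator $[\cdot, M_b]$ to both sides transfers this decomposition to $C_{T,b}$.

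Step 2 (paraproduct commutators). A direct expansion of $[\Pi_{\omega,h},M_b]$ yields a bilinear combination of dyadic paraproducts in which $b$ plays the role of symbol and $h$ appears as a multiplier. Invoking Theorem \ref{thm1.2} in its $n$-dimensional dyadic form (extended as in Section \ref{Proof of the Necessity of theorem 1.2}) together with $\|M_h\|\lesssim \|h\|_{BMO}$ (via the noncommutative John--Nirenberg inequality) yields
\[\big\|[\Pi_{\omega,T(1)},M_b]\big\|_{L_p(B(L_2(\mathbb{R}^n))\otimes\M)} \lesssim_{n,p} \|T(1)\|_{BMO(\mathbb{R}^n)}\,\|b\|_{\pmb{B}_p(\mathcal{D}_\omega,\M)},\]
and the analogous bound with $T^*(1)$ in place of $T(1)$, where $\|b\|_{\pmb{B}_p(\mathcal{D}_\omega,\M)}$ denotes the dyadic martingale Besov seminorm associated with $\mathcal{D}_\omega$.

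Step 3 (shift commutators). Fix a complexity-$(i,j)$ shift $S_\omega^{i,j}$. Decompose $b$ via its martingale differences $d_k^\omega b$ adapted to $\mathcal{D}_\omega$, and split $b = b_{>} + b_{=} + b_{<}$ according to scales above, equal to, and below the scale at which $S_\omega^{i,j}$ acts. The ``equal'' part collapses into a finite sum of paraproducts with symbols $d_k b$ sandwiched between shifts of norm $\lesssim 1$, directly controlled by Theorem \ref{thm1.2}; the ``above'' and ``below'' parts unfold into at most $O(i+j)$ paraproduct-times-shift pieces. Altogether,
\[\big\|[S_\omega^{i,j},M_b]\big\|_{L_p(B(L_2(\mathbb{R}^n))\otimes\M)} \lesssim_{n,p} (1+i+j)\,\|b\|_{\pmb{B}_p(\mathcal{D}_\omega,\M)}.\]

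Step 4 (summation and averaging). The polynomial growth $(1+i+j)$ is absorbed by $2^{-\max(i,j)\alpha/2}$, so the series converges absolutely. Taking $L_p$-norms and integrating over $\omega$ gives
\[\|C_{T,b}\|_{L_p(B(L_2(\mathbb{R}^n))\otimes\M)} \lesssim_{n,p,T} \big(1+\|T(1)\|_{BMO(\mathbb{R}^n)}+\|T^*(1)\|_{BMO(\mathbb{R}^n)}\big)\,\big(\mathbb{E}_\omega \|b\|_{\pmb{B}_p(\mathcal{D}_\omega,\M)}^p\big)^{1/p}.\]
A random-dyadic-to-continuous comparison, obtained by representing the martingale differences as differences $b(x)-b(y)$ on sibling cubes and integrating over translations and dilations of the grid, yields
\[\mathbb{E}_\omega \|b\|_{\pmb{B}_p(\mathcal{D}_\omega,\M)}^p \lesssim_n \|b\|_{\pmb{B}_p(\mathbb{R}^n, L_p(\M))}^p,\]
which closes the argument.

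The most delicate point is Step 3: in the scalar case one can use pointwise rearrangement tricks, but here $b$ is operator-valued, so every manipulation has to respect the tensor structure of $L_p(B(L_2(\mathbb{R}^n))\otimes\M)$; one must carefully count the pieces produced by the commutator expansion so that the accumulated constant is at most polynomial in the complexity $(i,j)$. The threshold $p>n$ for $n\ge 2$ and $p>2/(1+\alpha)$ for $n=1$ enters precisely in Step 3 and in the averaging estimate of Step 4, since these ranges are what guarantee that the Besov seminorm is nontrivial and that the tangential Schatten norms produced in the shift decomposition are summable.
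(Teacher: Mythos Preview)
Your overall architecture matches the paper's: Hyt\"{o}nen's representation plus control of the model commutators via the martingale Besov machinery. But several of your assertions about the model pieces are either too vague to count as a proof or actually wrong.

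\textbf{Step 2 (paraproduct commutators).} Saying that $[\Pi_{\omega,h},M_b]$ is ``a bilinear combination of dyadic paraproducts'' hides the real work. The paper writes $M_b=\pi_b+\varLambda_b+R_b$ and proves a separate Proposition~\ref{T0est} to control $[\pi_a,M_b]$; the hard term is $[\pi_a,R_b]$, which the paper rewrites as $-\Psi_{a,b}-\pi_a\pi_b$ and shows that the matrix of $\Psi_{a,b}$ is the \emph{triangular truncation} of $\pi_a\varLambda_b$. One then invokes the $L_p$-boundedness of the triangular projection (Lemma~\ref{triangle}), which is exactly why $1<p<\infty$ is needed here. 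There is no appeal to any John--Nirenberg inequality in this step.

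\textbf{Step 3 (shift commutators).} Your decomposition $b=b_{>}+b_{=}+b_{<}$ by scale relative to the shift is not what the paper does, and your claim that the commutator has growth $O(1+i+j)$ for all $p$ in the stated range is false. The paper again writes $M_b=\pi_b+\varLambda_b+R_b$; the terms $[S_\omega^{ij},\pi_b]$ and $[S_\omega^{ij},\varLambda_b]$ are immediate from Theorem~\ref{thm1.2} and Lemma~\ref{TLambdab}. For $[S_\omega^{ij},R_b]$ the paper computes explicitly that this operator is block-diagonal over $K\in\mathcal{D}^\omega$, and then estimates each block. For $p\ge 2$ this yields growth $\lesssim(i+j)$, but for $n=1$ and $\tfrac{2}{1+\alpha}<p<2$ the block estimate only gives
\[
\|[S_\omega^{ij},R_b]\|_{L_p(\mathcal{N})}\lesssim_{p}\bigl(2^{(i+j)(1-p/2)}(i^p+j^p)\bigr)^{1/p}\|b\|_{\pmb{B}_p(\mathbb{R},L_p(\mathcal{M}))},
\]
which is \emph{exponential} in the complexity. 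The series in Step~4 then converges precisely because $2(1-p/2)/p-\alpha<0$, i.e.\ $p>2/(1+\alpha)$. This is the genuine origin of that threshold; your account of it (``tangential Schatten norms \ldots\ are summable'') is not right.

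\textbf{The threshold $p>n$ for $n\ge2$.} This plays no role in the sufficiency proof. The paper's argument for $p\ge 2$ works uniformly in $n$ (see their Subsection~6.4); the restriction $p>n$ is there only because $\pmb{B}_p(\mathbb{R}^n,L_p(\mathcal{M}))$ contains only constants when $p\le n$ (Proposition~\ref{0pn}).

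\textbf{Step 4 (comparison of Besov norms).} The paper does not average over $\omega$; it uses the deterministic inequality $\|b\|_{\pmb{B}_p^{\omega,2^n}(\mathbb{R}^n,\mathcal{M})}\lesssim_n\|b\|_{\pmb{B}_p(\mathbb{R}^n,L_p(\mathcal{M}))}$ (Lemma~\ref{Comparison}), valid for every fixed grid. Your averaged version would also suffice, but it is not needed.
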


 Theorem \ref{thm6.4} directly implies the sufficiency of Theorem \ref{thm1.5} for $p>n$ if we just let $\M=\mathbb{C}$. So we give an alternative proof of the sufficiency of Theorem \ref{thm1.5} for $p>n\geq 2$ based on martingale paraproducts. Note that in particular, if $T$ is the Hilbert transform and $\M=B(\mathcal{H})$, then Theorem \ref{thm6.4} has been shown in \cite{PS}. However, the authors in \cite{PS} used the dyadic representation of the Hilbert transform established by Petermichl in \cite{SPe}, which is much easier than Hyt\"{o}nen's dyadic representation in \cite{TH1} for general singular integral operators that is the key technique in our proof of Theorem \ref{thm6.4}.
 
Our method from real analysis for the proof of Theorem \ref{thm6.4} is different from the original one from complex analysis used in \cite{V1}. To the best of our knowledge, this is the first time that Hyt\"{o}nen's dyadic representation is utilized to estimate Schatten $p$-class of commutators involving general singular integral operators. Compared with the dyadic representation of the Hilbert transform used in \cite{PS}, Hyt\"{o}nen's dyadic representation is much more complicated because of the appearance of dyadic shifts with high complexity and martingale paraproducts. This obviously gives rise to some essential difficulties to overcome.
 
 We will show Theorem \ref{thm6.4} in a universal way for $p\geq 2$ and $n\geq 1$. In particular, as for the case $n=1$, our method can still work well for $\frac{2}{1+\alpha}<p<2$. At the time of this writing, we do not know whether Theorem \ref{thm6.4} holds for $0<p\leq\frac{2}{1+\alpha}$ when $n=1$. However it should be noted that the reason why the estimate of Schatten $p$-class in \cite{V1, FLLX2024} can be achieved for $p<2$ is that they dealt with strong smoothness assumptions on the kernels of singular integral operators. But in our setting, we just require the standard estimates \eqref{standard}. We also would like to stress that the method in \cite{JPe} which heavily relies on Schur multipliers fails  in our current case due to the lack of the smoothness condition of the kernels.
 
  Especially, if $T$ is a Calderón-Zygmund transform with a sufficiently smooth convolution kernel, there is a relatively simple proof of Theorem \ref{thm6.4} by using the dyadic representation developed by Vagharshakyan \cite{Va2010} in the same way. This is because the dyadic shift operators in  Vagharshakyan's dyadic representation are very simple and have little complexity compared with Hyt\"{o}nen's dyadic representation.
  
Indeed, when $n=1$, Theorem \ref{thm6.4} holds for all $1<p<\8$ if the kernel of $T$ satisfies $\alpha=1$. This means in this case that the kernel $K(x,y)$ is very regular in some sense. But this does not imply that $K(x, y)$ is smooth. So Theorem \ref{thm6.4} is also more general than all the previous known results even when $n=1$ except the case $0<p\leq 1$.

\

It is also very interesting to study the converse to Theorem \ref{thm6.4}. To get a lower bound of  $\| C_{T, b}\|_{L_p(B(L_2(\mathbb{R}^n))\otimes \mathcal{M})}$ in terms of the operator-valued Besov norm of $b$, the kernel $K(x, y)$ associated with $T\in B(L_2(\mathbb{R}^n))$ should not be very small. Inspired by Hyt\"{o}nen's recent work \cite{TH3}, we deal with ``non-degenerate'' kernels. (see Definition \ref{nondege} for the definition of non-degenerate kernels).

\begin{thm}\label{nonschatten}
	Let $n\geq 1$, $1<p<\infty$ and $T\in B(L_2(\mathbb{R}^n))$ be a singular integral operator with a non-degenerate kernel $K(x,y)$ satisfying \eqref{standard}. Suppose that $b$ is a locally integrable $L_p(\mathcal{M})$-valued function. If $ C_{T, b}\in L_p(B(L_2(\mathbb{R}^n))\otimes \mathcal{M})$, then $b\in \pmb{B}_p(\mathbb{R}^n, L_p(\mathcal{M}))$. Furthermore, we have
	\begin{equation*}
		\|b\|_{\pmb{B}_p(\mathbb{R}^n, L_p(\mathcal{M}))}\lesssim_{n,p,T} \| C_{T, b}\|_{L_p(B(L_2(\mathbb{R}^n))\otimes \mathcal{M})}.
	\end{equation*}
In particular, when $n\geq 2$ and $1<p\leq n$, if $ C_{T, b}\in L_p(B(L_2(\mathbb{R}^n))\otimes \mathcal{M})$, then $b$ is constant.
\end{thm}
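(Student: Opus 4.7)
The plan is to adapt Hyt\"{o}nen's non-degenerate kernel approach from \cite{TH3} to the semicommutative setting, with the \emph{complex median method} (advertised in the abstract) replacing the real-valued median argument of Janson--Wolff. The scheme divides into three steps: localizing the information carried by $C_{T,b}$ via non-degeneracy, converting that information into pointwise lower bounds for $\|b(x)-b(y)\|_{L_p(\mathcal{M})}$, and summing over a dyadic family to recover the Besov integral in \eqref{funeq}.

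First, I would exploit non-degeneracy (Definition~\ref{nondege}): for every cube $Q\subset\mathbb{R}^n$ the kernel produces a ``partner'' cube $Q'$ with $\ell(Q')\approx\ell(Q)$ and $\mathrm{dist}(Q,Q')\approx\ell(Q)$, on which $|K(x,y)|\gtrsim\ell(Q)^{-n}$ and the argument of $K(x,y)$ stays within a small arc, say $|\arg K(x,y)-\theta_Q|\le\varepsilon_0$ for all $(x,y)\in Q\times Q'$ and some $\theta_Q\in\mathbb{R}$. The goal on each such pair is to produce test vectors/test operators out of which $\|C_{T,b}\|_{L_p(B(L_2(\mathbb{R}^n))\otimes\mathcal{M})}$ controls $\int_{Q\times Q'}\|b(x)-b(y)\|_{L_p(\mathcal{M})}^p\,dxdy/|Q|^{2}$.

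The second and main step is the complex median method. In the scalar real-kernel case, one picks the real median $m_Q$ of $b$ on $Q$ and splits $Q$ into $\{b>m_Q\}$ and $\{b<m_Q\}$, each of measure $\ge |Q|/2$, producing a sign of $b(x)-b(y)$ compatible with the sign of $K$. Here two obstacles appear simultaneously: $K$ is complex-valued (with phase $e^{i\theta_Q}$), and $b$ is $L_p(\mathcal{M})$-valued. The idea is to choose, for each normal state $\omega$ on $\mathcal{M}$, a \emph{complex median} $z_{Q,\omega}\in\mathbb{C}$ such that the two half-planes
\[
H_Q^{\pm}=\bigl\{x\in Q:\ \pm\mathrm{Re}\bigl[e^{-i\theta_Q}(\omega(b(x))-z_{Q,\omega})\bigr]\ge \tfrac{1}{2}\,\mathrm{osc}_{p,\omega}(b;Q)\bigr\}
\]
have measure $\gtrsim|Q|$, and similarly for $Q'$; the existence of such a $z_{Q,\omega}$ is the core combinatorial lemma to be established (the ``complex median'' lemma, holding simultaneously for real and imaginary parts). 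Then plugging the normalized test operators $\xi=|E_Q|^{-1/p}\mathbf{1}_{H_Q^+}$ and $\eta=|E_{Q'}|^{-1/p'}\mathbf{1}_{H_{Q'}^-}$ against $C_{T,b}$ and using the kernel representation
\[
\langle C_{T,b}(\eta),\xi\rangle_{\mathcal{M}}=\int_{Q\times Q'}K(x,y)(b(y)-b(x))\,\eta(y)\overline{\xi(x)}\,dxdy,
\]
the phase and median choices conspire so that the integrand has $\mathrm{Re}$ bounded below by $c\,\ell(Q)^{-n}\,\mathrm{osc}_{p,\omega}(b;Q)$; supremum over states $\omega$ passes to the full $L_p(\mathcal{M})$ oscillation. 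Trace duality in $S_p$ together with the rank-one structure of the test operators then gives a lower bound
\[
\|C_{T,b}\|_{L_p}^p\gtrsim \int_{Q\times Q'}\frac{\|b(x)-b(y)\|_{L_p(\mathcal{M})}^p}{\ell(Q)^{2n}}\,dx\,dy.
\]

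To finish, I would assemble these cube-by-cube estimates over a Whitney-type family of dyadic pairs $(Q,Q')$ covering almost every pair $(x,y)\in\mathbb{R}^n\times\mathbb{R}^n$ at the scale $|x-y|$, using that the multiplicity of the covering is $O(1)$ and that $\ell(Q)\approx|x-y|$; this converts the left-hand side into $\|C_{T,b}\|_{L_p}^p$ and the sum of the right-hand sides into $\|b\|_{\pmb{B}_p(\mathbb{R}^n,L_p(\mathcal{M}))}^p$ up to a constant depending only on $n,p,T$. The constant tracking is a routine consequence of the non-degeneracy parameters. The last assertion, that $b$ must be constant when $n\ge 2$ and $1<p\le n$, is then a pure scaling consequence of the Besov norm \eqref{funeq}: for non-constant $b$ with $b(x_0)\ne b(y_0)$ at Lebesgue points, the double integral over small balls around $(x_0,y_0)$ diverges unless $p>n$, exactly as in the commutative Janson--Wolff picture.

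The main obstacle I anticipate is the complex median lemma itself in the operator-valued setting: one must select a single pair of half-planes in $\mathbb{C}$ that is ``good'' with respect to a family of states on $\mathcal{M}$ large enough to recover the $L_p(\mathcal{M})$-norm of oscillation, while simultaneously respecting the near-constant phase $\theta_Q$ dictated by the kernel. A secondary technical difficulty is keeping uniform control of the test-operator normalizations in the non-commutative $L_p$, for which I would rely on the rank-one identification $\|\xi\otimes\eta\|_{S_{p'}}=\|\xi\|_{p'}\|\eta\|_{p'}$ tensored with a state of $\mathcal{M}$, avoiding any need for an operator-valued Schur-multiplier argument (which, as noted by the authors, breaks down under only the standard kernel estimates \eqref{standard}).
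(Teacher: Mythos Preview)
Your outline has the right ingredients—non-degeneracy to find partner cubes, the complex median to handle complex-valued kernels, and some form of summation—but the two places where you flag difficulties are exactly where your plan diverges from the paper, and neither of your proposed resolutions works as stated.

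\textbf{Operator-valued median via states.} You propose to pick a complex median $z_{Q,\omega}$ for each state $\omega$ and then take a supremum over states to recover $\|b(x)-b(y)\|_{L_p(\mathcal{M})}$. This fails: $\sup_{\omega}|\omega(x)|$ equals $\|x\|_\infty$, not $\|x\|_{L_p(\mathcal{M})}$, and in any case the test functions $\xi,\eta$ would depend on $\omega$, so the supremum cannot be pulled inside a single Schatten-class pairing. The paper avoids an operator-valued median entirely. It dualizes the \emph{martingale} Besov norm $\|b\|_{\pmb{B}_p^{\omega,2^n}(\mathbb{R}^n,\mathcal{M})}$ globally against a sequence $\{\lambda_{I,i}\}\subset L_{p'}(\mathcal{M})$ with $\sum\|\lambda_{I,i}\|_{p'}^{p'}\le 1$, and for each fixed $\lambda_{I,i}$ applies the complex median method to the \emph{scalar} function $G_{\lambda_{I,i},b}(x)=\tau(\lambda_{I,i}\,b(x))$. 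The median, and hence the sets $E_s^I,F_s^I$, depend on $\lambda_{I,i}$, but—crucially—the containing dyadic cube $J(I)$ and the $L_\infty$ bounds on the test functions do not.

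\textbf{Summation.} Your Whitney-covering argument cannot work: each cube-by-cube estimate already has $\|C_{T,b}\|_{L_p}^p$ on the left, so summing over $Q$ does not ``convert the left-hand side into $\|C_{T,b}\|_{L_p}^p$''—it gives $\infty\cdot\|C_{T,b}\|_{L_p}^p$. The paper's substitute is a semicommutative Rochberg--Semmes lemma (Theorem~\ref{nonRSNWO}): for any $V\in L_p(B(L_2)\otimes\mathcal{M})$ and any nearly-weakly-orthonormal families $\{e_I\},\{f_I\}$ with $\mathrm{supp}\subset I$ and $\|\cdot\|_\infty\le|I|^{-1/2}$, one has $\sum_I\|\langle e_I,V f_I\rangle\|_{L_p(\mathcal{M})}^p\lesssim\|V\|_{L_p}^p$. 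This is proved by duality from Lemma~\ref{nonNWOpre}, and it is what allows the cube-indexed family of pairings (with $e_I,f_I$ depending on $\lambda_{I,i}$ but uniformly controlled) to be summed. The passage from the dyadic martingale Besov norms to the continuous Besov norm is then Proposition~\ref{pdayun}, using Mei--Conde's $(n+1)$ shifted dyadic systems.

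So the correct architecture is: dualize $\to$ scalar complex median on $G_{\lambda,b}$ $\to$ semicommutative NWO bound $\to$ intersection of martingale Besov spaces. Your final claim about $p\le n$ forcing $b$ constant is handled in the paper by the elementary Proposition~\ref{0pn}, essentially as you suggest.
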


From Theorem \ref{thm6.4} and Theorem \ref{nonschatten}, we derive directly the following corollary.
	\begin{corollary}\label{corollary1.8}
		Suppose $p>n$ when $n\geq 2$, or $p>\frac{2}{1+\alpha}$ when $n=1$. Let $T\in B(L_2(\mathbb{R}^n))$ be a singular integral operator with a non-degenerate kernel $K(x,y)$ satisfying \eqref{standard}. Suppose that $b$ is a locally integrable $L_p(\mathcal{M})$-valued function. Then $ C_{T, b}\in L_p(B(L_2(\mathbb{R}^n))\otimes \mathcal{M})$ if and only if $b\in \pmb{B}_p(\mathbb{R}^n, L_p(\mathcal{M}))$. Moreover, in this case
		$$ \| C_{T, b}\|_{L_p(B(L_2(\mathbb{R}^n))\otimes \mathcal{M})}\approx_{n,p,T} 	\|b\|_{\pmb{B}_p(\mathbb{R}^n, L_p(\mathcal{M}))}.  $$
		In addition, when $n\geq 2$ and $1<p\leq n$, then $ C_{T, b}\in L_p(B(L_2(\mathbb{R}^n))\otimes \mathcal{M})$ if and only if $b$ is constant.
\end{corollary}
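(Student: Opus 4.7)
The plan is to obtain Corollary \ref{corollary1.8} as a direct bookkeeping consequence of Theorem \ref{thm6.4} and Theorem \ref{nonschatten}; no independent arguments are required, and accordingly I expect no serious obstacle beyond checking that the hypotheses of the two ingredients match the hypotheses of the corollary.

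For the first assertion, I would fix $p>n$ in case $n\geq 2$, or $p>\frac{2}{1+\alpha}$ in case $n=1$, and consider a singular integral operator $T\in B(L_2(\mathbb{R}^n))$ whose kernel $K(x,y)$ is non-degenerate and satisfies \eqref{standard}. The sufficiency direction is already contained in Theorem \ref{thm6.4}: if $b\in \pmb{B}_p(\mathbb{R}^n, L_p(\mathcal{M}))$, then $C_{T,b}\in L_p(B(L_2(\mathbb{R}^n))\otimes \mathcal{M})$ with the quantitative bound
\[
\|C_{T,b}\|_{L_p(B(L_2(\mathbb{R}^n))\otimes \mathcal{M})}\lesssim_{n,p,T}\bigl(1+\|T(1)\|_{BMO(\mathbb{R}^n)}+\|T^*(1)\|_{BMO(\mathbb{R}^n)}\bigr)\|b\|_{\pmb{B}_p(\mathbb{R}^n, L_p(\mathcal{M}))}.
\]
For the necessity direction, the non-degeneracy hypothesis on $K$ allows us to invoke Theorem \ref{nonschatten}, which gives the reverse inequality
\[
\|b\|_{\pmb{B}_p(\mathbb{R}^n, L_p(\mathcal{M}))}\lesssim_{n,p,T}\|C_{T,b}\|_{L_p(B(L_2(\mathbb{R}^n))\otimes \mathcal{M})},
\]
so in particular $C_{T,b}\in L_p$ forces $b\in \pmb{B}_p(\mathbb{R}^n, L_p(\mathcal{M}))$. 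Combining the two bounds yields the equivalence $\|C_{T,b}\|_{L_p(B(L_2(\mathbb{R}^n))\otimes \mathcal{M})}\approx_{n,p,T}\|b\|_{\pmb{B}_p(\mathbb{R}^n, L_p(\mathcal{M}))}$ as claimed. The only minor point to verify is that the implicit constant on the sufficiency side, which depends on the $BMO$ norms of $T(1)$ and $T^*(1)$, can be absorbed into the constant $\lesssim_{n,p,T}$ appearing in the corollary, but this is immediate since $T$ is fixed.

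For the second assertion, in the regime $n\geq 2$ and $1<p\leq n$, the last line of Theorem \ref{nonschatten} states directly that $C_{T,b}\in L_p(B(L_2(\mathbb{R}^n))\otimes \mathcal{M})$ forces $b$ to be constant. Conversely, if $b$ is a constant element of $L_p(\mathcal{M})$, then pointwise multiplication by $b$ commutes with $T$, so $C_{T,b}=0$, trivially in every $L_p$ class. This produces the stated equivalence. No step here is delicate; the whole corollary is a packaging result, and the substantive content is already in Theorems \ref{thm6.4} and \ref{nonschatten}.
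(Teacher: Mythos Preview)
Your proposal is correct and matches the paper's approach exactly: the paper states explicitly that Corollary \ref{corollary1.8} follows directly from Theorem \ref{thm6.4} and Theorem \ref{nonschatten}, without giving any further proof. The only content is the bookkeeping you outline, together with the trivial observation that constant $b$ yields $C_{T,b}=0$.
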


We would like to emphasize that Corollary \ref{corollary1.8} is more general than Theorem \ref{thm1.5} because it not only concerns the semicommutative setting, but also deals with commutators involving general singular integral operators, while \cite{JPe} and \cite{JW} focus on Calder\'{o}n-Zygmund transforms. We also remark that our approach is completely different from that of Janson and Wolff. Therefore, Theorem \ref{thm6.4}, Theorem \ref{nonschatten} and Corollary \ref{corollary1.8} almost give a complete picture of Schatten class membership of commutators involving singular integral operators associated with non-degenerate kernels in the semicommutative setting.

Our proof of Theorem \ref{nonschatten} relies on the proof of the commutative case $\M=\mathbb{C}$. So we list the scalar case of Theorem \ref{nonschatten} as Theorem \ref{Converse} in Section \ref{schattenconv}. Our main ingredient of the proof of Theorem \ref{nonschatten} is the following so-called complex median method, which is an extension of the real median method discovered by Lerner, Ombrosi and Rivera-R\'{\i}os in \cite{LOR}.

\begin{thm}\label{divideS}
	Let $(\Omega,\mathcal{F},\mu)$ be a measure space. Let $I\in \mathcal{F}$ be of finite measure, and $b$ be a measurable function on $I$.
	Then there exist two orthogonal lines $L_1$ and $L_2$ in $\mathbb{C}$ which divide $\mathbb{C}$ into four closed quadrants $T_1$, $T_2$, $T_3$, $T_4$ such that
	\begin{equation*}
		\mu(\{x\in I:b(x)\in T_i\})\ge \frac{1}{16}\mu(I),\quad i\in \{1,2,3,4\}.
	\end{equation*}
\end{thm}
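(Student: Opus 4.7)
The plan is to extend the classical one-dimensional real median method to the complex setting through two orthogonal bisections combined with a rotation/continuity argument. Recall that for any real-valued measurable function $g$ on $I$, the real median method provides $m \in \mathbb{R}$ with $\mu(\{g \geq m\}), \mu(\{g \leq m\}) \geq \mu(I)/2$; the aim is to apply this twice, to orthogonal projections of $b$, and then to tune the projection direction so that the four induced quadrant measures become simultaneously large.

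For each angle $\theta \in [0, \pi/2]$, I would apply the real median method to the rotated real projections $u_\theta := \operatorname{Re}(e^{-i\theta}b)$ and $v_\theta := \operatorname{Im}(e^{-i\theta}b)$, obtaining medians $m_1(\theta), m_2(\theta) \in \mathbb{R}$. The orthogonal lines
$L_1(\theta) := \{z \in \mathbb{C} : \operatorname{Re}(e^{-i\theta}z) = m_1(\theta)\}$
and
$L_2(\theta) := \{z \in \mathbb{C} : \operatorname{Im}(e^{-i\theta}z) = m_2(\theta)\}$
divide $\mathbb{C}$ into four closed quadrants $T_1(\theta), \ldots, T_4(\theta)$ (labelled cyclically). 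By the median property, every two-quadrant half-plane carries $\mu$-preimage measure $\geq \mu(I)/2$; equivalently, any two adjacent quadrants sum to at least $\mu(I)/2$, and opposite quadrants carry equal mass modulo atoms on $L_1(\theta) \cup L_2(\theta)$, which are absorbed using the closedness of the $T_j$.

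Define $t(\theta) := \mu(\{x \in I : b(x) \in T_1(\theta)\})$. Because a $\pi/2$ rotation cyclically permutes the quadrant labels, $t(\pi/2) = \mu(I)/2 - t(0)$, and a continuity argument in $\theta$ then produces $\theta^* \in [0, \pi/2]$ with $t(\theta^*) = \mu(I)/4$. At that angle, all four closed quadrants carry measure $\mu(I)/4 \geq \mu(I)/16$, yielding the theorem with room to spare. The principal technical obstacle is the possible discontinuity of $t(\theta)$ caused by atoms of the push-forward $b_{*}\mu$ on the rotating median lines, which render the median multivalued and $t$ merely semi-continuous. One deals with this by working with the leftmost and rightmost median selectors (which are upper/lower semi-continuous in $\theta$), exploiting the flexibility granted by the closed quadrants to assign boundary mass to either adjacent $T_j$, and applying a semi-continuous intermediate value argument at the critical angle to extract a concrete pair of orthogonal lines meeting the $\mu(I)/16$ threshold in every quadrant.
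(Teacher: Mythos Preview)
Your rotation-and-IVT strategy is genuinely different from the paper's proof, which never varies the orientation. The paper first fixes a direction and obtains an ``H-shaped'' partition into four pieces $S_1,\dots,S_4$ (one line together with two perpendicular rays whose feet $\alpha_1,\alpha_2$ may differ), and then runs a lengthy case analysis: it introduces angle-interval functions $r_1(x),\dots,r_4(x)$ of a sliding base point $x$ on the segment $[\alpha_1,\alpha_2]$, exploits their monotonicity, and locates a point where the two angle intervals $[r_1,r_2]$ and $[r_3,r_4]$ intersect. When $b_*\mu$ gives zero mass to every line in $\mathbb{C}$, your argument is essentially correct and even yields the sharper constant $\mu(I)/4$: medians are effectively unique, each closed half-plane has mass exactly $\mu(I)/2$, opposite quadrants have equal mass, and $t(\theta)$ is continuous.

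The gap is the atomic case, and it is a missing idea, not a routine detail. First, your assertion that ``opposite quadrants carry equal mass modulo atoms on $L_1\cup L_2$'' is false: take $\mu(I)=1$ with $b$ equal to $1+i$ and $-2i$, each on a set of mass $1/2$; at $\theta=0$ with medians $m_1=m_2=0$ one has $\mu(T_1^{\mathrm{open}})=1/2$ while $\mu(T_3^{\mathrm{open}})=0$. Second, and more seriously, your IVT mechanism on $t(\theta)=\mu(T_1(\theta))$ does not locate a good angle here. With leftmost median selectors one checks that $t(\theta)\equiv 1$ for all $\theta\in[0,\pi/2]$, so there is nothing to intermediate; yet for $\theta>\arctan 3$ the opposite closed quadrant satisfies $\mu(T_3(\theta))=0$. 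The theorem does hold for this $b$ (e.g.\ place the intersection of the two lines at one of the atoms), but neither leftmost nor rightmost selectors find such a configuration, and your single scalar $t(\theta)$ is blind to the failure in $T_3$. Coordinating two interacting, multivalued median selectors so that \emph{all four} quadrants stay large simultaneously is exactly the difficulty; your outline asserts it can be done but gives no mechanism, and the example shows that the specific tools you name (extreme selectors, semicontinuity of $t$, closedness of the $T_j$) are insufficient on their own.
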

 The concept of `median' for real-valued functions was first given by Carleson in \cite{CL1980}. Fujii applied this new concept `median' to the theory of weighted norm inequalities in \cite{FN1991}. Lerner improved their work to establish the celebrated median decomposition in \cite{LAK10}. We would like to stress that Lerner's median decomposition is very powerful and has been used to demonstrate many beautiful and important results on weighted norm inequalities. Besides, Lerner, Ombrosi and Rivera-R\'{\i}os employed `median' to estimate the lower bound of the boundedness of commutators in \cite{LOR}. This is now called the real median method.

However, no matter what the median decomposition or the median method, the concept `median' can be available only for real-valued functions. In addition, it is well-acknowledged that the real median method is only applicable to real-valued functions, and unfortunately cannot be applied to complex-valued ones. We extend the definition of `median' to complex-valued functions with the help of our new complex median method. Thus this allows us to deal with singular integral operators with complex-valued kernels. This is one of the main novelties of this article. We will use this new complex median method to give a new proof of some results in \cite{TH3}, which is about to appear in our subsequent paper \cite{WZ3}.


\

Last but not least, we consider the boundedness of operator-valued commutators involving general singular integral operators on $L_p(\mathbb{R}^n, L_p(\M))$. Denote by $BMO_\mathcal{M}(\mathbb{R}^n)$ the space consisting of all $\mathcal{M}$-valued functions $b$ that are Bochner integrable on any cube such that
\begin{equation*}
	\|b\|_{BMO_\mathcal{M}(\mathbb{R}^n)}=\sup_{Q\subset \mathbb{R}^n \atop Q \operatorname{cube}}\biggl(\frac{1}{m(Q)}\int_Q \Big\|b-\big(\frac{1}{m(Q)} \int_Q b\ d m\big)\Big\|_\mathcal{M}^2 dm\biggr)^{1/2}<\infty.
\end{equation*}
 The next theorem states the boundedness of operator-valued commutators for $p=2$.
\begin{thm}\label{thm1.8}
	Let $T\in B(L_2(\mathbb{R}^n))$ be a singular integral operator with kernel $K(x,y)$ satisfying \eqref{standard}. If $b\in BMO_\mathcal{M}(\mathbb{R}^n)$, then $ C_{T, b}$ is bounded on $L_2(\mathbb{R}^n,L_2(\mathcal{M}))$ and
	$$ \| C_{T, b}\|_{L_2(\mathbb{R}^n,L_2(\mathcal{M}))\rightarrow L_2(\mathbb{R}^n,L_2(\mathcal{M}))}\lesssim_{n,T }\big(1+\|T(1)\|_{BMO(\mathbb{R}^n)}+\|T^*(1)\|_{BMO(\mathbb{R}^n)}\big)\|b\|_{BMO_{\mathcal{M}}(\mathbb{R}^n)}. $$
\end{thm}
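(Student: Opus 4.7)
The plan is to reduce Theorem \ref{thm1.8} to Hyt\"{o}nen's dyadic representation theorem, the central tool already emphasized in the introduction. That theorem writes
\[ T = C_T\, \mathbb{E}_\omega \Bigl[ \sum_{i,j\ge 0} 2^{-\a \max(i,j)/2}\, S_\omega^{i,j} + \pi_{T(1)}^\omega + \bigl(\pi_{T^*(1)}^\omega\bigr)^* \Bigr], \]
where $\omega$ ranges over random dyadic systems in $\mathbb{R}^n$, each $S_\omega^{i,j}$ is a normalized dyadic shift of complexity $(i,j)$, the $\pi_c^\omega$ are scalar dyadic martingale paraproducts with symbol $c$, and $C_T \lesssim 1+\|T(1)\|_{BMO(\mathbb{R}^n)}+\|T^*(1)\|_{BMO(\mathbb{R}^n)}$. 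Commuting with $M_b$ under the expectation and applying Minkowski's inequality, it suffices to obtain the two uniform-in-$\omega$ bounds
\[ \|[S_\omega^{i,j}, M_b]\|_{L_2(\mathbb{R}^n,L_2(\M))\to L_2(\mathbb{R}^n,L_2(\M))} \lesssim_{n} (1+\max(i,j))\,\|b\|_{BMO_\M(\mathbb{R}^n)}, \]
\[ \|[\pi_c^\omega, M_b]\|_{L_2(\mathbb{R}^n,L_2(\M))\to L_2(\mathbb{R}^n,L_2(\M))} \lesssim \|c\|_{BMO(\mathbb{R}^n)}\,\|b\|_{BMO_\M(\mathbb{R}^n)}, \]
the polynomial growth in $(i,j)$ being absorbed upon summation by the geometric factor $2^{-\a\max(i,j)/2}$.

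For the shift commutator I would expand $S_\omega^{i,j}f = \sum_K\sum_{I,J\subset K} a_{IJK}\langle f, h_I\rangle h_J$ with $|I|=2^{-in}|K|$, $|J|=2^{-jn}|K|$, $|a_{IJK}|\le (|I||J|)^{1/2}/|K|$, and on each outer cube $K$ replace $b$ by the oscillation $b-\langle b\rangle_K$, since the constant piece $\langle b\rangle_K$ cancels in the commutator bracket. This oscillation is then telescoped through the $\max(i,j)+1$ dyadic generations between $I\vee J$ and $K$; each generation contributes an operator dominated by $\|b\|_{BMO_\M}$ via the operator-valued $L^2$ John--Nirenberg inequality combined with the standard $L^2$ estimate for a single normalized shift, and summing over generations produces the $(1+\max(i,j))$ polynomial growth. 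For the paraproduct commutator with scalar symbol $c$ and operator-valued $b$, expanding $\pi_c f = \sum_Q \langle c,h_Q\rangle \langle f\rangle_Q h_Q$ in Haar basis yields the key identity
\[ [\pi_c,M_b]f = \sum_Q \langle c, h_Q\rangle\bigl(\langle (b-\langle b\rangle_Q)f\rangle_Q - (b-\langle b\rangle_Q)\langle f\rangle_Q\bigr)h_Q, \]
in which only the $BMO_\M$-oscillation of $b$ appears. Both resulting sums are controlled in $L_2(\mathbb{R}^n,L_2(\M))$ by combining the Carleson embedding theorem (the sequence $\{|\langle c,h_Q\rangle|^2\}_Q$ is Carleson with constant $\lesssim \|c\|_{BMO}^2$) with the operator-valued dyadic $BMO$ inequality applied to $b-\langle b\rangle_Q$; the adjoint paraproduct is handled by duality.

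The main obstacle is precisely this paraproduct commutator step. One cannot decompose $[T,M_b]$ via paraproducts of $b$ itself (the usual identity $M_b = \pi_b + \pi_b^* + \Lambda_b$), because the Katz/Nazarov--Treil--Volberg examples recalled earlier show that $\pi_b$ with operator-valued symbol $b$ is not uniformly bounded as $\dim\M\to\infty$. The dyadic representation route circumvents this obstruction because the paraproducts that appear in Hyt\"{o}nen's decomposition have \emph{scalar} symbols $T(1), T^*(1)$, and the commutator $[\pi_c,M_b]$ with scalar $c$ only sees the oscillation $b-\langle b\rangle_Q$ rather than the pointwise operator norm of $b$ --- which is exactly what $\|b\|_{BMO_\M(\mathbb{R}^n)}$ measures. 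This same mechanism produces the prefactor $1+\|T(1)\|_{BMO(\mathbb{R}^n)}+\|T^*(1)\|_{BMO(\mathbb{R}^n)}$ appearing in the statement, inherited directly from the normalization of the dyadic representation.
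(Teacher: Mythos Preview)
Your overall strategy is exactly the paper's: apply Hyt\"{o}nen's representation and bound $[S_\omega^{ij},M_b]$ and $[\pi_c^\omega,M_b]$ (with scalar $c\in\{T(1),T^*(1)\}$) separately. Your identity for $[\pi_c,M_b]$ is correct and in fact coincides with the paper's decomposition: in the paper's notation your first sum is $V_{c,b}(f)=\sum_k d_kc\cdot\mathbb{E}_{k-1}\bigl[(b-b_{k-1})(f-f_{k-1})\bigr]$ and your second sum equals $-\Theta_b(\pi_c f)$, where $\Theta_b:=\pi_b+\Lambda_b=M_b-R_b$.

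The gap is in bounding the second sum. Its summands are \emph{not} orthogonal over $Q$ (the factor $(b-\langle b\rangle_Q)h_Q$ has Haar support on all scales $\le|Q|$), so Carleson embedding does not apply; what you actually need is
\[
\|\Theta_b\|_{L_2(\mathbb{R}^n,L_2(\M))\to L_2(\mathbb{R}^n,L_2(\M))}\lesssim \|b\|_{BMO_\M(\mathbb{R}^n)}.
\]
This is nontrivial precisely because, as you correctly observe, $\pi_b$ alone is unbounded for operator-valued $b$. The paper supplies it as Corollary~\ref{Thetabest}: one writes $\Theta_b=\bigl(\pi_b+(\pi_{b^*})^*\bigr)+\bigl(\Lambda_b-(\pi_{b^*})^*\bigr)$, bounds the first bracket by an $H_1$--$BMO$ duality argument (Proposition~\ref{pibsumpibstar}), and checks that the second is block-diagonal hence bounded. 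This is exactly the ``gap in \cite{HM}'' mentioned in the introduction. The same lemma underlies the shift estimate: the paper splits $[S_\omega^{ij},M_b]=[S_\omega^{ij},\Theta_b]+[S_\omega^{ij},R_b]$, and only the $R_b$-part produces the clean average-differences $\langle b\rangle_I-\langle b\rangle_J$ that your telescoping handles; the remaining pointwise tail of $b-\langle b\rangle_K$ is absorbed by the bounded operator $\Theta_b$. (Your first sum, by contrast, \emph{can} be treated by Carleson embedding plus John--Nirenberg with an auxiliary exponent $r<2$, and this is in fact a bit cleaner than the paper's $H_{1,\max}^d$ duality treatment of $V_{c,b}$ in Proposition~\ref{T2est}.)
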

Our proof of Theorem \ref{thm1.8} is still based on Hyt\"{o}nen's dyadic representation. When $T$ is a Riesz transform, Theorem \ref{thm1.8} coincides with \cite[Theorem A.1]{HM}. It seems that the proof of \cite[Theorem A.1]{HM} contains a gap that will be fixed in the present paper. Moreover, Theorem \ref{thm1.8} involves general singular integrals, which is new in the semicommutative setting and answers an open question in \cite[Remark A.3]{HM}.

Particularly, when $\mathcal{M}=\mathbb{C}$, Theorem \ref{thm1.8} is known; moreover, in this scalar case, $C_{T,b}$ is bounded on $L_p(\mathbb{R}^n)$ for any $1<p<\infty$. The reader is referred to \cite[Theorem 1.1]{HLW} or \cite[Theorem 3.1]{CPP2012}. We will give a new proof of it. 
Our new approach is based on the boundedness of martingale paraproducts, and needs some new interesting martingale inequalities (Lemma \ref{supk} and Proposition \ref{T1est}). The new proof is presented in Appendix \ref{appendix}.

We also study the converse to Theorem \ref{thm1.8} in terms of operator-valued $BMO$ spaces. Let $b$ be an $\mathcal{M}$-valued function that is Bochner integrable on any cube in $\mathbb{R}^n$. For any cube $Q\subset\mathbb{R}^n$, define
\begin{equation}\label{MOBQ}
	MO(b;Q)=\biggl(\frac{1}{m(Q)} \int_Q \Big|b-\big(\frac{1}{m(Q)} \int_Q b\ d m\big)\Big|^2 dm\biggr)^{1/2}.
\end{equation}
Following a similar argument as in \cite{TH3}, we obtain the following:
\begin{thm}\label{nonbound}
	Let $T\in B(L_2(\mathbb{R}^n))$ be a singular integral operator with a non-degenerate kernel $K(x,y)$ satisfying \eqref{standard}. Let $b$ be an $\mathcal{M}$-valued function that is Bochner integrable on any cube in $\mathbb{R}^n$. Suppose that for any cube $Q\subset\mathbb{R}^n$,
	\begin{equation*}
		MO(b;Q)\in \mathcal{M}\quad \text{and} \quad MO(b^*;Q)\in\mathcal{M}.
	\end{equation*}
	 If $C_{T, b}$ is bounded on $L_2(\mathbb{R}^n,L_2(\mathcal{M}))$, then $b\in BMO_{cr}(\mathbb{R}^n,\mathcal{M})$. (see Subsection \ref{ovbmo} for the definition of $BMO_{cr}(\mathbb{R}^n,\mathcal{M})$.) Furthermore, we have
	\begin{equation*}
		\|b\|_{BMO_{cr}(\mathbb{R}^n,\mathcal{M})}\lesssim_{n,T} \|C_{T, b}\|_{L_2(\mathbb{R}^n,L_2(\mathcal{M}))\to L_2(\mathbb{R}^n,L_2(\mathcal{M}))}.
	\end{equation*}
\end{thm}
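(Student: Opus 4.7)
The plan is to bound $\|MO(b;Q)\|_\M$ and $\|MO(b^*;Q)\|_\M$ uniformly in cubes $Q$ by the commutator norm; since $C_{T,b}^* = -C_{T^*,b^*}$ and $T^*$ inherits a non-degenerate kernel whenever $T$ does, it suffices to prove the column estimate $\|MO(b;Q)\|_\M \lesssim \|C_{T,b}\|$. Unpacking the $\M$-norm via the standard action of $\M$ on $L_2(\M)$, this reduces to bounding, uniformly over $\xi\in L_2(\M)$ with $\|\xi\|\le 1$, the quantity
\[
I(Q,\xi):=\Bigl(\tfrac{1}{|Q|}\int_Q\|(b(x)-\langle b\rangle_Q)\xi\|_{L_2(\M)}^2\,dx\Bigr)^{1/2},
\]
where the hypothesis $MO(b;Q)\in\M$ guarantees $\sup_\xi I(Q,\xi)=\|MO(b;Q)\|_\M$.

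\textbf{Key steps.} For each $\xi$, I would pair it with a test direction $\eta\in L_2(\M)$ of norm one and work with the scalar function $h(x):=\langle(b(x)-\langle b\rangle_Q)\xi,\eta\rangle_{L_2(\M)}$; this scalarization is what lets the complex median method be applied. Next, using Definition~\ref{nondege}, the non-degeneracy of $K$ provides a pair of cubes $Q_1,Q_2\subset cQ$ with $|Q_j|\sim|Q|$, $\mathrm{dist}(Q_1,Q_2)\sim\mathrm{diam}(Q)$, on which $|K(x,y)|\gtrsim|Q|^{-1}$ and $\arg K(x,y)$ lies in a sector of aperture less than $\pi/4$, exactly as in the setup of Hyt\"onen \cite{TH3}. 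I then apply Theorem~\ref{divideS} to $h$ on $Q_1$ and on $Q_2$ and, by pigeonholing over the sixteen resulting quadrant-pair combinations, select subsets $E_1\subset Q_1$, $E_2\subset Q_2$ with $|E_j|\gtrsim|Q|$ and a unimodular $\omega$ such that $\mathrm{Re}\bigl(\omega(h(y)-h(x))\bigr)$ is uniformly bounded below by a constant multiple of $\bigl(|Q|^{-1}\int_Q|h|^2\bigr)^{1/2}$ on $E_1\times E_2$, with the phase $\omega$ chosen to align with the kernel sector.

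\textbf{Testing and conclusion.} Setting $f=\bar\omega\,\mathbf{1}_{E_2}\xi$ and $g=\mathbf{1}_{E_1}\eta$, I expand
\[
\langle C_{T,b}f,g\rangle_{L_2(\mathbb{R}^n,L_2(\M))}=\int_{E_1}\int_{E_2}\bar\omega\,K(x,y)\bigl(h(y)-h(x)\bigr)\,dy\,dx,
\]
take real parts, and combine the kernel lower bound $|K|\gtrsim|Q|^{-1}$, the sector control, the median gap, and $|E_j|\gtrsim|Q|$ to get a lower bound of order $|Q|\cdot\bigl(|Q|^{-1}\int_Q|h|^2\bigr)^{1/2}$. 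Since $\|f\|_2\|g\|_2\lesssim|Q|$, dividing yields $\bigl(|Q|^{-1}\int_Q|h|^2\bigr)^{1/2}\lesssim\|C_{T,b}\|$. Sweeping $\eta$ through an orthonormal system approximating the range of $MO(b;Q)\xi$, and invoking $MO(b;Q)\in\M$, upgrades this scalar bound to $I(Q,\xi)\lesssim\|C_{T,b}\|$; taking $\sup_\xi$ gives the column bound, the symmetric argument with $b^*$ and $T^*$ gives the row bound, and $\sup_Q$ yields the claimed $BMO_{cr}$ estimate.

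\textbf{Main obstacle.} The chief difficulty is the interface between the inherently scalar complex median method and the operator-valued target. The scalarization $h=\langle(b-\langle b\rangle_Q)\xi,\eta\rangle$ only senses a one-dimensional slice of the oscillation of $b$ in the $(\xi,\eta)$-plane, so recovering the full $\|MO(b;Q)\|_\M$ requires a careful exhaustion over $\eta$ that remains compatible with the sector-matching in the median step; this is precisely where the standing assumption $MO(b;Q)\in\M$ is indispensable, since it forces the supremum defining the column norm to be attained in the limit against vectors reachable by this exhaustion. Coordinating the phase control of the kernel with the quadrant pair of the median uniformly in $\xi,\eta$ is the technical core and is where the proof mirrors and extends Hyt\"onen's scalar argument in \cite{TH3}.
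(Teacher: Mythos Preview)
Your approach differs from the paper's: the paper never uses the complex median method for Theorem~\ref{nonbound}, but instead establishes an operator-valued weak-factorization (Propositions~\ref{nondecom1} and~\ref{nondecom2}) and then tests $b^*$ against $\M^c$-atoms via the $H_{1,c}$--$BMO_c$ duality (Theorems~\ref{HBMOdual} and~\ref{atomH1}), absorbing a small remainder term. The median method is reserved in the paper for the Schatten-class Theorem~\ref{nonschatten}.

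More importantly, your proposal has a genuine gap at the ``sweeping over $\eta$'' step. What the median argument applied to $h_{\xi,\eta}(x)=\langle (b(x)-\langle b\rangle_Q)\xi,\eta\rangle$ delivers is a uniform bound
\[
\sup_{\|\eta\|\le 1}\ \frac{1}{|Q|}\int_Q\bigl|\langle (b(x)-\langle b\rangle_Q)\xi,\eta\rangle\bigr|^2\,dx\ \lesssim\ \|C_{T,b}\|^2,
\]
whereas the column quantity you need is
\[
I(Q,\xi)^2=\frac{1}{|Q|}\int_Q\|(b(x)-\langle b\rangle_Q)\xi\|_{L_2(\M)}^2\,dx=\frac{1}{|Q|}\int_Q\sum_j\bigl|\langle (b(x)-\langle b\rangle_Q)\xi,e_j\rangle\bigr|^2\,dx.
\]
The supremum over $\eta$ cannot be exchanged with the integral in $x$: if $v(x)=(b(x)-\langle b\rangle_Q)\xi$ takes values in many nearly orthogonal directions as $x$ ranges over $Q$, then $\int_Q\|v\|^2$ can be arbitrarily larger than $\sup_\eta\int_Q|\langle v,\eta\rangle|^2$. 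In other words, your argument proves only that each scalar function $x\mapsto\langle b(x)\xi,\eta\rangle$ lies in $BMO(\mathbb{R}^n)$ uniformly in $\xi,\eta$; this is the weak (matrix-entry) BMO condition, strictly weaker than $BMO_c(\mathbb{R}^n,\M)$. The sentence ``sweeping $\eta$ through an orthonormal system approximating the range of $MO(b;Q)\xi$'' does not repair this, since $MO(b;Q)\xi$ is a single vector and $|\langle MO(b;Q)\xi,e_j\rangle|^2$ bears no direct relation to $|Q|^{-1}\int_Q|h_{\xi,e_j}|^2$; the hypothesis $MO(b;Q)\in\M$ does not help here either. The paper's weak-factorization route sidesteps this by testing against genuinely operator-valued atoms $f\in L^0_{1,Q}(\M,L_2^c(\mathbb{R}^n))$ rather than rank-one tensors $\mathbf{1}_E\xi$, so the full column oscillation is captured at once.
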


We would like to remark that when $\mathcal{M}=\mathbb{C}$, the converse to Theorem \ref{thm1.8}, which is the scalar case of Theorem \ref{nonbound}, seems to be much subtler. Coifman, Rochberg and Weiss obtained a partial result of the ``only if'' part just for Riesz transforms in \cite{CRW}. In \cite{Uchi}, Uchiyama generalized this result and obtained the ``only if'' part for any Calder\'{o}n-Zygmund transforms with non-constant kernel $\phi$ satisfying the smooth estimate:
\begin{equation}\label{smooth}
	|\phi(x)-\phi(y)|\leq |x-y|, \quad  \forall\ |x|=|y|=1.
	\end{equation}
Recently, Hyt\"{o}nen further extended it to general non-degenerate singular integral operators in \cite{TH3}. We refer to \cite{TH3} for more details on the converse. In particular, our Theorem \ref{nonbound} has been shown in \cite{TH3} by Hyt\"{o}nen when $\mathcal{M}=\mathbb{C}$.

Our proof of Theorem \ref{nonbound} is similar to that in \cite{TH3}, and we generalize the weak-factorization type decomposition to the semicommutative setting following a similar argument as in \cite{TH3}. However, as we now deal with the semicommutative case, some noncommutativity issues naturally arise. 

\

A summary of the main techniques and the contents seems to be in order. Section \ref{pre2} is devoted to notation and background, such as noncommutative $L_p$-spaces, noncommutative martingales, martingale Besov spaces, operator-valued $BMO$ spaces and Hardy spaces. We also define the semicommutative martingale paraproducts associated with semicommutative $d$-adic martingales by virtue of operator-valued Haar multipliers.

In Section \ref{Proof of the Necessity of theorem 1.2}, we aim to prove Theorem \ref{thm1.2}. We will prove a stronger result, namely Lemma \ref{nonNWOpre}, to show the sufficiency of Theorem \ref{thm1.2}. Our proof is also different from Pott and Smith's one \cite{PS}. Then we follow the pattern setup in \cite{PS} and \cite{P} to show the necessity of Theorem \ref{thm1.2}. In Section \ref{Application 1:the CAR case} and Section \ref{Application 2}, we will show Theorem \ref{thm6.1} and Theorem \ref{thm7.1} respectively by the transference method with the help of Theorem \ref{thm1.2}. To that end, we transfer purely noncommutative martingale paraproducts to semicommutative martingale paraproducts, which enables us to apply Theorem \ref{thm1.2}.

 In Section \ref{Application 3} and Section \ref{pthm1.8}, by virtue of Hyt\"{o}nen's dyadic representation for general singular integral operators, we show Theorem \ref{thm6.4} and Theorem \ref{thm1.8} respectively. Our approach to Theorem \ref{thm6.4} differs from all the previous ones in similar situations as it is the first time that Hyt\"{o}nen's dyadic representation is utilized to estimate Schatten $p$-norm. In addition, we also need to consider Schatten class and boundedness of commutators involving martingale paraproducts, which are of independent interest (see Proposition \ref{T0est} and Proposition \ref{T2est} for more details). 

 In Section \ref{proofdivide}, we invent the complex median method.   As is well-known, the real median method turns out to be very powerful to deal with the lower bound of the boundedness and Schatten class of commutators (see \cite{LOR,TH3,FLL2023,DGKLWY2021}). However, all previous investigations of Lerner's real median method into lower bounds of commutators $[T,M_b]$ require that the kernel of $T$ and $b$ be real-valued functions. With the help of our new complex median method,  we treat complex-valued kernels. This constitutes perhaps one of the most important ideas of this article. We will also use this complex median method to investigate Schatten class of commutators on spaces of homogeneous type, which is about to appear in a subsequent paper \cite{FWZ2024}.

Then we first show the scalar case of Theorem \ref{nonschatten}, i.e. Theorem \ref{Converse}, in Section \ref{schattenconv} by the complex median method. In Section \ref{noncomschattenconv}, following the idea of the proof of Theorem \ref{Converse} and duality, we show Theorem \ref{nonschatten} by virtue of a semicommutative variant of Rochberg and Semmes' results \cite{RSe}. At the end, we show Theorem \ref{nonbound} in Section \ref{noncomboundconv} by a weak-factorization type decomposition. We mainly use the argument from \cite{TH3}.

Throughout this paper, we will use the following notation: $A\lesssim B$ (resp. $A\lesssim_\varepsilon B$) means that $A\le CB$ (resp. $A\le C_\varepsilon B$) for some absolute positive constant $C$ (resp. a positive constant $C_\varepsilon$ depending only on $\varepsilon$). $A\approx B$ or $A\approx_\varepsilon B$ means that these inequalities as well as their inverses hold.  Denote by $e_{i,j}$ the matrix which has $1$ in the $(i, j)$-th position as its only nonzero entry.

\bigskip

\section{Preliminaries}\label{pre2}

In this section, we provide notation and background that will be used in this paper.

\subsection{Noncommutative $L_p$-spaces} Let $\mathcal{M}$ be a von Neumann algebra equipped with a normal semifinite faithful trace $\tau$. Denote by $\M_+$ the positive part of $\M$. Let $\mathcal{S}_+(\M)$ be the set of all $x\in \M_+$ whose support projection has a finite trace, and $\mathcal{S}(\mathcal{M})$ be the linear span of $\mathcal{S}_+(\mathcal{M})$. Then $\mathcal{S}(\mathcal{M})$ is a $w^*$-dense $*$-subalgebra of $\mathcal{M}$. Let $x\in\mathcal{S}(\mathcal{M})$, then $|x|^p\in\mathcal{S}(\mathcal{M})$ for any $0<p<\infty$, where $|x|:=(x^*x)^{1/2}$. Define
\[\|x\|_p=(\tau(|x|^p))^{1/p}.\]
Thus $\|\cdot\|_p$ is a norm for $p\ge 1$, and a $p$-norm for $0<p<1$. The noncommutative $L_p$-space associated with $(\mathcal{M},\tau)$ is the completion of $(\mathcal{S}(\M),\|\cdot\|_p)$ for $0<p<\8$  denoted by $L_p(\mathcal{M},\tau)$. Let $L_0(\M, \tau)$ be the family of all measurable operators with respect to $(\M, \tau)$. We also write $L_p(\mathcal{M},\tau)$ simply by $L_p(\mathcal{M})$ for short. When $p=\8$, we set $L_\infty(\mathcal{M}):=\mathcal{M}$ equipped with the operator norm. 

In particular, when $p=2$, $L_2(\M)$ is a Hilbert space. We will view $\mathcal{M}$ as a von Neumann algebra on $L_2(\mathcal{M})$ by left multiplication, namely  $\M \hookrightarrow B(L_2(\M))$ via the embedding $x\mapsto L_x\in B(L_2(\M))$, where $x\in \M$ and $L_x(y):= x\cdot y\in L_2(\M)$ for any $y\in L_2(\M)$. Hence in this way, $\mathcal{M}$ is in its standard form. It is well-known that for $1\leq p<\8$ and $p'=\frac{p}{p-1}$
$$  \big(L_p(\M)\big)^*=L_{p'}(\M).  $$
We refer the reader to \cite{PX} for a detailed exposition of noncommutative $L_p$-spaces.

If $\mathcal{H}$ is a Hilbert space and $\mathcal{M}=B(\mathcal{H})$ equipped with the usual trace $\mathrm{Tr}$, then $L_p(\mathcal{M})$ is the Schatten $p$-class on $\mathcal{H}$ and denoted by  $S_p(\mathcal{H})$. Denote by $\eta_1\otimes\eta_2$ the rank $1$ operator on $\mathcal{H}$ given by
\[\eta_1\otimes\eta_2(\eta)= \eta_1\la \eta_2, \eta\ra, \quad \forall\eta\in\mathcal{H},\]
where $\eta_1$ and $\eta_2$ are two vectors in $\mathcal{H}$.
Then $\eta_1\otimes\eta_2\in \mathcal{S}(B(\mathcal{H}))$, and for any $0<p\leq\8$
$$  \|\eta_1\otimes\eta_2\|_{S_p(\mathcal{H})}=\|\eta_1\|_\mathcal{H}\|\eta_2\|_\mathcal{H}.  $$

\

Now we present the tensor product of von Neumann algebras. Assume that each $\mathcal{M}_k$ $(k=1, 2)$ is equipped with a normal semifinite faithful trace $\tau_k$. Then the tensor product of $\M_1$ and $\M_2$ denoted by $\mathcal{M}_1 \otimes \mathcal{M}_2$ is the $w^*$-closure of $\text{span}\{x_1\otimes x_2: x_1\in \M_1, x_2\in \M_2\}$ in $B(L_2(\M_1)\otimes L_2(\M_2))$. Here $L_2(\M_1)\otimes L_2(\M_2)$ is the Hilbert space tensor product of $L_2(\M_1)$ and $ L_2(\M_2)$.

It is well-known that there exists a unique normal semifinite faithful trace $\tau$ on the von Neumann algebra tensor product $\mathcal{M}_1 \otimes \mathcal{M}_2$ such that
$$
\tau\left(x_1 \otimes x_2\right)=\tau_1\left(x_1\right) \tau_2\left(x_2\right), \quad \forall x_1 \in \mathcal{S}(\M_1), \forall x_2 \in \mathcal{S}(\M_2) .
$$
$\tau$ is called the tensor product of $\tau_1$ and $\tau_2$ and denoted by $\tau_1 \otimes \tau_2$. 

Let $\mathbb{M}_d$ be the algebra of $d\times d$ matrices equipped with the usual trace $\mathrm{Tr}$. Denote by $\mathrm{tr}_d:=\frac{1}{d}\mathrm{Tr}$ the normalized trace on $\mathbb{M}_d$. For $k\geq 1$, let
\begin{equation*}
	\big(\mathbb{M}_d^{\otimes k},\text{tr}_d^{\otimes k}\big)=\mathop{\otimes}\limits_{i=1}^k(\mathbb{M}_d,\mathrm{tr}_d)
\end{equation*}
be the tensor products in the sense of von Neumann algebras. We define 	$$\Big(\mathop{\otimes}\limits_{k=1}^{\infty}\mathbb{M}_{d},\mathop{\otimes}\limits_{k=1}^{\infty}\mathrm{tr}_{d}\Big)= \mathop{\otimes}_{i=1}^{\infty}(\mathbb{M}_d,\mathrm{tr}_d)$$
as the inductive limit of $(\mathbb{M}_d^{\otimes k}, \text{tr}_d^{\otimes k})_{k\geq 1}$, also denoted by $\mathop{\otimes}\limits_{k=1}^{\infty}\mathbb{M}_{d}$ for simplicity (see \cite[Lemma 4.5]{HM1} for the inductive limit).

\

In this paper, we are concerned with the von Neumann algebra tensor product of $B(L_2(\mathbb{R}))$ and $\M$, where $B(L_2(\mathbb{R}))$ is endowed with the usual trace $\text{Tr}$, and $\M$ is a semifinite von Neumann algebra equipped with a normal semifinite faithful trace $\tau$.

In the sequel, we will identify any left multiplication $L_x\in B(L_2(\M))$ with $x\in \M$. Then for any $T\in B(L_2(\mathbb{R}))$, $T\otimes L_x \in B(L_2(\mathbb{R}))\otimes \M\hookrightarrow B(L_2(\mathbb{R}))\otimes B(L_2(\M))=B(L_2(\mathbb{R})\otimes L_2(\M))$, and 
$$ \| T\otimes L_x\|_{L_p(B(L_2(\mathbb{R}))\otimes \mathcal{M})}=\|T\|_{S_p(L_2(\mathbb{R}))}\|x\|_{L_p(\mathcal{M})}.$$
In the following, we write $T\otimes L_x$ as $x\cdot T$, and thus
\begin{equation}\label{XT}
	(x\cdot T)(f)=T\otimes L_x(f)=x\cdot T(f),\,\,\,\,\forall f\in L_2(\mathbb{R},L_2(\mathcal{M})).
\end{equation}

\subsection{Noncommutative martingales}\label{sec2.2} This subsection is devoted to noncommutative martingales. The reader is referred to \cite{PX} and \cite{bookXu}. Assume that $\mathcal{M}$ is a von Neumann algebra equipped with a normal faithful semifinite trace $\tau$ and $\mathcal{N}$ is a von Neumann subalgebra of $\mathcal{M}$ such that the restriction of $\tau$ to $\mathcal{N}$ is again semifinite. Then there exists a unique map $\mathcal{E}: \mathcal{M}\to\mathcal{N}$ satisfying the following properties:
\begin{enumerate}
	\item $\mathcal{E}$ is a normal contractive positive projection from $\mathcal{M}$ onto $\mathcal{N}$;
	\item $\mathcal{E}(axb)=a\mathcal{E}(x)b$ for any $x\in\mathcal{M}$ and $a,b\in\mathcal{N}$;
	\item $\tau \circ\mathcal{E}=\tau$.
\end{enumerate}
$\mathcal{E}$ is called the conditional expectation of $\mathcal{M}$ with respect to $\mathcal{N}$. Besides, $\mathcal{E}$ extends to a contractive positive projection from $L_p(\M)$ onto $L_p(\N)$ for any $1\leq p<\8$, still denoted by $\mathcal{E}$.

Recall that a filtration of von Neumann subalgebras of $\mathcal{M}$ is a nondecreasing sequence $(\mathcal{M}_n)_{n\ge 1}$ of von Neumann subalgebras of $\mathcal{M}$ such that $\cup_n\mathcal{M}_n$ is $w^*$-dense in $\mathcal{M}$ and the restriction of $\tau$ to $\mathcal{M}_n$ is also semifinite for every $n$. Let $\mathcal{E}_n$ be the conditional expectation of $\M$ with respect to $\M_n$. A sequence $x=(x_n)\subset L_1(\mathcal{M})$ is called a martingale with respect to $(\mathcal{M}_n)_{n\ge 1}$ if $\mathcal{E}_n(x_{n+1})=x_n$ for every $n\ge 1$. In addition, if $x_n\in L_p(\mathcal{M})$ with $p\ge 1$, $x$ is called an $L_p$-martingale with respect to $(\mathcal{M}_n)_{n\ge 1}$. Denote the martingale differences by $d_nx=x_n-x_{n-1}$ for $n\geq 1$ with the convention $x_0=0$. 

\begin{rem}\label{conmar}
	Let $1<p\leq \8$ and $x=(x_n)$ a noncommutative martingale such that
	$$ \sup_n \|x_n\|_p <\8.  $$
	Then there exists $x_{\infty} \in L_p(\mathcal{M})$ such that $x_n=\mathcal{E}_n\left(x_{\infty}\right)$ for every $n$.
\end{rem}

We are going to introduce two particular noncommutative martingales: the CAR algebras and tensor products of matrix algebras.

 \subsubsection{\textbf{CAR algebra.}}
We consider the following Pauli matrices:
\begin{equation*}
	\sigma_0=\begin{pmatrix}
		1&0\\0&-1
	\end{pmatrix},\quad
	\sigma_1=\begin{pmatrix}
		0&1\\1&0
	\end{pmatrix},\quad
	\sigma_2=\begin{pmatrix}
		0&-i\\i&0
	\end{pmatrix}.
\end{equation*}
For $n\ge 1$, define
\[c_{2n-1}=\sigma_0\otimes\cdots\sigma_0\otimes \sigma_1\otimes 1\otimes 1\cdots,\quad c_{2n}=\sigma_0\otimes\cdots\sigma_0\otimes \sigma_2\otimes 1\otimes 1\cdots,\]
where $\sigma_1$ and $\sigma_2$ occur in the $n$-th position. Then $(c_n)_{n\geq 1}$ are selfadjoint unitary operators and satisfy the following canonical anticommutation relations (CAR):
\begin{equation}\label{equa4.1}
	c_jc_k+c_kc_j=2\delta_{jk},\quad j,k\ge 1.
\end{equation}
The CAR algebra (Clifford algebra) denoted by $\mathcal{C}$ is the von Neumann algebra generated by $(c_n)_{n\ge 1}$. Let us give more details.

Let $\mathcal{I}$ denote the family of all finite subsets of $\mathbb{N}$. For a nonempty $A\in\mathcal{I}$, we arrange the integers of $A$ in an increasing order and write $A=\{k_1<k_2<\cdots<k_n\}$. Define $\max(A)=k_n$ and 
\[c_A=c_{k_1}c_{k_2}\cdots c_{k_n}.\]
If $A=\emptyset$, we set $\max(\emptyset)=1$ and $c_A=1$. Then $c_A$ is unitary for any $A\in\mathcal{I}$. If $A$ is a singleton $\{k\}$, we still use $c_k$ instead of $c_{\{k\}}$. Let $\mathcal{C}_0$ be the family of all finite linear combinations of $(c_A)_{A\in \mathcal{I}}$. Then $\mathcal{C}_0$ is an involutive algebra. Define $\tau$ to be the linear functional on $\mathcal{C}_0$ given by
\begin{equation}\label{tau}
	\tau(x)= \alpha_{\emptyset}
\end{equation} 
for $x=\sum\limits_{A\in \mathcal{I}} \alpha_A c_A$. One can check that $\tau$ is a positive faithful tracial state on $\mathcal{C}_0$. Then the CAR algebra $\mathcal{C}$ is the von Neumann algebra of the GNS representation of $\tau$. Note that $(c_A)_{A\in \mathcal{I}}$ is an orthonormal basis of $L_2(\mathcal{C})$. We refer the reader to \cite{Se} and \cite{PR} for more information on CAR algebra.

Let $\mathcal{C}_n$ be the von Neumann subalgebra generated by $\{c_A: \max(A)\leq n\}$ for any $n\geq 1$. It is clear that $\mathcal{C}_n$ is of dimension $2^n$, and $(\mathcal{C}_n)_{n\geq 1}$ is a filtration of $\mathcal{C}$. Then for any $b\in L_p(\mathcal{C})$ ($1\leq p\leq \8$),
$$  d_nb= \sum_{\max(A)=n}\hat{b}(A)c_A, \quad \forall \ n\geq 1, $$
where $\hat{b}(A)=\tau(c_A^*\cdot b)$.

\subsubsection{\textbf{Tensor product of matrix algebras.}} 
Let $\mathscr{M}_n=\mathbb{M}_d^{\otimes n}$ be endowed with the normalized trace $\text{tr}_d^{\otimes n}$. We embed $\mathscr{M}_n$ into $\mathscr{M}$ via the map $x \in \mathscr{M}_n\longmapsto x\otimes 1\otimes 1\otimes \cdots \in \mathscr{M}$.
Then $(\mathscr{M}_n)_{n\ge 1}$ is a natural filtration of $\mathscr{M}$. We will give an orthonormal basis of $L_2(\mathscr{M})$ in Section \ref{Application 2}.

\

\subsection{$d$-adic martingales and martingale Besov spaces}
In this subsection, we introduce $d$-adic martingales and martingale Besov spaces.
\subsubsection{\textbf{$d$-adic commutative martingales}}

Let $d\ge 2$ be a fixed integer. We are particularly interested in $d$-adic commutative martingales since it is closely related to dyadic martingales on Euclidean spaces. In this subsection, we give a general definition of $d$-adic commutative martingales. Afterwards we will present an orthonormal basis of Haar wavelets for $d$-adic commutative martingales, which will be used to represent martingale paraproducts and to define martingale Besov spaces for semicommutative $d$-adic martingales (to be defined in Section \ref{sec2.3}).

Let $\Omega$ be a measure space endowed with a $\sigma$-finite measure $\mu$. Assume that in $\Omega$, there exists a family of measurable sets $I_{n,k}$ for $n, k\in \mathbb{Z}$ satisfying the following properties:
\begin{enumerate}
	\item  $I_{n,k}$  are pairwise disjoint for any $k$ if $n$ is fixed;
	\item $\cup_{k\in \mathbb{Z}}I_{n,k}=\Omega$ for every $n$;
	\item $I_{n,k}=\cup_{q=1}^d I_{n+1,kd+q-1}$ for any $n, k$, so each $I_{n,k}$ is a union of $d$ disjoint subsets $I_{n+1,kd+q-1}$;
	\item $\mu(I_{n,k})=d^{-n}$ for any $n, k$.
\end{enumerate}
Then $I_{n,k}$ are called $d$-adic intervals, and let $\mathcal{ D}$ be the family of all such $d$-adic intervals. For $I\in \mathcal{D}$, let $\tilde{I}$ be the parent interval of $I$, and $I{(j)}$ the $j$-th subinterval of $I$, namely
\[(I_{n,k}){(j)}=I_{n+1,kd+j-1},\quad \forall n,k\in \mathbb{Z},1\le j\le d.\] 
Denote by $\mathcal{D}_n$ the collection of $d$-adic intervals of length $d^{-n}$ in $\mathcal{D}$. Given $I\in \mathcal{D}$, let $\mathcal{D}(I)$ be the collection of $d$-adic intervals contained in $I$, and $\mathcal{D}_n(I)$ the intersection of $\mathcal{D}_n$ and $\mathcal{D}(I)$.
For each $n\in \mathbb{Z}$, denote by $\mathcal{F}_n$ the $\sigma$-algebra generated by the $d$-adic intervals $I_{n,k}$, $\forall k\in \mathbb{Z}$. Denote by $\mathcal{F}$ the $\sigma$-algebra generated by all $d$-adic intervals for all $I_{n,k}$, $\forall n,k\in\mathbb{Z}$. 

Then $(\mathcal{F}_n)_{n\in \mathbb{Z}}$ is a filtration associated with the measure space $(\Omega, \mathcal{F}, \mu)$. Denote by $L^{\rm{loc}}_1(\Omega)$ the family of all locally integrable functions $g$ on $\Omega$, that is, $g\in L_1(I_{n, k})$ for all $n, k\in \mathbb{Z}$. For a locally integrable function $g\in L^{\rm{loc}}_1(\Omega)$, the sequence $(g_n)_{n\in\mathbb{Z}}$ is called a $d$-adic martingale, where
$$g_n= \mathbb{E}(g|\mathcal{F}_n)=\sum_{k=-\8}^{\8}\frac{\mathbbm{1}_{I_{n,k}}}{\mu(I_{n,k})}\int_{I_{n,k}}g\ d\mu.  $$
The martingale differences are defined as $d_n g=g_n-g_{n-1}$ for any $n\in\mathbb{Z}$. We also denote $g_n$ by $\mathbb{E}_n(g)$ ($n\in\mathbb{Z}$) as usual.

\begin{definition}\label{haar}
	Let $\omega=e^{\frac{2\pi \mathrm{i}}{d}}$ (here $\mathrm{i} $ is the imaginary number). For any $I=I_{n,k}\in \mathcal{D}$, define
	\[h_I^i=d^{n/2}\sum_{j=0}^{d-1}  \omega^{i{(j+1)}}\mathbbm{1}_{I_{n+1,kd+j}}, \quad \forall \ 1\leq i\leq d-1,\]
	and $h_I^0=d^{n/2}\mathbbm{1}_I$.
\end{definition}
Then $\{h_I^i\}_{I\in\mathcal{D},1\leq i\leq d-1}$ is an orthonormal basis on $L_2(\Omega)$ because $\forall g\in L_2(\Omega)$
\begin{equation}
	g=\sum_{k=-\8}^{\8} d_kg=\sum_{k=-\8}^{\8} \biggl(\sum_{|I|=d^{-k+1}}\sum_{i=1}^{d-1}h_I^i\langle h_I^i,g\rangle\biggr).
\end{equation}   
We call $\{h_I^i\}_{I\in\mathcal{D},1\leq i\leq d-1}$  the system of Haar wavelets. Note that for any $1\leq i, j\leq d-1$,
\begin{equation}\label{formula2.2}
	h_I^i\cdot h_I^j=\mu(I)^{-1/2} h_I^{\overline{i+j}},
\end{equation}
where $\overline{i+j}$ is the remainder in $[1, d]$ modulo $d$. The equality (\ref{formula2.2}) is vital in our proofs of Theorem \ref{thm1.2} and Lemma \ref{TLambdab}.

\begin{example}\label{example1.4.3}
	A natural example of $d$-adic martingales is where $\Omega=\mathbb{R}$, $\mu=m$ and $I_{n,k}$ are defined as follows
	\[ I_{n,k}=[kd^{-n},(k+1)d^{-n}), \quad \forall n,k\in\mathbb{Z}.\]
\end{example}
\begin{example}	
	For $d=2^n$ and $\Omega=\mathbb{R}^n$, define
	\[ \mathcal{ D}_k=\{2^{-k}([0,1)^n+q):q\in\mathbb{Z}^n\}, \quad \forall k\in \mathbb{Z}.\]
	Then $\mathcal{D}=\{2^{-k}([0,1)^n+q):k\in\mathbb{Z},q\in\mathbb{Z}^n\} $ is the family of all $2^n$-adic intervals. Indeed, this is the dyadic filtration on $\mathbb{R}^n$.
\end{example}
\noindent$\mathit{\textbf{Convention}}$. In the sequel, for simplicity of notation, we will always assume that $\Omega=\mathbb{R}$ as this does not change the $d$-adic martingale structure. Denote also by $|I|$ the length $m(I)$ of $I\in\mathcal{ D}$. 

\

We define the $d$-adic martingale $BMO$ space as follows:
\begin{defn}
	The martingale $BMO$ space of $d$-adic martingale denoted by $BMO^d(\mathbb{R})$ is the space of all locally integral functions $b$ such that
	\begin{equation}\label{BMOd}
		\|b\|_{BMO^d(\mathbb{R})}= \sup _{n\in\mathbb{Z}}\ \bigg\|\mathbb{E}_n\bigg(\sum_{k=n+1}^{\8}|d_k b|^2\bigg)\bigg\|_\8^{1/2} <\infty. 
	\end{equation} 
\end{defn}

For $h\in L_1^{\rm{loc}}(\mathbb{R})$, we define the $d$-adic martingale square function
\begin{equation*}
	S(h)=\biggl(\sum_{k\in\mathbb{Z}} |d_kh|^2\biggr)^{1/2}.
\end{equation*}

\begin{defn}
	The $d$-adic martingale Hardy space is defined by
	\begin{equation}\label{defnh1d}
		H^d_1(\mathbb{R})=\{h\in L_1(\mathbb{R}):\|h\|_{H^d_1(\mathbb{R})}=\|S(h)\|_{L_1(\mathbb{R})}<\infty\}.
	\end{equation}
\end{defn}
It is well-known that $(H_1^d(\mathbb{R}))^*=BMO^d(\mathbb{R})$. We refer the reader to \cite{GA} for more details on martingale Hardy spaces.

\subsubsection{\textbf{Semicommutative $d$-adic martingales and martingale Besov spaces}}\label{sec2.3}

In this subsection, we are concerned with semicommutative $d$-adic martingales and martingale Besov spaces. Firstly, we introduce the definition of semicommutative $d$-adic martingales. Then we give the definitions of martingale Besov spaces for semicommutative $d$-adic martingales, CAR algebra and $\mathscr{M}=\mathop{\otimes}\limits_{k=1}^{\infty}\mathbb{M}_{d}$.

We define the semicommutative $d$-adic martingales in the same way as in the commutative setting. Similarly, denote by $L^{\rm{loc}}_1({\mathbb{R}}, L_1(\M))$ the family of all $f$ such that $\mathbbm{1}_{I_{n, k}}\cdot f \in L_1({\mathbb{R}}, L_1(\M))$ for any $n, k\in\mathbb{Z}$. Then $\forall f\in L_1^{\rm{loc}}(\mathbb{R}, L_1(\M))$, the sequence $(f_n)_{n\in\mathbb{Z}}$ is called a semicommutative $d$-adic martingale, where
\begin{equation}\label{fn}
	f_n= \mathbb{E}(f|\mathcal{F}_n)=\sum_{k=-\8}^{\8}\frac{\mathbbm{1}_{I_{n,k}}}{\mu(I_{n,k})}\int_{I_{n,k}}f\ d\mu.  
\end{equation}
For any $f\in L_1(\mathbb{R},L_1(\mathcal{M}))$ and $g\in L_\8(\mathbb{R})$, define
\[\langle g, f\rangle= \int_{\mathbb{R}}\overline{g} \cdot f \ dm. \]
One easily checks that $\la g, f\ra \in L_1(\M)$. By a slight abuse of notation, we use the same notation $\langle$$\cdot$,$\cdot$$\rangle$ to denote the inner product in any given Hilbert space. Besides, by \eqref{fn}, the martingale differences are given by 
$$ d_n f=\sum_{|I|=d^{-n+1}}\sum_{i=1}^{d-1}h_I^i\otimes\langle h_I^i,f\rangle, \quad \forall f\in L_1^{\rm{loc}}(\mathbb{R}, L_1(\M)) \ \text{and} \ n\in \mathbb{Z}. $$
Note that $\langle h_I^i,f\rangle\in L_1(\M)$. Later, we will give a more general definition of martingale differences.

\begin{rem}
	By Remark \ref{conmar}, if $f\in L_p(\mathbb{R}, L_p(\M))$ for $1<p\leq \8$, then
	$$   \sum_{n=-k}^k d_nf \longrightarrow f  \quad \text{as} \quad k\rightarrow\8  $$
	in $L_p(\mathbb{R}, L_p(\M))$ (in $w^*$-topology for $p=\8$).
\end{rem}

We will utilize $h_I^i$ to give a direct representation of $\pi_b$, which is easier to handle. It is well-known that $L_2(\mathbb{R},L_2(\mathcal{M}))=L_2(\mathbb{R})\otimes L_2(\mathcal{M})$. In the sequel, for any $f\in L_2(\mathbb{R})$ and $x\in L_2(\mathcal{M})$, we use ``$x\cdot f$ ''  (or ``$f\cdot x$ '') to denote $f\otimes x\in L_2(\mathbb{R},L_2(\mathcal{M}))$ for sake of simplicity.

Now we calculate $\pi_b$. Let $b\in  L_1^{\rm{loc}}(\mathbb{R}, L_1(\M))$. For $f\in L_2(\mathbb{R},L_2(\mathcal{M}))$, we have 
\begin{equation}\label{calcu}
	\begin{aligned}
		\pi_b(f){}&=\sum_{k=-\infty}^{\infty}d_kb\cdot f_{k-1}\\&=\sum_{k=-\infty}^{\infty}\biggl(\sum_{|I|=d^{-k+1}}\sum_{i=1}^{d-1}h_I^i\otimes\langle h_I^i,b\rangle\biggr)\biggl(\sum_{|I|=d^{-k+1}}\mathbbm{1}_I\otimes\biggl\langle \frac{\mathbbm{1}_I}{|I|},f\biggr\rangle\biggr)\\&=\sum_{I\in \mathcal{D}}\sum_{i=1}^{d-1}h^i_I\otimes\langle h_I^i,b\rangle \biggl\langle \frac{\mathbbm{1}_I}{|I|},f\biggr\rangle,
	\end{aligned}
\end{equation}
which, by (\ref{XT}), can be rewritten as
\begin{equation}\label{pib}
	\begin{aligned}
		\pi_b(f)=\sum_{I\in \mathcal{D}}\sum_{i=1}^{d-1}h^i_I\langle h_I^i,b\rangle \biggl\langle \frac{\mathbbm{1}_I}{|I|},f\biggr\rangle.
	\end{aligned}
\end{equation}
The adjoint operator of $\pi_b$ is given by $\forall f\in L_2(\mathbb{R},L_2(\mathcal{M}))$
\begin{equation}\label{pistar}
	\begin{aligned}
		\pi_b^*(f)&=\sum_{k\in\mathbb{Z}} \mathbb{E}_{k-1}(d_k b^* d_kf) \\
		&=\sum_{I\in \mathcal{D}}\sum_{i=1}^{d-1}\frac{\mathbbm{1}_I}{|I|}\langle h_I^i, b\rangle^* \langle h_I^i,f\rangle\\
		&=\sum_{I\in \mathcal{D}}\sum_{i=1}^{d-1}\frac{\mathbbm{1}_I}{|I|}\langle b,h_I^i\rangle \langle h_I^i,f\rangle. 
	\end{aligned}
\end{equation}

From \eqref{pib}, we see that the martingale paraproduct $\pi_b$ with symbol $b$ is induced by the operator-valued Haar multiplier $(b_I^i)_{I\in \mathcal{D}, 1\leq i\leq d-1}$ where
$$  b_I^i= \langle h_I^i,b\rangle. $$
Hence, in general, we define $\pi_b$ in the following way: 
\begin{definition}
	For any operator-valued Haar multiplier $b=(b_I^i)_{I\in \mathcal{D}, 1\leq i\leq d-1}\subset L_0(\M)$, $\pi_b$ with symbol $b$ is defined as follows
	\begin{equation}\label{defn1.7.4}
		\begin{aligned}
			\pi_{b}(f)=\sum_{I\in \mathcal{D}}\sum_{i=1}^{d-1}h^i_I b_I^i \biggl\langle \frac{\mathbbm{1}_I}{|I|},f\biggr\rangle, \quad \forall f\in L_2(\mathbb{R},L_2(\mathcal{M})).
		\end{aligned}
	\end{equation}
	Besides, the corresponding sequence of martingale differences $(d_n b)_{n\in\mathbb{Z}}$ with symbol $b$ is given by
	\begin{equation}\label{dnb}
		d_n b= \sum_{|I|=d^{-n+1}}\sum_{i=1}^{d-1}h_I^i \cdot b_I^i, \quad \forall n\in \mathbb{Z}. 
	\end{equation}
\end{definition}
\begin{rem}
	In \eqref{defn1.7.4}, if all but finitely many $b_I^i$ are $0$, then $\pi_b$ is densely defined.
\end{rem}
Therefore, each operator-valued Haar multiplier in $L_0(\M)$ corresponds to a sequence of martingale differences and vice versa. In the sequel, $\pi_b$ is defined as in \eqref{defn1.7.4}, and for consistency of notation, we identify $b_I^i$ and $\langle h_I^i,b\rangle$ by a slight abuse of notation.

As mentioned before, we use Haar wavelets to define martingale Besov spaces for semicommutative $d$-adic martingales.
\begin{definition}\label{mbs1}
	The martingale Besov space $\pmb{B}_p^d(\mathbb{R},\M)$ $(0<p<\8)$ of semicommutative $d$-adic martingales is the space of all operator-valued Haar multipliers $b=(\langle h_I^i,b\rangle)_{I\in \mathcal{D}, 1\leq i\leq d-1}\subset L_0(\M)$ such that
	\begin{equation}\label{e1.2}
		\|b\|_{\pmb{B}_p^d(\mathbb{R},\M)}=\biggl(\sum_{I\in \mathcal{D}}\sum_{i=1}^{d-1}\biggl(\frac{\|\langle h_I^i,b\rangle\|_{L_p{(\mathcal{M})}}}{|I|^{1/2}}\biggr)^p\biggr)^{1/p}<\infty.
	\end{equation}
\end{definition}

	\begin{rem}
	In particular, if $\M=\mathbb{C}$, the martingale Besov space $\pmb{B}_p^d(\mathbb{R},\mathbb{C})$ is as same as that in \cite{CP}. 
\end{rem}

\begin{proposition}\label{prop2.100}
	When $1\leq p<\8$, $\pmb{B}_p^d(\mathbb{R},\M)$ is a Banach space. When $0<p<1$, $\pmb{B}_p^d(\mathbb{R},\M)$ is a quasi-Banach space.
\end{proposition}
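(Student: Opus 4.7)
The plan is to identify $\pmb{B}_p^d(\mathbb{R},\M)$ isometrically with a vector-valued $\ell_p$-direct sum of copies of $L_p(\M)$, and then deduce everything from the corresponding property of $L_p(\M)$. Specifically, since the elements of $\pmb{B}_p^d(\mathbb{R},\M)$ are by definition Haar multipliers $b=(\langle h_I^i,b\rangle)_{I,i}\subset L_0(\M)$, the assignment
$$b\;\longmapsto\;\Bigl(|I|^{-1/2}\langle h_I^i,b\rangle\Bigr)_{I\in\mathcal{D},\,1\le i\le d-1}$$
is a bijection from $\pmb{B}_p^d(\mathbb{R},\M)$ onto the subspace of $\prod_{I,i}L_p(\M)$ consisting of families whose $\ell_p$-sum of $L_p(\M)$-norms is finite, and by \eqref{e1.2} this bijection is isometric when the target is equipped with its natural $\ell_p$-(quasi-)norm.

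Granting this identification, the (quasi-)norm axioms follow instantly. Homogeneity and non-negativity are inherited coordinatewise from $L_p(\M)$. The triangle inequality for $1\le p<\infty$, respectively the $p$-triangle inequality $\|b+b'\|_{\pmb{B}_p^d(\mathbb{R},\M)}^p\le \|b\|_{\pmb{B}_p^d(\mathbb{R},\M)}^p+\|b'\|_{\pmb{B}_p^d(\mathbb{R},\M)}^p$ for $0<p<1$, follows by combining the corresponding (quasi-)triangle inequality in each $L_p(\M)$ with Minkowski's inequality in scalar $\ell_p$ (resp.\ the scalar $p$-subadditivity $(a+b)^p\le a^p+b^p$ on $[0,\infty)$). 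Definiteness is immediate: if $\|b\|_{\pmb{B}_p^d(\mathbb{R},\M)}=0$, then every coefficient $\langle h_I^i,b\rangle$ vanishes in $L_p(\M)$, so $b$ is the zero Haar multiplier.

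For completeness, let $(b^{(k)})_{k\ge 1}$ be a Cauchy sequence in $\pmb{B}_p^d(\mathbb{R},\M)$. For each fixed pair $(I,i)$, the estimate
$$\frac{\|\langle h_I^i, b^{(k)}-b^{(\ell)}\rangle\|_{L_p(\M)}}{|I|^{1/2}}\;\le\;\|b^{(k)}-b^{(\ell)}\|_{\pmb{B}_p^d(\mathbb{R},\M)}$$
(taking $p$-th roots if $p<1$) shows that $(\langle h_I^i,b^{(k)}\rangle)_{k\ge 1}$ is Cauchy in $L_p(\M)$, hence converges to some $\beta_I^i\in L_p(\M)$ by completeness of $L_p(\M)$. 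Let $b$ be the Haar multiplier with coefficients $\langle h_I^i,b\rangle:=\beta_I^i$. A standard Fatou-type argument, applied to the partial sums over any finite subset $F\subset \mathcal{D}\times\{1,\ldots,d-1\}$, yields
$$\sum_{(I,i)\in F}\left(\frac{\|\beta_I^i-\langle h_I^i,b^{(k)}\rangle\|_{L_p(\M)}}{|I|^{1/2}}\right)^p\;\le\;\liminf_{\ell\to\infty}\|b^{(\ell)}-b^{(k)}\|_{\pmb{B}_p^d(\mathbb{R},\M)}^p,$$
and letting $F\uparrow \mathcal{D}\times\{1,\ldots,d-1\}$ and $k\to\infty$ gives both $b\in\pmb{B}_p^d(\mathbb{R},\M)$ and $b^{(k)}\to b$ in $\pmb{B}_p^d(\mathbb{R},\M)$.

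No essential obstacle is expected: the proposition is, after the above identification, exactly the classical fact that the $\ell_p$-direct sum of a family of (quasi-)Banach spaces is again a (quasi-)Banach space. The only point requiring any care is that the space has been defined intrinsically in terms of abstract Haar multipliers, so one must verify that the coefficient map is genuinely a bijection onto the weighted $\ell_p$ target; but this is built into Definition~\ref{mbs1} itself.
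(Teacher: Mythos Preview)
Your proof is correct and takes essentially the same approach as the paper: both identify $\pmb{B}_p^d(\mathbb{R},\M)$ isometrically with a weighted $\ell_p$-sum of copies of $L_p(\M)$ (the paper phrases this as the Bochner space $L_p(\mathcal{D},L_p(\M))$ for a suitable discrete measure on $\mathcal{D}$). The paper simply invokes this identification and stops, whereas you spell out the norm axioms and the completeness argument; the underlying idea is the same.
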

\begin{proof}
	We just consider $1\leq p<\8$ and $d=2$ for simplicity as it is similar for $0<p<1$ and $d>2$. Note that $ \mathcal{D}$ is a denumerable set. Let $\mathcal{P}( \mathcal{D})$ be the power set of $ \mathcal{D}$. Consider the following function from all singletons of $\mathcal{P}( \mathcal{D})$ to $[0, \8)$:
	$$  \mu(\{I\})=\dfrac{1}{|I|^{p/2}}, \ \forall \ I\in  \mathcal{D}. $$
	Then $\mu$ can be extended to be a measure on $\mathcal{P}( \mathcal{D})$, and $(\mathcal{ D}, \mathcal{P}( \mathcal{D}), \mu)$ is a measure space. Hence ${\pmb{B}_p^d(\mathbb{R},\M)}$ is equivalent to the Banach space $L_p( \mathcal{ D}, L_p(\M))$ consisting of all $L_p(\M)$-valued Bochner $L_p$-integrable functions.
\end{proof}
\begin{proposition}
	 The subspace $\mathcal{S}(L_\8(\mathbb{R})\otimes \M) \cap \pmb{B}_p^d(\mathbb{R},\M)$ is dense in $\pmb{B}_p^d(\mathbb{R},\M)$ for $0<p<\8$.
\end{proposition}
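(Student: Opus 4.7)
The plan is to combine two routine density facts with the identification from Proposition~\ref{prop2.100}, which represents $\pmb{B}_p^d(\mathbb{R},\M)$ as the (quasi-)Banach space $L_p(\mathcal{D}\times\{1,\dots,d-1\}, L_p(\M))$ with respect to the weighted counting measure assigning mass $|I|^{-p/2}$ to each singleton $\{(I,i)\}$. Under this identification, approximating a generic $b\in \pmb{B}_p^d(\mathbb{R},\M)$ will proceed in two stages: first truncate its Haar coefficients to a finite index set, then approximate each coefficient in $L_p(\M)$ by one in $\mathcal{S}(\M)$.

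For the truncation stage, the countability of $\mathcal{D}\times\{1,\dots,d-1\}$ together with the finiteness of the weighted $\ell_p$-sum defining $\|b\|_{\pmb{B}_p^d(\mathbb{R},\M)}$ ensures that, given $\e>0$, there is a finite subset $F\subset \mathcal{D}\times\{1,\dots,d-1\}$ whose complement contributes less than $\e$ to the norm. For the smoothing stage, the density of $\mathcal{S}(\M)$ in $L_p(\M)$ (valid for every $0<p<\infty$) lets me choose, for each $(I,i)\in F$, an element $\tilde b_I^i\in \mathcal{S}(\M)$ with $\|b_I^i-\tilde b_I^i\|_{L_p(\M)}$ so small that $\sum_{(I,i)\in F}|I|^{-p/2}\|b_I^i-\tilde b_I^i\|_{L_p(\M)}^p<\e^p$. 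The candidate approximant is then the finite sum $\tilde b=\sum_{(I,i)\in F}h_I^i\otimes \tilde b_I^i$, which by orthonormality of $\{h_I^i\}$ has Haar coefficients $\tilde b_I^i$ on $F$ and zero elsewhere; the two error contributions combine (up to a multiplicative constant when $p<1$) to give $\|b-\tilde b\|_{\pmb{B}_p^d(\mathbb{R},\M)}\lesssim \e$.

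The main verification is that this finite sum $\tilde b$ actually lies in $\mathcal{S}(L_\infty(\mathbb{R})\otimes\M)$. Since each $h_I^i$ is bounded and supported in $I$, and each $\tilde b_I^i\in \mathcal{S}(\M)\subset\M$, the sum $\tilde b$ is a bounded element of $L_\infty(\mathbb{R})\otimes\M$. Its support projection is dominated by the finite join $\bigvee_{(I,i)\in F}(\mathbbm{1}_I\otimes p_{\tilde b_I^i})$, where $p_{\tilde b_I^i}$ denotes the support projection of $\tilde b_I^i$. Each summand has finite trace under $m\otimes\tau$ since $|I|<\infty$ and $\tau(p_{\tilde b_I^i})<\infty$, so the join does as well. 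Splitting $\tilde b$ into its real and imaginary parts and then each self-adjoint piece into positive and negative parts (each still supported under the same projection) exhibits $\tilde b$ as a linear combination of elements of $\mathcal{S}_+(L_\infty(\mathbb{R})\otimes\M)$, so $\tilde b\in \mathcal{S}(L_\infty(\mathbb{R})\otimes\M)$. I do not anticipate a serious obstacle, as the argument is the standard diagonal approximation for vector-valued $L_p$-spaces; the only careful step is this final inclusion into $\mathcal{S}$, handled by the finite-support property of Haar wavelets together with the definition of $\mathcal{S}(\M)$.
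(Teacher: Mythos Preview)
Your proof is correct and follows essentially the same approach as the paper. The paper's own proof is a single sentence---it just observes that $h_I^i\otimes x\in \mathcal{S}(L_\infty(\mathbb{R})\otimes\M)\cap \pmb{B}_p^d(\mathbb{R},\M)$ for $x\in\mathcal{S}(\M)$ and leaves the truncation and coefficient-approximation steps implicit; you have simply spelled those steps out in full, including the verification that a finite sum of such tensors lies in $\mathcal{S}(L_\infty(\mathbb{R})\otimes\M)$.
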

\begin{proof}
	It is clear that for $x\in \mathcal{S}(\M)$, $b:=h_I^i\otimes x \in \mathcal{S}(L_\8(\mathbb{R})\otimes \M)$ belongs to $\pmb{B}_p^d(\mathbb{R},\M)$, which implies the desired result.
\end{proof}

The following two propositions give equivalent descriptions of $\pmb{B}_p^d(\mathbb{R},\M)$.
\begin{proposition}\label{bbpdrmdp}
For any $0<p<\8$ and $b\in \pmb{B}_p^d(\mathbb{R},\M)$,
	\[\|b\|_{\pmb{B}_p^d(\mathbb{R},\M)}\approx_{d, p}\biggl(\sum_{k=-\infty}^\infty d^{k}\|d_kb\|_{L_p(\mathbb{R},L_p({\mathcal{M}}))}^p\biggr)^{1/p}.\]
\end{proposition}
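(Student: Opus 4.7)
The plan is to fix a level $n$ and a single $d$-adic interval $I$ with $|I|=d^{-n+1}$, compute both sides of the claimed equivalence restricted to $I$, and reduce everything to a finite-dimensional norm equivalence on $L_p(\mathcal{M})^{d-1}$ coming from the discrete Fourier transform.

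\textbf{Step 1: Pointwise formula for $d_n b$.} Fix $I=I_{n-1,k}$, so $|I|=d^{-n+1}$, and write $J_j:=I_{n,kd+j}$ for its $d$ children, $j=0,\dots,d-1$. By Definition \ref{haar}, $h_I^i$ is constant on each $J_j$ with value $|I|^{-1/2}\omega^{i(j+1)}=d^{(n-1)/2}\omega^{i(j+1)}$. Using \eqref{dnb}, the restriction of $d_nb$ to $J_j$ is the constant element
\[
v_{I,j}\;:=\;d^{(n-1)/2}\sum_{i=1}^{d-1}\omega^{i(j+1)}\,b_I^i\;\in\;L_p(\mathcal{M}).
\]
Since $|J_j|=d^{-n}$, this gives
\[
\|d_nb\|_{L_p(\mathbb{R},L_p(\mathcal{M}))}^p
\;=\;\sum_{|I|=d^{-n+1}}\sum_{j=0}^{d-1}d^{-n}\,\|v_{I,j}\|_{L_p(\mathcal{M})}^p,
\]
so that, multiplying by $d^n$ and using $d^{(n-1)p/2}=|I|^{-p/2}$,
\[
d^n\|d_nb\|_{L_p(\mathbb{R},L_p(\mathcal{M}))}^p
\;=\;\sum_{|I|=d^{-n+1}}\frac{1}{|I|^{p/2}}\sum_{j=0}^{d-1}\Big\|\sum_{i=1}^{d-1}\omega^{i(j+1)}b_I^i\Big\|_{L_p(\mathcal{M})}^p.
\]

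\textbf{Step 2: Local DFT equivalence.} For the target equivalence it remains to show, for each fixed $I$,
\[
\sum_{i=1}^{d-1}\|b_I^i\|_{L_p(\mathcal{M})}^p
\;\approx_{d,p}\;\sum_{j=0}^{d-1}\Big\|\sum_{i=1}^{d-1}\omega^{i(j+1)}b_I^i\Big\|_{L_p(\mathcal{M})}^p.
\]
Setting $b_I^0:=0$, the map $T\colon(b_I^i)_{i=1}^{d-1}\mapsto\bigl(\sum_{i=1}^{d-1}\omega^{i(j+1)}b_I^i\bigr)_{j=0}^{d-1}$ is a linear map between the finite-dimensional spaces $L_p(\mathcal{M})^{d-1}$ and $L_p(\mathcal{M})^d$. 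The bound $\lesssim_{d,p}$ from the left-hand side to the right-hand side is immediate from the (quasi-)triangle inequality in $L_p(\mathcal{M})$, since each entry of $T$ has modulus one and the number of summands is $d-1$. For the reverse bound, use the discrete Fourier inversion
\[
b_I^{i'}\;=\;\frac{1}{d}\sum_{j=0}^{d-1}\omega^{-i'(j+1)}\Big(\sum_{i=1}^{d-1}\omega^{i(j+1)}b_I^i\Big),\qquad i'=1,\dots,d-1,
\]
which holds because $b_I^0=0$ and $\frac{1}{d}\sum_j\omega^{(i-i')(j+1)}=\delta_{ii'}$. Applying the (quasi-)triangle inequality again gives $\sum_{i'}\|b_I^{i'}\|_{L_p(\mathcal{M})}^p\lesssim_{d,p}\sum_j\|\sum_i\omega^{i(j+1)}b_I^i\|_{L_p(\mathcal{M})}^p$.

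\textbf{Step 3: Summation.} Combining Step 1 and Step 2 yields
\[
d^n\|d_nb\|_{L_p(\mathbb{R},L_p(\mathcal{M}))}^p
\;\approx_{d,p}\;\sum_{|I|=d^{-n+1}}\sum_{i=1}^{d-1}\frac{\|b_I^i\|_{L_p(\mathcal{M})}^p}{|I|^{p/2}}.
\]
Summing over $n\in\mathbb{Z}$ and using that each $I\in\mathcal{D}$ appears exactly once (namely for the unique $n$ with $|I|=d^{-n+1}$) gives the desired equivalence with constants depending only on $d$ and $p$.

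\textbf{Expected main obstacle.} There is no deep obstacle; the only point requiring care is Step 2, where one must verify that the quasi-triangle inequality constants in $L_p(\mathcal{M})$ for $0<p<1$ remain dimension-free in the infinite-dimensional factor $L_p(\mathcal{M})$, which is fine because the DFT inversion only produces a fixed number ($d$) of summands with unimodular scalar coefficients. Everything else is a direct computation using the explicit form of the Haar wavelets and the definition \eqref{e1.2}.
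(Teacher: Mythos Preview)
Your proof is correct and follows essentially the same approach as the paper: both compute $d^k\|d_kb\|_{L_p}^p$ explicitly as a sum over $I\in\mathcal{D}_{k-1}$ of $|I|^{-p/2}\sum_j\|\sum_i\omega^{i(j+1)}b_I^i\|_{L_p(\mathcal{M})}^p$, and then reduce to the finite-dimensional DFT norm equivalence on $L_p(\mathcal{M})^d$. The only cosmetic difference is that the paper packages Step~2 by padding with $b_I^0=0$ and invoking invertibility of the full $d\times d$ Fourier matrix acting on the direct sum $\mathcal{Z}=\oplus_1^d\mathcal{M}$, whereas you write out the Fourier inversion formula explicitly; the content is the same.
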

\begin{proof}
		Recall that martingale differences are defined in \eqref{dnb}. 
		Then $\forall k\in\mathbb{Z}$, one has
		\begin{equation*}
			\begin{aligned}
				\|d_kb\|_{L_p(\mathbb{R},L_p({\mathcal{M}}))}^p{}&=\sum_{I\in\mathcal{D}_{k-1}}\bigg\|\sum_{i=1}^{d-1}h_I^i \cdot b_I^i\bigg\|_{L_p(\mathbb{R},L_p({\mathcal{M}}))}^p
				=d^{-k}\cdot\sum_{I\in\mathcal{D}_{k-1}}\frac{1}{|I|^{p/2}}\sum_{j=0}^{d-1}\bigg\|\sum_{i=1}^{d-1}\omega^{i(j+1)} b_I^i\bigg\|_{L_p({\mathcal{M}})}^p.
			\end{aligned}
		\end{equation*}
	Let $\mathcal{Z}=\mathop{\oplus}\limits_{k=1}^d \mathcal{M}$ be the von Neumann algebra direct sum of $d$ copies of $\mathcal{M}$. One can check that for any $x:=(x_1,x_2,\cdots, x_d)^\top\in\mathcal{Z}$,
	\begin{equation*}
		\|x\|_{L_p(\mathcal{Z})}=\biggl(\sum_{i=1}^d\|x_i\|^p_{L_p(\mathcal{M})}\biggr)^{1/p}.
	\end{equation*}
In other words, $L_p(\mathcal{Z})=\oplus_p^d L_p(\mathcal{M})$, where $\oplus_p^d$ denotes the direct sum in the $\ell_p^d$-sense. Let
\begin{equation*}
	\xi_I=(b_I^1, b_I^2, \cdots, b_I^{d-1})^\top,\quad \tilde{\xi}_I=(b_I^1, b_I^2, \cdots, b_I^{d-1}, 0)^\top
\end{equation*}
and
\begin{equation*}
	B_I=\sum\limits_{i=1}^{d-1}\sum\limits_{j=0}^{d-1}\omega^{i(j+1)}e_{j+1,i},\quad \tilde{B}_I=\sum\limits_{i=1}^{d}\sum\limits_{j=0}^{d-1}\omega^{i(j+1)}e_{j+1,i}.
\end{equation*}
Then
\begin{equation*}
	\begin{aligned}
		\sum_{j=0}^{d-1}\bigg\|\sum_{i=1}^{d-1}\omega^{i(j+1)} b_I^i\bigg\|_{L_p({\mathcal{M}})}^p=\|B_I\xi\|^p_{L_p(\mathcal{Z})} =\|\tilde{B}_I\tilde{\xi}\|^p_{L_p(\mathcal{Z})}\approx_{d, p} \|\tilde{\xi}\|^p_{L_p(\mathcal{Z})}=\sum_{i=1}^{d-1}\|b_I^i\|_{L_p({\mathcal{M}})}^p.
	\end{aligned}
\end{equation*}
It implies that
\begin{equation*}
	\begin{aligned}
		\biggl(\sum_{k=-\infty}^\infty d^{k}\|d_kb\|_{L_p(\mathbb{R},L_p({\mathcal{M}}))}^p\biggr)^{1/p}\approx_{d, p}  \biggl(\sum_{I\in \mathcal{D}}\sum_{i=1}^{d-1}\biggl(\frac{\|b_I^i\|_{L_p{(\mathcal{M})}}}{|I|^{1/2}}\biggr)^p\biggr)^{1/p}=\|b\|_{\pmb{B}_p^d(\mathbb{R},\M)}.
	\end{aligned}
\end{equation*}
This finishes the proof.
\end{proof}	

	
\begin{prop}\label{equbbk}
	Let $1\le p<\infty$. Suppose $b$ is a locally integrable $L_p(\M)$-valued function and $b\in \pmb{B}_p^{d}(\mathbb{R},\mathcal{M})$. Then
	\begin{equation*}
			\|b\|_{\pmb{B}_p^d(\mathbb{R},\M)} \approx_{d,p} \left(\sum_{k\in\mathbb{Z}} d^k \|b-b_k\|^p_{L_p(\mathbb{R},L_p(\mathcal{M}))}\right)^{1/p}.
	\end{equation*}
\end{prop}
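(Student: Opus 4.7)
The plan is to leverage Proposition \ref{bbpdrmdp} and reduce the claim to a one-sided discrete Hardy-type inequality. Set
\[ A = \sum_{k\in\mathbb{Z}} d^k \|b - b_k\|^p_{L_p(\mathbb{R}, L_p(\mathcal{M}))}, \qquad B = \sum_{k\in\mathbb{Z}} d^k \|d_k b\|^p_{L_p(\mathbb{R}, L_p(\mathcal{M}))}. \]
Since Proposition \ref{bbpdrmdp} already gives $\|b\|_{\pmb{B}_p^d(\mathbb{R},\mathcal{M})}^p \approx_{d,p} B$, the entire task is to prove $A \approx_{d,p} B$.

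The direction $B \lesssim_{d,p} A$ is routine: decomposing $d_k b = (b - b_{k-1}) - (b - b_k)$ and applying the triangle inequality in $L_p(\mathbb{R}, L_p(\mathcal{M}))$ followed by the elementary inequality $(u+v)^p \leq 2^{p-1}(u^p+v^p)$ yields
\[ d^k \|d_k b\|_p^p \leq 2^{p-1}\bigl(d\cdot d^{k-1}\|b - b_{k-1}\|_p^p + d^k\|b-b_k\|_p^p\bigr), \]
and summing over $k\in\mathbb{Z}$ gives $B \leq 2^{p-1}(d+1)A$.

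The harder direction $A \lesssim_{d,p} B$ rests on the pointwise estimate
\[ \|b - b_k\|_p \leq \sum_{j > k} \|d_j b\|_p, \qquad k\in\mathbb{Z}. \]
For this, I will fix $K > k$, write $b_K - b_k = \sum_{j=k+1}^K d_j b$, apply the triangle inequality, and then let $K\to\infty$. The justification for passing to the limit is the vector-valued Lebesgue differentiation theorem: since $b$ is locally Bochner integrable with values in $L_p(\mathcal{M})$, $b_K(x) \to b(x)$ in $L_p(\mathcal{M})$ for almost every $x\in\mathbb{R}$; Fatou's lemma then gives $\|b-b_k\|_p \leq \liminf_K \|b_K - b_k\|_p$, and the triangle inequality upgrades this to the claimed tail bound. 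Setting $c_k = d^{k/p}\|b - b_k\|_p$ and $a_j = d^{j/p}\|d_j b\|_p$ rewrites the estimate as
\[ c_k \leq \sum_{m=1}^\infty d^{-m/p}\, a_{k+m}, \]
which is a cross-correlation against the kernel $\mathcal{K}(m) = d^{-m/p}\mathbbm{1}_{m\geq 1}$. Since $\|\mathcal{K}\|_{\ell^1(\mathbb{N})} = (d^{1/p}-1)^{-1} < \infty$, Young's inequality on $\mathbb{Z}$ (for $p=1$ this is just Fubini; for $p>1$ it follows from Hölder and Fubini) yields $\|c\|_{\ell^p(\mathbb{Z})} \leq \|\mathcal{K}\|_{\ell^1}\|a\|_{\ell^p(\mathbb{Z})}$, i.e.\ $A \leq (d^{1/p}-1)^{-p}\, B$.

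The only genuinely delicate step is the invocation of the Banach-valued Lebesgue differentiation theorem to ensure $b_K\to b$ almost everywhere in $L_p(\mathcal{M})$-norm; once this is in hand, everything else is a packaging of triangle inequalities and a geometric-kernel Young inequality. No martingale square-function machinery or noncommutative Burkholder-Gundy theorem is needed, which is why the argument works uniformly for every $1\le p<\infty$.
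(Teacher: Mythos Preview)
Your proof is correct and follows essentially the same approach as the paper's: both reduce via Proposition~\ref{bbpdrmdp} to comparing the two weighted $\ell^p$-sums, control $\|b-b_k\|_p$ by the tail $\sum_{j>k}\|d_jb\|_p$, and then apply a Minkowski/Young-type inequality against the geometric kernel $d^{-m/p}$. The only minor difference is in the easy direction $B\lesssim A$: the paper uses the contractivity of $\mathbb{E}_k$ on $L_p$ to write $\|d_kb\|_p=\|\mathbb{E}_k(b-b_{k-1})\|_p\le\|b-b_{k-1}\|_p$, which gives the slightly cleaner constant $B\le d\cdot A$ instead of your $2^{p-1}(d+1)A$.
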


\begin{proof}
	On the one hand, since $b\in \pmb{B}_p^{d}(\mathbb{R},\mathcal{M})$, from Proposition \ref{bbpdrmdp} we know that
	$$\biggl(\sum_{k\in\mathbb{Z}} d^{k}\|d_kb\|_{L_p(\mathbb{R},L_p({\mathcal{M}}))}^p\biggr)^{1/p}<\infty.$$
	Then due to the Minkowski inequality we obtain
	\begin{equation*}
		\begin{aligned}
			{}&\biggl(\sum_{k\in\mathbb{Z}}  \Big(\sum_{j=1}^\infty d^{-j/p}\big\| d^{(j+k)/p}\cdot d_{j+k}b\big\|_{L_p(\mathbb{R},L_p(\mathcal{M}))}\Big)^p\biggr)^{1/p}\\
			&\le \sum_{j=1}^\infty  d^{-j/p} \biggl(\sum_{k\in\mathbb{Z}} \big\| d^{(j+k)/p}\cdot d_{j+k}b\big\|^p_{L_p(\mathbb{R},L_p(\mathcal{M}))}\biggr)^{1/p}\\
			&=\sum_{j=1}^\infty  d^{-j/p} \biggl(\sum_{k\in\mathbb{Z}} d^k \|d_{k}b\|^p_{L_p(\mathbb{R},L_p(\mathcal{M}))}\biggr)^{1/p}\\
			&=\frac{1}{d^{1/p}-1}\biggl(\sum_{k\in\mathbb{Z}} d^k \|d_{k}b\|^p_{L_p(\mathbb{R},L_p(\mathcal{M}))}\biggr)^{1/p}<\infty.
		\end{aligned}
	\end{equation*}
	Besides, for any $k\in\mathbb{Z}$, notice that
	\begin{equation*}
		\begin{aligned}
			\|b-b_k\|_{L_p(\mathbb{R},L_p(\mathcal{M}))}{}&\le  \sum_{j=k+1}^\infty \| d_jb\|_{L_p(\mathbb{R},L_p(\mathcal{M}))}\\
			&=\sum_{j=1}^\infty\| d_{j+k}b\|_{L_p(\mathbb{R},L_p(\mathcal{M}))}\\
			&=d^{-k/p}\cdot\sum_{j=1}^\infty d^{-j/p} \big\| d^{(j+k)/p}\cdot d_{j+k}b\big\|_{L_p(\mathbb{R},L_p(\mathcal{M}))}.
		\end{aligned}
	\end{equation*}
	Thus 
	\begin{equation*}
		\begin{aligned}
			\biggl(\sum_{k\in\mathbb{Z}} d^k \|b-b_k\|^p_{L_p(\mathbb{R},L_p(\mathcal{M}))}\biggr)^{1/p}{}&\le \biggl(\sum_{k\in\mathbb{Z}}  \Big(\sum_{j=1}^\infty d^{-j/p}\big\| d^{(j+k)/p}\cdot d_{j+k}b\big\|_{L_p(\mathbb{R},L_p(\mathcal{M}))}\Big)^p\biggr)^{1/p}\\
			&\le \frac{1}{d^{1/p}-1}\biggl(\sum_{k\in\mathbb{Z}} d^k \|d_{k}b\|^p_{L_p(\mathbb{R},L_p(\mathcal{M}))}\biggr)^{1/p}.
		\end{aligned}
	\end{equation*}
	On the other hand, notice that for any $k\in\mathbb{Z}$,
	\begin{equation*}
		\|d_kb\|_{L_p(\mathbb{R},L_p(\mathcal{M}))}=\|\mathbb{E}_k(b-b_{k-1})\|_{L_p(\mathbb{R},L_p(\mathcal{M}))}\le \|b-b_{k-1}\|_{L_p(\mathbb{R},L_p(\mathcal{M}))},
	\end{equation*}
	which implies that
	\begin{equation*}
		\sum_{k\in\mathbb{Z}} d^k \|d_kb\|^p_{L_p(\mathbb{R},L_p(\mathcal{M}))} \le d\cdot\sum_{k\in\mathbb{Z}} d^k \|b-b_k\|^p_{L_p(\mathbb{R},L_p(\mathcal{M}))}.
	\end{equation*}
	The proof is completed.
\end{proof}
	
\

As for the martingale Besov spaces concerning the CAR algebra and $\mathscr{M}=\mathop{\otimes}\limits_{k=1}^{\infty}\mathbb{M}_{d}$, we use the martingale differences to formulate their definitions.

\begin{definition}\label{mbs2}
	The martingale Besov space $\pmb{B}_p(\mathcal{C})$ $(0<p<\8)$ for the CAR algebra is the completion of the set consisting of all $b\in \mathcal{S}(\mathcal{C})$ such that
	\begin{equation*}
		\|b\|_{\pmb{B}_p(\mathcal{C})}=\biggl(\sum_{k=1}^\infty2^{k}\|d_kb\|^p_{L_p(\mathcal{C})}\biggr)^{1/p}<\infty,
	\end{equation*}
	with respect to $ \|\cdot \|_{\pmb{B}_p(\mathcal{C})} $.
\end{definition}
\begin{definition}\label{mbs3}
	The martingale Besov space $\pmb{B}_p(\mathscr{M})$ $(0<p<\8)$ for $\mathscr{M}=\mathop{\otimes}\limits_{k=1}^{\infty}\mathbb{M}_{d}$ is the completion of the set consisting of all $b\in \mathcal{S}(\mathscr{M})$ such that
	\begin{equation*}
		\|b\|_{\pmb{B}_p(\mathscr{M})}=\biggl(\sum_{k=1}^\infty d^{2k}\|d_kb\|^p_{L_p(\mathscr{M})}\biggr)^{1/p}<\infty,
	\end{equation*}
	with respect to $ \|\cdot \|_{\pmb{B}_p(\mathscr{M})} $.
\end{definition}
\begin{proposition}\label{prop2.180}
	When $1\leq p<\8$, $\pmb{B}_p(\mathcal{C})$ and $\pmb{B}_p(\mathscr{M})$ are Banach spaces. When $0<p<1$,  they are quasi-Banach spaces.
\end{proposition}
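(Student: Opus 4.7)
My plan is to mimic the strategy of Proposition~\ref{prop2.100}, embedding the underlying pre-space isometrically into a weighted $\ell_p$-direct sum of $L_p$-spaces, which will automatically be a Banach (quasi-Banach) space.

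First, for each $k\geq 1$, I would introduce the $k$-th martingale difference space $H_k^\mathcal{C}\subset L_p(\mathcal{C})$, defined as the range of the projection $\mathcal{E}_k-\mathcal{E}_{k-1}$ associated with the filtration $(\mathcal{C}_n)_{n\geq 1}$. Since $\mathcal{E}_k$ extends to a contractive projection on $L_p(\mathcal{C})$, $H_k^\mathcal{C}$ is a closed subspace of $L_p(\mathcal{C})$, hence a Banach space for $p\geq 1$ and a quasi-Banach space for $0<p<1$. Next, I would form the weighted direct sum
\[ W_\mathcal{C}=\Big\{(x_k)_{k\geq 1}:x_k\in H_k^\mathcal{C},\ \sum_{k=1}^\infty 2^k\|x_k\|_{L_p(\mathcal{C})}^p<\infty\Big\}, \]
equipped with the norm $\|(x_k)\|_{W_\mathcal{C}}=\bigl(\sum_{k=1}^\infty 2^k\|x_k\|_{L_p(\mathcal{C})}^p\bigr)^{1/p}$, which is itself (quasi-)Banach by the standard argument (Minkowski, respectively the $p$-triangle inequality, applied componentwise in $L_p(\mathcal{C})$ and then in the weighted $\ell_p$-sum).

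Then the assignment $b\mapsto(d_kb)_{k\geq 1}$ defines an isometric linear map $\Phi$ from the pre-space $V_\mathcal{C}=\{b\in\mathcal{S}(\mathcal{C}):\|b\|_{\pmb{B}_p(\mathcal{C})}<\infty\}$ into $W_\mathcal{C}$. The closure $\overline{\Phi(V_\mathcal{C})}$ inside $W_\mathcal{C}$ is a closed subspace of a (quasi-)Banach space, hence is itself (quasi-)Banach, and by the universal property of completion it is isometrically isomorphic to $\pmb{B}_p(\mathcal{C})$, yielding the claim. The identical argument with the filtration $(\mathscr{M}_n)_{n\geq 1}$ and the weight $d^{2k/p}$ in place of $2^{k/p}$ handles $\pmb{B}_p(\mathscr{M})$.

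The only point requiring care is verifying that $\Phi$ is injective, so that the (quasi-)seminorm on $V_\mathcal{C}$ is genuinely a (quasi-)norm. If $d_kb=0$ for all $k\geq 1$, then $\mathcal{E}_n(b)=\sum_{k=1}^n d_kb=0$ for every $n$; since $b\in\mathcal{S}(\mathcal{C})\subset L_1(\mathcal{C})$ and $\bigcup_n\mathcal{C}_n$ is $w^*$-dense in $\mathcal{C}$, the noncommutative martingale convergence theorem in $L_1$ forces $b=0$. Aside from this, the remaining verifications — homogeneity and the (quasi-)triangle inequality — are immediate from the linearity of the martingale differences and the corresponding inequalities for $L_p(\mathcal{C})$ and $\ell_p$, so I anticipate no serious obstacle.
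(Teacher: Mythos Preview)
Your proposal is correct and takes essentially the same approach as the paper, which simply omits the proof and points to Proposition~\ref{prop2.100}: identify the Besov space with (a closed subspace of) a weighted $\ell_p$-direct sum of $L_p$-spaces over the index set $\{k\geq 1\}$, and inherit completeness from that ambient space. Your check of injectivity via the convention $b_0=0$ and martingale convergence is the right way to see the seminorm is genuinely a (quasi-)norm on $\mathcal{S}(\mathcal{C})$.
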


The proof of Proposition \ref{prop2.180} is similar to that of Proposition \ref{prop2.100}, and we omit its proof.

\subsection{Operator-valued $BMO$ spaces and Hardy spaces}\label{ovbmo}

Let $b$ be an $\mathcal{M}$-valued function that is Bochner integrable on any cube in $\mathbb{R}^n$, and define the following operator-valued $BMO$ spaces:
\begin{equation*}
	\begin{aligned}
		BMO_c(\mathbb{R}^n,\mathcal{M}){}&=\biggl\{b:\|b\|_{BMO_c(\mathbb{R}^n,\mathcal{M})}=\sup_{Q\subset \mathbb{R}^n \atop Q \operatorname{cube}}\big\|MO(b;Q)\big\|_{\mathcal{M}}<\infty \biggr\},\\
		BMO_r(\mathbb{R}^n,\mathcal{M})	&=\biggl\{b:\|b\|_{BMO_r(\mathbb{R}^n,\mathcal{M})}=\|b^*\|_{BMO_c(\mathbb{R}^n,\mathcal{M})}<\infty\biggr\},\\
		BMO_{cr}(\mathbb{R}^n,\mathcal{M})&=BMO_c(\mathbb{R}^n,\mathcal{M})\cap	BMO_r(\mathbb{R}^n,\mathcal{M}),
	\end{aligned}
\end{equation*}
where $MO(b;Q)$ is defined in \eqref{MOBQ}.

Now we introduce the preduals of $BMO_c(\mathbb{R}^n,\mathcal{M})$ and $BMO_r(\mathbb{R}^n,\mathcal{M})$, which are called operator-valued Hardy spaces. Let $1\le p<\infty$. For any $f\in L_1(\mathbb{R}^n,L_1(\mathcal{M}))$, let $\tilde{f}(x,y)=P_y\ast f(x)$ be the Poisson integral of $f$ on the upper half plane $\mathbb{R}_+^{n+1}=\{(x,y):x\in \mathbb{R}^n,y>0\}$, where 
$$P_y(x)=\frac{\Gamma(\frac{n+1}{2})}{\pi^{\frac{n+1}{2}}}\frac{y}{(y^2+|x|^2)^{\frac{n+1}{2}}}$$
is the Poisson kernel, and $\Gamma$ is the gamma function. Let $\gamma=\{(x,y)\in \mathbb{R}_+^{n+1}:|x|^2<y^2\}$. The operator-valued column Hardy space $H_{p,c}(\mathbb{R}^n,\mathcal{M})$ is the space of all $f\in L_1(\mathbb{R}^n,L_1(\mathcal{M}))$ such that 
\begin{equation*}
	\|f\|_{H_{p,c}(\mathbb{R}^n,\mathcal{M})}=\bigg\|\bigg(\int_{\gamma}  \Big|\frac{\partial \tilde{f}}{\partial x}(x+\cdot,y)\Big|^2+  \Big|\frac{\partial \tilde{f}}{\partial y}(x+\cdot,y)\Big|^2 \frac{dxdy}{y^{n-1}}\bigg)^{1/2}\bigg\|_{L_p(\mathbb{R}^n,L_p(\mathcal{M}))}<\infty.
\end{equation*}
Similarly, define the operator-valued row Hardy space as follows
\begin{equation*}
	H_{p,r}(\mathbb{R}^n,\mathcal{M})=\bigg\{f\in L_1(\mathbb{R}^n,L_1(\mathcal{M})):\|f\|_{H_{p,r}(\mathbb{R}^n,\mathcal{M})}=\|f^*\|_{H_{p,c}(\mathbb{R}^n,\mathcal{M})}<\infty  \bigg\}.
\end{equation*}

The following theorem in \cite{Mei1} is on the duality between operator-valued Hardy spaces and $BMO$ spaces.
\begin{thm}\label{HBMOdual}
   $\big(H_{1,c}(\mathbb{R}^n,\mathcal{M})\big)^*=BMO_c(\mathbb{R}^n,\mathcal{M})$. More precisely, every $g\in BMO_c(\mathbb{R}^n,\mathcal{M})$ defines a continuous linear functional $L_g$ on $H_{1,c}(\mathbb{R}^n,\mathcal{M})$ by
    \begin{equation*}
    	L_g(f)=\tau\biggl(\int_{\mathbb{R}^n} g(x)^*f(x)dx\biggr),\quad \forall f\in H_{1,c}(\mathbb{R}^n,\mathcal{M}).
    \end{equation*}
    Conversely, if $L\in H_{1,c}(\mathbb{R}^n,\mathcal{M})^*$, then there exists some $g\in BMO_c(\mathbb{R}^n,\mathcal{M})$ such that $L=L_g$ as above.
    Similarly,
    \begin{equation*}
       \big(H_{1,r}(\mathbb{R}^n,\mathcal{M})\big)^*=BMO_r(\mathbb{R}^n,\mathcal{M}).
    \end{equation*}
\end{thm}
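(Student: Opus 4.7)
The plan is to follow Fefferman's classical $H^{1}$--$BMO$ duality strategy, adapted to operator values as in Mei \cite{Mei1}. The key bridge is a Carleson measure characterization of $BMO_c(\mathbb{R}^n,\mathcal{M})$: $g\in BMO_c(\mathbb{R}^n,\mathcal{M})$ if and only if there exists a constant $K>0$ such that for every cube $Q\subset \mathbb{R}^n$ with Carleson tent $\widehat{Q}=Q\times(0,\ell(Q)]\subset \mathbb{R}_+^{n+1}$,
\begin{equation*}
\frac{1}{m(Q)}\int_{\widehat{Q}}\bigl|\nabla\tilde g(x,y)\bigr|^{2}\,y\,dx\,dy\le K\cdot 1_{\mathcal{M}}
\end{equation*}
as an operator inequality in $\mathcal{M}$, with the least admissible $\sqrt{K}$ comparable to $\|g\|_{BMO_c(\mathbb{R}^n,\mathcal{M})}$. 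Here $\nabla=(\partial_x,\partial_y)$ denotes the full gradient on $\mathbb{R}_+^{n+1}$. This equivalence is proved by transcribing the scalar Poisson-integral and mean-oscillation estimates, tested against arbitrary vectors $\xi\in L_2(\mathcal{M})$ to reduce matters to positive sesquilinear forms.

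For the easy direction, I fix $g\in BMO_c(\mathbb{R}^n,\mathcal{M})$ and take $f$ in a dense subspace of $H_{1,c}(\mathbb{R}^n,\mathcal{M})$ with enough decay to justify boundary integration. Green's identity on $\mathbb{R}_+^{n+1}$ converts the bilinear pairing into an upper half-space integral:
\begin{equation*}
\tau\Bigl(\int_{\mathbb{R}^n}g(x)^{*}f(x)\,dx\Bigr)=\kappa_n\,\tau\int_{\mathbb{R}_+^{n+1}}\bigl(\nabla\tilde g(x,y)\bigr)^{*}\cdot\nabla\tilde f(x,y)\,y\,dx\,dy.
\end{equation*}
Translating variables to bring in the cone $\gamma$ that defines $\|f\|_{H_{1,c}}$, then applying an operator-valued Cauchy--Schwarz pointwise in $(x,y)$ followed by the noncommutative Carleson embedding theorem using the measure from the first paragraph, yields $|L_g(f)|\lesssim \|g\|_{BMO_c}\|f\|_{H_{1,c}}$. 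Density then extends $L_g$ to a bounded functional on $H_{1,c}(\mathbb{R}^n,\mathcal{M})$.

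For the converse, the plan is a Hahn--Banach argument. The map $f\mapsto \bigl(\nabla\tilde f(x+z,y)\bigr)_{(z,y)\in\gamma}$ isometrically embeds $H_{1,c}(\mathbb{R}^n,\mathcal{M})$ into an $L_1$-space of operator-valued functions on $\mathbb{R}^n$ taking values in a column $L_2$-space over the cone with measure $y^{1-n}\,dz\,dy$. A bounded $L\in H_{1,c}(\mathbb{R}^n,\mathcal{M})^{*}$ extends by Hahn--Banach to this ambient $L_1$-space; its dual is the corresponding row $L_2$-valued $L_\infty$-space, so $L$ is represented by a kernel $G$ on $\mathbb{R}^n\times\gamma$. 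A Calder\'on-type reproducing identity for the Poisson gradient then extracts an $\mathcal{M}$-valued function $g$ on $\mathbb{R}^n$ whose Poisson gradient matches $G$ under pairing against all $\nabla\tilde f$; the $L_\infty$ bound on $G$ translates back into the Carleson condition from the first paragraph, giving $g\in BMO_c(\mathbb{R}^n,\mathcal{M})$ with $\|g\|_{BMO_c}\lesssim \|L\|$ and $L=L_g$. The row identity is obtained by taking adjoints. The main obstacle is the noncommutative Carleson embedding theorem: scalar stopping-time and maximal-function arguments do not apply verbatim, and one must work with genuinely $\mathcal{M}$-valued positive measures and carefully separate column and row structures, which is precisely what forces the two parallel Hardy and $BMO$ scales to be introduced in the first place.
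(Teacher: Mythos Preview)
The paper does not prove this theorem; it is stated as a known result and attributed to Mei \cite{Mei1}, where the full proof appears. Your outline is consistent with Mei's approach (Carleson characterization of $BMO_c$, Green's formula plus operator-valued Cauchy--Schwarz for one direction, Hahn--Banach and a reproducing identity for the converse), so there is nothing in the present paper to compare against beyond the citation.
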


In addition, we also need another two kinds of $L_2(\mathbb{R}^n)$-valued noncommutative $L_p$-spaces. For any $1\le p\le\infty$, the column noncommutative $L_p$-space $L_p(\mathcal{M},L_2^c(\mathbb{R}^n))$ (resp. the row noncommutative $L_p$-space $L_p(\mathcal{M},L_2^r(\mathbb{R}^n))$) is defined to be the space of all $f\in L_p(B(L_2(\mathbb{R}^n))\otimes \mathcal{M})$ such that
\begin{equation*}
	\|f\|_{L_p(\mathcal{M},L_2^c(\mathbb{R}^n))}=\bigg\|\biggl(\int_{\mathbb{R}^n} |f(x)|^2dx\biggr)^{1/2}\bigg\|_{L_p(\mathcal{M})}<\infty
\end{equation*}

\begin{equation*}
   \bigg(\,\, \mathrm{resp}.\quad	\|f\|_{L_p(\mathcal{M},L_2^r(\mathbb{R}^n))}=\|f^*\|_{L_p(\mathcal{M},L_2^c(\mathbb{R}^n))} <\infty \,\,\bigg).
\end{equation*}
As in the commutative setting, operator-valued Hardy spaces admit atomic decomposition as well. In what follows, we present the atomic decomposition for operator-valued Hardy spaces. At first, we give the definition of $\mathcal{M}^c$-atoms and $\mathcal{M}^r$-atoms.
\begin{definition}
	A function $a\in L_1(\mathcal{M},L_2^c(\mathbb{R}^n))$ (resp. $a\in L_1(\mathcal{M},L_2^r(\mathbb{R}^n))$) is an $\mathcal{M}^c$-atom  (resp. $\mathcal{M}^r$-atom) if there exists a cube $Q$ such that
	\begin{enumerate}
		\item $\mathrm{supp}\,a\subseteq Q$,
		\item $\int_Q a(x)dx=0$,
		\item $\|a\|_{L_1(\mathcal{M},L_2^c(\mathbb{R}^n))}\le |Q|^{-1/2}$\,\, (\,\,resp.\,\, $\|a\|_{L_1(\mathcal{M},L_2^r(\mathbb{R}^n))}\le |Q|^{-1/2}$\,\,).
	\end{enumerate}
\end{definition}

In \cite{Mei1}, Mei proved that $H_{1,c}(\mathbb{R}^n,\mathcal{M})$ and $H_{1,r}(\mathbb{R}^n,\mathcal{M})$ have the following atomic decomposition.
\begin{thm}\label{atomH1}
	For any $f\in H_{1,c}(\mathbb{R}^n,\mathcal{M})$,
	\begin{equation*}
		\|f\|_{H_{1,c}(\mathbb{R}^n,\mathcal{M})}\approx \inf \bigg\{\sum_{i\in\mathbb{N}} |\lambda_i|: f=\sum_{i\in\mathbb{N}} \lambda_i a_i,\,\,\text{where}\,\, a_i \,\,\text{is an}\,\, \mathcal{M}^c\text{-atom}, \,\, \text{and}\,\,\lambda_i\in\mathbb{C}    \bigg\}.
	\end{equation*}
    Similarly, for any $f\in H_{1,r}(\mathbb{R}^n,\mathcal{M})$,
    \begin{equation*}
    	\|f\|_{H_{1,r}(\mathbb{R}^n,\mathcal{M})}\approx \inf \bigg\{\sum_{i\in\mathbb{N}} |\lambda_i|: f=\sum_{i\in\mathbb{N}} \lambda_i a_i,\,\,\text{where}\,\, a_i \,\,\text{is an}\,\, \mathcal{M}^r\text{-atom}, \,\, \text{and}\,\,\lambda_i\in\mathbb{C}    \bigg\}
    \end{equation*}
\end{thm}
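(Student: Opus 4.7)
The plan is to split the equivalence into the easy ``atomic upper bound'' and the harder ``atomic decomposition'' direction, and to deduce the row case from the column case by taking adjoints (since $\|f\|_{H_{1,r}(\mathbb{R}^n,\mathcal{M})}=\|f^*\|_{H_{1,c}(\mathbb{R}^n,\mathcal{M})}$ and the two atom classes are related by the involution $a\mapsto a^*$).

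For the upper bound, it suffices by sublinearity to show that every $\mathcal{M}^c$-atom $a$ supported in a cube $Q$ satisfies $\|a\|_{H_{1,c}(\mathbb{R}^n,\mathcal{M})}\le C$ for an absolute constant $C$. I would split the Lusin integral defining $\|a\|_{H_{1,c}(\mathbb{R}^n,\mathcal{M})}$ according to whether $x$ lies in the dilate $2Q$ or in its complement. On the local region, Cauchy--Schwarz in $L_2(\mathcal{M},L_2^c(\mathbb{R}^n))$ together with the standard $L_2$--boundedness of the column Lusin area function (viewed operator-valued) reduces the estimate to $|Q|^{1/2}\|a\|_{L_2(\mathcal{M},L_2^c(\mathbb{R}^n))}$, which is $\lesssim 1$ after interpolating the atom-size condition $\|a\|_{L_1(\mathcal{M},L_2^c(\mathbb{R}^n))}\le |Q|^{-1/2}$ with the boundedness of $a$ on its support. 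On the far region I would exploit the zero-mean cancellation $\int_Q a=0$ by replacing each gradient of the Poisson kernel by its difference with its value at the center $x_Q$; this produces an extra factor $\ell(Q)/|x-x_Q|^{n+1}$ that makes the tail integral over $\gamma$ absolutely convergent and uniformly bounded. Summing $\sum_i\lambda_i a_i$ then yields $\|f\|_{H_{1,c}(\mathbb{R}^n,\mathcal{M})}\lesssim\sum_i|\lambda_i|$, and taking the infimum gives one inequality.

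For the converse I would use the tent-space realization of $H_{1,c}$. The map $f\mapsto \nabla\widetilde{f}$ (full space-time gradient on $\mathbb{R}_+^{n+1}$) is an isometric embedding of $H_{1,c}(\mathbb{R}^n,\mathcal{M})$ into a noncommutative column tent space $T_{1}^c(\mathbb{R}_+^{n+1},\mathcal{M})$, whose norm is exactly the one appearing in the definition of $\|\cdot\|_{H_{1,c}(\mathbb{R}^n,\mathcal{M})}$. The plan is then to decompose any $F\in T_{1}^c$ into $T_{1}^c$-atoms supported in Carleson tents over cubes, and to pull such a decomposition back to an atomic decomposition of $f$ via a Calder\'on-type reproducing formula written as a double integral in $(y,y')$ of $\nabla\widetilde f$. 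Integration by parts converts a tent atom into an $\mathcal{M}^c$-atom on the base cube (the cancellation property of the tent atom produces the vanishing mean condition, and its $T_{1}^c$-size condition is matched to the $L_1(\mathcal{M},L_2^c(\mathbb{R}^n))$-size $|Q|^{-1/2}$). As a sanity check, the existence of such a decomposition is also consistent with the duality $\bigl(H_{1,c}(\mathbb{R}^n,\mathcal{M})\bigr)^*=BMO_c(\mathbb{R}^n,\mathcal{M})$ stated in Theorem \ref{HBMOdual}, since one can independently verify that atoms pair boundedly with $BMO_c$.

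The main obstacle is the tent-space atomic decomposition itself in the operator-valued setting. Classically one performs a Whitney decomposition of the level sets $\{x: S(F)(x)>2^k\}$ of the Lusin area function, but here $S(F)$ is operator-valued and one must replace level sets by spectral-type projections, along the lines of the Cuculescu construction familiar from noncommutative martingale $H_1$ theory. The delicate points are ensuring the resulting atomic pieces satisfy the one-sided column normalization in $L_1(\mathcal{M},L_2^c(\mathbb{R}^n))$, rather than a symmetric norm in $L_1(B(L_2(\mathbb{R}^n))\otimes\mathcal{M})$, and keeping the atom sizes correctly scaled by $|Q|^{-1/2}$ throughout the stopping-time iteration. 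This is precisely the step carried out in \cite{Mei1}, and it is where essentially all the noncommutative work resides; once it is in place, the row case is immediate by adjunction.
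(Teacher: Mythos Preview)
The paper does not prove this theorem at all; it is quoted from \cite{Mei1} with the sentence ``In \cite{Mei1}, Mei proved that $H_{1,c}(\mathbb{R}^n,\mathcal{M})$ and $H_{1,r}(\mathbb{R}^n,\mathcal{M})$ have the following atomic decomposition.'' So there is no in-paper argument to compare against, only Mei's original proof, and your overall strategy (tent-space embedding, noncommutative atomic decomposition of the tent space via Cuculescu-type projections, pullback by a reproducing formula) is indeed the route taken there.

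That said, your treatment of the easy direction contains a genuine slip. You write that the local estimate ``reduces to $|Q|^{1/2}\|a\|_{L_2(\mathcal{M},L_2^c(\mathbb{R}^n))}$, which is $\lesssim 1$ after interpolating the atom-size condition with the boundedness of $a$ on its support.'' But an $\mathcal{M}^c$-atom is \emph{not} assumed bounded: the only size hypothesis is $\big\|(\int_Q|a|^2)^{1/2}\big\|_{L_1(\mathcal{M})}\le |Q|^{-1/2}$, and there is no way to upgrade this to an $L_2(\mathcal{M})$ bound on $(\int_Q|a|^2)^{1/2}$. The correct local argument stays entirely in $L_1(\mathcal{M})$: one uses the operator inequality $\int_{\mathbb{R}^n} S(a)(x)^2\,dx \lesssim \int_{\mathbb{R}^n}|a(x)|^2\,dx$ (the pointwise-in-$\mathcal{M}$ $L_2$ boundedness of the column area function), followed by the Kadison--Schwarz inequality
\[
\int_{2Q} S(a)(x)\,dx \;\le\; |2Q|^{1/2}\Bigl(\int_{2Q} S(a)(x)^2\,dx\Bigr)^{1/2}
\]
in the operator order, then takes the trace. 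This gives $\int_{2Q}\|S(a)(x)\|_{L_1(\mathcal{M})}\,dx \lesssim |Q|^{1/2}\|a\|_{L_1(\mathcal{M},L_2^c(\mathbb{R}^n))}\le 1$ without ever passing through $L_2(\mathcal{M})$. Your tail estimate and the reduction of the row case to the column case by adjoints are fine.
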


\bigskip

\section{Proof of Theorem \ref{thm1.2}}\label{Proof of the Necessity of theorem 1.2}
   This section is devoted to the proof of Theorem \ref{thm1.2}. We divide this proof into two parts, namely the sufficiency and necessity respectively. In this section, we will denote $B(L_2(\mathbb{R}))\otimes \mathcal{M}$ by $\mathcal{N}$ for convenience. We first proceed with the sufficiency part.
   \subsection{The Sufficiency of Theorem \ref{thm1.2}} In this subsection, we are about to show that
   $$ b\in \pmb{B}_p^d(\mathbb{R},\M)\ \Longrightarrow \ \pi_b\in L_p(\mathcal{N}) \quad \forall \ 0<p<\8. \eqno{(*)} $$

We will prove a stronger result, namely Lemma \ref{nonNWOpre}. We begin with the following lemma, which was already shown by Rochberg and Semmes in \cite{RSe}. However, we give a different and self-contained proof by virtue of Theorem \ref{lem2.1} with the help of martingale paraproducts.
\begin{lemma}\label{nonNWOpre1}
	Let $0<p<\infty$. Assume that $\{e_{I,i}\}_{I\in\mathcal{D},1\le i\le d-1}$ and $\{f_{I,i}\}_{I\in\mathcal{D},1\le i\le d-1}$ are function sequences in $L_2(\mathbb{R})$ satisfying $\mathrm{supp}e_{I,i}, \,\mathrm{supp}f_{I,i}\subseteq I$ and
	$\|e_{I,i}\|_{\infty},\,\|f_{I,i}\|_{\infty}\le |I|^{-\frac{1}{2}}$. Define
	\begin{equation*}
		A(f)=\sum_{I\in \mathcal{D}}\sum_{i=1}^{d-1} \lambda_{I,i} \langle e_{I,i},f\rangle f_{I,i},\quad \forall f\in L_2(\mathbb{R}),
	\end{equation*}
	where $\{\lambda_{I,i}\}_{I\in\mathcal{D},1\le i\le d-1}\subset \mathbb{C}$.
	Then
	\begin{equation*}
		\begin{aligned}
			\|A\|^p_{S_p(L_2(\mathbb{R}))}\lesssim_{d,p}	\sum_{I\in\mathcal{D}}\sum_{i=1}^{d-1}|\lambda_{I,i}|^p.
		\end{aligned}
	\end{equation*}
\end{lemma}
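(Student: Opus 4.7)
The plan is to exploit Theorem~\ref{lem2.1} by viewing $A$ as a paraproduct-like object built from the NWO data $\{e_{I,i}\},\{f_{I,i}\}$. The guiding model is the special choice $f_{I,i}=h_I^i$ and $e_{I,i}=h_I^0:=|I|^{-1/2}\mathbbm{1}_I$: in that case a direct computation gives $A=\pi_b$ with symbol $b=\sum_{I,i}\lambda_{I,i}|I|^{1/2}h_I^i$, for which $\|b\|_{\pmb{B}_p^d(\mathbb{R},\mathbb{C})}^p=\sum_{I,i}|\lambda_{I,i}|^p$, so Theorem~\ref{lem2.1} delivers the required bound at once. The rest of the argument is a reduction of the general NWO setup to this model.

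First I would dispose of the quasi-Banach range $0<p\le 1$ by the $p$-triangle inequality in $S_p$: each rank-one summand satisfies
\[
\|f_{I,i}\otimes \overline{e_{I,i}}\|_{S_p}^p=\|f_{I,i}\|_2^p\,\|e_{I,i}\|_2^p\le 1
\]
because $\|f_{I,i}\|_2,\|e_{I,i}\|_2\le 1$ follows from $\mathrm{supp}\subseteq I$ together with $\|\cdot\|_\infty\le |I|^{-1/2}$. Summing gives $\|A\|_{S_p}^p\le \sum_{I,i}|\lambda_{I,i}|^p$ without any use of Theorem~\ref{lem2.1}, so the substantive case is $p>1$.

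For $p>1$, the plan is to expand each $f_{I,i}$ (resp.\ $e_{I,i}$) in the local orthonormal Haar basis $\{h_I^0\}\cup\{h_J^j:J\in\mathcal{D}(I),\,1\le j\le d-1\}$ of $L_2(I)$; Parseval on $I$ automatically keeps the expansion coefficients $\ell_2$-bounded by $\|f_{I,i}\|_2\le 1$ (resp.\ $\|e_{I,i}\|_2\le 1$). Substituting these expansions into $A$ and regrouping the resulting quadruple sum according to the inner Haar indices decomposes $A$ into four pieces indexed by (constant/Haar) $\times$ (constant/Haar) in the two slots. The three pieces in which at least one slot carries a genuine Haar factor can each be identified, after reindexing, with a paraproduct, its adjoint, or a composition of the two; the $\pmb{B}_p^d$ norms of the associated symbols are then controlled by $(\sum_{I,i}|\lambda_{I,i}|^p)^{1/p}$ through a Minkowski/H\"older step that transfers the Parseval $\ell_2$ bound on the local expansion coefficients into an $\ell_p$ bound against the weights $\lambda_{I,i}$, after which Theorem~\ref{lem2.1} yields the $S_p$ estimate on each such piece.

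The main obstacle I expect is the remaining constant--constant piece, a weighted nested sum of averaging projections $\mathbbm{1}_I\otimes \mathbbm{1}_I/|I|$, which does not fit the paraproduct template because the projections span overlapping scales. I plan to handle it by a separate telescoping argument over dyadic scales, writing the nested projection sum as a telescoping sum of martingale differences and dominating it by an associated ``maximal'' paraproduct in the spirit of Pott--Smith \cite{PS}; one further application of Theorem~\ref{lem2.1} will then close the estimate by the same $\sum_{I,i}|\lambda_{I,i}|^p$ bound.
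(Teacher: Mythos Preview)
Your treatment of $0<p\le 1$ is fine and matches the paper. The plan for $p>1$, however, has a genuine gap. After the local Haar expansion, the three ``non-constant'' pieces are \emph{not} paraproducts or compositions thereof under any reindexing: the Haar--constant piece, for instance, is a sum of rank-one terms $h_J^j\otimes h_I^0$ over all nested pairs $J\subseteq I$, not just the diagonal $J=I$ that characterises a paraproduct; the Haar--Haar piece involves $h_J^j\otimes h_K^k$ with $J,K$ ranging independently inside $I$ and admits no canonical factorisation as a composition of two paraproducts. Parseval gives only $\ell_2$-control on the expansion coefficients at each fixed $(I,i)$, which does not combine with the $\ell_p$-weights $|\lambda_{I,i}|$ to control the sum over all depths $|I|/|J|$. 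Finally, the constant--constant piece $\sum_I \nu_I\, h_I^0\otimes h_I^0$ is precisely the simplest special case of the lemma itself (take $e_{I,i}=f_{I,i}=h_I^0$), so ``handling it separately by telescoping'' is circular unless that argument is actually produced; the reference to Pott--Smith does not supply one.

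The paper avoids decomposing $e_{I,i},f_{I,i}$ altogether. For $p=2k$ an even integer it expands $\|A\|_{S_p}^p=\mathrm{Tr}\bigl((A^*A)^k\bigr)$ directly; the hypotheses $\mathrm{supp}\subseteq I$, $\|\cdot\|_\infty\le|I|^{-1/2}$ give the uniform pointwise bound $|\langle e_{I,i},e_{J,j}\rangle|,\,|\langle f_{I,i},f_{J,j}\rangle|\le\bigl\langle|I|^{-1/2}\mathbbm{1}_I,|J|^{-1/2}\mathbbm{1}_J\bigr\rangle$, which majorises the full trace expansion termwise by the corresponding expansion of $\|C\|_{S_{2p}}^{2p}$ for the genuine paraproduct $C=\sum_{I,i}|\lambda_{I,i}|^{1/2}\,h_I^i\otimes|I|^{-1/2}\mathbbm{1}_I$ (orthonormality of the $h_I^i$ collapses $C^*C$ to a single sum, giving exactly the required form). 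Theorem~\ref{lem2.1} applied to $C$ yields $\|A\|_{S_p}^p\le\|C\|_{S_{2p}}^{2p}\approx\sum_{I,i}|\lambda_{I,i}|^p$, and interpolation with $p\le1$ finishes. The single move---bounding the inner products \emph{inside} the trace rather than expanding the functions themselves---is what your approach lacks.
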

\begin{proof}
	Notice that $A$ can be rewritten as 
	$$A=\sum_{I\in \mathcal{D}}\sum_{i=1}^{d-1}\lambda_{I,i}\cdot f_{I,i}\otimes e_{I,i},$$
	where $f_{I,i}\otimes e_{I,i}$ is defined in Section \ref{pre2}. When $0<p\le 1$, by the triangle inequality, one has
	\begin{equation*}
		\begin{aligned}
			\|A\|^p_{S_p(L_2(\mathbb{R}))}{}&\le \sum_{I\in\mathcal{D}}\sum_{i=1}^{d-1} |\lambda_{I,i}|^p\|f_{I,i}\otimes e_{I,i}\|^p_{S_p(L_2(\mathbb{R}))}\\
			&\leq \sum_{I\in\mathcal{D}}\sum_{i=1}^{d-1} |\lambda_{I,i}|^p\|f_{I,i}\|^p_{L_2(\mathbb{R})}\|e_{I,i}\|^p_{L_2(\mathbb{R})}\le\sum_{I\in\mathcal{D}}\sum_{i=1}^{d-1} |\lambda_{I,i}|^p.
		\end{aligned}
	\end{equation*}
	When $p=2k$ and $k\in\mathbb{N}_+$, we calculate that
	\begin{equation}\label{mp}
		\begin{aligned}
			\|A\|^p_{S_p(L_2(\mathbb{R}))}
			&=\mathrm{Tr} \biggl(\Big|\sum_{I\in\mathcal{D}}\sum_{i=1}^{d-1}\lambda_{I,i}\cdot f_{I,i}\otimes e_{I,i}\Big|^{2k}\biggr)\\
			&=\mathrm{Tr} \biggl(\Big(\sum_{I,J\in\mathcal{D}}\sum_{i,j=1}^{d-1}\lambda_{I,i}^*\lambda_{J,j}\cdot \langle f_{I,i},f_{J,j}\rangle e_{I,i}\otimes e_{J,j}\Big)^{k}\biggr)\\
			&=\sum_{I_1,J_1,\cdots,I_k,J_k\in\mathcal{D}}\sum_{ i_1,j_1,\cdots,i_k,j_k=1}^{d-1} \lambda_{I_1,i_1}^*\lambda_{J_1,j_1}\cdots \lambda_{I_k,i_k}^*\lambda_{J_k,j_k} \\
			&\quad\quad\quad\cdot\prod_{s=1}^k \langle f_{I_s,i_s},f_{J_s,j_s}\rangle \langle e_{J_s,j_s},e_{I_{s+1},i _{s+1}}\rangle,
		\end{aligned}
	\end{equation}
	where we set $I_{k+1}=I_1$ and $i_{k+1}=i_1$. Note that 
	\begin{equation*}
		\begin{aligned}
			\bigg|\prod_{s=1}^k \langle f_{I_s,i_s},f_{J_s,j_s}\rangle \langle e_{J_s,j_s},e_{I_{s+1},i_{s+1}}\rangle\bigg|\le \prod_{s=1}^k \bigg\langle \frac{\mathbbm{1}_{I_s}}{|I_s|^{1/2}},\frac{\mathbbm{1}_{J_s}}{|J_s|^{1/2}}\bigg\rangle \bigg\langle \frac{\mathbbm{1}_{J_s}}{|J_s|^{1/2}},\frac{\mathbbm{1}_{I_{s+1}}}{|I_{s+1}|^{1/2}}\bigg\rangle.
		\end{aligned}
	\end{equation*}
	Then we have
	\begin{equation}\label{mp1}
		\begin{aligned}
			\|A\|^p_{S_p(L_2(\mathbb{R}))}
			&\leq \sum_{I_1,J_1,\cdots,I_k,J_k\in\mathcal{D}}\sum_{ i_1,j_1,\cdots,i_k,j_k=1}^{d-1} |\lambda_{I_1,i_1}^*| |\lambda_{J_1,j_1}|\cdots |\lambda_{I_k,i_k}^*| |\lambda_{J_k,j_k}|\\
			&\quad\quad\quad \cdot\prod_{s=1}^k \bigg\langle \frac{\mathbbm{1}_{I_s}}{|I_s|^{1/2}},\frac{\mathbbm{1}_{J_s}}{|J_s|^{1/2}}\bigg\rangle \bigg\langle \frac{\mathbbm{1}_{J_s}}{|J_s|^{1/2}},\frac{\mathbbm{1}_{I_{s+1}}}{|I_{s+1}|^{1/2}}\bigg\rangle\\
			&=\sum_{I_1,I_2,\cdots,I_{2k-1},I_{2k}\in\mathcal{D}}\sum_{i_1,i_2,\cdots,i_{2k-1},i_{2k}=1}^{d-1} |\lambda_{I_1,i_1}| |\lambda_{I_2,i_2}|\cdots |\lambda_{I_{2k-1},i_{2k-1}}| |\lambda_{I_{2k},i_{2k}}| \\
			&\quad\quad\quad\,\,\,\cdot\prod_{s=1}^{2k} \bigg\langle \frac{\mathbbm{1}_{I_s}}{|I_s|^{1/2}},\frac{\mathbbm{1}_{I_{s+1}}}{|I_{s+1}|^{1/2}}\bigg\rangle, 
		\end{aligned}
	\end{equation}
	where we set $I_{2k+1}=I_1$. Consider the following martingale paraproduct
	$$  C= \sum_{I\in \mathcal{D}}\sum_{i=1}^{d-1}|\lambda_{I,i}|^{1/2}\cdot h_I^i\otimes\frac{\mathbbm{1}_{I}}{|I|^{1/2}}. $$
	By the same calculation as in \eqref{mp}, one has
	\begin{equation*}
		\begin{aligned}
			&\|C\|^{2p}_{S_{2p}(L_2(\mathbb{R}))}\\
			&=\sum_{I_1,I_2,\cdots,I_{2k-1},I_{2k}\in\mathcal{D}}\sum_{i_1,i_2,\cdots,i_{2k-1},i_{2k}=1}^{d-1} (|\lambda_{I_1,i_1}|^{1/2})^2 (|\lambda_{I_2,i_2}|^{1/2})^2\cdots (|\lambda_{I_{2k-1},i_{2k-1}}|^{1/2})^2 (|\lambda_{I_{2k},i_{2k}}|^{1/2})^2\\
			&\quad\quad\quad\,\,\, \cdot \prod_{s=1}^{2k} \bigg\langle \frac{\mathbbm{1}_{I_s}}{|I_s|^{1/2}},\frac{\mathbbm{1}_{I_{s+1}}}{|I_{s+1}|^{1/2}}\bigg\rangle.
		\end{aligned}
	\end{equation*}
	Then by \eqref{mp1} and Theorem \ref{lem2.1},
	$$ \|A\|^p_{S_p(L_2(\mathbb{R}))}\leq \|C\|^{2p}_{S_{2p}(L_2(\mathbb{R}))} \approx_{d, p} \sum_{I\in\mathcal{D}}\sum_{i=1}^{d-1}(|\lambda_{I,i}|^{1/2})^{2p}=\sum_{I\in\mathcal{D}}\sum_{i=1}^{d-1}|\lambda_{I,i}|^p.  $$
	This proves Lemma \ref{nonNWOpre1} for all even numbers $p$. Therefore by interpolation, the proof of Lemma \ref{nonNWOpre1} is finished.
\end{proof}

Now we extend Lemma \ref{nonNWOpre1} to the semicommutative setting.
\begin{lemma}\label{nonNWOpre}
	Let $0<p<\infty$. Assume that $\{e_{I,i}\}_{I\in\mathcal{D},1\le i\le d-1}$ and $\{f_{I,i}\}_{I\in\mathcal{D},1\le i\le d-1}$ are function sequences in $L_2(\mathbb{R})$ satisfying $\mathrm{supp}e_{I,i}, \,\mathrm{supp}f_{I,i}\subseteq I$ and
	$\|e_{I,i}\|_{\infty},\,\|f_{I,i}\|_{\infty}\le |I|^{-\frac{1}{2}}$. Define
	\begin{equation*}
		A(f)=\sum_{I\in \mathcal{D}}\sum_{i=1}^{d-1} \lambda_{I,i} \langle e_{I,i},f\rangle f_{I,i},\quad \forall f\in L_2(\mathbb{R},L_2(\mathcal{M})),
	\end{equation*}
	where $\{\lambda_{I,i}\}_{I\in\mathcal{D},1\le i\le d-1}\subset L_p(\mathcal{M})$.
	Then
	\begin{equation*}
		\begin{aligned}
			\|A\|^p_{L_p(\mathcal{N})}\lesssim_{d,p}	\sum_{I\in\mathcal{D}}\sum_{i=1}^{d-1}\|\lambda_{I,i}\|^p_{L_p(\mathcal{M})}.
		\end{aligned}
	\end{equation*}
\end{lemma}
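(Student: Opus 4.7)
The plan is to adapt the proof of Lemma \ref{nonNWOpre1} by replacing scalar absolute values with $L_p(\mathcal{M})$-norms, with the key new ingredient being the noncommutative Hölder inequality. First observe that $A$ can be rewritten as
\begin{equation*}
A = \sum_{I\in\mathcal{D}}\sum_{i=1}^{d-1} \lambda_{I,i}\cdot (f_{I,i}\otimes e_{I,i}),
\end{equation*}
in the notation of \eqref{XT}, so that $\|\lambda_{I,i}\cdot(f_{I,i}\otimes e_{I,i})\|_{L_p(\mathcal{N})} = \|\lambda_{I,i}\|_{L_p(\mathcal{M})}\|f_{I,i}\|_2\|e_{I,i}\|_2 \le \|\lambda_{I,i}\|_{L_p(\mathcal{M})}$. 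The case $0<p\le 1$ then follows immediately from the $p$-triangle inequality in $L_p(\mathcal{N})$, exactly as in the first step of Lemma \ref{nonNWOpre1}.

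For the case $p=2k$ with $k\in\mathbb{N}_+$, I would mimic the trace expansion \eqref{mp} and \eqref{mp1}. Since $(f_{I,i}\otimes e_{I,i})^* = e_{I,i}\otimes f_{I,i}$ and $(x_1\cdot T_1)(x_2\cdot T_2) = (x_1 x_2)\cdot(T_1 T_2)$, the expansion $(A^*A)^k$ produces sums over tuples $(I_s,i_s,J_s,j_s)$ weighted by scalar inner products $\langle f_{I_s,i_s},f_{J_s,j_s}\rangle\langle e_{J_s,j_s},e_{I_{s+1},i_{s+1}}\rangle$, multiplied by operator products $\lambda_{I_1,i_1}^*\lambda_{J_1,j_1}\cdots\lambda_{I_k,i_k}^*\lambda_{J_k,j_k}\in L_1(\mathcal{M})$. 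Applying $\mathrm{Tr}\otimes\tau$ yields
\begin{equation*}
\|A\|_{L_p(\mathcal{N})}^p = \sum \Big(\prod_s \langle f_{I_s,i_s},f_{J_s,j_s}\rangle \langle e_{J_s,j_s},e_{I_{s+1},i_{s+1}}\rangle\Big) \tau\big(\lambda_{I_1,i_1}^*\lambda_{J_1,j_1}\cdots\lambda_{I_k,i_k}^*\lambda_{J_k,j_k}\big),
\end{equation*}
with $I_{k+1}=I_1$, $i_{k+1}=i_1$. The main obstacle lies in bounding this trace term in the absence of commutativity: the direct substitution $|\lambda|$ used in \eqref{mp1} is no longer available. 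The resolution is the noncommutative Hölder inequality, which gives
\begin{equation*}
|\tau(\lambda_{I_1,i_1}^*\lambda_{J_1,j_1}\cdots\lambda_{I_k,i_k}^*\lambda_{J_k,j_k})| \le \prod_{s=1}^k \|\lambda_{I_s,i_s}\|_{L_p(\mathcal{M})} \|\lambda_{J_s,j_s}\|_{L_p(\mathcal{M})},
\end{equation*}
since there are $2k=p$ factors, each in $L_p(\mathcal{M})$.

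Combining this with the scalar estimate $|\langle f_{I,i},f_{J,j}\rangle| \le \langle \mathbbm{1}_I/|I|^{1/2},\mathbbm{1}_J/|J|^{1/2}\rangle$ from Lemma \ref{nonNWOpre1}, we arrive at an inequality of the form \eqref{mp1} with scalar $|\lambda_{I,i}|$ replaced by $\|\lambda_{I,i}\|_{L_p(\mathcal{M})}$. Setting
\begin{equation*}
C = \sum_{I\in\mathcal{D}}\sum_{i=1}^{d-1} \|\lambda_{I,i}\|_{L_p(\mathcal{M})}^{1/2} \cdot h_I^i \otimes \frac{\mathbbm{1}_I}{|I|^{1/2}},
\end{equation*}
the same trace expansion together with Theorem \ref{lem2.1} then yields
\begin{equation*}
\|A\|_{L_p(\mathcal{N})}^p \le \|C\|_{S_{2p}(L_2(\mathbb{R}))}^{2p} \approx_{d,p} \sum_{I,i}\|\lambda_{I,i}\|_{L_p(\mathcal{M})}^p,
\end{equation*}
handling all even integer $p$. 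The remaining $p\in(1,\infty)\setminus 2\mathbb{N}$ are obtained by complex interpolation between the endpoint $p=1$ (from step one) and $p=2k$ for $k$ large, applied to the linear map $(\lambda_{I,i})\mapsto A$ between $\ell_p$-direct sums of $L_p(\mathcal{M})$ and $L_p(\mathcal{N})$, exactly as in the scalar case at the end of Lemma \ref{nonNWOpre1}.
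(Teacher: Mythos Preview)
Your proof is correct and follows essentially the same route as the paper: the case $0<p\le 1$ by the $p$-triangle inequality, the case $p=2k$ by the trace expansion combined with the noncommutative H\"older inequality, and interpolation for the remaining values. The only cosmetic difference is that after obtaining the scalar upper bound \eqref{mp1} with $|\lambda_{I,i}|$ replaced by $\|\lambda_{I,i}\|_{L_p(\mathcal{M})}$, the paper introduces $B=\sum_{I,i}\|\lambda_{I,i}\|_{L_p(\mathcal{M})}\cdot \frac{\mathbbm{1}_I}{|I|^{1/2}}\otimes\frac{\mathbbm{1}_I}{|I|^{1/2}}$ and invokes the already-proved scalar Lemma~\ref{nonNWOpre1}, whereas you reintroduce the paraproduct $C$ and apply Theorem~\ref{lem2.1} directly; since Lemma~\ref{nonNWOpre1} is itself proved via $C$ and Theorem~\ref{lem2.1}, the two packagings are equivalent.
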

\begin{proof}
	Note that $A$ can be rewritten as 
	$$A=\sum_{I\in \mathcal{D}}\sum_{i=1}^{d-1}\lambda_{I,i}\cdot f_{I,i}\otimes e_{I,i}. $$
	When $0<p\le 1$, by the triangle inequality, one has
	\begin{equation*}
		\begin{aligned}
			\|A\|^p_{L_p(\mathcal{N})}{}&\le \sum_{I\in\mathcal{D}}\sum_{i=1}^{d-1} \|\lambda_{I,i}\|^p_{L_p(\mathcal{M})}\|f_{I,i}\otimes e_{I,i}\|^p_{S_p(L_2(\mathbb{R}))}\\
			&\leq \sum_{I\in\mathcal{D}}\sum_{i=1}^{d-1} \|\lambda_{I,i}\|^p_{L_p(\mathcal{M})}\|f_{I,i}\|^p_{L_2(\mathbb{R})}\|e_{I,i}\|^p_{L_2(\mathbb{R})}\le\sum_{I\in\mathcal{D}}\sum_{i=1}^{d-1} \|\lambda_{I,i}\|^p_{L_p(\mathcal{M})}.
		\end{aligned}
	\end{equation*}
	When $p=2k$, where $k\in\mathbb{N}_+$, by the Hölder inequality we calculate that
	\begin{equation}\label{mp2}
		\begin{aligned}
			{}&\|A\|^p_{L_p(\mathcal{N})}\\
			&=(\mathrm{Tr}\otimes \tau) \biggl(\Big|\sum_{I\in\mathcal{D}}\sum_{i=1}^{d-1}\lambda_{I,i}\cdot f_{I,i}\otimes e_{I,i}\Big|^{2k}\biggr)\\
			&=(\mathrm{Tr}\otimes \tau) \biggl(\Big(\sum_{I,J\in\mathcal{D}}\sum_{i,j=1}^{d-1}\lambda_{I,i}^*\lambda_{J,j}\cdot \langle f_{I,i},f_{J,j}\rangle e_{I,i}\otimes e_{J,j}\Big)^{k}\biggr)\\
			&=\sum_{I_1,J_1,\cdots,I_k,J_k\in\mathcal{D}}\sum_{ i_1,j_1,\cdots,i_k,j_k=1}^{d-1} \tau(\lambda_{I_1,i_1}^*\lambda_{J_1,j_1}\cdots \lambda_{I_k,i_k}^*\lambda_{J_k,j_k}) \prod_{s=1}^k \langle f_{I_s,i_s},f_{J_s,j_s}\rangle \langle e_{J_s,j_s},e_{I_{s+1},i_{s+1}}\rangle\\
			&\le \sum_{I_1,J_1,\cdots,I_k,J_k\in\mathcal{D}} \sum_{ i_1,j_1,\cdots,i_k,j_k=1}^{d-1} \prod_{s=1}^k \|\lambda_{I_s,i_s}\|_{L_{2k}(\mathcal{M})}\|\lambda_{J_s,j_s}\|_{L_{2k}(\mathcal{M})}\left|\langle f_{I_s,i_s},f_{J_s,j_s}\rangle \langle e_{J_s,j_s},e_{I_{s+1},i_{s+1}}\rangle\right|\\
			&\leq \sum_{I_1,J_1,\cdots,I_k,J_k\in\mathcal{D}} \sum_{ i_1,j_1,\cdots,i_k,j_k=1}^{d-1} \prod_{s=1}^k \|\lambda_{I_s,i_s}\|_{L_{2k}(\mathcal{M})}\|\lambda_{J_s,j_s}\|_{L_{2k}(\mathcal{M})} \\
			&\quad\quad\quad\cdot\prod_{s=1}^k \bigg\langle \frac{\mathbbm{1}_{I_s}}{|I_s|^{1/2}},\frac{\mathbbm{1}_{J_s}}{|J_s|^{1/2}}\bigg\rangle \bigg\langle \frac{\mathbbm{1}_{J_s}}{|J_s|^{1/2}},\frac{\mathbbm{1}_{I_{s+1}}}{|I_{s+1}|^{1/2}}\bigg\rangle,
		\end{aligned}
	\end{equation}
	where we set $I_{k+1}=I_1$ and $i_{k+1}=i_1$. Now we define
	\begin{equation*}
		B=\sum_{I\in \mathcal{D}}\sum_{i=1}^{d-1}\|\lambda_{I,i}\|_{L_{p}(\mathcal{M})}\cdot \frac{\mathbbm{1}_I}{|I|^{1/2}}\otimes \frac{\mathbbm{1}_I}{|I|^{1/2}}.
	\end{equation*}
	Thus by \eqref{mp2} and Lemma \ref{nonNWOpre1}, one has
	\begin{equation}\label{BlambdaI}
		\|A\|^p_{L_p(\mathcal{N})}\leq \|B\|^p_{S_p(L_2(\mathbb{R}))}\lesssim_{d,p} \sum_{I\in\mathcal{D}}\sum_{i=1}^{d-1}\|\lambda_{I,i}\|^p_{L_p(\mathcal{M})}.
	\end{equation}
		Therefore by using interpolation, for any $1< p<\infty$, we have
		\begin{equation*}
			\begin{aligned}
				\|A\|^p_{L_p(\mathcal{N})}\lesssim_{d,p}	\sum_{I\in\mathcal{D}}\sum_{i=1}^{d-1}\|\lambda_{I,i}\|^p_{L_p(\mathcal{M})}.
			\end{aligned}
		\end{equation*}
		This completes the proof.
	\end{proof}

\begin{proof}[Proof of the Sufficiency of Theorem \ref{thm1.2}]
	For any $I\in\mathcal{D}$ and $1\le i\le d-1$, let 
	\begin{equation*}
		e_{I,i}=\frac{\mathbbm{1}_{I}}{|I|^{1/2}},\quad f_{I,i}=h_I^i,\quad \text{and}\quad \lambda_{I,i}=\frac{\langle h_I^i,b\rangle}{|I|^{1/2}}.
	\end{equation*}
One checks that $\mathrm{supp}e_{I,i}, \,\mathrm{supp}f_{I,i}\subseteq I$ and
$\|e_{I,i}\|_{\infty},\,\|f_{I,i}\|_{\infty}\le |I|^{-\frac{1}{2}}$. Hence from Lemma \ref{nonNWOpre} we have
\begin{equation*}
	\|\pi_b\|^p_{L_p(\mathcal{N})}\lesssim_{d,p}	\sum_{I\in\mathcal{D}}\sum_{i=1}^{d-1}\|\lambda_{I,i}\|^p_{L_p(\mathcal{M})}=\|b\|^p_{\pmb{B}_p^d(\mathbb{R},\M)}.
\end{equation*}
The proof is finished.
\end{proof}

\subsection{The Necessity of Theorem \ref{thm1.2}}\label{Proof of the Sufficiency of theorem 1.2}
We divide the proof into two cases: $p\ge 1$ and $0<p<1$. Each one will be stated and proved in Propositions \ref{Case 4} and \ref{Case 5}, respectively. For the first one, the proof is easier and relies on the following elementary lemma.

\begin{lemma}\label{lem3.1}
	Let $1\le p<\infty$ and $T\in L_p(\mathcal{N})$. $E=(E_{I,i})_{I\in\mathcal{D},1\le i\le d-1}$ is defined as the block diagonal of $T$, where for $I\in\mathcal{D}$ and $1\le i\le d-1$, $E_{I,i}:L_2(\mathcal{M})\to L_2(\mathcal{M})$ is given by
	\[\langle E_{I,i}x,y\rangle=\langle Th_I^i\otimes x,h_I^i\otimes y\rangle,\,\,\,\,\forall x,y\in L_2(\mathcal{M}).\] Then 
	\begin{equation*}
		\|T\|^p_{L_p(\mathcal{N})}\ge \sum_{I\in \mathcal{D}}\sum_{i=1}^{d-1}\|E_{I,i}\|^p_{L_p(\mathcal{M})}.
	\end{equation*}
\end{lemma}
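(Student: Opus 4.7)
The plan is to realize the map $T \mapsto E = (E_{I,i})$ as a pinching by a family of mutually orthogonal rank-one projections on $L_2(\mathbb{R})$, and then invoke the standard fact that pinching is contractive on $L_p$ for $1 \le p < \infty$. Concretely, for each $I \in \mathcal{D}$ and $1 \le i \le d-1$, let $P_{I,i} \in B(L_2(\mathbb{R}))$ be the rank-one orthogonal projection onto $\mathbb{C}\,h_I^i$. Since $\{h_I^i\}_{I \in \mathcal{D}, 1 \le i \le d-1}$ is an orthonormal basis of $L_2(\mathbb{R})$, the $P_{I,i}$ are mutually orthogonal and sum strongly to $1_{L_2(\mathbb{R})}$, and $\Tr(P_{I,i}) = 1$.

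Next, I would introduce the subalgebra $\mathcal{N}_0 = \bigoplus_{I,i} (\mathbb{C}\,P_{I,i}) \otimes \mathcal{M} \subset \mathcal{N}$ and the pinching map $\mathcal{E}\colon \mathcal{N} \to \mathcal{N}_0$ defined by $\mathcal{E}(T) = \sum_{I,i} (P_{I,i} \otimes 1_\mathcal{M}) T (P_{I,i} \otimes 1_\mathcal{M})$. Standard noncommutative probability (the finite-subfamily truncations being completely positive unital trace-preserving projections, combined with strong-operator convergence) shows that $\mathcal{E}$ is a normal trace-preserving conditional expectation, hence extends to a contraction on $L_p(\mathcal{N})$ for all $1 \le p \le \infty$; in particular
\[
\|T\|_{L_p(\mathcal{N})}^p \;\ge\; \|\mathcal{E}(T)\|_{L_p(\mathcal{N})}^p .
\]
The defining identity $\langle E_{I,i} x, y\rangle = \langle T(h_I^i \otimes x), h_I^i \otimes y\rangle$ shows exactly that the $(I,i)$-summand of $\mathcal{E}(T)$ equals $P_{I,i} \otimes E_{I,i}$, so $\mathcal{E}(T) = \sum_{I,i} P_{I,i} \otimes E_{I,i}$ (with each $E_{I,i}$ a priori in the predual $L_p(\mathcal{M})$ via this identification).

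Finally, the mutual orthogonality of the $P_{I,i}$ makes $\mathcal{E}(T)$ genuinely block diagonal, so $|\mathcal{E}(T)|^p = \sum_{I,i} P_{I,i} \otimes |E_{I,i}|^p$, and applying $\Tr \otimes \tau$ together with $\Tr(P_{I,i}) = 1$ yields
\[
\|\mathcal{E}(T)\|_{L_p(\mathcal{N})}^p \;=\; \sum_{I \in \mathcal{D}} \sum_{i=1}^{d-1} \|E_{I,i}\|_{L_p(\mathcal{M})}^p .
\]
Combining this with the pinching inequality gives the claim. The only mildly delicate point is handling the infinite index set when verifying that the pinching $\mathcal{E}$ is well defined and contractive on $L_p(\mathcal{N})$; this is routine once one truncates to a finite subfamily $F$ of $\mathcal{D} \times \{1,\dots,d-1\}$, uses the trace-preserving contractivity of each finite pinching, and passes to the limit along the directed family of finite $F$'s, exploiting the strong convergence $\sum_{(I,i) \in F} P_{I,i} \to 1$ on $L_2(\mathbb{R})$.
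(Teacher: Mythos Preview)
Your proof is correct and follows exactly the paper's approach: the paper simply observes that $E$ is a trace-preserving conditional expectation and hence contractive on $L_p$. You have spelled out the same argument in more detail (mutually orthogonal rank-one projections, pinching, block-diagonal norm computation), but the idea is identical.
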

\begin{proof}
	Note that $E$ is a trace preserving conditional expectation, and thereby contractive.
\end{proof}

\begin{proposition}\label{Case 4}
	If $p\ge 1$ and $\pi_b\in L_p(\mathcal{N})$, then $b\in \pmb{B}_p^d(\mathbb{R},\M)$.
\end{proposition}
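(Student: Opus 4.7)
The plan is to apply Lemma~\ref{lem3.1} not to $\pi_b$ itself (whose Haar-basis block diagonal is zero, since by (\ref{pib}) $\pi_b(h_I^i\otimes x)$ lies in $\overline{\mathrm{span}}\{h_{I'}^{i'}\otimes L_2(\mathcal{M}):I'\subsetneq I\}$, orthogonal to $h_I^i\otimes L_2(\mathcal{M})$) but to the positive operator $\pi_b\pi_b^*$ at the exponent $p/2$, where the diagonal blocks will be explicit and nonzero.

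The first step is to compute the diagonal blocks of $\pi_b\pi_b^*$. Starting from the adjoint formula (\ref{pistar}), which gives $\pi_b^*(h_J^j\otimes y)=\frac{\mathbbm{1}_J}{|J|}\langle h_J^j,b\rangle^* y$, I would then apply $\pi_b$ via (\ref{pib}) and use the averaging identity $\langle\frac{\mathbbm{1}_I}{|I|},\frac{\mathbbm{1}_J}{|J|}\rangle=\frac{|I\cap J|}{|I||J|}$. Projecting onto the $h_J^j$-coefficient, the orthonormality of $\{h_I^i\}$ collapses the double sum to the single surviving pair $(I,i)=(J,j)$, producing
\[
 E_{J,j}(\pi_b\pi_b^*)(y)\;=\;\tfrac{1}{|J|}\,\langle h_J^j,b\rangle\langle h_J^j,b\rangle^*\,y.
\]
Since $\|aa^*\|_{L_{p/2}(\mathcal{M})}=\|a\|_{L_p(\mathcal{M})}^2$, this gives $\|E_{J,j}\|_{L_{p/2}}^{p/2}=\|\langle h_J^j,b\rangle\|_{L_p(\mathcal{M})}^p/|J|^{p/2}$, and summing over $(J,j)\in\mathcal{D}\times\{1,\dots,d-1\}$ reproduces exactly $\|b\|_{\pmb{B}_p^d(\mathbb{R},\mathcal{M})}^p$.

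For $p\ge 2$ the exponent $p/2\ge 1$ falls in the range of validity of Lemma~\ref{lem3.1}, and applying it to $T=\pi_b\pi_b^*\in L_{p/2}(\mathcal{N})$ together with the identity $\|\pi_b\pi_b^*\|_{L_{p/2}(\mathcal{N})}^{p/2}=\|\pi_b\|_{L_p(\mathcal{N})}^p$ immediately yields $\|b\|_{\pmb{B}_p^d}^p\le\|\pi_b\|_{L_p(\mathcal{N})}^p$. For $1\le p<2$, where $p/2<1$ and the conditional expectation underlying Lemma~\ref{lem3.1} is no longer contractive on the quasi-Banach $L_{p/2}$, I would instead argue by duality: the sufficiency $\|\pi_{b'}\|_{L_{p'}(\mathcal{N})}\lesssim_{d,p'}\|b'\|_{\pmb{B}_{p'}^d}$ (already established via Lemma~\ref{nonNWOpre} for every $p'\in(0,\infty)$, in particular for $p'=p/(p-1)>2$), combined with the trace identity $(\mathrm{Tr}\otimes\tau)(\pi_{b'}^*\pi_b)=\sum_{I,i}\tau(\langle h_I^i,b'\rangle^*\langle h_I^i,b\rangle)/|I|$ (which comes from the same diagonal computation run with $\pi_{b'}^*$ in place of $\pi_b^*$), reduces the problem to the canonical $\ell_p(L_p(\mathcal{M}))$--$\ell_{p'}(L_{p'}(\mathcal{M}))$ duality of $\pmb{B}_p^d$ against $\pmb{B}_{p'}^d$.

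The main obstacle I anticipate is the bookkeeping in the first step: one must carefully trace through the two nested Haar expansions inside $\pi_b\pi_b^*(h_J^j\otimes y)$ and verify that the single surviving pair is $(I,i)=(J,j)$, with coefficient exactly $1/|J|$, so that the $\pmb{B}_p^d$-norm of $b$ emerges cleanly after summation. Once this cancellation is secured, both the $p\ge 2$ step and the duality bridge for $1\le p<2$ are routine assemblies of $L_{p/2}$- and $L_{p'}$-norms.
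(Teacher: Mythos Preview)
Your approach is correct for $p>1$ but has a genuine gap at $p=1$, and it differs substantially from the paper's proof.

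For $p\ge 2$ your computation of the Haar-diagonal of $\pi_b\pi_b^*$ is right: only the pair $(I,i)=(J,j)$ survives, the coefficient $\langle\frac{\mathbbm{1}_J}{|J|},\frac{\mathbbm{1}_J}{|J|}\rangle=|J|^{-1}$ appears exactly as you claim, and Lemma~\ref{lem3.1} applied at exponent $p/2\ge 1$ gives $\|b\|_{\pmb{B}_p^d}^p\le\|\pi_b\pi_b^*\|_{L_{p/2}(\mathcal N)}^{p/2}=\|\pi_b\|_{L_p(\mathcal N)}^p$. For $1<p<2$ the duality argument also works: the orthogonality $(\pi_{b'}^{I,i})^*\pi_b^{J,j}=0$ for $(I,i)\ne(J,j)$ (see (\ref{pibiijj})) gives your trace identity, and since $p'=p/(p-1)\in(2,\infty)$ the sufficiency from Lemma~\ref{nonNWOpre} is available.

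The gap is at $p=1$: then $p'=\infty$, and Lemma~\ref{nonNWOpre} was only proved for $0<p'<\infty$. Worse, the endpoint estimate $\|\pi_{b'}\|_{B(L_2)}\lesssim\sup_{I,i}\|\langle h_I^i,b'\rangle\|_{\mathcal M}/|I|^{1/2}$ is \emph{false}: if each $\langle h_I^i,b'\rangle$ has norm $|I|^{1/2}$ then $\|\pi_{b'}f\|_{L_2}^2=\sum_{I,i}\|\langle h_I^i,b'\rangle\langle\tfrac{\mathbbm{1}_I}{|I|},f\rangle\|_{L_2(\mathcal M)}^2$ is bounded below by $(d-1)\sum_k\|f_k\|_{L_2}^2$, which diverges for any nonzero $f\in L_2$. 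So the duality test against $\pi_{b'}$ cannot close the case $p=1$.

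The paper avoids this by a different device that works uniformly for all $p\ge 1$: it introduces the bounded shift $R(h_I^i\otimes x)=h_{\tilde I}^i\otimes x$ (with $\|R\|\le\sqrt d$) and applies Lemma~\ref{lem3.1} \emph{once}, at exponent $p$, to $\pi_bR$. Because $\pi_b$ applied to $h_{\tilde I}^i$ now produces a nonzero $h_I^i$-component, the Haar-diagonal of $\pi_bR$ is $E_{I,i}=\frac{\omega^{-iq}}{\sqrt d\,|I|^{1/2}}\langle h_I^i,b\rangle$, and the $\pmb{B}_p^d$-norm drops out directly. This single-step argument sidesteps both the case split and the $p=1$ obstruction; you could adopt the same $R$ to repair your proof, or alternatively, in the duality picture for $p=1$, test against $T=D R^*$ with $D$ a bounded Haar-block-diagonal operator rather than against a paraproduct $\pi_{b'}$.
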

\begin{proof}
	First, define for any $I\in \mathcal{D}$, $1\le i\le d-1$, $x\in L_2(\mathcal{M})$,
	\begin{equation*}
		\begin{aligned}
			R:L_2(\mathbb{R},L{}&_2(\mathcal{M}))\to L_2(\mathbb{R},L_2(\mathcal{M}))\\&
			h_I^i\otimes x\mapsto h_{\tilde{I}}^i\otimes x,
		\end{aligned}
	\end{equation*}
	where $\tilde{I}$ is the parent interval of $I$. Then $R$ is well-defined and bounded. Indeed, for any $ f\in L_2(\mathbb{R},L_2(\mathcal{M}))$,
	\begin{equation}\label{Rnorm}
		\begin{aligned}
			\|Rf\|_{L_2(\mathbb{R},L_2(\mathcal{M}))}{}&=\biggl\|R\Bigl(\sum_{I\in \mathcal{D}}\sum_{i=1}^{d-1}\langle h_I^i,f\rangle h_I^i\Bigr)\biggr\|_{L_2(\mathbb{R},L_2(\mathcal{M}))}=\biggl\|\sum_{I\in \mathcal{D}}\sum_{i=1}^{d-1}\langle h_I^i,f\rangle h_{\tilde{I}}^i\biggr\|_{L_2(\mathbb{R},L_2(\mathcal{M}))}\\&=\biggl\|\sum_{\tilde{I}\in \mathcal{D}}\sum_{i=1}^{d-1}\Bigl(\sum_{I\subset \tilde{I}\atop |I|=d^{-1}|\tilde{I}|}\langle h_I^i,f\rangle\Bigr) h_{\tilde{I}}^i\biggr\|_{L_2(\mathbb{R},L_2(\mathcal{M}))}\\&=\biggl(\sum_{\tilde{I}\in \mathcal{D}}\sum_{i=1}^{d-1}\Bigl\|\sum_{I\subset \tilde{I}\atop |I|=d^{-1}|\tilde{I}|}\langle h_I^i,f\rangle \Bigr\|^2_{L_2(\mathcal{M})}\biggr)^{1/2}\\
			&\le \biggl(d\sum_{\tilde{I}\in \mathcal{D}}\sum_{i=1}^{d-1}\sum_{I\subset \tilde{I}\atop |I|=d^{-1}|\tilde{I}|}\|\langle h_I^i,f\rangle \|^2_{L_2(\mathcal{M})}\biggr)^{1/2}=\sqrt{d}\|f\|_{L_2(\mathbb{R},L_2(\mathcal{M}))}\,.
		\end{aligned}
	\end{equation}
	Now let $E=(E_{I,i})_{I\in\mathcal{D},1\le i\le d-1}$ be the block diagonal of $\pi_bR$ defined in Lemma \ref{lem3.1}. Then for $ x,y\in L_2(\mathcal{M})$, we have
	\begin{equation*}
		\begin{aligned}
			\langle E_{I,i}x,y\rangle{}&=\langle\pi_bR(h_I^i\otimes x),h_I^i\otimes y\rangle=\langle\pi_b(h_{\tilde{I}}^i\otimes x),h_I^i\otimes y\rangle\\&=\frac{\omega^{-iq}}{|\tilde{I}|^{1/2}}\langle \langle h_I^i,b\rangle x,y\rangle=\frac{\omega^{-iq}}{\sqrt{d}|I|^{1/2}}\langle \langle h_I^i,b\rangle x,y\rangle,
		\end{aligned}
	\end{equation*}
	where $\tilde{I}(q)=I, 1\le q\le d$. Thus $E_{I,i}=\frac{\omega^{-iq}}{\sqrt{d}|I|^{1/2}} \langle h_I^i,b\rangle$. Therefore, from \eqref{Rnorm} and Lemma \ref{lem3.1}, we get
	\begin{equation*}
		\begin{aligned}
			\|\pi_b\|^p_{L_p(\mathcal{N})}\ge \frac{1}{d^{p/2}} \|\pi_bR\|^p_{L_p(\mathcal{N})}\ge \frac{1}{d^{p/2}} \sum_{I\in \mathcal{D}}\sum_{i=1}^{d-1}\|E_{I,i}\|^p_{L_p(\mathcal{M}) )}{}&= \frac{1}{d^{p}}\sum_{I\in \mathcal{D}}\sum_{i=1}^{d-1}\biggl(\frac{\|\langle h_I^i,b\rangle\|_{L_p(\mathcal{M}) )}}{|I|^{1/2}}\biggr)^p\\&\gtrsim_{d,p} \|b\|^p_{\pmb{B}^d_p(\mathbb{R},\M)}.
		\end{aligned}
	\end{equation*}
	This yields the desired result.
\end{proof}

\begin{proposition}\label{Case 5}
	If $0<p<1$ and $\pi_b\in L_p(\mathcal{N})$, then $b\in \pmb{B}_p^d(\mathbb{R},\M)$.
\end{proposition}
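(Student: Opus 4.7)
For $0<p<1$, the argument of Proposition~\ref{Case 4} breaks down because the conditional expectation $E$ onto the Haar-diagonal is no longer contractive on $L_p(\mathcal N)$: on the $2\times 2$ rank-one matrix $T=\bigl(\begin{smallmatrix}1&1\\1&1\end{smallmatrix}\bigr)$ one has $\|E(T)\|_{S_p}^p=2>2^p=\|T\|_{S_p}^p$. My plan is therefore to replace the conditional expectation by \emph{genuine orthogonal projections}, which stay contractive on $L_p(\mathcal N)$ for every $p>0$ via the singular-value inequality $s_n(QTP)\leq s_n(T)$, and then combine the resulting scale-wise estimates using the $p$-triangle inequality that is available for $0<p<1$.

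Concretely, for each $n\in\mathbb Z$ and $1\leq i\leq d-1$ introduce the orthogonal projections
\[\tilde P_n=\sum_{I\in\mathcal D_n}P_{\mathbbm{1}_I/|I|^{1/2}},\qquad \tilde Q_n^i=\sum_{I\in\mathcal D_n}P_{h_I^i}\]
on $L_2(\mathbb R)$, tensored with the identity on $L_2(\mathcal M)$. A direct computation using \eqref{pib} and the orthogonality of the Haar wavelets across scales gives
\[\tilde Q_n^i\,\pi_b\,\tilde P_n=\sum_{I\in\mathcal D_n}|I|^{-1/2}\langle h_I^i,b\rangle\cdot\bigl(h_I^i\otimes \mathbbm{1}_I/|I|^{1/2}\bigr),\]
whose rank-one summands have both pairwise orthogonal inputs and pairwise orthogonal outputs. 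Hence by the orthogonal-direct-sum property of $L_p(\mathcal N)$, valid for every $p>0$,
\[\sum_{I\in\mathcal D_n}\frac{\|\langle h_I^i,b\rangle\|_{L_p(\mathcal M)}^p}{|I|^{p/2}}=\|\tilde Q_n^i\,\pi_b\,\tilde P_n\|_{L_p(\mathcal N)}^p\leq \|\pi_b\|_{L_p(\mathcal N)}^p.\]

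The main obstacle is assembling these per-scale, per-index bounds into the full Besov sum $\|b\|_{\pmb B_p^d(\mathbb R,\mathcal M)}^p$, since naive termwise summation over $n$ and $i$ diverges. Following the pattern of Pott--Smith and Pisier, I plan to aggregate the pieces $\tilde Q_n^i\,\pi_b\,\tilde P_n$ into a single amplified operator on $\ell_2(\mathbb Z\times\{1,\dots,d-1\})\otimes L_2(\mathbb R,L_2(\mathcal M))$ by attaching orthonormal ``label'' vectors $e_n\otimes e_i$. The resulting aggregate
\[\Pi_b:=\sum_{n,i}(e_n\otimes e_i)\otimes \tilde Q_n^i\,\pi_b\,\tilde P_n,\]
has, thanks to the label orthogonality, its $L_p$-norm raised to the $p$-th power equal to the full sum $\|b\|_{\pmb B_p^d(\mathbb R,\mathcal M)}^p$. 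The key remaining step is to dominate $\|\Pi_b\|_{L_p}$ by a constant multiple of $\|\pi_b\|_{L_p(\mathcal N)}$, which I expect to achieve by factoring $\Pi_b$ through iterated applications of the shift operator $R$ from Proposition~\ref{Case 4} combined with the $p$-triangle inequality $\|A+B\|_{S_p}^p\leq \|A\|_{S_p}^p+\|B\|_{S_p}^p$. Constructing this factorization so that both control estimates hold simultaneously, with a constant depending only on $d$ and $p$, is where I expect the main technical effort to lie.
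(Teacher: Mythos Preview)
Your per-scale estimate is correct: $\tilde Q_n^i\,\pi_b\,\tilde P_n$ is indeed a direct sum of rank-ones with mutually orthogonal inputs and outputs, and compression by orthogonal projections is contractive on $L_p(\mathcal N)$ for all $p>0$, so $\sum_{I\in\mathcal D_n}|I|^{-p/2}\|\langle h_I^i,b\rangle\|_{L_p(\mathcal M)}^p\leq\|\pi_b\|_{L_p(\mathcal N)}^p$. The difficulty, as you recognise, is assembling countably many such bounds into one.

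The aggregation plan, however, has a genuine obstruction. The only natural factorisation of your $\Pi_b$ through $\pi_b$ is $\Pi_b=Q\,(\mathrm{Id}_{\ell_2}\otimes\pi_b)\,P$ with $Q=\sum_{n,i}P_{e_n\otimes e_i}\otimes\tilde Q_n^i$ and $P=\sum_{n,i}P_{e_n\otimes e_i}\otimes\tilde P_n$; both $Q,P$ are projections, but $\|\mathrm{Id}_{\ell_2}\otimes\pi_b\|_{L_p}=\infty$, so the compression bound is vacuous. Any factorisation through a \emph{bounded} number of copies of $\pi_b$ (which is all that finitely many applications of the shift $R$ can produce) controls only finitely many scales at once, and the $p$-triangle inequality cannot convert infinitely many bounds of the form $\|\,\cdot\,\|_{L_p}^p\leq\|\pi_b\|_{L_p}^p$ into a single one without a divergent constant. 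The structural reason is that the input projections $\tilde P_n=\mathbb E_n$ have \emph{nested}, non-orthogonal ranges across $n$; there is no way to decouple the input side without paying for an infinite amplification.

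The paper's argument addresses precisely this point by replacing the averaging projections $\tilde P_n$ with martingale \emph{difference} projections $d_{n+1}$, which are mutually orthogonal. One fixes a period $N$ and, for $k=0,\dots,N-1$, forms
\[
\pi_{b,k}=\Bigl(\sum_m d_{Nm+k+2}\Bigr)\,\pi_b\,\Bigl(\sum_n d_{Nn+k+1}\Bigr),
\]
a genuine two-sided compression of $\pi_b$ by orthogonal projections, so $\|\pi_{b,k}\|_{L_p}\leq\|\pi_b\|_{L_p}$. Each $\pi_{b,k}$ splits into a ``minor diagonal'' $\pi_{b,k}^{(0)}$, which is block-diagonal and, summed over $k$, recovers a fixed fraction of $\|b\|_{\pmb B_p^d}^p$, plus an off-diagonal remainder $\pi_{b,k}^{(1)}$ whose $L_p^p$-norm is bounded by $C(d)\,d^{-Np/2}\,\|b\|_{\pmb B_p^d}^p$ thanks to the exponential decay of $\|\pi_b^{n,m}\|_{L_p}$ in the scale gap $m-n$. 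Choosing $N$ large makes the remainder absorbable and closes the inequality. This periodisation-plus-decay mechanism is the missing idea in your plan; without it, the input-side non-orthogonality of the $\tilde P_n$ cannot be overcome.
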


The remaining part of this subsection is devoted to the proof of Proposition \ref{Case 5}. We will follow the arguments in \cite{P} or \cite{PS}. To this end, define for any $m,n \in \mathbb{Z}$
\begin{equation}\label{pibnm}
	\pi_b^{n,m}=d_{m+1}\pi_bd_{n+1}.
\end{equation}
Recall that $d_kb=\sum\limits_{|I|=d^{-k+1}}\sum\limits_{i=1}^{d-1}h_I^i\langle h_I^i,b\rangle$. Thus for any $ f\in L_2(\mathbb{R},L_2(\mathcal{M}))$,
\begin{equation*}
	\pi_b^{n,m}(f)=\sum_{I\in \mathcal{D}_m\atop J\in \mathcal{D}_n}\sum_{i,j=1}^{d-1}\biggl\langle \frac{\mathbbm{1}_I}{|I|},h_J^j\biggr\rangle h_I^i \langle h_I^i,b\rangle \langle h_J^j,f\rangle.
\end{equation*}
If $m\le n$, then for $I\in \mathcal{D}_m$ and $ J\in \mathcal{D}_n$,
$$ \biggl\langle \frac{\mathbbm{1}_I}{|I|},h_J^j\biggr\rangle=0.  $$
It thus follows that $\pi_b^{n,m}=0$.

Let $b\in \pmb{B}^d_p(\mathbb{R},\M)$ and $N\ge 2$ be a fixed positive integer (to be chosen later). For $k=0,1,\cdots, N-1$, define
\begin{equation}\label{pibk}
	\pi_{b,k}=\sum_{n=-\infty}^{\infty}\sum_{m=-\infty}^{\infty}\pi_b^{Nn+k,Nm+k+1}=\sum_{m=-\infty}^{\infty}\sum_{n=-\infty}^{m}\pi_b^{Nn+k,Nm+k+1}.
\end{equation}
In addition, we define
\begin{equation*}\label{pibk0}
	\pi_{b,k}^{(0)}=\sum_{n=-\infty}^{\infty}\pi_b^{Nn+k,Nn+k+1},\quad\quad \pi_{b,k}^{(1)}=\sum_{m=-\infty}^{\infty}\sum_{n=-\infty}^{m-1}\pi_b^{Nn+k,Nm+k+1}.
\end{equation*}
Then
\begin{equation*}\label{pibksum}
	\pi_{b,k}=\pi_{b,k}^{(0)}+\pi_{b,k}^{(1)}.
\end{equation*}
$\pi_{b, k}^{(0)}$ is defined as the minor diagonal and will play an important role later. In the following, we are about to obtain the lower bound of $\|\pi_b\|_{L_p(\mathcal{N})}$ by $\|\pi_{b, k}^{(0)}\|_{L_p(\mathcal{N})}$, which will be the dominant term. The following lemma implies that $\|\pi_{b,k}^{(1)}\|_{L_p(\mathcal{N})}$ is the minor term  since $	\|\pi_b^{n,m}\|_{L_p(\mathcal{N})}$ shrinks rapidly when $m>n$.

\begin{lemma}\label{lem3.2}
	Let $b\in\pmb{B}^d_p(\mathbb{R},\M)$. If $m>n$ and $0<p<1$, then
	\begin{equation*}
		\|\pi_b^{n,m}\|^p_{L_p(\mathcal{N})}\le (d-1)d^{(n-m)p/2}\sum_{I\in \mathcal{D}_m}\sum_{i=1}^{d-1}\biggl(\frac{\|\langle h_I^i,b\rangle\|_{L_p(\mathcal{M})}}{|I|^{1/2}}\biggr)^p.
	\end{equation*}
\end{lemma}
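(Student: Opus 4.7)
The key observation is that for $m>n$, every $I\in\mathcal{D}_m$ is contained in a unique $J(I)\in\mathcal{D}_n$, and the Haar function $h_{J(I)}^j$ is constant on $I$ with value of modulus $|J(I)|^{-1/2}=d^{n/2}$; moreover $\langle\mathbbm{1}_I/|I|,h_J^j\rangle=0$ for any other $J\in\mathcal{D}_n$. Substituting the identity $(d_{n+1}f)|_I=\sum_{j=1}^{d-1}h_{J(I)}^j|_I\cdot\langle h_{J(I)}^j,f\rangle$ into the explicit formula for $\pi_b^{n,m}$ splits the operator into elementary pieces
\begin{equation*}
	\pi_b^{n,m}=\sum_{I\in\mathcal{D}_m}\sum_{i=1}^{d-1}\sum_{j=1}^{d-1}T_{I,i,j},\qquad T_{I,i,j}:=h_{J(I)}^j\big|_I\cdot\big(h_I^i\otimes h_{J(I)}^j\big)\otimes L_{\langle h_I^i,b\rangle},
\end{equation*}
where $h_I^i\otimes h_{J(I)}^j\in B(L_2(\mathbb{R}))$ denotes the rank-one operator.

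Since each $T_{I,i,j}$ is literally a scalar multiple of the tensor product of a rank-one operator on $L_2(\mathbb{R})$ with a left multiplication on $L_2(\mathcal{M})$, the identity $\|T\otimes L_x\|_{L_p(\mathcal{N})}=\|T\|_{S_p(L_2(\mathbb{R}))}\|x\|_{L_p(\mathcal{M})}$ recalled in Section \ref{pre2}, together with $\|h_I^i\otimes h_{J(I)}^j\|_{S_p}=\|h_I^i\|_2\|h_{J(I)}^j\|_2=1$, gives the exact evaluation
\begin{equation*}
	\|T_{I,i,j}\|_{L_p(\mathcal{N})}=d^{n/2}\cdot\|\langle h_I^i,b\rangle\|_{L_p(\mathcal{M})}.
\end{equation*}

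The proof then finishes by applying the $p$-triangle inequality $\|\sum_k x_k\|_{L_p}^p\le\sum_k\|x_k\|_{L_p}^p$, which is valid precisely because $0<p<1$. Summing the cube of the norms over $(I,i,j)$, collecting the factor $d-1$ from the $j$-summation, and rewriting $d^{np/2}=d^{(n-m)p/2}\cdot|I|^{-p/2}$ yields the desired bound. The only delicate point is the use of $0<p<1$: the $p$-triangle inequality is what allows the naive sum of norms to be sharp enough to exhibit the decay factor $d^{(n-m)p/2}$, and this is also why the lemma is stated only in this range (the analogous estimate for $p\ge 1$ would require a different, less lossy argument). Beyond this, the proof is essentially bookkeeping once one exploits that $h_{J(I)}^j$ is constant on every descendant $I\in\mathcal{D}_m$.
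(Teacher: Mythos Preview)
Your proof is correct and follows essentially the same route as the paper's: both decompose $\pi_b^{n,m}$ into rank-one elementary pieces $\langle h_I^i,b\rangle\cdot h_I^i\otimes h_{J(I)}^j$ (scaled by the constant value of $h_{J(I)}^j$ on $I$), compute each $L_p(\mathcal{N})$-norm exactly, and then apply the $p$-triangle inequality for $0<p<1$. The only cosmetic difference is that the paper indexes first by $J\in\mathcal{D}_n$ and then by descendants $I\in\mathcal{D}_m(J(q))$, whereas you index by $I$ and use its unique ancestor $J(I)$; the resulting computations are identical. (Minor slip: ``Summing the cube of the norms'' should read ``the $p$-th powers''.)
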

\begin{proof}
	Write $\pi_b^{n,m}$ in the following concrete form:
	\begin{equation}\label{pibnmsum}
		\begin{aligned}
			\pi_b^{n,m}{}&=\sum_{I\in \mathcal{D}_m\atop J\in \mathcal{D}_n}\sum_{i,j=1}^{d-1}\biggl\langle \frac{\mathbbm{1}_I}{|I|},h_J^j\biggr\rangle(\pi_b^{n,m})_{I,J}^{i,j}\\&=\sum_{J\in \mathcal{D}_n}\sum_{q=1}^d\sum_{I\in \mathcal{D}_m(J(q))}\sum_{i,j=1}^{d-1}\omega^{qj}\frac{1}{|J|^{1/2}}(\pi_b^{n,m})_{I,J}^{i,j},
		\end{aligned}
	\end{equation}
	where 
	\begin{equation*}
		(\pi_b^{n,m})_{I,J}^{i,j}=\langle h_I^i,b\rangle\cdot h_I^i\otimes h_J^j.
	\end{equation*}
	Hence
	\begin{equation*}
		\begin{aligned}
			\|\pi_b^{n,m}\|^p_{L_p(\mathcal{N})}{}&\le \sum_{J\in \mathcal{D}_n}\sum_{I\in \mathcal{D}_m(J)}\sum_{i,j=1}^{d-1}\biggl(\frac{\|(\pi_b^{n,m})_{I,J}^{i,j}\|_{L_p(\mathcal{N})}}{|J|^{1/2}}\biggr)^p\\&=\sum_{J\in \mathcal{D}_n}\sum_{I\in \mathcal{D}_m(J)}\sum_{i,j=1}^{d-1}\biggl(\frac{\|h_I^i\otimes h_J^j\|_{S_p(L_2(\mathbb{R}))}\|\langle h_I^i,b\rangle\|_{L_p(\mathcal{M})}}{|J|^{1/2}}\biggr)^p\\&=\sum_{J\in \mathcal{D}_n}\sum_{I\in \mathcal{D}_m(J)}\sum_{i,j=1}^{d-1}\biggl(\frac{\|\langle h_I^i,b\rangle\|_{L_p(\mathcal{M})}}{|J|^{1/2}}\biggr)^p\\&=(d-1)d^{(n-m)p/2}\sum_{I\in \mathcal{D}_m}\sum_{i=1}^{d-1}\biggl(\frac{\|\langle h_I^i,b\rangle\|_{L_p(\mathcal{M})}}{|I|^{1/2}}\biggr)^p.
		\end{aligned}
	\end{equation*}
	This finishes the proof.
\end{proof}

By virtue of Lemma \ref{lem3.2}, we estimate $\|\pi_{b,k}^{(1)}\|_{L_p(\mathcal{N})}$ as follows:
\begin{lemma}\label{cor3.12}
		We have $$\sum\limits_{k=0}^{N-1}\|\pi^{(1)}_{b,k}\|^p_{{L_p(\mathcal{N})}}\leq \frac{(d-1)d^{-p/2}}{d^{Np/2}-1}\|b\|^p_{\pmb{B}^d_p(\mathbb{R},\M)}.$$
	\end{lemma}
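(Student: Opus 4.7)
The proof proposal is as follows. Since $0<p<1$, the space $L_p(\mathcal{N})$ is a $p$-Banach space, so the $p$-triangle inequality gives
\[
\|\pi^{(1)}_{b,k}\|^p_{L_p(\mathcal{N})}
\;\leq\;\sum_{m\in\mathbb{Z}}\sum_{n\leq m-1}\|\pi_b^{Nn+k,\,Nm+k+1}\|^p_{L_p(\mathcal{N})}.
\]
Next I would apply Lemma \ref{lem3.2} to each block. Note that in the notation of Lemma \ref{lem3.2}, the pair $(n,m)$ is replaced by $(Nn+k,\,Nm+k+1)$, and the hypothesis ``$m>n$'' becomes $Nm+k+1>Nn+k$, which is automatic from $n\leq m-1$. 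Therefore
\[
\|\pi_b^{Nn+k,\,Nm+k+1}\|^p_{L_p(\mathcal{N})}
\;\leq\;(d-1)\,d^{(N(n-m)-1)p/2}\!\!\!\sum_{I\in\mathcal{D}_{Nm+k+1}}\sum_{i=1}^{d-1}\!\left(\frac{\|\langle h_I^i,b\rangle\|_{L_p(\mathcal{M})}}{|I|^{1/2}}\right)^{\!p}.
\]

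Then I would sum the resulting geometric series in $n$: setting $j=m-n\geq 1$,
\[
\sum_{n\leq m-1}d^{(N(n-m)-1)p/2}
\;=\;d^{-p/2}\sum_{j=1}^{\infty}d^{-Njp/2}
\;=\;\frac{d^{-p/2}}{d^{Np/2}-1}.
\]
Plugging this back and summing over $m\in\mathbb{Z}$ and $k\in\{0,1,\dots,N-1\}$, the key combinatorial observation is that as $(k,m)$ ranges over $\{0,\dots,N-1\}\times\mathbb{Z}$, the index $Nm+k+1$ runs exactly once over every integer. Consequently
\[
\sum_{k=0}^{N-1}\sum_{m\in\mathbb{Z}}\sum_{I\in\mathcal{D}_{Nm+k+1}}\sum_{i=1}^{d-1}\!\left(\frac{\|\langle h_I^i,b\rangle\|_{L_p(\mathcal{M})}}{|I|^{1/2}}\right)^{\!p}
\;=\;\sum_{I\in\mathcal{D}}\sum_{i=1}^{d-1}\!\left(\frac{\|\langle h_I^i,b\rangle\|_{L_p(\mathcal{M})}}{|I|^{1/2}}\right)^{\!p}
\;=\;\|b\|^{p}_{\pmb{B}^d_p(\mathbb{R},\M)},
\]
which immediately yields the asserted bound.

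No serious obstacle is expected here: the argument is a bookkeeping exercise that combines the $p$-triangle inequality, the per-block estimate of Lemma \ref{lem3.2}, a geometric series, and the partition of $\mathbb{Z}$ into residue classes modulo $N$. The only point requiring some care is tracking the exponent shift $N(n-m)-1$ arising from the relabeling $(n,m)\mapsto(Nn+k,Nm+k+1)$, and verifying that the indices $Nm+k+1$ tile $\mathbb{Z}$ without overlap.
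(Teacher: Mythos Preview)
Your proof is correct and follows essentially the same approach as the paper: apply the $p$-triangle inequality, invoke Lemma~\ref{lem3.2} with the relabeled indices, sum the resulting geometric series in $n$, and then use that $\{Nm+k+1:0\le k\le N-1,\ m\in\mathbb{Z}\}$ partitions $\mathbb{Z}$ to recover $\|b\|^p_{\pmb{B}^d_p(\mathbb{R},\M)}$. The only cosmetic difference is that the paper factors the geometric sum as $d^{-(Nm+1)p/2}\sum_{n\le m-1}d^{Nnp/2}$ rather than substituting $j=m-n$, but the computation is the same.
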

\begin{proof}
	By Lemma \ref{lem3.2}, one has
	\[\begin{aligned}
		\|\pi_{b,k}^{(1)}\|^p_{L_p(\mathcal{N})}{}&\le \sum_{m=-\infty}^{\infty}\sum_{n=-\infty}^{m-1} \|\pi_b^{Nn+k,Nm+k+1}\|^p_{L_p(\mathcal{N})}\\&\le\sum_{m=-\infty}^{\infty}\sum_{n=-\infty}^{m-1}(d-1)d^{{(Nn-Nm-1)p}/{2}}\sum_{I\in \mathcal{D}_{Nm+k+1}}\sum_{i=1}^{d-1}\biggl(\frac{\|\langle h_I^i,b\rangle\|_{L_p(\mathcal{M})}}{|I|^{1/2}}\biggr)^p\\&=\sum_{m=-\infty}^{\infty}(d-1)d^{-(Nm+1)p/2}\sum_{I\in \mathcal{D}_{Nm+k+1}}\sum_{i=1}^{d-1}\biggl(\frac{\|\langle h_I^i,b\rangle\|_{L_p(\mathcal{M})}}{|I|^{1/2}}\biggr)^p\sum_{n=-\infty}^{m-1}d^{Nnp/2}\\&=\frac{(d-1)d^{-p/2}}{d^{Np/2}-1}\sum_{m=-\infty}^{\infty}\sum_{I\in \mathcal{D}_{Nm+k+1}}\sum_{i=1}^{d-1}\biggl(\frac{\|\langle h_I^i,b\rangle\|_{L_p(\mathcal{M})}}{|I|^{1/2}}\biggr)^p.
	\end{aligned}\]
	We then deduce
	\begin{equation*}
		\begin{aligned}
			\sum_{k=0}^{N-1}\|\pi^{(1)}_{b,k}\|^p_{{L_p(\mathcal{N})}}&\le \frac{(d-1)d^{-p/2}}{d^{Np/2}-1}\sum_{I\in \mathcal{D}}\sum_{i=1}^{d-1}\biggl(\frac{\|\langle h_I^i,b\rangle\|_{L_p(\mathcal{M})}}{|I|^{1/2}}\biggr)^p
			= \frac{(d-1)d^{-p/2}}{d^{Np/2}-1}\|b\|^p_{\pmb{B}^d_p(\mathbb{R},\M)}.
		\end{aligned}
	\end{equation*}
\end{proof}

Now we come to the estimate of $\|\pi^{(0)}_{b,k}\|^p_{{L_p(\mathcal{N})}}$. The following well-known lemma is straightforward but very helpful for us.
\begin{lemma}\label{lem3.3}
	Let $0<p<\infty$. If $\{R_i\}_{1\le i\le n}$  are operators in $L_p(\mathcal{N})$ satisfying $R_i^*R_j=0,\,\,\,\,\forall 1\le i,j\le n, i\neq j$ and $T=\sum\limits_{i=1}^nR_i$, then
	\begin{equation}\label{ine3.13}
		\|T\|^p_{{L_p(\mathcal{N})}}\ge \frac{1}{n}\sum_{i=1}^n\|R_i\|^p_{{L_p(\mathcal{N})}}.
	\end{equation}
\end{lemma}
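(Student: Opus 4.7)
The plan is to reduce the inequality to the simpler statement that $\|R_k\|_{L_p(\mathcal{N})} \le \|T\|_{L_p(\mathcal{N})}$ for every $k$, after which the conclusion \eqref{ine3.13} is obtained by summing over $k$ and bounding each term by the maximum. The key input is the orthogonality assumption $R_i^*R_j=0$ for $i\ne j$, which allows us to compute
\[
|T|^2 = T^*T = \Bigl(\sum_{i=1}^n R_i^*\Bigr)\Bigl(\sum_{j=1}^n R_j\Bigr) = \sum_{i=1}^n R_i^*R_i = \sum_{i=1}^n |R_i|^2.
\]
In particular, $0\le |R_k|^2 \le |T|^2$ in the usual operator order on the positive part of $L_{p/2}(\mathcal{N})$ for every $k$.

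The second step is to transfer this operator inequality to an inequality of $L_p$-norms. I would treat the ranges $p\ge 2$ and $0<p<2$ separately. For $p\ge 2$, since $p/2\ge 1$, the functional $X\mapsto \|X\|_{L_{p/2}(\mathcal{N})}$ is monotone on the positive cone (for positive $A\le B$ and positive $Y$ of unit $L_{(p/2)'}$-norm, $\tau(AY)\le\tau(BY)$, and duality $\|A\|_{p/2}=\sup_{Y\ge 0,\,\|Y\|_{(p/2)'}\le 1}\tau(AY)$ gives the claim). Applied to $|R_k|^2\le |T|^2$, this yields $\|R_k\|_{L_p(\mathcal{N})}^2 \le \|T\|_{L_p(\mathcal{N})}^2$. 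For $0<p<2$, the function $x\mapsto x^{p/2}$ is operator monotone on $[0,\infty)$ since $0<p/2\le 1$, so $|R_k|^2\le |T|^2$ gives $|R_k|^p\le |T|^p$ as operators, and taking traces yields $\|R_k\|_{L_p(\mathcal{N})}^p\le \|T\|_{L_p(\mathcal{N})}^p$. Either way, $\|R_k\|_{L_p(\mathcal{N})}\le \|T\|_{L_p(\mathcal{N})}$.

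To conclude, it suffices to observe that
\[
\frac{1}{n}\sum_{k=1}^n \|R_k\|_{L_p(\mathcal{N})}^p \le \max_{1\le k\le n}\|R_k\|_{L_p(\mathcal{N})}^p \le \|T\|_{L_p(\mathcal{N})}^p,
\]
which is \eqref{ine3.13}. The only point that requires genuine care is the transfer from the operator inequality $|R_k|^2\le |T|^2$ to the quasi-norm inequality when $p<2$, as here the $\|\cdot\|_{p/2}$-monotonicity fails in general and one must invoke the operator monotonicity of fractional powers instead; this is the main (and really the only) obstacle in the proof.
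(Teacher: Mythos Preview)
Your proof is correct and follows the same approach as the paper: compute $T^*T=\sum_i R_i^*R_i\ge R_k^*R_k$ from the orthogonality assumption, deduce $\|R_k\|_{L_p(\mathcal{N})}\le\|T\|_{L_p(\mathcal{N})}$, and average. The paper states only the operator inequality and leaves the passage to norms implicit; your explicit treatment of the two ranges $p\ge 2$ (monotonicity of $\|\cdot\|_{p/2}$ on the positive cone) and $0<p<2$ (operator monotonicity of $x\mapsto x^{p/2}$) fills in exactly that detail.
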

\begin{proof}
	This follows from the fact that
	$$ T^*T=\sum_{i=1}^{n}R_i^*R_i\geq R_i^*R_i.  $$
\end{proof}

\begin{remark}
	It is obvious that our estimate is far from being optimal in (\ref{ine3.13}), but it does not affect our later proof. See \cite[Theorem 1.3]{JXu} or \cite[Lemma 2.1]{CRX} for better constants in (\ref{ine3.13}).
\end{remark}
\begin{lemma}\label{lem3.4}
	Let $b\in\pmb{B}^d_p(\mathbb{R},\M)$ and $0<p<1$. Then
	\[\sum_{k=0}^{N-1}\|\pi_{b,k}^{(0)}\|_{{L_p(\mathcal{N})}}\ge \frac{(d-1)^{p/2-1}}{d^{p/2+1}}\|b\|^p_{\pmb{B}^d_p(\mathbb{R},\M)}.\]
\end{lemma}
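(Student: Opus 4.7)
The plan is to decompose $\pi_{b,k}^{(0)}$ into mutually orthogonal pieces and then invoke Lemma \ref{lem3.3} on the finest ones. Specializing \eqref{pibnmsum} to $m=n+1$ (so $\mathcal{D}_m(J(q))=\{J(q)\}$) gives $\pi_b^{n,n+1}=\sum_{J\in\mathcal{D}_n}R_{n,J}$, where
\begin{equation*}
R_{n,J}(f)=|J|^{-1/2}\sum_{q=1}^d\sum_{i,j=1}^{d-1}\omega^{qj}\,h_{J(q)}^i\otimes\langle h_{J(q)}^i,b\rangle\langle h_J^j,f\rangle.
\end{equation*}
As $(k,n,J)$ ranges over $\{0,\dots,N-1\}\times\mathbb{Z}\times\mathcal{D}_{Nn+k}$, each $R_{Nn+k,J}$ reads $f$ only through the Haar coefficients $\langle h_J^j,f\rangle$ with $J\in\mathcal{D}_{Nn+k}$ and outputs into $\mathrm{span}\{h_{J(q)}^i\}_{q,i}\otimes L_2(\mathcal{M})$, so the initial and final projections are pairwise orthogonal. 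Combined with the elementary identity
\begin{equation*}
T=\sum_iT_i,\quad T_i^*T_j=T_iT_j^*=0\ (i\ne j)\ \Longrightarrow\ \|T\|_p^p=\sum_i\|T_i\|_p^p,
\end{equation*}
this yields the exact equality $\sum_{k=0}^{N-1}\|\pi_{b,k}^{(0)}\|_{L_p(\mathcal{N})}^p=\sum_{J\in\mathcal{D}}\|R_{n(J),J}\|_{L_p(\mathcal{N})}^p$, where $n(J)$ denotes the level of $J$.

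Next, for $J$ at level $n$, I split $R_{n,J}=\sum_{q=1}^d\sum_{i=1}^{d-1}R_{n,J,q,i}$ by isolating the output direction $h_{J(q)}^i$. Setting $v_{J,q}:=\sum_{j=1}^{d-1}\omega^{-qj}h_J^j\in L_2(\mathbb{R})$ (so that $\langle v_{J,q},f\rangle=\sum_j\omega^{qj}\langle h_J^j,f\rangle$), and using convention \eqref{XT}, a direct computation gives
\begin{equation*}
R_{n,J,q,i}=|J|^{-1/2}\,\langle h_{J(q)}^i,b\rangle\cdot\bigl(h_{J(q)}^i\otimes v_{J,q}\bigr).
\end{equation*}
Since $h_{J(q)}^i\otimes v_{J,q}$ is rank one on $L_2(\mathbb{R})$ with $S_p$-norm equal to $\|h_{J(q)}^i\|_2\|v_{J,q}\|_2=\sqrt{d-1}$, we obtain $\|R_{n,J,q,i}\|_{L_p(\mathcal{N})}^p=(d-1)^{p/2}\,|J|^{-p/2}\,\|\langle h_{J(q)}^i,b\rangle\|_{L_p(\mathcal{M})}^p$. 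Moreover the ranges of the $R_{n,J,q,i}$ for distinct $(q,i)$ lie in the mutually orthogonal subspaces $\mathbb{C}\,h_{J(q)}^i\otimes L_2(\mathcal{M})$, which forces $R_{n,J,q,i}^*R_{n,J,q',i'}=0$ whenever $(q,i)\ne(q',i')$. Hence Lemma \ref{lem3.3} (with $d(d-1)$ summands) yields
\begin{equation*}
\|R_{n,J}\|_p^p\ge\frac{1}{d(d-1)}\sum_{q,i}\|R_{n,J,q,i}\|_p^p=\frac{(d-1)^{p/2-1}}{d}\,|J|^{-p/2}\sum_{q,i}\|\langle h_{J(q)}^i,b\rangle\|_{L_p(\mathcal{M})}^p.
\end{equation*}

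Finally, summing over $J\in\mathcal{D}$ and reindexing by $I=J(q)$ (so $|J|^{-p/2}=d^{-p/2}|I|^{-p/2}$) collapses the double sum into $d^{-p/2}\|b\|_{\pmb{B}_p^d(\mathbb{R},\mathcal{M})}^p$, producing the asserted constant $(d-1)^{p/2-1}/d^{p/2+1}$. The main subtlety lies in the careful verification of the two orthogonality claims: the pairwise orthogonality of the initial and final projections of $R_{Nn+k,J}$ as $(k,n,J)$ varies, which upgrades the passage from $\sum_{k}\|\pi_{b,k}^{(0)}\|_p^p$ to $\sum_J\|R_{n(J),J}\|_p^p$ into exact equality, and the range orthogonality of the finer pieces $R_{n,J,q,i}$, which secures the hypothesis $R_i^*R_j=0$ needed for Lemma \ref{lem3.3}; both ultimately come from the disjointness of supports of distinct Haar wavelets $h_{J(q)}^i$.
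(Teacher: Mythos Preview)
Your proof is correct and essentially the same as the paper's, with the two nested decompositions performed in the opposite order: you first split $\pi_{b,k}^{(0)}$ over $(n,J)$ using two-sided orthogonality to get exact norm additivity and then invoke Lemma~\ref{lem3.3} over the $d(d-1)$ pieces $(q,i)$ inside each $R_{n,J}$, whereas the paper first groups by $(q,i)$ into operators $A_{q,i}$, applies Lemma~\ref{lem3.3}, and then uses block-diagonality of each $A_{q,i}$ over $(n,J)$. All ingredients (the rank-one norm $\sqrt{d-1}$, the factor $1/(d(d-1))$ from Lemma~\ref{lem3.3}, and the reindexing $I=J(q)$) and the resulting constant coincide.
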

\begin{proof}
	From \eqref{pibnmsum} we deduce
	\begin{equation*}
		\begin{aligned}
			\pi_b^{Nn+k,Nn+k+1}{}&=\sum_{J\in \mathcal{D}_{Nn+k}}\sum_{q=1}^{d}\sum_{I\in \mathcal{D}_{Nn+k+1}(J(q))}\sum_{i,j=1}^{d-1}\frac{\omega^{qj}}{|J|^{1/2}}(\pi_b^{Nn+k,Nn+k+1})_{I,J}^{i,j}\\&=\sum_{J\in \mathcal{D}_{Nn+k}}\sum_{q=1}^d\sum_{i,j=1}^{d-1}\frac{\omega^{qj}}{|J|^{1/2}}(\pi_b^{Nn+k,Nn+k+1})_{J(q),J}^{i,j}.
		\end{aligned}
	\end{equation*}
	Then 
	\begin{equation*}
		\begin{aligned}
			\pi^{(0)}_{b,k}{}&=\sum_{n=-\infty}^{\infty}\sum_{J\in \mathcal{D}_{Nn+k}}\sum_{q=1}^d\sum_{i,j=1}^{d-1}\frac{\omega^{qj}}{|J|^{1/2}}(\pi_b^{Nn+k,Nn+k+1})_{J(q),J}^{i,j}\\
			&=\sum_{q=1}^d\sum_{i=1}^{d-1} \sum_{n=-\infty}^{\infty}\sum_{J\in \mathcal{D}_{Nn+k}}\sum_{j=1}^{d-1}\frac{\omega^{qj}}{|J|^{1/2}}(\pi_b^{Nn+k,Nn+k+1})_{J(q),J}^{i,j}\\
			&=:\sum_{q=1}^d\sum_{i=1}^{d-1}A_{q,i}.
		\end{aligned}
	\end{equation*}
	Since the ranges of $\{(\pi_b^{Nn+k,Nn+k+1})_{J(q),J}^{i,j}\}_{1\leq q\leq d, 1\leq i\leq d-1}$ are mutually orthogonal, by Lemma \ref{lem3.3} we have 
	\begin{equation*}\label{3.4.1}
		\begin{aligned}
			\|\pi^{(0)}_{b,k}\|^p_{{L_p(\mathcal{N})}}\ge \frac{1}{d(d-1)}\sum_{q=1}^d\sum_{i=1}^{d-1}\| A_{q,i}\|^p_{L_p(\mathcal{N})}.
		\end{aligned}
	\end{equation*}
	When $q$ and $i$ are fixed, the operator $A_{q, i}$ is a block diagonal matrix with respect to the basis $\{h_J^j,h_{J(q)}^i\}_{J\in\mathcal{D}_{Nn+k}}$. Consequently, one has
	\begin{equation*}\label{3.4.2}
		\begin{aligned}
			{}&\biggl\|\sum_{n=-\infty}^{\infty}\sum_{J\in \mathcal{D}_{Nn+k}}\sum_{j=1}^{d-1}\frac{\omega^{qj}}{|J|^{1/2}}(\pi_b^{Nn+k,Nn+k+1})_{J(q),J}^{i,j}\biggr\|^p_{L_p(\mathcal{N})}\\= &\sum_{n=-\infty}^{\infty}\sum_{J\in \mathcal{D}_{Nn+k}}\biggl\|\sum_{j=1}^{d-1}\frac{\omega^{qj}}{|J|^{1/2}}(\pi_b^{Nn+k,Nn+k+1})_{J(q),J}^{i,j}\biggr\|^p_{L_p(\mathcal{N})}\\=&\sum_{n=-\infty}^{\infty}\sum_{J\in \mathcal{D}_{Nn+k}}\biggl\|\sum_{j=1}^{d-1}\frac{\omega^{qj}}{|J|^{1/2}}h_{J(q)}^i\otimes h_J^j\biggr\|_{S_p(L_2(\mathbb{R}))}^p\|\langle h_{J(q)}^i,b\rangle\|_{L_p(\mathcal{M})}^p.
		\end{aligned}
	\end{equation*}
	It is clear that 
	\begin{equation*}\label{3.4.3}
		\begin{aligned}
			\biggl\|\sum_{j=1}^{d-1}\frac{\omega^{qj}}{|J|^{1/2}}h_{J(q)}^i\otimes h_J^j\biggr\|_{S_p(L_2(\mathbb{R}))}=\frac{(d-1)^{1/2}}{|J|^{1/2}}.
		\end{aligned}
	\end{equation*}
	Combining the preceding inequalities, we obtain
	\begin{equation*}
		\begin{aligned}
			\|\pi^{(0)}_{b,k}\|^p_{{L_p(\mathcal{N})}}{}&\ge \frac{(d-1)^{p/2}}{d(d-1)}\sum_{q=1}^d\sum_{i=1}^{d-1}\sum_{n=-\infty}^{\infty}\sum_{J\in \mathcal{D}_{Nn+k}}\frac{1}{|J|^{p/2}}\|\langle h_{J(q)}^i,b\rangle\|_{L_p(\mathcal{M})}^p\\&= \frac{(d-1)^{p/2-1}}{d^{p/2+1}}\sum_{i=1}^{d-1}\sum_{n=-\infty}^{\infty}\sum_{J\in \mathcal{D}_{Nn+k+1}}\frac{1}{|J|^{p/2}}\|\langle h_{J}^i,b\rangle\|_{L_p(\mathcal{M})}^p.
		\end{aligned}	
	\end{equation*}
	Hence
	\begin{equation*}
		\begin{aligned}
			\sum_{k=0}^{N-1}\|\pi^{(0)}_{b,k}\|^p_{{L_p(\mathcal{N})}}{}&\ge \frac{(d-1)^{p/2-1}}{d^{p/2+1}}\sum_{J\in \mathcal{D}}\sum_{i=1}^{d-1}\biggl(\frac{\|\langle h_J^i,b\rangle\|_{L_p(\mathcal{M})}}{|J|^{1/2}}\biggr)^p
			=\frac{(d-1)^{p/2-1}}{d^{p/2+1}}\|b\|^p_{\pmb{B}^d_p(\mathbb{R},\M)}.
		\end{aligned}	
	\end{equation*}
\end{proof}

\begin{proposition}\label{pro3.4}
	Let $b\in\pmb{B}^d_p(\mathbb{R},\M)$ and $0<p<1$. Then $$ \|b\|_{\pmb{B}^d_p(\mathbb{R},\M)}\lesssim_{d,p}\|\pi_b\|_{L_p(\mathcal{N})}.$$
\end{proposition}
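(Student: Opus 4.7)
The plan is to combine the lower bound for $\sum_{k=0}^{N-1}\|\pi_{b,k}^{(0)}\|_{L_p(\mathcal{N})}^p$ from Lemma \ref{lem3.4} with an upper bound for $\sum_{k=0}^{N-1}\|\pi_{b,k}\|_{L_p(\mathcal{N})}^p$ in terms of $\|\pi_b\|_{L_p(\mathcal{N})}^p$, while absorbing the off-diagonal contribution controlled in Lemma \ref{cor3.12} by choosing $N$ large enough. Specifically, since $\pi_{b,k}=\pi_{b,k}^{(0)}+\pi_{b,k}^{(1)}$ and $0<p<1$, the quasi-triangle inequality gives
\[
\|\pi_{b,k}^{(0)}\|_{L_p(\mathcal{N})}^p\leq \|\pi_{b,k}\|_{L_p(\mathcal{N})}^p+\|\pi_{b,k}^{(1)}\|_{L_p(\mathcal{N})}^p.
\]

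The key step is to establish $\|\pi_{b,k}\|_{L_p(\mathcal{N})}\leq \|\pi_b\|_{L_p(\mathcal{N})}$ for each $k$ and each $0<p<1$. I would observe that, from the definitions \eqref{pibnm} and \eqref{pibk} and the vanishing of $\pi_b^{n,m}$ for $m\leq n$, one has $\pi_{b,k}=Q_k\,\pi_b\,P_k$, where $P_k=\sum_{n\in\mathbb{Z}}d_{Nn+k+1}$ and $Q_k=\sum_{m\in\mathbb{Z}}d_{Nm+k+2}$ are orthogonal projections on $L_2(\mathbb{R},L_2(\M))$ acting solely on the $L_2(\mathbb{R})$ factor. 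Writing $P_k=\bar P_k\otimes I_{\M}$ and $Q_k=\bar Q_k\otimes I_{\M}$ with $\bar P_k,\bar Q_k\in B(L_2(\mathbb{R}))$ orthogonal projections, these factorised operators lie in $\mathcal{N}=B(L_2(\mathbb{R}))\otimes\M$ with operator norm one; the bimodule property $\|X T Y\|_{L_p(\mathcal{N})}\leq \|X\|_\infty \|T\|_{L_p(\mathcal{N})}\|Y\|_\infty$, valid for every $0<p\leq\infty$, then yields the claimed bound.

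Summing over $k=0,\ldots,N-1$ and invoking Lemma \ref{lem3.4} on the left and Lemma \ref{cor3.12} on the right gives
\[
\frac{(d-1)^{p/2-1}}{d^{p/2+1}}\,\|b\|_{\pmb{B}_p^d(\mathbb{R},\M)}^p\leq N\,\|\pi_b\|_{L_p(\mathcal{N})}^p+\frac{(d-1)d^{-p/2}}{d^{Np/2}-1}\,\|b\|_{\pmb{B}_p^d(\mathbb{R},\M)}^p.
\]
Choosing $N=N(d,p)$ large enough that $\frac{(d-1)d^{-p/2}}{d^{Np/2}-1}\leq \tfrac{1}{2}\cdot\frac{(d-1)^{p/2-1}}{d^{p/2+1}}$, the error term can be absorbed into the left-hand side, delivering $\|b\|_{\pmb{B}_p^d(\mathbb{R},\M)}^p\lesssim_{d,p}\|\pi_b\|_{L_p(\mathcal{N})}^p$. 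The main obstacle is the first step: for $p\geq 1$ one could argue by contractivity of conditional expectations on $L_p$, but for $0<p<1$ this fails in general, so one must instead realise $P_k,Q_k$ as honest elements of the ambient von Neumann algebra $\mathcal{N}$ and invoke the $L_p(\mathcal{N})$-bimodule bound, which works uniformly for all $0<p\leq\infty$.
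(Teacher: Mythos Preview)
Your proof is correct and follows essentially the same route as the paper: both identify $\pi_{b,k}=Q_k\pi_bP_k$ with $P_k,Q_k$ orthogonal projections in $\mathcal{N}$, deduce $\|\pi_{b,k}\|_{L_p(\mathcal{N})}\le\|\pi_b\|_{L_p(\mathcal{N})}$, and then balance Lemma~\ref{lem3.4} against Lemma~\ref{cor3.12} by choosing $N$ large. Your remark that the bound for $0<p<1$ relies on the $\mathcal{N}$-bimodule property of $L_p(\mathcal{N})$ (rather than contractivity of conditional expectations) is exactly the point the paper is implicitly using when it says the projections have norm~$1$.
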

\begin{proof}
	From \eqref{pibnm} and \eqref{pibk} we observe that
	\begin{equation*}
		\pi_{b,k}=\biggl(\sum_{m=-\infty}^{\infty}d_{Nm+k+2}\biggr)\pi_b\biggl(\sum_{n=-\infty}^{\infty}d_{Nn+k+1}\biggr).
	\end{equation*}
	Note that $\sum\limits_{n=-\infty}^{\infty}d_{Nn+k+1}$ and $\sum\limits_{m=-\infty}^{\infty}d_{Nm+k+2}$ are projections with norm $1$. Thus
	\begin{equation*}
		\|\pi_{b,k}\|_{L_p(\mathcal{N})}\le \|\pi_{b}\|_{L_p(\mathcal{N})}.
	\end{equation*}
	By Lemmas \ref{cor3.12} and \ref{lem3.4}, we have
	\begin{equation*}
		\begin{aligned}
			\|\pi_{b}\|^p_{L_p(\mathcal{N})}\ge \frac{1}{N}\sum_{k=0}^{N-1}\|\pi_{b,k}\|^p_{L_p(\mathcal{N})}{}&\ge \frac{1}{N}\sum_{k=0}^{N-1}\big(\|\pi^{(0)}_{b,k}\|^p_{L_p(\mathcal{N})}-\|\pi^{(1)}_{b,k}\|^p_{L_p(\mathcal{N})}\big)\\&\ge \frac{1}{N}\biggl(\frac{(d-1)^{p/2-1}}{d^{p/2+1}}-\frac{(d-1)d^{-p/2}}{d^{Np/2}-1}\biggr)\|b\|^p_{\pmb{B}^d_p(\mathbb{R},\M)},
		\end{aligned}
	\end{equation*}	
	which yields the desired result as long as we choose $N$ sufficiently large.
\end{proof}

Now we give the proof of Proposition \ref{Case 5}.
\begin{proof}[Proof of Proposition \ref{Case 5}]
	When $0<p<1$, Proposition \ref{Case 5} follows from Proposition \ref{pro3.4} and the standard limit argument. Indeed, for any positive integer $a$, we define
	\begin{equation*}
		b^{(a)}=\sum_{I\in \mathcal{D}^{(a)}}\sum_{i=1}^d\langle h_I^i,b\rangle h_I^i,
	\end{equation*}
	where
	\[\mathcal{D}^{(a)}=\{I_{n,k}\in\mathcal{D}:|n|\le a, |k|\le a\}.\]
	To implement the limit argument, we need to show $b^{(a)}\in {\pmb{B}^d_p(\mathbb{R},\mathcal{M})}$. For any $I\in \mathcal{ D}$ and $1\leq i\leq d-1$, define 
	\begin{equation}\label{pibii}
		\pi_b^{I,i}=\langle h_I^i,b\rangle\cdot B^{I,i},
	\end{equation}
	where $B^{I,i}\in B(L_2(\mathbb{R}))$ is defined by
	\begin{equation}\label{Bii}
		B^{I,i}=h_I^i\otimes\frac{\mathbbm{1}_I}{|I|}.
	\end{equation}	
	Then we have
	\begin{equation}\label{pibsum}
		\pi_b=\sum_{I\in \mathcal{D}}\sum_{i=1}^{d-1}\pi_b^{I,i}.
	\end{equation}	
	If $I\neq J$ or $i\neq j$, then $\forall g, h\in L_2(\mathbb{R})$,
	\begin{equation*}
		\begin{aligned}
			\langle (B^{I,i})^*B^{J,j}(g), h\rangle{}&=\langle B^{J,j}(g), B^{I,i}(h)\rangle=\overline{\biggl\langle \frac{\mathbbm{1}_J}{|J|},g\biggr\rangle}\biggl\langle \frac{\mathbbm{1}_I}{|I|},h\biggr\rangle\langle h_J^j,h_I^i\rangle=0,
		\end{aligned}
	\end{equation*}	
	which implies that  
	\begin{equation}\label{pibiijj}
		(\pi_b^{I,i})^*(\pi_b^{J,j})=0.
	\end{equation}
	So from \eqref{pibsum} we get
	\begin{equation}\label{pibpib}
		\pi_b^*\pi_b=\sum_{I\in\mathcal{ D}}\sum_{i=1}^{d-1}(\pi_b^{I,i})^*(\pi_b^{I,i}).
	\end{equation}	 
	By (\ref{pibpib}), one has for any $0<p<\8$
	$$  	\|\pi_{b}\|_{L_p(\mathcal{N})} \geq 	\|\pi_{b}^{I, i}\|_{L_p(\mathcal{N})}=\frac{\|\langle h^i_I,b\rangle\|_{L_p(\mathcal{M})}}{|I|^{1/2}}. $$
	This implies that $b^{(a)}\in {\pmb{B}^d_p(\mathbb{R},\mathcal{M})}$. Therefore by Proposition \ref{pro3.4},
	\[\|b\|_{\pmb{B}^d_p(\mathbb{R},\mathcal{M})}=\lim_{a\to \infty}\|b^{(a)}\|_{\pmb{B}^d_p(\mathbb{R},\mathcal{M})}\lesssim_{d,p} \|\pi_b\|_{L_p(\mathcal{N})}.\]
	This finishes the proof.
\end{proof}

\begin{proof}[Proof of the Necessity of Theorem \ref{thm1.2}]
	The desired result follows from Proposition \ref{Case 4} for $p\geq 1$, and from Proposition \ref{Case 5} for $0<p<1$.
\end{proof}

\bigskip

\section{Proof of Theorem \ref{thm6.1}}\label{Application 1:the CAR case}
	First recall the Walsh system. Let $\mathcal{G}=\{1,-1\}^{\mathbb{N}}$ be equipped with the uniform distribution $P$. Recall that for any $n\geq 1$, $\varepsilon_n((\theta_k)_{k\in \mathbb{N}}):= \theta_n $, $\forall \theta=(\theta_k)_{k\in \mathbb{N}}\in \mathcal{G}$. Then $(\varepsilon_n)_{n\geq 1}$ is the Rademacher sequence on $\mathcal{G}$, namely a sequence of independent identically distributed random variables on $(\mathcal{G},P)$ such that $P(\varepsilon_n=1)=P(\varepsilon_n=-1)=1/2$ for all $n\in\mathbb{N}$. 
	
	Recall that $\mathcal{I}$ denotes the family of all finite subsets of $\mathbb{N}$. For a nonempty set $A\in\mathcal{I}$, we write $A=\{k_1<k_2<\cdots<k_n\}$ in an increasing order. Define
	\begin{equation*}
		\omega_A=\varepsilon_{k_1}\varepsilon_{k_2}\cdots\varepsilon_{k_n}.
	\end{equation*}
	If $A=\emptyset$, we set $\varepsilon_A=1$. If $A$ is a singleton $\{k\}$, we still use $\omega_k$ instead of $\omega_{\{k\}}$. Thus $(\omega_A)_{A\in \mathcal{I}}$, called the Walsh system, is an orthonormal basis of $L_2(\mathcal{G})$. Denote by $\mathcal{G}_n$ the $\sigma$-algebra generated by $\{\omega_A : {\max (A)\le n}\}$. Then $(\mathcal{G}_n)_{n\geq 1}$ is the filtration of $(\mathcal{G}, P)$ for the Walsh system.
	
	We define for any $\theta\in \mathcal{G}$,
	\begin{equation*}
		\begin{aligned}
			\sigma_\theta:{}&\mathcal{C}\to \mathcal{C}\\&
			c_i\mapsto \varepsilon_i(\theta)c_i,\quad \forall i\in\mathbb{N}.
		\end{aligned}
	\end{equation*}
	Then $\sigma_\theta$ extends to a trace preserving automorphism of the CAR algebra $\mathcal{C}$, and consequently extends to an isometry on $L_p(\mathcal{C})$ for all $0< p< \infty$. By virtue of $	\sigma_\theta$, the CAR algebra can be transfered to the operator-valued Walsh system. For any given $b=\sum\limits_{A\in\mathcal{I}}\hat{b}(A)c_A\in L_p(\mathcal{C})$ with $0<p<\8$, we define 
	\[\tilde b(\theta)=\sigma_\theta(b)=\sum_{A\in\mathcal{I}}\hat{b}(A)c_A\cdot\omega_A(\theta).\]
	Then $\tilde{b}\in L_p(\mathcal{G},L_p(\mathcal{C}))$. Hence, for any given $b$,  define the martingale paraproduct $\pi_{\tilde{b}}$ of symbol $\tilde{b}$ associated with the Walsh system  on $L_2(\mathcal{G},L_2(\mathcal{C}))$ by
	\begin{equation*}
		\begin{aligned}
			\pi_{\tilde{b}}:L_2(\mathcal{G},L_2(\mathcal{C}){}&)\to L_2(\mathcal{G},L_2(\mathcal{C}))\\&
			g\mapsto \sum_{k=1}^\infty d_k\tilde{b}\cdot g_{k-1}.
		\end{aligned}
	\end{equation*}
	In fact, $	\pi_{\tilde{b}}$ is a martingale paraproduct for semicommutative dyadic martingales.
	
	Now we come to the proof of Theorem \ref{thm6.1}.
	\begin{proof}[Proof of Theorem \ref{thm6.1}]
		Since $L_2(\mathcal{C})\cong \ell_2(\mathcal{I})$ and $L_2(\mathcal{G},L_2(\mathcal{C}))\cong \ell_2(\mathcal{I}, L_2(\mathcal{C}))$, we represent $\pi_b$ and $\pi_{\tilde{b}}$ in the matrix form. For any $A,B\in\mathcal{I}$, note that for $k\geq 1$
		\begin{equation*}
			(c_B)_{k-1}=\sum_{\max(D)\le k-1}\tau(c_D^*\cdot c_B)c_D=
			\begin{cases}
				c_B,& \text{if $k-1\ge \max(B)$};\\
				0,& \text{otherwise}.
			\end{cases}
		\end{equation*}
		Then
		\begin{equation*}
			\begin{aligned}
				\langle c_A,\pi_b (c_B)\rangle{}&=\langle c_A,\sum_{k=1}^\infty d_kb\cdot(c_B)_{k-1}\rangle=\langle c_A,\sum_{k-1\ge \max(B)}d_kb\cdot c_B\rangle\\&=\langle c_A,\sum_{\max (E)\ge \max(B)+1}\hat{b}(E)c_E\cdot c_B\rangle\\&=\langle c_Ac_B^*,\sum_{\max (E)\ge \max(B)+1}\hat{b}(E)c_E\rangle.
			\end{aligned}
		\end{equation*}
		From the CAR \eqref{equa4.1}, we have 
		\[c_A^*=\pm c_A\,\,\,\,\mathrm{and}\,\,\,\,c_Ac_B=\pm c_{A\Delta B},\,\,\,\, \forall A,B\in\mathcal{I},\]
		where $A\Delta B=(A\cup B)\backslash(A\cap B)$. Then
		\begin{equation*}
			\begin{aligned}
				\langle c_A,\pi_b (c_B)\rangle{}&=\langle \pm c_{A\Delta B},\sum_{\max (E)\ge \max{B}+1}\hat{b}(E)c_E\rangle\\&=
				\begin{cases}
					\pm \hat{b}(A\Delta B),& \text{if $\max(A\Delta B)>\max (B)$};\\
					0,& \text{if $\max(A\Delta B)\le\max (B)$},
				\end{cases}\\&=
				\begin{cases}
					\hat{b}(A\Delta B),& \text{if $\max (A)>\max (B)$ and $c_Ac_B^*=c_{A\Delta B}$};\\
					- \hat{b}(A\Delta B),& \text{if $\max (A)>\max (B)$ and $c_Ac_B^*=-c_{A\Delta B}$};\\
					0,& \text{if $\max (A)\le\max (B)$}.
				\end{cases}
			\end{aligned}
		\end{equation*}
		In the same way, one has
		\begin{equation*}
			\begin{aligned}
				\langle \omega_A,\pi_{\tilde{b}} (\omega_B)\rangle=
				\begin{cases}
					\hat{b}(A\Delta B)c_{A\Delta B},& \text{if $\max (A)>\max (B)$};\\
					0,& \text{if $\max (A)\le\max (B)$}.
				\end{cases}
			\end{aligned}
		\end{equation*}
		Denote by
		\[[\pi_b]=\Bigl((\pi_b)_{A,B}\Bigr)_{A,B\in\mathcal{I}}\]
		the matrix form of $\pi_b$ with respect to the basis $(c_A)_{A\in\mathcal{I}}$, where $(\pi_b)_{A,B}=\langle c_A,\pi_b (c_B)\rangle$. Analogously, let \[[\pi_{\tilde{b}}]=\Bigl((\pi_{\tilde{b}})_{A,B}\Bigr)_{A,B\in\mathcal{I}}\]
		be the matrix form of $\pi_{\tilde{b}}$ with respect to the basis $(\omega_A)_{A\in\mathcal{I}}$, where
		$(\pi_{\tilde{b}})_{A,B}=\langle \omega_A,\pi_{\tilde{b}} (\omega_B)\rangle$. By the above discussion, we see that
		\begin{equation*}
			\begin{aligned}
				[\pi_{\tilde{b}}]{}&=\biggl((\pi_b)_{A,B}c_Ac_B^*\biggr)_{A,B\in\mathcal{I}}\\&=
				\begin{pmatrix}
					\ddots & \,  &   0 \\
					\,     & c_A &   \,\\
					0      & \,  &   \ddots
				\end{pmatrix}_{A\in\mathcal{I}}([\pi_b]\otimes 1_{\mathcal{C}})
				\begin{pmatrix}
					\ddots & \,  &   0 \\
					\,     & c_B^* &   \,\\
					0      & \,  &   \ddots
				\end{pmatrix}_{B\in\mathcal{I}},
			\end{aligned}
		\end{equation*}
		where $1_{\mathcal{C}}$ is the identity of $\mathcal{C}$. So for any $0<p<\8$ this leads to 
		\begin{equation*}
			\begin{aligned}
				\|\pi_{\tilde{b}}\|_{L_p(B(L_2(\mathcal{G}))\otimes \mathcal{C})}{}&=\|[\pi_{\tilde{b}}]\|_{L_p(B(\ell_2(\mathcal{I}))\otimes \mathcal{C})}= \|[\pi_{b}]\|_{S_p(\ell_2(\mathcal{I}))}\cdot \| 1_\mathcal{C}\|_{L_p(\mathcal{C})}
				=\|\pi_{b}\|_{S_p(L_2(\mathcal{C}))}.
			\end{aligned}
		\end{equation*}
		By Theorem \ref{thm1.2}, we have $\pi_{\tilde{b}}\in {L_p(B(L_2(\mathcal{G}))\otimes \mathcal{C})}$ if and only if $\tilde{b}\in \pmb{B}_p^2(\mathbb{R},\mathcal{C})$, where 
		\begin{equation*}
			\|\tilde{b}\|_{\pmb{B}_p^2(\mathbb{R},\mathcal{C})}\approx_p\biggl(\sum_{k=1}^\infty2^{k}\|d_k\tilde{b}\|^p_{L_p(\mathcal{G},L_p(\mathcal{C}))}\biggr)^{1/p}. 
		\end{equation*}
		However, note that for any $\theta\in\mathcal{G}$ and $k\geq 1$,
		\begin{equation*}
			\begin{aligned}
				(d_k \tilde{b})(\theta)&=\big(\sum_{\max(A)=k}\hat{b}(A)c_A\omega_A\big)(\theta)=\sigma_\theta\big(\sum_{\max(A)=k}\hat{b}(A)c_A\big)=\sigma_\theta(d_k b),
			\end{aligned}
		\end{equation*}
		which yields
		\[\|d_k\tilde{b}\|^p_{L_p(\mathcal{G},L_p(\mathcal{C}))}=\int_{\mathcal{G}}\|(d_k \tilde{b})(\theta)\|_{L_p((\mathcal{C})}^p dP(\theta) =\int_{\mathcal{G}}\|\sigma_\theta(d_kb)\|_{L_p((\mathcal{C})}^p dP(\theta) =\|d_kb\|^p_{L_p(\mathcal{C})}.\]
		Therefore, we get $\pi_b\in S_p(L_2(\mathcal{C}))$ if and only if $b\in \pmb{B}_p(\mathcal{C})$ with relevant constants depending only on $p$. Thus Theorem \ref{thm6.1} is proved.
	\end{proof}

\bigskip

\section{Proof of Theorem \ref{thm7.1}}\label{Application 2}

First we construct an orthonormal basis of $\mathbb{M}_d$, which will induce an orthonormal basis in $\mathscr{M}=\mathop{\otimes}\limits_{k=1}^{\infty}\mathbb{M}_{d}$.  Let 
$\sigma=(1\,2\,\cdots\,d)$ be the $d$-cycle, and recall $\omega=e^{2\pi \mathrm{i}/d}$. Define
\[\Omega=\biggl\{U_{(i,j)}=\sum_{l=1}^d \omega^{i\cdot l}e_{l,\sigma^{j}(l)}:\quad 1\le i,j\le d\biggr\}.\]
Then $\Omega$ is an orthonormal basis of $L_2(\mathbb{M}_d,\mathrm{tr}_d)$, and every element of $\Omega$ is unitary. In particular, $U_{(d,d)}=1$. Moreover, such matrices $U_{(i,j)}$ satisfy the following properties.

\begin{lemma}\label{uuu} 
For any $ 1\le i,j, k, l\le d$, we have
\begin{equation}
	\begin{cases}
		U_{(i, j)}^*=\omega^{i\cdot j}U_{(\overline{-i}, \overline{-j})},\\
		U_{(i, j)}U_{(k, l)}=\omega^{j\cdot k}U_{(\overline{i+k}, \overline{j+l})},
	\end{cases}
\end{equation}
where $\overline{i}$ and $\overline{i+j}$ are the remainder in $[1, d]$ modulo $d$.
\end{lemma}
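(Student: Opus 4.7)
The proof is a direct computation based on the definition $U_{(i,j)} = \sum_{l=1}^d \omega^{i\cdot l} e_{l,\sigma^j(l)}$, where $\sigma^j(l)$ denotes the representative of $l+j$ in $[1,d]$ modulo $d$. The key observation throughout is that since $\omega$ is a $d$-th root of unity, we may freely replace any exponent by its representative modulo $d$; in particular $\omega^{\overline{-i}} = \omega^{-i}$ and $\omega^{\overline{i+k}} = \omega^{i+k}$, so the bars on subscripts do not affect the scalar factors.

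For the first identity, I would take the conjugate transpose to write
\[
U_{(i,j)}^* \;=\; \sum_{l=1}^d \omega^{-i\cdot l}\, e_{\sigma^j(l),\,l}.
\]
Then I would re-index by setting $m=\sigma^j(l)$, so that $l=\sigma^{-j}(m)\equiv m-j\pmod d$. This yields $U_{(i,j)}^* = \sum_m \omega^{-i(m-j)} e_{m,\sigma^{-j}(m)} = \omega^{ij}\sum_m \omega^{-i\cdot m}e_{m,\sigma^{-j}(m)} = \omega^{i\cdot j}U_{(\overline{-i},\overline{-j})}$, which is exactly the first formula.

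For the second identity, I would multiply
\[
U_{(i,j)}U_{(k,l)} \;=\; \sum_{m,n=1}^d \omega^{i\cdot m+k\cdot n}\, e_{m,\sigma^j(m)}\,e_{n,\sigma^l(n)}
\]
and use the matrix unit relation $e_{a,b}e_{c,d}=\delta_{bc}e_{a,d}$. The Kronecker delta forces $n=\sigma^j(m)\equiv m+j\pmod d$, collapsing the double sum to a single sum and producing the composition $\sigma^l\circ\sigma^j=\sigma^{j+l}$ in the second index. The phase becomes $\omega^{i\cdot m+k(m+j)}=\omega^{j\cdot k}\cdot\omega^{(i+k)m}$, and factoring out $\omega^{j\cdot k}$ gives $\omega^{j\cdot k}U_{(\overline{i+k},\overline{j+l})}$, as required.

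There is no real obstacle; the only bookkeeping to keep straight is the interplay between the integer labels and their representatives modulo $d$, which is harmless because it only shifts exponents of $\omega$ by multiples of $d$. Both identities fall out of a one-line calculation once the substitutions $m=\sigma^j(l)$ (for the adjoint) and $n=\sigma^j(m)$ (for the product) are carried out.
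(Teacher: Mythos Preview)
Your proof is correct and follows essentially the same approach as the paper's own proof: both compute $U_{(i,j)}^*$ by taking the conjugate transpose and re-indexing via $l\mapsto\sigma^{-j}(l)$, and both compute the product by collapsing the double sum with the matrix-unit relation $e_{a,b}e_{c,d}=\delta_{bc}e_{a,d}$ and then factoring out $\omega^{j\cdot k}$. The arguments are line-for-line equivalent up to choice of summation variable names.
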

\begin{proof}
	For any $1\le i,j\le d$, we calculate
	\begin{equation*}
		\begin{aligned}
			U_{(i, j)}^*=\sum_{l=1}^d \omega^{-i\cdot l}e_{\overline{j+l},l}=\sum_{l=1}^d \omega^{-i\cdot (l-j)}e_{l,\overline{l-j}}=\omega^{i\cdot j}\sum_{l=1}^d \omega^{-i\cdot l}e_{l,\sigma^{-j}(l)}=\omega^{i\cdot j}U_{(\overline{-i}, \overline{-j})}.
		\end{aligned}
	\end{equation*}
Besides, for any $ 1\le i,j, k, l\le d$, one has
\begin{equation*}
	\begin{aligned}
		U_{(i, j)}U_{(k, l)}=\sum_{s=1}^d \omega^{i\cdot s}e_{s,\overline{j+s}}\cdot \sum_{t=1}^d \omega^{k\cdot t}e_{t,\overline{t+l}}=\sum_{s=1}^d \omega^{i\cdot s}\omega^{k\cdot (j+s)}e_{s,\overline{j+s+l}}{}&=\omega^{j\cdot k}\sum_{s=1}^d \omega^{(i+k)\cdot s}e_{s,\sigma^{j+l}(s)}\\
		&=\omega^{j\cdot k}U_{(\overline{i+k}, \overline{j+l})},
	\end{aligned}
\end{equation*}
as desired.
\end{proof}

Denote $\mathscr{A}=\{(k,i_k,j_k): k\in\mathbb{N},1\le i_k,j_k\le d\}$. For any nonempty finite subset $\alpha=\{(1,i_{1},j_{1}),(2,i_{2},j_{2}),\cdots,(n,i_{n},j_{n})\}\subset \mathscr{A}$, define $\max(\alpha)=n$. Besides, define $\max(\emptyset)=1$. Let $\mathcal{J}$ be the family of all finite subsets $\alpha\subset\mathscr{A}$ with $(i_{\max(\alpha)},j_{\max(\alpha)})\neq (d,d)$. For any given $\alpha=\{(1,i_{1},j_{1}),(2,i_{2},j_{2}),\cdots,(n,i_{n},j_{n})\}\in\mathcal{J}$, define
\[U_\alpha=U_{(i_1,j_1)}\otimes U_{(i_2,j_2)}\otimes\cdots\otimes U_{(i_n,j_n)}\otimes 1\otimes 1 \cdots\in \mathscr{M} .\] 
In addition, we set $U_\emptyset=1$. Then $(U_\alpha)_{\alpha\in\mathcal{J}}$ is an orthonormal basis of $L_2(\mathscr{M})$. Next, we calculate  $U_\alpha U_\beta^*$. For any given $\alpha,\beta\in\mathcal{J}$, write 
\[\alpha=\{(1,\tilde{i}_1,\tilde{j}_1),\cdots,(\max(\alpha),\tilde{i}_{\max(\alpha)},\tilde{j}_{\max(\alpha)})\}\]
where $1\le \tilde{i}_1,\tilde{j}_1,\cdots,\tilde{i}_{\max(\alpha)},\tilde{j}_{\max(\alpha)}\le d$ and
\[\beta=\{(1,{i}_1,{j}_1),\cdots,(\max(\beta),{i}_{\max(\beta)},{j}_{\max(\beta)})\}\]
where $1\le {i}_1,{j}_1,\cdots,{i}_{\max(\beta)},{j}_{\max(\beta)}\le d$. To calculate $U_\alpha U_\beta^*$, we define $\eta_{\alpha,\beta}\in \mathcal{J}$ associated with $\alpha$ and $\beta$ as follows:
\begin{enumerate}
	\item if $\max(\alpha)=\max (\beta)$,
	\begin{equation}\label{5.1}
		\eta_{\alpha,\beta}=\{(1,\tilde{i}_1-i_1,\tilde{j}_1-j_1),\cdots,(\max(\alpha),\tilde{i}_{\max(\alpha)}-i_{\max(\alpha)},\tilde{j}_{\max(\alpha)}-j_{\max(\alpha)})\};
	\end{equation}
	\item if $\max(\alpha)<\max (\beta)$,
	\begin{equation}\label{5.2}
		\begin{aligned}
			\eta_{\alpha,\beta}={}&\{(1,\tilde{i}_1-i_1,\tilde{j}_1-j_1),\cdots,(\max(\alpha),\tilde{i}_{\max(\alpha)}-i_{\max(\alpha)},\tilde{j}_{\max(\alpha)}-j_{\max(\alpha)}),\\&(\max(\alpha)+1,i_{\max(\alpha)+1},j_{\max(\alpha)+1}),\cdots,(\max(\beta),i_{\max(\beta)},j_{\max(\beta)})\};
		\end{aligned}
	\end{equation}
	\item if $\max(\alpha)>\max (\beta)$,
	\begin{equation}\label{5.3}
		\begin{aligned}
			\eta_{\alpha,\beta}={}&\{(1,\tilde{i}_1-i_1,\tilde{j}_1-j_1),\cdots,(\max(\beta),\tilde{i}_{\max(\beta)}-i_{\max(\beta)},\tilde{j}_{\max(\beta)}-j_{\max(\beta)}),\\&(\max(\beta)+1,\tilde{i}_{\max(\beta)+1},\tilde{j}_{\max(\beta)+1}),\cdots,(\max(\alpha),\tilde{i}_{\max(\alpha)},\tilde{j}_{\max(\alpha)})\}.
		\end{aligned}
	\end{equation}
\end{enumerate}
Notice that the case where $\alpha=\emptyset$ or $\beta=\emptyset$ has been included in the construction of $\eta_{\alpha,\beta}$. In (\ref{5.1}), (\ref{5.2}) and (\ref{5.3}), if $\tilde{i}_k-i_k\leq 0$ (respectively $\tilde{j}_k-j_k\leq 0$), then we can substitute $\tilde{i}_k-i_k+d$ (respectively $\tilde{j}_k-j_k+d$) for $\tilde{i}_k-i_k$ (respectively $\tilde{j}_k-j_k$).


By Lemma \ref{uuu}, one verifies that 
\begin{equation}\label{ual}
	U_\alpha U_\beta^*=\lambda_{\alpha,\beta}U_{\eta_{\alpha,\beta}},
\end{equation}
where
\begin{equation*}
	\lambda_{\alpha,\beta}=
	\begin{cases}
		\omega^{-i_{1}(\tilde{j}_{1}-j_{1})}\cdots \omega^{-i_{\max(\alpha)}(\tilde{j}_{\max(\alpha)}-j_{\max(\alpha)})},& \text{if $\max(\alpha)\le \max(\beta)$};\\
		\omega^{-i_{1}(\tilde{j}_{1}-j_{1})}\cdots \omega^{-i_{\max(\beta)}(\tilde{j}_{\max(\beta)}-j_{\max(\beta)})},& \text{if $\max(\alpha)> \max(\beta)$}.
	\end{cases}
\end{equation*}
This implies that $|\lambda_{\alpha,\beta}|=1$.

\

Let $\upsilon=e^{2\pi \mathrm{i}/d^2}$. Let $\mathcal{R}=\{\upsilon^1,\upsilon^2,\cdots,\upsilon^{d^2}\}^{\mathbb{N}}$ be equipped with the uniform distribution. For $1\le i, j\le d$, we define
\[h_{(i, j)}=\sum_{l=1}^{d^2}\upsilon^{(di+j)l}\mathbbm{1}_{{\{v^l\}}}.\]
Similarly, for any given $\alpha=\{(1,i_{1},j_{1}),(2,i_{2},j_{2}),\cdots,(n,i_{n},j_{n})\}\in\mathcal{J}$, define
\[h_\alpha=h_{(i_1,j_1)}\otimes h_{(i_2,j_2)}\otimes\cdots\otimes h_{(i_n,j_n)}\otimes 1\otimes 1\cdots\in L_2(\mathcal{R}),\]
namely, for every $t=(t_m)_{m\in\mathbb{N}}\in\mathcal{R}$,
$$ h_\alpha(t)=\prod_{k=1}^nh_{(i_k,j_k)}(t_k). $$
We also set $h_\emptyset=1$. Then $(h_\alpha)_{\alpha\in\mathcal{J}}$ is an orthonormal basis of $L_2(\mathcal{R})$. Let $\mathscr{R}_n$ be the $\sigma$-algebra generated by $\{h_\alpha: \max(\alpha)\leq n\}$, and then $(\mathscr{R}_n)_{n\geq 1}$ is a filtration for $\mathcal{R}$. Indeed, a martingale in $L_2(\mathcal{R})$ with respect to the filtration $(\mathscr{R}_n)_{n\geq 1}$ is a $d^2$-adic martingale.

\

Define for any $t=(t_m)_{m\in\mathbb{N}}\in\mathcal{R}$, and $\forall k\in\mathbb{N}, 1\le i_k,j_k\le d,$
\begin{equation*}
	\begin{aligned}
		{}&\,\,\sigma_{h(t)}:\mathscr{M}\to \mathscr{M}\\&
		U_{\{(k,i_k,j_k)\}}\mapsto h_{\{(k,i_k,j_k)\}}(t_k)U_{\{(k,i_k,j_k)\}}.
	\end{aligned}
\end{equation*}
Then $\sigma_{h(t)}$ extends to a trace preserving automorphism of $\mathscr{M}$, and hence extends to an isometry on $L_p(\mathscr{M})$ for all $0< p< \infty$.

Now for any given $b=\sum\limits_{\alpha\in\mathcal{J}}\hat{b}(\alpha)U_\alpha\in L_p(\mathscr{M})$ with $\hat{b}(\alpha)=\tau(U_\alpha^*\cdot b)$, we define 
\[\tilde b(t)=\sigma_{h(t)}(b)=\sum_{\alpha\in\mathcal{J}}\hat{b}(\alpha)U_\alpha\cdot h_\alpha(t).\]
Then $\tilde{b}\in L_p(\mathcal{R},L_p(\mathscr{M}))$. Therefore, for any given $b$, define the martingale paraproduct $\pi_{\tilde{b}}$ of symbol $\tilde{b}$ on $L_2(\mathcal{R},L_2(\mathscr{M}))$ by
\begin{equation*}
	\begin{aligned}
		\pi_{\tilde{b}}:L_2(\mathcal{R},L_2(\mathscr{M}){}&)\to L_2(\mathcal{R},L_2(\mathscr{M}))\\&
		g\mapsto \sum_{k=1}^\infty d_k\tilde{b}\cdot g_{k-1}.
	\end{aligned}
\end{equation*}
In fact, $\pi_{\tilde{b}}$ is a martingale paraproduct for semicommutative $d^2$-adic martingales.

Now we come to the proof of Theorem \ref{thm7.1}.

\begin{proof}[Proof of Theorem \ref{thm7.1}]

	Since $L_2(\mathscr{M})\cong \ell_2(\mathcal{J})$ and $L_2(\mathcal{R},L_2(\mathscr{M}))\cong \ell_2(\mathcal{J},L_2(\mathscr{M}))$, we represent $\pi_b$ and $\pi_{\tilde{b}}$ in the matrix form. Note that for $k\ge 1$,
	\begin{equation*}
		(U_\beta)_{k-1}=
		\begin{cases}
			U_\beta,& \text{if $k-1\ge \max(\beta)$};\\
			0,& \text{otherwise}.
		\end{cases}
	\end{equation*}
	This implies
	\begin{equation*}
		\begin{aligned}
			\langle U_\alpha,\pi_b (U_\beta)\rangle{}&=\langle U_\alpha,\sum_{k=1}^\infty d_kb\cdot(U_\beta)_{k-1}\rangle=\langle U_\alpha,\sum_{k-1\ge \max(\beta)}d_kb\cdot U_\beta\rangle\\&=\langle U_\alpha,\sum_{\max (\gamma)\ge \max(\beta)+1}\hat{b}(\gamma)U_\gamma U_\beta\rangle\\&=\langle U_\alpha U_\beta^*,\sum_{\max \gamma\ge \max(\beta)+1}\hat{b}(\gamma)U_\gamma\rangle.
		\end{aligned}
	\end{equation*}
	Then by \eqref{ual}
	
	\begin{equation*}
		\begin{aligned}
			\langle U_\alpha,\pi_b (U_\beta)\rangle{}&=\langle  \lambda_{\alpha,\beta}U_{\eta_{\alpha,\beta}},\sum_{\max \gamma\ge \max(\beta)+1}\hat{b}(\gamma)U_\gamma\rangle
		\\&=
		\begin{cases}
			\overline{\lambda_{\alpha,\beta}}\cdot \hat{b}(\eta_{\alpha,\beta}),& \text{if $\max(\alpha)>\max (\beta)$};\\
			0,& \text{if $\max(\alpha)\le\max (\beta)$}.
		\end{cases}
	\end{aligned}
\end{equation*}
In the same way, one has
\begin{equation*}
	\begin{aligned}
		\langle h_\alpha,\pi_{\tilde{b}} (h_\beta)\rangle=
		\begin{cases}
			\hat{b}(\eta_{\alpha,\beta})U_{\eta_{\alpha,\beta}},& \text{if $\max (\alpha)>\max (\beta)$};\\
			0,& \text{if $\max (\alpha)\le\max (\beta)$}.
		\end{cases}
	\end{aligned}
\end{equation*}
Denote by \[[\pi_b]=\Bigl((\pi_b)_{\alpha,\beta}\Bigr)_{\alpha,\beta\in\mathcal{J}}\] 
the matrix form of $\pi_b$ with respect to the basis $(U_\alpha)_{\alpha\in\mathcal{J}}$, where
$(\pi_b)_{\alpha,\beta}=\langle U_\alpha,\pi_b (U_\beta)\rangle$. Analogously, let
\[[\pi_{\tilde{b}}]=\Bigl((\pi_{\tilde{b}})_{\alpha,\beta}\Bigr)_{\alpha,\beta\in\mathcal{J}}\]
be the matrix form of $\pi_{\tilde{b}}$ with respect to the basis $(h_\alpha)_{\alpha\in\mathcal{J}}$, where
$(\pi_{\tilde{b}})_{\alpha,\beta}=\langle h_\alpha,\pi_{\tilde{b}} (h_\beta)\rangle$.

Observing that $(\pi_{\tilde{b}})_{\alpha,\beta}=(\pi_b)_{\alpha,\beta}U_\alpha U_\beta^*$ for any $\alpha,\beta\in\mathcal{J}$, one has
\begin{equation*}
	\begin{aligned}
		[\pi_{\tilde{b}}]{}&=\biggl((\pi_b)_{\alpha,\beta}U_\alpha U_\beta^*\biggr)_{\alpha,\beta\in\mathcal{J}}\\&=
		\begin{pmatrix}
			\ddots & \,  &   0 \\
			\,     &  U_\alpha &   \,\\
			0      & \,  &   \ddots
		\end{pmatrix}_{\alpha\in\mathcal{J}}([\pi_b]\otimes 1_{\mathscr{M}})
		\begin{pmatrix}
			\ddots & \,  &   0 \\
			\,     &  U_\beta^* &   \,\\
			0      & \,  &   \ddots
		\end{pmatrix}_{\beta\in\mathcal{J}},
	\end{aligned}
\end{equation*}
where $1_{\mathscr{M}}$ is the identity of $\mathscr{M}$. So this implies that for any $0<p<\infty$
\begin{equation*}
	\begin{aligned}
		\|\pi_{\tilde{b}}\|_{L_p(B(L_2(\mathcal{R}))\otimes \mathscr{M})}{}&=\|[\pi_{\tilde{b}}]\|_{L_p(B(\ell_2(\mathcal{J}))\otimes \mathscr{M})}=\|[\pi_b]\|_{S_p(\ell_2(\mathcal{J}))}\cdot\|1_{\mathscr{M}}\|_{L_p(\mathscr{M})}=\|\pi_{b}\|_{S_p(L_2(\mathscr{M}))}.
	\end{aligned}
\end{equation*}
By Theorem \ref{thm1.2}, we have $\pi_{\tilde{b}}\in {L_p(B(L_2(\mathcal{R}))\otimes \mathscr{M})}$ if and only if $\tilde{b}\in \pmb{B}_p^{d^2}(\mathbb{R},\mathscr{M})$, where 
\begin{equation*}
	\|\tilde{b}\|_{\pmb{B}_p^{d^2}(\mathbb{R},\mathscr{M})}\approx_{d,p}\biggl(\sum_{k=1}^\infty d^{2k}\|d_k\tilde{b}\|^p_{L_p(\mathcal{R},L_p(\mathscr{M}))}\biggr)^{1/p}. 
\end{equation*}
However, note that for any $t\in\mathcal{R}$ and $k\ge 1$,
\begin{equation*}
	\begin{aligned}
		(d_k\tilde{b})(t){}&=\big(\sum_{\max(\alpha)=k}\hat{b}(\alpha)U_\alpha h_\alpha\big)(t)=\sigma_{h(t)}(\sum_{\max(\alpha)=k}\hat{b}(\alpha)U_\alpha)=\sigma_{h(t)}(d_k b).
	\end{aligned}
\end{equation*}
This yields
\[\|d_k\tilde{b}\|^p_{L_p(\mathcal{R},L_p(\mathscr{M}))}=\int_{\mathcal{R}}\|(d_k \tilde{b})(t)\|_{L_p(\mathscr{M})}^p dt =\int_{\mathcal{R}}\|\sigma_{h(t)}(d_kb)\|_{L_p(\mathscr{M})}^p dt =\|d_kb\|^p_{L_p(\mathscr{M})},\]
Therefore, we conclude that $\pi_b\in S_p(L_2(\mathscr{M}))$ if and only if $b\in \pmb{B}_p(\mathscr{M})$ with relevant constants depending only on $d$ and $p$. This completes the proof of Theorem \ref{thm7.1}.
\end{proof}

\bigskip

\section{Proof of Theorem \ref{thm6.4}}\label{Application 3}
We first start with preparations concerning martingale paraproducts and Schatten classes, namely Lemma \ref{TLambdab} and Proposition \ref{T0est}, which will be helpful in the proof of Theorem \ref{thm6.4}. Then we will introduce the key ingredient: the dyadic representation of singular integral operators by Hyt\"{o}nen in \cite{TH1} and \cite{TH2}. This representation enables the reduction to the $d$-adic martingale setting. Finally, we will give a proof of Theorem \ref{thm6.4} using the result about martingale paraproducts stated in Theorem \ref{thm1.2}. In the remainder of this section, we will still denote $B(L_2(\mathbb{R}))\otimes \mathcal{M}$ by $\mathcal{N}$.

\subsection{Schatten class of operator-valued commutators involving martingale paraproducts}
\begin{lem}\label{TLambdab}
	Assume that $1\le p<\infty$. For any semicommutative $d$-adic martingale $ f=(f_k)_{k\in\mathbb{Z} }\in L_2(\mathbb{R},L_2(\mathcal{M}))$, we define
	\[\varLambda_b(f)=\sum_{k\in\mathbb{Z}}d_kb\cdot d_kf.\]
	If $b\in\pmb{B}_p^d(\mathbb{R},\mathcal{M})$, then $\varLambda_b\in L_p(\mathcal{N})$ and 
	$$ \|\varLambda_b\|_{L_p(\mathcal{N})} \lesssim_{d, p} \|b\|_{\pmb{B}_p^d(\mathbb{R},\mathcal{M})}. $$
\end{lem}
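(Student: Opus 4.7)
The plan is to expand $d_kb$ and $d_kf$ in Haar wavelets, collapse the product via the multiplicative identity $h_I^i\cdot h_I^j=|I|^{-1/2}h_I^{\overline{i+j}}$ from \eqref{formula2.2}, and then recognize the resulting expression as a finite sum of operators falling under Lemma \ref{nonNWOpre}. Specifically, writing $d_kb=\sum_{|I|=d^{-k+1}}\sum_{i=1}^{d-1}h_I^i\,b_I^i$ and $d_kf=\sum_{|I|=d^{-k+1}}\sum_{j=1}^{d-1}h_I^j\,\langle h_I^j,f\rangle$, terms with $I\neq J$ at the same generation vanish by disjointness of supports, so the pointwise product collapses to a single sum over $I$. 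Applying \eqref{formula2.2} (and noting $h_I^d=|I|^{-1/2}\mathbbm{1}_I$ when $\overline{i+j}=d$) gives
\begin{equation*}
	\varLambda_b(f)=\sum_{I\in\mathcal D}\sum_{i,j=1}^{d-1}|I|^{-1/2}\,h_I^{\overline{i+j}}\cdot b_I^i\cdot\langle h_I^j,f\rangle.
\end{equation*}

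Next, for each fixed pair $(i,j)$ with $1\le i,j\le d-1$, I would apply Lemma \ref{nonNWOpre} to the single piece
\begin{equation*}
	\varLambda_b^{i,j}(f)=\sum_{I\in\mathcal D}\lambda_{I,j}\,\langle e_{I,j},f\rangle\,f_{I,j},\qquad e_{I,j}=h_I^j,\ f_{I,j}=h_I^{\overline{i+j}},\ \lambda_{I,j}=|I|^{-1/2}b_I^i.
\end{equation*}
Since $h_I^j$ and $h_I^{\overline{i+j}}$ are both supported in $I$ with sup-norm equal to $|I|^{-1/2}$, the hypotheses of Lemma \ref{nonNWOpre} are satisfied, yielding
\begin{equation*}
	\|\varLambda_b^{i,j}\|_{L_p(\mathcal N)}^p\lesssim_{d,p}\sum_{I\in\mathcal D}|I|^{-p/2}\,\|b_I^i\|_{L_p(\mathcal M)}^p.
\end{equation*}

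Finally, since $\varLambda_b=\sum_{i,j=1}^{d-1}\varLambda_b^{i,j}$ is a finite sum of at most $(d-1)^2$ terms, the triangle inequality in $L_p(\mathcal N)$ (valid for $p\ge 1$) together with the trivial estimate $\sum_i A_i^{1/p}\lesssim_{d,p}(\sum_i A_i)^{1/p}$ gives
\begin{equation*}
	\|\varLambda_b\|_{L_p(\mathcal N)}^p\lesssim_{d,p}\sum_{I\in\mathcal D}\sum_{i=1}^{d-1}\Bigl(\frac{\|b_I^i\|_{L_p(\mathcal M)}}{|I|^{1/2}}\Bigr)^{\!p}=\|b\|_{\pmb B_p^d(\mathbb R,\mathcal M)}^p
\end{equation*}
by Definition \ref{mbs1}. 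The only genuinely nontrivial step is the first one: recognizing that the algebraic identity \eqref{formula2.2} converts the apparently quadratic expression $d_kb\cdot d_kf$ into a linear combination of rank-one-type operators matching the template of Lemma \ref{nonNWOpre}. Once this structural observation is made, the rest of the proof is a bookkeeping exercise, and no further obstacle is anticipated.
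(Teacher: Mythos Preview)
Your proof is correct and in fact simpler than the paper's. The paper proceeds by first splitting $\varLambda_b=(\pi_{b^*})^*+\tilde{\varLambda}_b$, where the $\overline{i+j}=d$ terms assemble into $(\pi_{b^*})^*$ (handled via Theorem \ref{thm1.2}) and the remaining terms form a block-diagonal operator $\tilde{\varLambda}_b$ whose blocks are estimated by embedding them into circulant-type matrices $B^I$. Your approach bypasses both detours: you decompose $\varLambda_b$ into $(d-1)^2$ pieces $\varLambda_b^{i,j}$ and feed each one directly into Lemma \ref{nonNWOpre}, treating the cases $\overline{i+j}=d$ and $\overline{i+j}\neq d$ uniformly. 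Since Lemma \ref{nonNWOpre} is proved independently of Theorem \ref{thm1.2} (it rests only on the scalar result Theorem \ref{lem2.1}), there is no circularity. What the paper's route buys is the explicit identification $\varLambda_b=(\pi_{b^*})^*+\tilde{\varLambda}_b$, which is reused verbatim in the boundedness argument of Corollary \ref{Thetabest}; your route is more economical for this lemma in isolation but does not expose that structure.
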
	

\begin{proof}
	We write $	\varLambda_b$ as follows:
	\begin{equation}\label{Lambdasum}
		\begin{aligned}
			\varLambda_b(f){}&=\sum_{k\in\mathbb{Z}}d_kb\cdot d_kf\\
			&=\sum_{k\in\mathbb{Z}}\Bigl(\sum_{I\in\mathcal{D}_{k-1}}\sum_{i=1}^{d-1}\langle h_I^i,b\rangle h_I^i\Bigr)\Bigl(\sum_{J\in\mathcal{D}_{k-1}}\sum_{j=1}^{d-1}\langle h_J^j,f\rangle h_J^j\Bigr)\\
			&=\sum_{k\in\mathbb{Z}}\sum_{I\in\mathcal{D}_{k-1}}\Bigl(\sum_{i=1}^{d-1}\langle h_I^i,b\rangle h_I^i\Bigr)\Bigl(\sum_{j=1}^{d-1}\langle h_I^j,f\rangle h_I^j\Bigr)\\
			&=\sum_{k\in\mathbb{Z}}\sum_{I\in\mathcal{D}_{k-1}}\Bigl(\sum_{\overline{i+j}=d}\langle h_I^i,b\rangle\langle h_I^j,f\rangle \frac{\mathbbm{1}_I}{|I|}+\sum_{l=1}^{d-1}\sum_{\overline{i+j}=l}\langle h_I^i,b\rangle\langle h_I^j,f\rangle \frac{h_I^l}{|I|^{1/2}}\Bigr)\\
			&=\sum_{k\in\mathbb{Z}}\sum_{I\in\mathcal{D}_{k-1}}\Bigl(\sum_{\overline{i+j}=d}\langle b^*,h_I^{d-i}\rangle\langle h_I^j,f\rangle \frac{\mathbbm{1}_I}{|I|}+\sum_{l=1}^{d-1}\sum_{\overline{i+j}=l}\langle h_I^i,b\rangle\langle h_I^j,f\rangle \frac{h_I^l}{|I|^{1/2}}\Bigr)\\
			&=(\pi_{b^*})^*(f)+\tilde{\varLambda}_b(f),
		\end{aligned}
	\end{equation}
	where we have used \eqref{pistar}, and where
	\begin{equation}\label{tildevar}
		\tilde{\varLambda}_b(f)=\sum_{k\in\mathbb{Z}}\sum_{I\in\mathcal{D}_{k-1}}\sum_{l=1}^{d-1}\sum_{\overline{i+j}=l}\langle h_I^i,b\rangle\langle h_I^j,f\rangle \frac{h_I^l}{|I|^{1/2}}.
	\end{equation}
	By Theorem \ref{thm1.2}, we know
	\begin{equation}\label{bbbp}
		\|(\pi_{b^*})^*\|_{L_p(\mathcal{N})}\approx_{d,p} \|b\|_{\pmb{B}_p^{d}(\mathbb{R},\mathcal{M})}.
	\end{equation}
	It remains to estimate $\|\tilde{\varLambda}_b\|_{L_p(\mathcal{N})}$. We represent it into the matrix form. Note that for any $S,T\in \mathcal{D}$,  $1\le s,t\le d-1$, and $x,y\in L_2(\mathcal{M})$,
	\begin{equation*}
		\begin{aligned}
			\langle h_S^s\otimes x,\tilde{\varLambda}_b(h_T^t\otimes y)\rangle{}&=
			\begin{cases}
				\langle x,|S|^{-1/2}\langle h_S^{s-t},b\rangle y\rangle ,& \text{if $S=T$ and $s\neq t$},\\
				0,& \text{otherwise}.
			\end{cases}
		\end{aligned}
	\end{equation*}
	This yields that $\tilde{\varLambda}_b$ is a block diagonal matrix with respect to the basis $\{h_I^i\}_{I\in\mathcal{D}, 1\leq i\leq d-1}$. (If $s-t< 0$, replace $s-t$ with $s-t+d$, and still denote it by $s-t$.)
	For any $I\in \mathcal{ D}$, $1\leq s\neq t \leq d-1$, denote $|I|^{-1/2}\langle h_I^{s-t},b\rangle$ by $a^I_{s-t}$, and define $a_0^I=0$.
	Hence one has
	\begin{equation*}
		\begin{aligned}
			\|\tilde{\varLambda}_b\|_{L_p(\mathcal{N})}^p{}&=\sum_{I\in\mathcal{D}}\biggl\|\biggl(a_{s-t}^I\biggr)_{1\le s,t\le d-1}\biggr\|_{L_p(\mathbb{M}_{d-1}\otimes\mathcal{M})}^p,
		\end{aligned}
	\end{equation*}
	where $\mathbb{M}_{d-1}$ is equipped with the usual trace.
	Let 
	\begin{equation}\label{BI}
		B^I=\begin{pmatrix}
			a_0^I       & a_{d-1}^I & \cdots    & a_2^I    & a_1^I \\
			a_1^I       & a_0^I       & a_{d-1}^I & \cdots & a_2^I \\
			\vdots    & \ddots    & \ddots    & \ddots & \vdots \\
			a_{d-2}^I & \cdots    & a_1^I       & a_0^I    & a_{d-1}^I  \\
			a_{d-1}^I & \cdots    & a_2^I      & a_1^I    & a_0^I
		\end{pmatrix}
		=\begin{pmatrix}
			&  &   & a_1^I \\
			&  \biggl(a_{s-t}^I\biggr)_{1\le s,t\le d-1}    & &\vdots  \\
			&  & & a_{d-1}^I \\
			a_{d-1}^I      & \cdots  & a_1^I  & a_0^I
		\end{pmatrix}.
	\end{equation}
	By Lemma \ref{lem3.1}, we have 
	\[\biggl\|\biggl(a_{s-t}^I\biggr)_{1\le s,t\le d-1}\biggr\|_{L_p(\mathbb{M}_{d-1}\otimes \mathcal{M})}\le \|B^I\|_{L_p(\mathbb{M}_{d}\otimes\mathcal{M})},\]
	which implies
	\begin{equation*}
		\begin{aligned}
			\|\tilde{\varLambda}_b\|_{L_p(\mathcal{N})}^p{}&\le \sum_{I\in\mathcal{D}}\|B^I\|_{L_p(\mathbb{M}_{d}\otimes\mathcal{M})}^p.
		\end{aligned}
	\end{equation*}
	Note that we can write $B^I$ as
	\begin{equation*}
		\begin{aligned}
			B^I=a_1^IA+a_2^IA^2+\cdots+a_{d-1}^IA^{d-1}
		\end{aligned}
	\end{equation*}
	with
	\begin{equation*}
		A=e_{1,d}+\sum_{j=1}^{d-1}e_{j+1,j}.
	\end{equation*}
	Using the triangle inequality, one has
	\begin{equation*}\label{bbbbp}
		\begin{aligned}
			\quad\|\tilde{\varLambda}_b\|_{L_p(\mathcal{N})}^p{}&\le \sum_{I\in\mathcal{D}}\|a_1^IA+a_2^IA^2+\cdots+a_{d-1}^IA^{d-1}\|_{L_p(\mathbb{M}_{d}\otimes\mathcal{M})}^p\\&\lesssim_{d,p}\sum_{I\in\mathcal{D}}\sum_{i=1}^{d-1}\|a_i^IA^i\|^p_{L_p(\mathbb{M}_{d}\otimes\mathcal{M})}\le\sum_{I\in\mathcal{D}}\sum_{i=1}^{d-1}\|a_i^I\|_{L_p(\mathcal{M})}^p\|A\|^p_{S_p(\mathbb{M}_{d})}\\&\lesssim_{d,p}\sum_{I\in\mathcal{D}}\sum_{i=1}^{d-1}\|a_i^I\|_{L_p(\mathcal{M})}^p=\sum_{I\in \mathcal{D}}\sum_{i=1}^{d-1}\biggl(\frac{\|\langle h_I^i,b\rangle\|_{L_p(\mathcal{M})}}{|I|^{1/2}}\biggr)^p=\|b\|_{\pmb{B}_p^{d}(\mathbb{R},\mathcal{M})}^p.
		\end{aligned}
	\end{equation*}
	Combining this with (\ref{bbbp}), we obtain the desired result.
\end{proof}
\begin{rem}
	Lemma \ref{TLambdab} also holds for $0<p<1$ with the same proof,  we leave the details to the interested reader.
\end{rem}

In what follows, we need to use the boundedness of  the triangular projection on Schatten classes. The triangular projection is defined as follows
\begin{equation*}
	\begin{aligned}
		\mathcal{P}: B(\ell_2{}) &\longrightarrow B(\ell_2) \\
		(m_{ij})_{i,j}&\longmapsto (\delta_{i>j}\cdot m_{ij})_{i,j},
	\end{aligned}
\end{equation*}
where $\delta_{i>j}=1$ if $i>j$, and $\delta_{i>j}=0$ if $i\leq j$. It is well-known that $\mathcal{P}$ is bounded from $S_p(\ell_2)$ to $S_p(\ell_2)$ when $1<p<\8$. We refer the reader to \cite{GK} for more information.

Then for $1<p<\8$, we can define $\mathcal{P}\otimes Id_{L_p(\M)}$ on the algebraic tensor product $S_p(\ell_2)\otimes L_p(\mathcal{M})$. The next lemma is well-known, and the interested reader is referred to \cite{Hao} for a detailed proof.
\begin{lemma}\label{triangle}
	Let $1<p<\infty$. Then $\mathcal{P}\otimes Id_{L_p(\mathcal{M})}$ extends to a bounded map on $L_p(B(\ell_2)\otimes \mathcal{M})$. Moreover, $$ \|\mathcal{P}\otimes Id_{L_p(\mathcal{M})}\|_{L_p(B(\ell_2)\otimes \mathcal{M}) \to L_p(B(\ell_2)\otimes \mathcal{M})}\lesssim \max\{p', p\}. $$
\end{lemma}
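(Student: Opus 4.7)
My plan is to reduce the boundedness of $\mathcal{P} \otimes Id_{L_p(\mathcal{M})}$ to a vector-valued Riesz projection estimate on the circle $\mathbb{T}$, via a standard transference using the torus action by diagonal conjugations. For each $\theta \in [0, 2\pi)$ I would let $D_\theta$ be the diagonal unitary on $\ell_2$ with entries $(e^{\mathrm{i} j\theta})_{j}$ and consider the Schur multiplier $\sigma_\theta(T) := D_\theta T D_\theta^*$. Since $\sigma_\theta$ is a $*$-automorphism of $B(\ell_2)$, the map $\sigma_\theta \otimes Id_\mathcal{M}$ is a trace-preserving $*$-automorphism of $B(\ell_2) \otimes \mathcal{M}$ and hence an isometry on $L_p(B(\ell_2) \otimes \mathcal{M})$ for every $1 < p < \infty$. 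Decomposing any matrix $T$ into its diagonals $T = \sum_{k\in \mathbb{Z}} T_k$, where $T_k$ collects the entries $(i,j)$ with $i - j = k$, one has $\sigma_\theta(T_k) = e^{\mathrm{i} k\theta} T_k$; so the map $\theta \mapsto F_T(\theta) := \sigma_\theta(T) = \sum_k e^{\mathrm{i} k\theta} T_k$ plays the role of a Fourier series attached to $T$.

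Next I would apply the scalar Riesz projection $R$, projecting onto strictly positive Fourier modes, fiberwise in $\theta$. The crucial pointwise identity
\begin{equation*}
\sigma_{-\theta}\!\bigl(R(F_T)(\theta)\bigr) = \sum_{k \ge 1} T_k = (\mathcal{P}\otimes Id_\mathcal{M})(T)
\end{equation*}
holds for \emph{every} $\theta$, because applying $\sigma_{-\theta}$ cancels the exponential factor $e^{\mathrm{i} k\theta}$. Integrating in $\theta$ and using that $\sigma_{\pm \theta}$ are isometries on the noncommutative $L_p$-space, together with $\|F_T(\theta)\|_{L_p} = \|T\|_{L_p}$, I would obtain
\begin{equation*}
\|(\mathcal{P}\otimes Id_\mathcal{M})(T)\|_{L_p(B(\ell_2)\otimes \mathcal{M})} \le C_p \cdot \|T\|_{L_p(B(\ell_2)\otimes \mathcal{M})},
\end{equation*}
where $C_p$ is the norm of $R\otimes Id$ on $L_p(\mathbb{T}; L_p(B(\ell_2)\otimes \mathcal{M}))$.

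To finish, I would identify $L_p(\mathbb{T}; L_p(B(\ell_2)\otimes \mathcal{M}))$ with the noncommutative $L_p$-space $L_p(L_\infty(\mathbb{T})\otimes B(\ell_2)\otimes \mathcal{M})$, on which $R\otimes Id$ coincides with the analytic projection over the semifinite von Neumann algebra $B(\ell_2)\otimes \mathcal{M}$. The required quantitative bound $C_p \lesssim \max\{p, p'\}$ is then a consequence of the boundedness of the Riesz projection on noncommutative Hardy spaces over an arbitrary semifinite von Neumann algebra, which follows from the noncommutative Burkholder--Gundy inequalities and is independent of $\mathcal{M}$; density over finitely-supported matrices finalizes the estimate on the full $L_p$ space. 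The main obstacle is precisely this final step: lifting the scalar Pichorides-type bound $\max\{p,p'\}$ to the noncommutative vector-valued setting with a constant uniform in $\mathcal{M}$. However, since this upgrade is already well documented in the noncommutative martingale/harmonic analysis literature, the remainder of the argument is a clean transference computation with no further technical surprises.
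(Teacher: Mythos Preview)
Your transference argument via diagonal conjugation is correct and is one of the standard proofs of this well-known fact. Note, however, that the paper does not actually prove this lemma: it simply states that the result is well-known and refers to \cite{Hao} for details, so there is no in-paper proof to compare against. Your reduction to the boundedness of the Riesz projection on $L_p(L_\infty(\mathbb{T})\otimes B(\ell_2)\otimes\mathcal{M})$ is clean, and the required constant $\lesssim\max\{p,p'\}$ follows from the UMD property of noncommutative $L_p$-spaces (equivalently, from the noncommutative martingale transform inequalities of Pisier--Xu and Randrianantoanina), which indeed gives a bound uniform in $\mathcal{M}$.
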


Before proving Theorem \ref{thm6.4}, we give the following proposition, which concerns the $p$-Schatten class of operator-valued commutators involving martingale paraproducts and the left multiplication operator $M_b$.

\begin{proposition}\label{T0est}
	Let $1<p<\infty$. If $a\in BMO^d(\mathbb{R})$ and $b\in \pmb{B}_p^{d}(\mathbb{R},\mathcal{M})$, then $[\pi_a,M_b]$ and $[\pi^*_a,M_b]$ both belong to $L_p(\mathcal{N})$. Moreover,
	\begin{equation*}
		\|[\pi_a,M_b]\|_{L_p(\mathcal{N})}\lesssim_{d,p} \|a\|_{BMO^d(\mathbb{R})} \|b\|_{\pmb{B}_p^{d}(\mathbb{R},\mathcal{M})}
	\end{equation*}
and
\begin{equation*}
	\|[\pi_a^*,M_b]\|_{L_p(\mathcal{N})}\lesssim_{d,p} \|a\|_{BMO^d(\mathbb{R})} \|b\|_{\pmb{B}_p^{d}(\mathbb{R},\mathcal{M})}.
\end{equation*}
\end{proposition}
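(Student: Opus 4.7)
First I would compute $[\pi_a,M_b](f)$ explicitly via the Haar representations, then decompose the result into operators controlled by Theorem \ref{thm1.2}, Lemma \ref{TLambdab}, and standard $BMO^d$ estimates. Using $\pi_a(g)(x)=\sum_I\sum_i a_I^i h_I^i(x) g_I$ with $g_I := |I|^{-1}\int_I g$, and the fact that each scalar $a_I^i = \langle h_I^i, a\rangle$ commutes with $b(x)$, one arrives at
\[[\pi_a,M_b](f)(x) = \sum_{I\in\mathcal{D}}\sum_{i=1}^{d-1} a_I^i\, h_I^i(x)\cdot \frac{1}{|I|}\int_I (b(y)-b(x))\, f(y)\, dy.\]
Writing $b(y)-b(x) = (b(y)-b_I) - (b(x)-b_I)$ for $x,y \in I$, this splits as $T_1(f) + T_2(f)$, where $T_1$ isolates the covariance piece $(bf)_I - b_If_I$ and $T_2$ collects the oscillation piece $(b_I - b(x))f_I$.

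For $T_1$, the covariance $\frac{1}{|I|}\int_I (b-b_I)(f-f_I)$ expands via Haar orthogonality and the product identity $h_J^j h_J^{d-j} = |J|^{-1}\mathbbm{1}_J$ into $\frac{1}{|I|}\sum_{J\subseteq I}\sum_{j+k=d} b_J^j f_J^k$. Reorganizing the sums by fixing $J$ first, the inner $I$-sum becomes a Carleson-type weight in $a$ that is controlled by $\|a\|_{BMO^d(\mathbb{R})}$ through the $H^1$-$BMO$ duality for dyadic martingales; what remains is a paraproduct in $b$ of the $\Lambda$-type whose Schatten norm is bounded by $\|b\|_{\pmb{B}_p^d(\mathbb{R},\mathcal{M})}$ via Lemma \ref{TLambdab}. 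For $T_2$, expanding $b(x) - b_I$ in Haar on $I$ and distributing against $h_I^i(x)$ by means of $h_I^i\cdot h_J^j = |I|^{-1/2}h_J^j$ when $J\subsetneq I$ or $h_I^i\cdot h_I^j = |I|^{-1/2}h_I^{\overline{i+j}}$ when $J = I$ decomposes $T_2$ into a diagonal paraproduct $\pi_c$ with symbol $c$ built from products $a_I^i b_I^j$ at the same scale, plus an off-diagonal lower-triangular piece indexed by pairs $J\subsetneq I$. The diagonal paraproduct is handled immediately by Theorem \ref{thm1.2} with Besov norm estimated by $\|a\|_{BMO^d(\mathbb{R})}\|b\|_{\pmb{B}_p^d(\mathbb{R},\mathcal{M})}$.

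The principal obstacle is the off-diagonal piece of $T_2$: one must show that the aggregated weight $\sum_{I\supsetneq J,i} a_I^i h_I^i(J)/|I|^{1/2}$, which sums the $a$-coefficients over scales coarser than $J$, is uniformly bounded on each $J$ by $\|a\|_{BMO^d(\mathbb{R})}$, a consequence of the dyadic John-Nirenberg inequality. Granted this weight estimate, the off-diagonal term reduces to a paraproduct in $b$ whose Schatten norm is controlled by $\|a\|_{BMO^d(\mathbb{R})}\|b\|_{\pmb{B}_p^d(\mathbb{R},\mathcal{M})}$ via Theorem \ref{thm1.2}; for $1<p<\infty$, Lemma \ref{triangle} can be invoked to manage the ordering of summation arising from the non-commutativity of $\mathcal{M}$. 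The bound for $[\pi_a^*,M_b]$ then follows by taking adjoints: $([\pi_a^*,M_b])^* = -[\pi_a, M_{b^*}]$, the isometric invariance of the Schatten norm under adjoints, and the identity $\|b^*\|_{\pmb{B}_p^d(\mathbb{R},\mathcal{M})} = \|b\|_{\pmb{B}_p^d(\mathbb{R},\mathcal{M})}$, which follows from $(b^*)_I^i = (b_I^{d-i})^*$ together with $\overline{h_I^i} = h_I^{d-i}$.
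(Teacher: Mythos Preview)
Your splitting $[\pi_a,M_b]=T_1+T_2$ is legitimate, but the justifications you offer for each piece contain a real gap. For the off-diagonal part of $T_2$, the claimed weight bound is false: $\sum_{I\supsetneq J,\,i}a_I^i\,h_I^i(J)$ is nothing but $\sum_{k\le k_J}(d_ka)(J)$, the partial martingale sum of $a$ evaluated on $J$, and this is \emph{not} uniformly bounded for $a\in BMO^d$ (take $a=\log|\cdot|$ on $\mathbb R$ with $d=2$: every ancestor of $[0,2^{-N})$ contributes a term of size $\approx\log 2$, so the sum diverges); John--Nirenberg gives no such pointwise control. The correct observation is that the full inner sum, \emph{with the factor $f_I$ kept}, equals $\mathbb E_{k_J}(\pi_af)$ on $J$, whence the off-diagonal piece is exactly $-\pi_b\circ\pi_a$. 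Likewise the diagonal piece is not a paraproduct $\pi_c$: when $\overline{i+j}=d$ (the \emph{only} case for $d=2$) one has $h_I^ih_I^j=|I|^{-1}\mathbbm 1_I$, which is not a Haar function, so Theorem~\ref{thm1.2} does not apply; the diagonal piece is in fact $-\Lambda_b\circ\pi_a$. Altogether $T_2=-(\pi_b+\Lambda_b)\circ\pi_a$, bounded by $\|\pi_a\|_{B(L_2)}\bigl(\|\pi_b\|_{L_p(\mathcal N)}+\|\Lambda_b\|_{L_p(\mathcal N)}\bigr)$ via Theorem~\ref{thm1.2} and Lemma~\ref{TLambdab}.

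For $T_1$ the $H^1$--$BMO$ sketch is too vague to be a proof; the ``Carleson-type weight'' $\sum_{I\supseteq J,\,i}a_I^i h_I^i(x)/|I|$ is a function of $x$ that does not localize to $J$, and it is unclear how a pointwise or Carleson control of it would yield a Schatten bound. In the paper's framework $T_1(f)=\sum_k d_ka\cdot\mathbb E_{k-1}\bigl(\sum_{j\ge k}d_jb\,d_jf\bigr)=\pi_a\Lambda_b(f)-\Psi_{a,b}(f)$ with $\Psi_{a,b}(f)=\sum_k d_ka\cdot\sum_{j<k}d_jb\,d_jf$, and the substantive step is to verify entrywise that $[\Psi_{a,b}]=(\mathcal P\otimes Id)\bigl([\pi_a\Lambda_b]\bigr)$ in the Haar matrix representation; Lemma~\ref{triangle} then yields $\|\Psi_{a,b}\|_{L_p(\mathcal N)}\lesssim_p\|\pi_a\Lambda_b\|_{L_p(\mathcal N)}\lesssim\|a\|_{BMO^d}\|b\|_{\pmb B_p^d}$. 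This is where the triangular projection belongs---in $T_1$, not $T_2$---and where the hypothesis $1<p<\infty$ is genuinely used.
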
 
\begin{proof}
	Let
	\begin{equation}\label{R_b}
		R_b(f)=\sum_{k\in\mathbb{Z}}b_{k-1}\cdot d_kf, \quad \forall f\in L_2(\mathbb{R},L_2(\mathcal{M})).
	\end{equation}
	Note that for $b, f\in L_2(\mathbb{R},L_2(\mathcal{M}))$,  $M_b(f)=\pi_b(f)+\varLambda_b(f)+R_b(f)$. Thus
	$$ 	[\pi_{{a}},M_b]=[\pi_{{a}},\pi_b]+[\pi_{{a}},\varLambda_b]+[\pi_{{a}},R_b].$$
	We first estimate $\|[\pi_{a}, R_b]\|_{L_p(\mathcal{N})}$. For any $f\in L_2(\mathbb{R},L_2(\mathcal{M}))$,
	\begin{equation*}
		\begin{aligned}
			[\pi_{a}, R_b](f){}&=\pi_{a} (R_b(f))-R_b (\pi_{a}(f))\\
			&=\sum_{k\in\mathbb{Z}} d_ka\cdot\mathbb{E}_{k-1} \biggl(\sum_{j\in\mathbb{Z}} b_{j-1}\cdot d_jf \biggr)- \sum_{k\in\mathbb{Z}} b_{k-1} \cdot d_k \biggl(\sum_{j\in\mathbb{Z}} d_ja\cdot f_{j-1}\biggr)\\
			&=\sum_{k\in\mathbb{Z}} d_ka\cdot \biggl(\sum_{j\le k-1}b_{j-1}\cdot d_jf\biggr)-\sum_{k\in\mathbb{Z}} b_{k-1}\cdot d_ka\cdot f_{k-1}\\
			&=\sum_{k\in\mathbb{Z}} d_ka\cdot \biggl(\sum_{j\le k-1}b_{j-1}\cdot d_jf- b_{k-1}\cdot f_{k-1}\biggr)\\
			&=-\sum_{k\in\mathbb{Z}} d_ka\cdot \biggl( \sum_{j\le k-1}d_jb\cdot d_jf\biggr)-\sum_{k\in\mathbb{Z}} d_ka\cdot \biggl(\sum_{j\le k-1}d_jb\cdot f_{j-1}\biggr)\\
			&=-\sum_{k\in\mathbb{Z}} d_ka\cdot \biggl( \sum_{j\le k-1}d_jb\cdot d_jf\biggr)-\pi_{a}(\pi_b(f))\\
			&=: -\varPsi_{a,b}(f)-\pi_{a}(\pi_b(f)).
		\end{aligned}
	\end{equation*}
	Thus
	\begin{equation}\label{piarb}
		[\pi_{a}, R_b]=-\varPsi_{a,b}-\pi_{a}\pi_b.
	\end{equation}
	From Theorem \ref{thm1.2}, we know that
	\begin{equation*}
		\begin{aligned}
			\|\pi_b\|_{L_p(\mathcal{N})}\lesssim_{d, p}\|b\|_{\pmb{B}_p^{d}(\mathbb{R},\mathcal{M})}.
		\end{aligned}
	\end{equation*}
	Since $\pi_a$ is bounded on $L_2(\mathbb{R}, L_2(\M))$, one has
	\begin{equation}\label{piarb1}
		\|\pi_a\pi_b\|_{L_p(\mathcal{N})}\lesssim_{d, p}\|a\|_{BMO^d(\mathbb{R})}\|b\|_{\pmb{B}_p^{d}(\mathbb{R},\mathcal{M})}.
	\end{equation}
	
	To deal with $\varPsi_{a,b}$, let $\mathcal{Q}_n=\big\{(S,s):S\in\mathcal{D}_n,1\le s\le d-1\big\}$ and 
	\begin{equation*}
		\mathcal{Q}=\cdots \mathcal{Q}_{-2}\cup \mathcal{Q}_{-1}\cup \mathcal{Q}_{0}\cup \mathcal{Q}_{1}\cup \mathcal{Q}_{2}\cdots.
	\end{equation*}
    One needs only order the elements of $\mathcal{Q}$ according to $n$ in $\mathcal{Q}_n$. The elements inside each $\mathcal{Q}_n$ need not be ordered. Note that $L_2(\mathbb{R},L_2(\mathcal{M}))\cong \ell_2(\mathcal{Q}, L_2(\mathcal{M}))$. Besides, we denote by
	\begin{equation*}
		[\varPsi_{a,b}]=\biggl((\varPsi_{a,b})_{(S,s),(T,t)}\biggr)_{(S,s),(T,t)\in \mathcal{Q}}
	\end{equation*}
	the matrix form of $\varPsi_{a,b}$ with respect to the basis $(h_S^s)_{(S,s)\in \mathcal{Q}}$, where
	$(\varPsi_{a,b})_{(S,s),(T,t)}=\langle h_S^s,\varPsi_{a,b} (h_T^t)\rangle$. Analogously, let
	\begin{equation*}
		[\pi_a\varLambda_b]=\biggl((\pi_a\varLambda_b)_{(S,s),(T,t)}\biggr)_{(S,s),(T,t)\in \mathcal{Q}}
	\end{equation*}
	be the matrix form of $\pi_a\varLambda_b$.
	
	Our aim is to prove that
	\begin{equation}\label{varpsi}
		[\varPsi_{a,b}]=(\mathcal{P}\otimes Id_{L_p(\mathcal{M})})([\pi_{a}\varLambda_{b}]).
	\end{equation}
	On the one hand, note that
	\begin{equation}\label{varpsif}
		\begin{aligned}
			\varPsi_{a,b}(f){}&=\sum_{k\in\mathbb{Z}} d_ka\cdot\biggl(\mathbb{E}_{k-1}\Bigl(\sum_{j\in\mathbb{Z}} d_j{b}\cdot d_j{f}\Bigr)-\mathbb{E}_{k-1}\Bigl(\sum_{j\ge k} d_j{b}\cdot d_j{f}\Bigr)\biggr)\\
			&=\sum_{k\in\mathbb{Z}} d_ka\cdot\biggl(\mathbb{E}_{k-1}(\varLambda_{{b}}({f}))-\mathbb{E}_{k-1}\Bigl(\sum_{j\ge k} d_j{b}\cdot d_j{f}\Bigr)\biggr)\\
			&=\pi_a(\varLambda_{{b}}({f}))-\sum_{k\in\mathbb{Z}} d_ka\cdot \mathbb{E}_{k-1}\Bigl(\sum_{j\ge k} d_j{b}\cdot d_j{f}\Bigr).
		\end{aligned}
	\end{equation}
	Suppose that $S\in \mathcal{D}_m$, $T\in\mathcal{D}_n$, and $m>n$. For any $1\le s,t\le d-1$, note that $d_{m+1}h_S^s=h_S^s$ and $d_{n+1}h_T^t=h_T^t$, hence
	\begin{equation*}
		\begin{aligned}
			\langle h_S^s,\varPsi_{a,b}(h_T^t)\rangle-\langle h_S^s,\pi_a(\varLambda_b(h_T^t))\rangle{}&=-\biggl\langle h_S^s, \sum_{k\in\mathbb{Z}} d_ka\cdot \mathbb{E}_{k-1}\Bigl(\sum_{j\ge k} d_j{b}\cdot d_j{h_T^t}\Bigr)\biggr\rangle \\
			&=-\biggl\langle d_{m+1}h_S^s, \sum_{k\leq {n+1}} d_ka\cdot \mathbb{E}_{k-1}\Bigl(d_{n+1}{b}\cdot d_{n+1}{h_T^t}\Bigr)\biggr\rangle\\
			&=-\biggl\langle h_S^s, d_{m+1}\Bigl(\sum_{k\le n+1} d_ka\cdot \mathbb{E}_{k-1}\bigl(d_{n+1}{b}\cdot {h_T^t}\bigr)\Bigr)\biggr\rangle\\
			&=\langle h_S^s,0\rangle=0.
		\end{aligned}
	\end{equation*}
	This implies that when $S\in \mathcal{D}_m$, $T\in\mathcal{D}_n$, and $m>n$
	\begin{equation}\label{pusiab}
		\begin{aligned}
			(\varPsi_{a,b})_{(S,s),(T,t)}=(\pi_a\varLambda_b)_{(S,s),(T,t)}.
		\end{aligned}
	\end{equation}
	On the other hand, 
	\begin{equation*}
		\begin{aligned}
			\varPsi_{a,b}(f){}&=\sum_{k\in\mathbb{Z}}\sum_{I\in\mathcal{D}_{k-1}}\sum_{i=1}^{d-1}\langle h_I^i,a\rangle h_I^i \cdot \biggl(\sum_{j\le k-1}\sum_{L\in \mathcal{D}_{j-1}}\sum_{l=1}^{d-1}\langle h_L^l,b\rangle h_L^l \cdot \sum_{Q\in \mathcal{D}_{j-1}}\sum_{q=1}^{d-1}\langle h_Q^q,f\rangle h_Q^q\biggr)\\
			&=\sum_{k\in\mathbb{Z}}\sum_{I\in\mathcal{D}_{k-1}}\sum_{i=1}^{d-1}\langle h_I^i,a\rangle h_I^i \cdot \biggl(\sum_{j\le k-1}\sum_{L\in \mathcal{D}_{j-1}}\sum_{l=1}^{d-1}\sum_{q=1}^{d-1}\langle h_L^l,b\rangle \langle h_L^q,f\rangle h_L^{\overline{l+q}}\biggr)\\
			&=\sum_{I\in\mathcal{D}}\sum_{i=1}^{d-1}\langle h_I^i,a\rangle\cdot \biggl(\sum_{I\subsetneqq L}\sum_{l=1}^{d-1}\sum_{q=1}^{d-1}\langle h_L^l,b\rangle \langle h_L^q,f\rangle h_I^i h_L^{\overline{l+q}}\biggr)\\
			&=\sum_{I\in\mathcal{D}}\sum_{i=1}^{d-1}\langle h_I^i,a\rangle\cdot \biggl(\sum_{I\subsetneqq L}\sum_{l=1}^{d-1}\sum_{q=1}^{d-1}\langle h_L^l,b\rangle \langle h_L^q,f\rangle  \bigg\langle \frac{\mathbbm{1}_I}{|I|},h_L^{\overline{l+q}}\bigg\rangle h_I^i\biggr),
		\end{aligned}
	\end{equation*}
	where the last equality follows from 
	$$  h_I^i h_L^{\overline{l+q}}=h_I^i (\mathbbm{1}_I\cdot h_L^{\overline{l+q}})= \bigg\langle \frac{\mathbbm{1}_I}{|I|},h_L^{\overline{l+q}}\bigg\rangle h_I^i, \quad \forall  I\subsetneqq L.$$
	Combining the preceding equalities, we get 
	\begin{equation}\label{pusiab2}
		\begin{aligned}
			(\varPsi_{a,b})_{(S,s),(T,t)}=
			\begin{cases}
				\langle h_S^s,a\rangle \sum\limits_{l=1}^{d-1}\langle h_T^l,b\rangle \bigg\langle \frac{\mathbbm{1}_S}{|S|},h_T^{\overline{l+t}}\bigg\rangle=(\pi_a\varLambda_b)_{(S,s),(T,t)} ,& \text{if $S\subsetneqq T$ };\\
				0,& \text{otherwise}.
			\end{cases}
		\end{aligned}
	\end{equation}
	Hence from \eqref{pusiab} and \eqref{pusiab2}, we conclude \eqref{varpsi}.
	
	From Lemma \ref{TLambdab}, we know that
	\begin{equation*}
		\begin{aligned}
			\|\varLambda_b\|_{L_p( \mathcal{N})}\lesssim_{d, p}\|b\|_{\pmb{B}_p^{d}(\mathbb{R},\mathcal{M})}.
		\end{aligned}
	\end{equation*}
	Hence from \eqref{varpsi} and Lemma \ref{triangle} one has
	\begin{equation*}\label{piarb2}
		\begin{aligned}
			\|\varPsi_{a,b}\|_{L_p(\mathcal{N})}{}&=\|[\varPsi_{a,b}]\|_{L_p(B(\ell_2(\mathcal{Q}))\otimes \mathcal{M})}\\
			&=\|(\mathcal{P}\otimes Id_{L_p(\mathcal{M})})([\pi_{a}\varLambda_{b}])\|_{L_p(B(\ell_2(\mathcal{Q}))\otimes \mathcal{M})}\\
			&\lesssim_p \|[\pi_{a}\varLambda_{b}]\|_{L_p(B(\ell_2(\mathcal{Q}))\otimes \mathcal{M})}\\
			&=\|[\pi_{a}\varLambda_{b}]\|_{L_p(\mathcal{N})}
			\lesssim_{d, p}\|a\|_{BMO^d(\mathbb{R})}\|b\|_{\pmb{B}_p^{d}(\mathbb{R},\mathcal{M})}.
		\end{aligned}
	\end{equation*}
	Combining the preceding inequalities, we arrive at
	\begin{equation*}
		\|[\pi_a,R_b]\|_{L_p(\mathcal{N})}\lesssim_{d,p}\|a\|_{BMO^d(\mathbb{R})} \|b\|_{\pmb{B}_p^{d}(\mathbb{R},\mathcal{M})}.
	\end{equation*}
		Therefore, by the triangle inequality we deduce that
		\begin{equation*}
			\begin{aligned}
				\|[\pi_a,M_b]\|_{L_p(\mathcal{N})}{}&\le \|[\pi_a,\pi_b]\|_{L_p(\mathcal{N})}+\|[\pi_a,\varLambda_b]\|_{L_p(\mathcal{N})}\\
				&\quad+\|[\pi_a,R_b]\|_{L_p(\mathcal{N})}\\
				&\lesssim_{d,p}\|a\|_{BMO^d(\mathbb{R})} \|b\|_{\pmb{B}_p^{d}(\mathbb{R},\mathcal{M})}.
			\end{aligned}
		\end{equation*}
			It is easy to verify that 
		\begin{equation*}
			[\pi_a^*,M_b]^*=-[\pi_a,M_{b^*}].
		\end{equation*}
Hence
\begin{equation*}
\begin{aligned}
	\|[\pi_a^*,M_b]\|_{L_p(\mathcal{N})}=\|[\pi_a,M_{b^*}]\|_{L_p(\mathcal{N})}\lesssim_{d,p} \|a\|_{BMO^d(\mathbb{R})}\|b\|_{\pmb{B}_p^{d}(\mathbb{R},\mathcal{M})}.
\end{aligned}
\end{equation*}
	\end{proof}

\subsection{Hyt\"{o}nen's dyadic representation}
Now we introduce the dyadic system on $\mathbb{R}^n$. Recall that the standard system of dyadic cubes is 
\[\mathcal{D}^0=\big\{2^{-k}([0,1)^n+q):k\in\mathbb{Z},q\in\mathbb{Z}^n\big\}.\]
Let $\mathcal{ D}^0_k=\big\{2^{-k}([0,1)^n+q):q\in\mathbb{Z}^n\big\}$ for any $k\in \mathbb{Z}$. Define $\ell(I)=2^{-k}$ and $|I|= 2^{-nk}$ if $I\in \mathcal{ D}^0_k$. Let $\omega=(\omega_j)_{j\in\mathbb{Z}}\in(\{0,1\}^n)^\mathbb{Z}$ and define
\begin{equation}\label{Idot}
	I\dot{+}\omega=I+\sum_{j:2^{-j}<\ell(I)}2^{-j}\omega_j.
\end{equation}
Note that if $\ell(I)=2^{-k}$, $I\cap I\dot{+}\omega\neq \emptyset$ unless a coordinate of $\sum_{j:2^{-j}<\ell(I)}2^{-j}\omega_j$ is exactly $2^{-k}$. Then set
\[\mathcal{D}^\omega=\big\{I\dot{+}\omega:I\in\mathcal{D}^0\big\},\]
which is obtained by translating the standard system. Indeed, in particular, if $\omega=(\omega_j)_{j\in\mathbb{Z}}\in(\{0,1\}^n)^\mathbb{Z}$ such that $\exists j_0\in \mathbb{Z}$, $\forall j\leq j_0$, $\omega_j=0$, then
$$  \mathcal{D}^\omega=\big\{I+\sum_{j}2^{-j}\omega_j:I\in\mathcal{D}^0\big\}. $$
Now for any $k\in\mathbb{Z}$, $\mathcal{ D}_k^\omega=\big\{I\dot{+}\omega:I\in\mathcal{D}_k^0\big\}$ is the family of all dyadic cubes with volume $2^{-nk}$. The following lemma implies that $\mathcal{D}^\omega$ is still a dyadic system.
\begin{lem}
	For any $\omega=(\omega_j)_{j\in\mathbb{Z}}\in(\{0,1\}^n)^\mathbb{Z}$, $\mathcal{D}^\omega$ on $\mathbb{R}^n$ is a dyadic system. More precisely, for any $k\in\mathbb{Z}$,
    \begin{equation*}
    	\mathcal{ D}_k^\omega=\mathcal{D}^0_k+\sum\limits_{j>k}2^{-j}\omega_j.
    \end{equation*}
\end{lem}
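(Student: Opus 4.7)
The plan is to verify the formula first and then deduce the dyadic system property from it. I will split the argument into three steps.

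\textbf{Step 1: The formula for $\mathcal{D}_k^\omega$.} Given $I\in\mathcal{D}_k^0$ we have $\ell(I)=2^{-k}$, so the condition $2^{-j}<\ell(I)$ in \eqref{Idot} is exactly $j>k$. Therefore
\begin{equation*}
	I\dot{+}\omega = I+\sum_{j>k}2^{-j}\omega_j,
\end{equation*}
and taking the union over $I\in\mathcal{D}_k^0$ gives $\mathcal{D}_k^\omega=\mathcal{D}_k^0+t_k$ with $t_k:=\sum_{j>k}2^{-j}\omega_j$.

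\textbf{Step 2: Partition at each level.} Since $\mathcal{D}_k^0$ is a partition of $\mathbb{R}^n$ into pairwise disjoint cubes of side $2^{-k}$, and $\mathcal{D}_k^\omega$ is just a global translate of $\mathcal{D}_k^0$ by the vector $t_k$, it follows that $\mathcal{D}_k^\omega$ is still a partition of $\mathbb{R}^n$ into pairwise disjoint cubes of side $2^{-k}$.

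\textbf{Step 3: Nesting between consecutive levels.} This is the only point that requires a small computation; the rest is bookkeeping. Write $t_k = t_{k+1}+2^{-(k+1)}\omega_{k+1}$. Take any $I\in\mathcal{D}_k^\omega$ and write $I=I_0+t_k$ with $I_0\in\mathcal{D}_k^0$. Decompose $I_0=\bigcup_{i=1}^{2^n}I_{0,i}$ with $I_{0,i}\in\mathcal{D}_{k+1}^0$. Each $I_{0,i}$ has the form $2^{-(k+1)}(p+[0,1)^n)$ for some $p\in\mathbb{Z}^n$, so translating by the vector $2^{-(k+1)}\omega_{k+1}$ (whose coordinates are in $\{0,2^{-(k+1)}\}$) produces another cube of $\mathcal{D}_{k+1}^0$. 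Consequently
\begin{equation*}
	I_{0,i}+t_k = \bigl(I_{0,i}+2^{-(k+1)}\omega_{k+1}\bigr)+t_{k+1} \in \mathcal{D}_{k+1}^0+t_{k+1}=\mathcal{D}_{k+1}^\omega,
\end{equation*}
and therefore $I=\bigcup_{i=1}^{2^n}(I_{0,i}+t_k)$ is a disjoint union of $2^n$ cubes of $\mathcal{D}_{k+1}^\omega$. Combined with Step~2 this establishes that $\mathcal{D}^\omega=\bigcup_{k\in\mathbb{Z}}\mathcal{D}_k^\omega$ is a dyadic system on $\mathbb{R}^n$.

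The only mildly subtle point is Step~3, namely checking that the additional translation $2^{-(k+1)}\omega_{k+1}$ at level $k+1$ stabilises $\mathcal{D}_{k+1}^0$; once one notices that this shift has coordinates in $\{0,2^{-(k+1)}\}$, which are integer multiples of the side-length $2^{-(k+1)}$, this is immediate. No other obstacle is expected.
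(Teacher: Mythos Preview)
Your proof is correct and follows essentially the same route as the paper: both hinge on the observation that the extra shift $2^{-(k+1)}\omega_{k+1}$ (equivalently $2^{-k}\omega_k$ in the paper's indexing) stabilises $\mathcal{D}_{k+1}^0$, so that consecutive levels of $\mathcal{D}^\omega$ are translates of the corresponding levels of $\mathcal{D}^0$ by the \emph{same} vector, whence nesting is inherited. The paper phrases this by rewriting $\mathcal{D}_k^\omega=\mathcal{D}_k^0+\sum_{j\ge k}2^{-j}\omega_j$ and $\mathcal{D}_{k-1}^\omega=\mathcal{D}_{k-1}^0+\sum_{j\ge k}2^{-j}\omega_j$, while you track the children explicitly; the content is identical.
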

\begin{proof}
By direct calculation,
	\begin{align*}
	\mathcal{ D}_k^\omega&=\big\{I\dot{+}\omega:I\in\mathcal{D}_k^0\big\}\\
	&=\left\{I+\sum\limits_{j>k}2^{-j}\omega_j:I\in\mathcal{D}_k^0\right\}.
	\end{align*}
Note that 
$$	\mathcal{ D}_k^\omega= \mathcal{D}^0_k+\sum\limits_{j>k}2^{-j}\omega_j=\mathcal{D}^0_k+\sum\limits_{j\geq k}2^{-j}\omega_j=(\mathcal{D}^0+\sum\limits_{j\geq k}2^{-j}\omega_j)_k, $$
and
$$ \mathcal{ D}_{k-1}^\omega=\mathcal{D}^0_{k-1}+\sum\limits_{j>k-1}2^{-j}\omega_j=\mathcal{D}^0_{k-1}+\sum\limits_{j\geq k}2^{-j}\omega_j=(\mathcal{D}^0+\sum\limits_{j\geq k}2^{-j}\omega_j)_{k-1}, $$
where $(\mathcal{D}^0+\sum\limits_{j\geq k}2^{-j}\omega_j)_k$ and $(\mathcal{D}^0+\sum\limits_{j\geq k}2^{-j}\omega_j)_{k-1}$ are the collections of cubes in $\mathcal{D}^0+\sum\limits_{j\geq k}2^{-j}\omega_j$ with the corresponding volumes $2^{-nk}$ and $2^{-n(k-1)}$ respectively. This implies that every cube in $ \mathcal{ D}_{k-1}^\omega$ is a union of $2^n$ disjoint cubes in $ \mathcal{ D}_{k}^\omega$. Hence, $\mathcal{D}^\omega$ is a dyadic system.
\end{proof}

We refer the reader to \cite{TH2008} for more details on $\mathcal{D}^\omega$. For any $I\in\mathcal{ D}^\omega$, let $\mathcal{D}^\omega(I)$ be the collection of cubes in $\mathcal{D}^\omega$ contained in $I$, and $\mathcal{D}_k^\omega(I)$ the intersection of $\mathcal{D}_k^\omega$ and $\mathcal{D}^\omega(I)$ for any $k\in\mathbb{Z}$. In addition, we assign to the parameter set $(\{0,1\}^n)^\mathbb{Z}$ the natural probability measure, that is the infinite tensor product of the uniform probability measure $\sum\limits_{\omega\in \{0,1\}^n}2^{-n}\delta_{\omega}$. Here $\delta_{\omega}$ is the Dirac measure. Denote by $\mathbb{E}_\omega$ the expectation on $(\{0,1\}^n)^\mathbb{Z}$.

For convenience, denote the set $\{0,1\}^n\backslash \{0\}$ by $\{0,1\}^n_0$. Now for any given cube $I=I_1\times\cdots\times I_n\in\mathcal{D}^\omega$, let $H_{I_i}^0=|I_i|^{-1/2}\mathbbm{1}_{I_i}$ and $H_{I_i}^1=|I_i|^{-1/2}(\mathbbm{1}_{I_{i\ell}}-\mathbbm{1}_{I_{ir}})$, where $\mathbbm{1}_{I_{i\ell}}$ and $\mathbbm{1}_{I_{ir}}$ are the left and right halves of $I_i$ for $1\le i\le n$. For any $ \eta\in\{0,1\}^n_0$, we denote by $H_I^\eta$ the function on the cube $I=I_1\times\cdots\times I_n$ which is the product of the one-variable functions:
\[H_I^\eta(x)=H_{I_1\times\cdots\times I_n}^{(\eta_1,\cdots,\eta_n)}(x_1,\cdots,x_n)=\prod_{i=1}^nH_{I_i}^{\eta_i}(x_i).\]
Hence $\{H_I^\eta\}_{I\in\mathcal{D}^\omega,\eta\in\{0,1\}^n_0}$ form an orthonormal basis of $L_2(\mathbb{R}^n)$.

\

Let $i,j\in \mathbb{N}\cup\{0\}$. For a fixed dyadic system $\mathcal{D}^\omega$, the dyadic shift with parameters $i,j$ is an operator of the form
\[S_\omega^{ij}(f)=\sum_{K\in\mathcal{D}^\omega}A_K^{ij}(f),\quad\quad A_K^{ij}(f)=\sum_{\substack{I,J\in\mathcal{D}^\omega;I,J\subseteq K\\ \ell(I)=2^{-i}\ell(K)\\\ell(J)=2^{-j}\ell(K)}}\sum_{\xi,\eta\in\{0,1\}^n_0}a_{IJK}^{\xi\eta}\langle H_I^\xi,f\rangle H_J^\eta,\]
with coefficients $a_{IJK}^{\xi\eta}$ satisfying
\begin{equation}\label{aijkxieta}
	|a_{IJK}^{\xi\eta}|\le\frac{\sqrt{|I||J|}}{|K|}.
\end{equation}
From the definition of $S_\omega^{ij}$, we see that $I$ and $J$ are the $i$-th and $j$-th generation of $K$ respectively. So when $i$ or $j$ is very large, the dyadic shift $S_\omega^{ij}$ has very high complexity. This is the main difficulty when dealing with $S_\omega^{ij}$. We  also have the following properties:
\begin{enumerate}
	\item The map $S_\omega^{ij}:L_2(\mathbb{R}^n)\to L_2(\mathbb{R}^n)$  is bounded with norm at most one,
	\item $S_\omega^{ij}$ is of weak type $(1,1)$ with norm $O(i)$,
	\item For $1<p<\8$
	$$ \|S_\omega^{ij}\|_{L_p(\mathbb{R}^n)\rightarrow L_p(\mathbb{R}^n)}\lesssim_{n, p} i+j.   $$
\end{enumerate} 
The reader is referred to \cite{TH1} and \cite{TH2} for more information.

Recall that in this paper, $T:L_2(\mathbb{R}^n)\to L_2(\mathbb{R}^n)$ is always assumed to be bounded and its kernel satisfies the estimates (\ref{standard}). The following is the dyadic representation of singular integral operators discovered by Hyt\"{o}nen in \cite{TH1} and \cite{TH2} (see {\cite[Theorem 3.3]{TH2}}).
\begin{thm}\label{CZdec}
	Let $T$ be a bounded singular integral operator.
	Then $T$ has a dyadic expansion, say for $f,g\in L_2(\mathbb{R}^n)$,
	\begin{equation}\label{T}
		\begin{aligned}
			\langle g,T(f)\rangle{}&=C_1(T)\mathbb{E}_\omega\sum_{i,j=0\atop \max\{i,j\}>0}^\infty \tau(i,j)\langle g,S_\omega^{ij}(f)\rangle+C_2(T)\mathbb{E}_\omega\langle g,S_\omega^{00}(f)\rangle\\&\quad+\mathbb{E}_\omega \langle g,\pi_{T(1)}^\omega (f)\rangle+\mathbb{E}_\omega \langle g,(\pi_{T^*(1)}^\omega)^* (f)\rangle,
		\end{aligned}
	\end{equation}
	where $S_\omega^{ij}$ is the dyadic shift of parameters $(i,j)$ on the dyadic system $\mathcal{D}^\omega$, $\pi_b^\omega$ is the dyadic martingale paraproduct on the dyadic system $\mathcal{D}^\omega$ associated with the $BMO$-function $b\in\{T(1),T^*(1)\}$, $C_1(T)$, $C_2(T)$ are positive constants depending on $T$, and $\tau(i,j)$ satisfies 
	\begin{equation*}
		0\leq \tau(i,j)\lesssim (1+\max\{i,j\})^{2(n+\alpha)}2^{-\alpha \max\{i,j\}}.
	\end{equation*}
\end{thm}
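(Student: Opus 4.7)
The plan is to recall Hytönen's probabilistic argument built on the Nazarov--Treil--Volberg good/bad cube machinery. First I would fix the product uniform probability measure on $(\{0,1\}^n)^{\mathbb Z}$ and set up the random dyadic lattice $\mathcal D^\omega$ together with the Haar system $\{H_I^\eta : I\in\mathcal D^\omega,\,\eta\in\{0,1\}^n_0\}$. Expanding $f$ and $g$ in this basis and using bilinearity gives
\[
\langle g,T(f)\rangle=\sum_{I,J\in\mathcal D^\omega}\sum_{\xi,\eta\in\{0,1\}^n_0}\langle g,H_J^\eta\rangle\,\langle H_J^\eta,TH_I^\xi\rangle\,\langle H_I^\xi,f\rangle.
\]
Averaging over $\omega$, every interaction $(I,J)$ can then be classified by the relative sizes and separation of $I$ and $J$.

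Next I would introduce the good/bad splitting: fixing parameters $r\in\mathbb N$ and $\gamma\in(0,1)$, a cube $I$ is declared \emph{bad} if there exists a larger $K\in\mathcal D^\omega$ with $\ell(K)\ge 2^r\ell(I)$ and $\mathrm{dist}(I,\partial K)\le \ell(I)^\gamma\ell(K)^{1-\gamma}$. The key probabilistic lemma shows that the probability of being bad is independent of position and can be made arbitrarily small. The NTV trick therefore lets me discard bad cubes in the Haar expansion and reduce to sums of pairs $(I,J)$ in which the smaller cube is good, at the price of an absolute multiplicative constant that I absorb into $C_1(T)$ and $C_2(T)$.

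The heart of the argument is then the reorganization. I would fix $i,j\ge 0$, group each good pair $(I,J)$ with $\ell(I)=2^{-i}\ell(K)$, $\ell(J)=2^{-j}\ell(K)$ by their smallest common ancestor $K\in\mathcal D^\omega$, and define the coefficients
\[
a_{IJK}^{\xi\eta}=\langle H_J^\eta,TH_I^\xi\rangle\cdot\frac{|K|}{\sqrt{|I||J|}}\cdot\frac{1}{\tau(i,j)}.
\]
For separated good pairs, the standard kernel estimate \eqref{standard} together with the zero-mean and localization of Haar functions, combined with the good-cube boundary constraint, yields $|\langle H_J^\eta,TH_I^\xi\rangle|\lesssim \sqrt{|I||J|}\,|K|^{-1}\cdot (1+\max\{i,j\})^{2(n+\alpha)}2^{-\alpha\max\{i,j\}}$; this produces the required bound \eqref{aijkxieta} and the decay factor $\tau(i,j)$. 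Collecting these yields the dyadic shifts $S_\omega^{ij}$ for $\max\{i,j\}>0$, together with the flat shift $S_\omega^{00}$ coming from pairs of essentially equal cubes that are well separated. The remaining ``diagonal'' part, where $I\subseteq J$ or $J\subseteq I$ with $\ell(I)=\ell(J)$ (so $K$ coincides with the larger of them), must be treated separately: testing one Haar function against the constant $1$ in the other variable shows that these contributions reassemble into exactly the Haar expansions of $T(1)$ and $T^*(1)$ paired with Haar coefficients of $f$ (resp.\ $g$), which is precisely the action of $\pi_{T(1)}^\omega$ and $(\pi_{T^*(1)}^\omega)^*$.

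The main obstacle will be the diagonal/paraproduct extraction. For pairs of essentially equal and overlapping cubes one cannot invoke the kernel estimate; instead one has to use the $T(1)$ data and the weak boundedness property to identify the matrix coefficients $\langle H_J^\eta,TH_I^\xi\rangle$ modulo shifts of bounded complexity. The bookkeeping that separates the genuine paraproduct piece from the $S_\omega^{00}$ contribution, while simultaneously preserving the quantitative decay $\tau(i,j)\lesssim (1+\max\{i,j\})^{2(n+\alpha)}2^{-\alpha\max\{i,j\}}$ through the Hölder regularity of $K(x,y)$ on all off-diagonal shifts, is the delicate step and the genuinely nontrivial content of \cite{TH1,TH2}.
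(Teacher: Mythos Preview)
Your sketch is a reasonable outline of Hyt\"onen's argument from \cite{TH1,TH2}, but note that the paper does not prove this theorem at all: it is stated as a quotation of \cite[Theorem~3.3]{TH2} and used as a black box. So there is nothing to compare against; the paper's ``proof'' is simply the citation. If your intent is to reproduce Hyt\"onen's proof, what you have written captures the skeleton correctly (random dyadic lattices, good/bad decomposition, reorganization by common ancestor, extraction of the paraproducts from the diagonal), though of course the actual execution of the coefficient bounds and the paraproduct identification is where all the work lies and would need to be filled in from the original sources.
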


\begin{rem}
	Note that $S_\omega^{ij}$ is always contractive on $L_2(\mathbb{R}^n, L_2(\M))$ for all $w$ and $i, j$. Besides, the assumption that $T(1),T^*(1)\in BMO(\mathbb{R}^n)$ implies that $\pi_{T(1)}^\omega$ and $(\pi_{T^*(1)}^\omega)^* $ are still bounded on $L_2(\mathbb{R}^n, L_2(\M))$. This yields that $T$ is bounded on $L_2(\mathbb{R}^n, L_2(\M))$, and (\ref{T}) also holds for any $f,g\in L_2(\mathbb{R}^n, L_2(\M))$.
\end{rem}

The dyadic system $\mathcal{ D}^\omega$ on $\mathbb{R}^n$ can be regarded as the $2^n$-adic system by our definition of $d$-adic martingales (see Subsection \ref{sec2.3}). So we can define the martingale Besov space $\pmb{B}_p^{\omega, 2^n}(\mathbb{R}^n,\mathcal{M})$ on $\mathbb{R}^n$ by virtue of $H_I^\eta$ similarly as in Definition \ref{mbs1}. More precisely, $\pmb{B}_p^{\omega, 2^n}(\mathbb{R}^n,\M)$ $(0<p<\8)$ associated with semicommutative dyadic martingales on $\mathbb{R}^n$ is the space of all operator-valued Haar multipliers $b=(\langle H_I^\eta,b\rangle)_{I\in \mathcal{D}^\omega, \eta\in\{0,1\}^n_0}\subset L_0(\M)$ such that
\begin{equation}\label{Bpw2nrnm}
	\|b\|_{\pmb{B}_p^{\omega, 2^n}(\mathbb{R}^n,\M)}=\biggl(\sum_{I\in \mathcal{D}^\omega}\sum_{\eta\in\{0,1\}^n_0}\frac{\|\langle H_I^\eta,b\rangle\|^p_{L_p{(\mathcal{M})}}}{|I|^{p/2}}\biggr)^{1/p}<\infty.
\end{equation}

In addition, Theorem \ref{thm1.2}, Lemma \ref{TLambdab}, Proposition \ref{T0est} also hold for the dyadic system on $\mathbb{R}^n$ with $d=2^n$ since our proof only depends on the martingale structure, and does not depend on the dimension of the Euclidean space. 

The following lemma shows that $\|b\|_{\pmb{B}_p^{\omega, 2^n}(\mathbb{R}^n,\mathcal{M})}$ can be dominated by $\|b\|_{\pmb{B}_p(\mathbb{R}^n,L_p(\mathcal{M}))}$. The converse to this lemma can be found in Proposition \ref{pdayun}.

\begin{lemma}\label{Comparison}
	Let $1\leq p<\infty$. If $b\in\pmb{B}_p(\mathbb{R}^n,L_p(\mathcal{M}))$, then $b\in\pmb{B}_p^{\omega, 2^n}(\mathbb{R}^n,\mathcal{M})$. Moreover, in this case, $\|b\|_{\pmb{B}_p^{\omega, 2^n}(\mathbb{R}^n,\mathcal{M})}\lesssim_{n} \|b\|_{\pmb{B}_p(\mathbb{R}^n,L_p(\mathcal{M}))}$.
\end{lemma}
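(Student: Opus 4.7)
The plan is to use the mean-zero property of the Haar functions $H_I^\eta$ (for $\eta \in \{0,1\}^n_0$) to rewrite each coefficient $\langle H_I^\eta, b\rangle$ in terms of the differences $b(x)-b(y)$, and then to sum over all $I \in \mathcal{D}^\omega$ by exchanging the order of summation and integration.

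First, fixing $I \in \mathcal{D}^\omega$ and $\eta \in \{0,1\}^n_0$, since $H_I^\eta$ has mean zero on $I$ and $\|H_I^\eta\|_\infty \le |I|^{-1/2}$, subtracting the average $c_I = \frac{1}{|I|}\int_I b(y)\,dy$ gives
\begin{equation*}
	\langle H_I^\eta, b\rangle = \int_I H_I^\eta(x)\big(b(x)-c_I\big)\,dx = \frac{1}{|I|}\int_I\int_I H_I^\eta(x)\big(b(x)-b(y)\big)\,dy\,dx.
\end{equation*}
Taking the $L_p(\M)$-norm, using $|H_I^\eta(x)|\le |I|^{-1/2}$, and applying Jensen's inequality (which is valid since $p \ge 1$), we obtain the pointwise-on-the-cube bound
\begin{equation*}
	\frac{\|\langle H_I^\eta, b\rangle\|_{L_p(\M)}^p}{|I|^{p/2}} \le \frac{1}{|I|^{2}}\int_I\int_I \|b(x)-b(y)\|_{L_p(\M)}^p\,dy\,dx.
\end{equation*}

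Next, I sum over $I \in \mathcal{D}^\omega$ and $\eta \in \{0,1\}^n_0$, and swap the order of summation and integration, which gives
\begin{equation*}
	\|b\|_{\pmb{B}_p^{\omega, 2^n}(\mathbb{R}^n,\M)}^p \le (2^n-1)\int_{\mathbb{R}^n\times\mathbb{R}^n} \|b(x)-b(y)\|_{L_p(\M)}^p \Bigg(\sum_{\substack{I\in\mathcal{D}^\omega\\ I\ni x,y}}\frac{1}{|I|^2}\Bigg)\,dy\,dx.
\end{equation*}
The key observation is that for any $x\neq y$, the cubes of $\mathcal{D}^\omega$ containing both points are totally ordered by inclusion (they are precisely the ancestors of the smallest such cube $I_0(x,y)$), so their volumes form the geometric sequence $|I_0|,\,2^n|I_0|,\,4^n|I_0|,\dots$; hence
\begin{equation*}
	\sum_{\substack{I\in\mathcal{D}^\omega\\ I\ni x,y}}\frac{1}{|I|^2} = \frac{1}{|I_0(x,y)|^2}\sum_{k=0}^\infty \frac{1}{4^{nk}} \le \frac{2}{|I_0(x,y)|^2}.
\end{equation*}
Finally, since $x,y\in I_0(x,y)$ forces $|x-y|\le \sqrt{n}\,\ell(I_0(x,y))$, one has $|I_0(x,y)|^{-2}\le n^n |x-y|^{-2n}$, and combining the above yields
\begin{equation*}
	\|b\|_{\pmb{B}_p^{\omega, 2^n}(\mathbb{R}^n,\M)}^p \lesssim_n \int_{\mathbb{R}^n\times\mathbb{R}^n}\frac{\|b(x)-b(y)\|_{L_p(\M)}^p}{|x-y|^{2n}}\,dy\,dx = \|b\|_{\pmb{B}_p(\mathbb{R}^n,L_p(\M))}^p.
\end{equation*}

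None of the steps is really an obstacle: the mean-zero reduction and the Jensen estimate are routine, and the only part that needs any care is the geometric-series bound, which rests on the elementary fact that dyadic cubes containing two prescribed points are nested with volumes increasing by a factor $2^n$ at each step. This would be the point at which to pause and verify the dimensional constants carefully, but nothing substantial is required.
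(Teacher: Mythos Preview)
Your proof is correct and follows essentially the same approach as the paper: both bound each Haar coefficient via Jensen's inequality by $\frac{1}{|I|^2}\int_{I\times I}\|b(x)-b(y)\|_{L_p(\mathcal{M})}^p\,dx\,dy$ and then control the resulting kernel $\sum_{I\ni x,y}|I|^{-2}$ by a geometric series dominated by $c_n|x-y|^{-2n}$. The paper organizes the final summation slightly differently---it fixes a large ambient cube $I\in\mathcal{D}^0_t$, writes the partial sum as an integral against a kernel $K_I(x,y)$, and lets $t\to-\infty$---but the underlying estimate is identical to yours, and your direct Fubini argument is in fact a bit cleaner.
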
	

\begin{proof}
	Without loss of generality, assume $\omega=0$. For any given $J\in\mathcal{D}^0$ and $\eta\in\{0,1\}^n_0$,
	\begin{equation*}
		\begin{aligned}
			\frac{\|\langle H_J^\eta,b\rangle\|_{L_p(\mathcal{M})}}{|J|^{1/2}}
			{}&=\frac{1}{|J|^{1/2}}\bigg\|\biggl\langle H_J^\eta,b-\biggl\langle \frac{\mathbbm{1}_{J}}{|J|},b\biggr\rangle\biggr\rangle\bigg\|_{L_p(\mathcal{M})}\\
			&\le \frac{1}{|J|}\int_J\biggl\|b(x)-\biggl\langle \frac{\mathbbm{1}_J}{|J|},b\biggr\rangle\biggr\|_{L_p(\mathcal{M})} dx\\
			&\le \frac{1}{|J|^2}\int_{J\times J}\|b(x)-b(y)\|_{L_p(\mathcal{M})}dxdy.
		\end{aligned}
	\end{equation*}
	Given $t\in\mathbb{Z}$, $I\in \mathcal{D}^0_t$ and $\eta\in\{0,1\}^n_0$, by the H\"{o}lder inequality, one has
	\begin{equation*}
		\begin{aligned}
			\sum_{J\in\mathcal{D}^0(I)}\frac{\|\langle H_J^\eta,b\rangle\|_{L_p(\mathcal{M})}^p}{|J|^{p/2}}{}&\le  \sum_{J\in\mathcal{D}^0(I)} \frac{1}{|J|^2}\int_{J\times J}\|b(x)-b(y)\|_{L_p(\mathcal{M})}^pdxdy\\
			&=\sum_{s=t}^\infty\sum_{J\in\mathcal{D}^0_s(I)}\frac{1}{(2^{n(t-s)}|I|)^2}\int_{J\times J}\|b(x)-b(y)\|_{L_p(\mathcal{M})}^pdxdy\\
			&=\frac{1}{|I|^2}\int_{\mathbb{R}^n\times\mathbb{R}^n}K_I(x,y)\|b(x)-b(y)\|_{L_p(\mathcal{M})}^pdxdy,
		\end{aligned}
	\end{equation*}
	where
	\begin{equation*}
		K_I(x,y)=\sum_{s=t}^\infty\sum_{J\in\mathcal{D}^0_s(I)}2^{2n(s-t)}\mathbbm{1}_J(x)\mathbbm{1}_J(y).
	\end{equation*}
	Clearly,  if $x\notin I$ or $y\notin I$, then $K_I(x,y)=0$. On the other hand, suppose that $x,y\in I$, and $|x-y|>\sqrt{n}\ell(J)$ for some $J\in \mathcal{D}^0_s(I)$. Then $\exists 1\le k\le n$ such that $|x_k-y_k|>\ell(J)$, where $x_k$ is the $k$-th coordinate of $x$. We then deduce that $\mathbbm{1}_J(x)\mathbbm{1}_J(y)=0$. Hence
	\begin{equation*}
		K_I(x,y)\le \mathbbm{1}_I(x)\mathbbm{1}_I(y)\sum_{s=t}^{t+\lfloor\log_2(\sqrt{n}\ell(I)/|x-y|)\rfloor}2^{2n(s-t)}\le \frac{(4n)^n}{4^n-1}\frac{|I|^2}{|x-y|^{2n}}\mathbbm{1}_I(x)\mathbbm{1}_I(y),
	\end{equation*}
	where $\lfloor\cdot\rfloor$ is the floor function.
	
	Therefore, for a given $t\in\mathbb{Z}$, we sum up all $I\in \mathcal{ D}^0_t$, and obtain
	\begin{equation*}
		\sum_{I\in\mathcal{D}^0_t}\sum_{J\in\mathcal{D}^0(I)}\sum_{\eta}\frac{\|\langle H_J^\eta,b\rangle\|_{L_p(\mathcal{M})}^p}{|J|^{p/2}}\le\frac{(4n)^n(2^n-1)}{4^n-1}\int_{\mathbb{R}^n\times\mathbb{R}^n}\frac{\|b(x)-b(y)\|_{L_p(\mathcal{M})}^p}{|x-y|^{2n}}dxdy.
	\end{equation*}
	Finally letting $t\to -\infty$, we have
	\begin{equation*}
		\|b\|^p_{\pmb{B}_p^{0, 2^n}(\mathbb{R}^n,\mathcal{M})}\lesssim_{n}\|b\|^p_{\pmb{B}_p(\mathbb{R}^n,L_p(\mathcal{M}))}.
	\end{equation*}

\end{proof}	

\subsection{Proof of Theorem \ref{thm6.4}}
The following two lemmas will also be needed for the proof of Theorem \ref{thm6.4}. Before formulating them, we introduce some definitions. Let $(e_i)_{i\in \mathbb{N}}$ be the standard orthonormal basis on $\ell_2$. For any $A\in L_p(B(\ell_2)\otimes \mathcal{M})$, denote by $A_{i, j}$ the $(i, j)$-th entry defined as
$$ A_{i, j}= \la e_i, A e_j\ra\in L_p(\M).  $$

\begin{lemma}\label{lem5.1.9AAA}
	Suppose that $(A_\gamma)_{\gamma\in \Gamma}$ is a net of operators in $L_p(B(\ell_2)\otimes \mathcal{M})$ $(1\leq p<\8)$. Let $A\in L_p(B(\ell_2)\otimes \mathcal{M})$. If for any $i, j\in \mathbb{N}$ and $x\in L_{p'}(\M)$
	$$ \lim_{\gamma} \tau\bigg( \big((A_\gamma)_{i, j}-A_{i, j}\big) x\bigg)=0,  $$
	then
	$$ \|A\|_{L_p(B(\ell_2)\otimes \mathcal{M})} \leq \sup_{\gamma}  \|A_\gamma\|_{L_p(B(\ell_2)\otimes \mathcal{M})}. $$
\end{lemma}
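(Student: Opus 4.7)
The plan is to invoke the duality
\begin{equation*}
\|A\|_{L_p(B(\ell_2)\otimes \mathcal{M})} = \sup\bigl\{\bigl|(\Tr\otimes\tau)(AB)\bigr| : B\in L_{p'}(B(\ell_2)\otimes \mathcal{M}),\ \|B\|_{p'}\leq 1\bigr\},
\end{equation*}
valid for all $1\leq p<\infty$, and to establish for each admissible $B$ the bound $|(\Tr\otimes\tau)(AB)|\leq\sup_\gamma\|A_\gamma\|_p$. The hypothesis only provides entrywise weak convergence $(A_\gamma)_{i,j}\to A_{i,j}$ in $\sigma(L_p(\mathcal{M}),L_{p'}(\mathcal{M}))$, so one cannot directly pass to the limit inside $(\Tr\otimes\tau)(A_\gamma B)$ for arbitrary $B$. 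The idea is therefore to test first against elements with finite matrix-index support, and then to remove the truncation by approximation.

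Concretely, I would let $p_N$ denote the orthogonal projection onto $\mathrm{span}\{e_1,\dots,e_N\}\subset\ell_2$ and set $\Phi_N(X):=(p_N\otimes 1)X(p_N\otimes 1)$. Then $\Phi_N$ is a trace-preserving conditional expectation onto $p_N B(\ell_2)p_N\otimes\mathcal{M}$, hence contractive on every $L_q(B(\ell_2)\otimes\mathcal{M})$. For $B\in L_{p'}$ with $\|B\|_{p'}\leq 1$, put $B_N:=\Phi_N(B)=\sum_{i,j=1}^N e_{i,j}\otimes B_{i,j}$ with $B_{i,j}\in L_{p'}(\mathcal{M})$ and $\|B_N\|_{p'}\leq 1$. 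Using the trace property together with the identity $\Phi_N(A)=A^{(N)}:=\sum_{i,j=1}^N e_{i,j}\otimes A_{i,j}$, one finds
\begin{equation*}
(\Tr\otimes\tau)(AB_N) \;=\; (\Tr\otimes\tau)(A^{(N)}B),
\end{equation*}
and the right-hand side converges to $(\Tr\otimes\tau)(AB)$ as $N\to\infty$, because $A^{(N)}\to A$ in $L_p$-norm whenever $A\in L_p$.

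For fixed $N$, the pairing
\begin{equation*}
(\Tr\otimes\tau)(AB_N) \;=\; \sum_{i,j=1}^N \tau(A_{i,j}\cdot B_{j,i})
\end{equation*}
is a finite sum, and by hypothesis each term equals $\lim_\gamma \tau((A_\gamma)_{i,j}\cdot B_{j,i})$. Summing, $(\Tr\otimes\tau)(AB_N) = \lim_\gamma (\Tr\otimes\tau)(A_\gamma B_N)$, so H\"older's inequality combined with $\|B_N\|_{p'}\leq 1$ yields $|(\Tr\otimes\tau)(AB_N)|\leq\sup_\gamma\|A_\gamma\|_p$. Letting $N\to\infty$ and then taking the supremum over $B$ produces the claimed bound.

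The main point to watch is the case $p=1$, where $L_{p'}=B(\ell_2)\otimes\mathcal{M}$ is typically nonseparable and the truncations $B_N$ do not converge to $B$ in any topology that is automatically available from the norm. This is precisely what makes the identity $(\Tr\otimes\tau)(AB_N) = (\Tr\otimes\tau)(A^{(N)}B)$ essential: it transfers the approximation from the test element $B$ to the integrable operator $A$, where norm convergence $A^{(N)}\to A$ in $L_1$ is automatic. Modulo this observation, the argument is a bookkeeping exercise in noncommutative duality and weak lower semicontinuity of the norm.
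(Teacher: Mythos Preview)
Your proof is correct and follows essentially the same approach as the paper: both arguments compress by the finite-rank projections $p_N$ (the paper writes a general finite-rank projection $\rho$), use the trace identity $(\Tr\otimes\tau)\bigl(\Phi_N(A)B\bigr)=(\Tr\otimes\tau)\bigl(A\,\Phi_N(B)\bigr)$ to reduce to a finite sum of entrywise pairings, apply the hypothesis there, and then pass to the limit in $N$ using $\Phi_N(A)\to A$ in $L_p$. Your explicit remark that for $p=1$ the approximation must be placed on $A$ rather than on $B$ is exactly the point encoded in the paper's choice to compress $A$ from the outset.
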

\begin{proof}
	Note that for any projection $\rho\in B(\ell_2)$ with finite rank, one has for any $B\in L_{p'}(B(\ell_2)\otimes \mathcal{M})$
	$$  \text{Tr}\otimes \tau ((\rho\otimes 1_{\M})A(\rho\otimes 1_{\M}) B) =\lim_\gamma  \text{Tr}\otimes \tau ((\rho\otimes 1_{\M})A_\gamma(\rho\otimes 1_{\M})B).  $$
	By duality, this implies that
	$$  \|(\rho\otimes 1_{\M})A(\rho\otimes 1_{\M})\|_{L_p(B(\ell_2)\otimes \mathcal{M})} \leq \sup_\gamma  \|(\rho\otimes 1_{\M})A_\gamma(\rho\otimes 1_{\M})\|_{L_p(B(\ell_2)\otimes \mathcal{M})}.   $$
	Therefore, we have
	\begin{equation*}
		\begin{aligned}
			\|A\|_{L_p(B(\ell_2)\otimes \mathcal{M})}&=\sup_{\substack{\rho^2=\rho=\rho* \\ \text{finite rank}}}\|(\rho\otimes 1_{\M})A(\rho\otimes 1_{\M})\|_{L_p(B(\ell_2)\otimes \mathcal{M})} \\
			&\leq \sup_{\substack{\rho^2=\rho=\rho* \\ \text{finite rank}}}\sup_\gamma  \|(\rho\otimes 1_{\M})A_\gamma(\rho\otimes 1_{\M})\|_{L_p(B(\ell_2)\otimes \mathcal{M})}\\
			&\leq \sup_{\gamma}  \|A_\gamma\|_{L_p(B(\ell_2)\otimes \mathcal{M})},
		\end{aligned}
	\end{equation*}
	as desired.
\end{proof}

Let $T\in B(L_2(\mathbb{R}^n))$. Then $T\otimes Id_{L_2(\M)}$ extends to a bounded operator on $L_2(\mathbb{R}^n, L_2(\M))$. We still denote it by $T$ for simplicity. Thus by virtue of continuity and linearity, $T$ satisfies the following properties: for any $f\in \mathcal{S}(L^\8(\mathbb{R}^n)\otimes\M)$ and $x\in \M$
\begin{equation}\label{Tid}
	T(f)x=T(fx) \quad \text{and} \quad \tau (T(fx))= T(\tau(fx)).
\end{equation}

\begin{lemma}\label{lem5.1.10AAA}
	Suppose that $(T_\gamma)_{\gamma\in \Gamma}$ is a bounded net of operators in $B(L_p(\mathbb{R}^n))$ for each $p\in (1, \8)$. Assume that $(T_\gamma)_{\gamma\in \Gamma}$ converges to $T\in B(L_p(\mathbb{R}^n))$ with respect to the weak operator topology for each $p\in (1, \8)$. If $1< p<\8$ and $b\in L_p(\mathbb{R}^n, L_p(\mathcal{M}))$, then
	$$  \| C_{T, b}\|_{L_p(B(L_2(\mathbb{R}^n))\otimes \mathcal{M})} \leq \sup_\gamma  \| C_{T_\gamma, b}\|_{L_p(B(L_2(\mathbb{R}^n))\otimes \mathcal{M})}. $$
\end{lemma}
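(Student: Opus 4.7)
The plan is to apply Lemma \ref{lem5.1.9AAA} once we have established the entry-wise convergence of $C_{T_\gamma,b}$ toward $C_{T,b}$. There is nothing to prove when the right-hand side is infinite, so assume $M:=\sup_\gamma\|C_{T_\gamma,b}\|_{L_p(B(L_2(\mathbb R^n))\otimes\M)}<\infty$. Fix an orthonormal basis $(e_i)_{i\in\mathbb N}$ of $L_2(\mathbb R^n)$ consisting of bounded, compactly supported functions (for instance a Haar wavelet basis); this identifies $L_p(B(L_2(\mathbb R^n))\otimes\M)$ with $L_p(B(\ell_2(\mathbb N))\otimes\M)$ and writes the matrix coefficients as
\[(C_{T_\gamma,b})_{i,j}=\la e_i,\,T_\gamma(be_j)\ra-\la e_i,\,b\,T_\gamma(e_j)\ra\in L_p(\M).\]

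First I would verify the entry-wise weak convergence hypothesis of Lemma \ref{lem5.1.9AAA}. Fix $x\in L_{p'}(\M)$ and set $\phi(y):=\tau(b(y)x)$, which belongs to $L_p(\mathbb R^n)$ by H\"older's inequality. Using (\ref{Tid}) to interchange $T_\gamma$ with the functional $\tau(\,\cdot\,x)$, a direct computation recasts the two summands of $\tau\big((C_{T_\gamma,b})_{i,j}\,x\big)$ as the scalar dualities
\[\la e_i,\,T_\gamma(e_j\phi)\ra_{L_2(\mathbb R^n)}\qquad\text{and}\qquad \la e_i\overline{\phi},\,T_\gamma(e_j)\ra_{L_2(\mathbb R^n)},\]
each understood as an $L_p$--$L_{p'}$ pairing. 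Since $e_i,e_j$ are bounded with compact support, $e_j\phi$ and $e_i\overline{\phi}$ lie in $L_p(\mathbb R^n)$, while $e_i$ and $e_j$ themselves lie in every $L_q(\mathbb R^n)$. The hypothesised WOT convergence $T_\gamma\to T$ on $L_p(\mathbb R^n)$ handles the first pairing, and on $L_{p'}(\mathbb R^n)$ the second, yielding $\lim_\gamma\tau\big((C_{T_\gamma,b})_{i,j}\,x\big)=\tau\big((C_{T,b})_{i,j}\,x\big)$ for every $i,j$ and every $x\in L_{p'}(\M)$.

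Finally I would observe that, since $L_p(B(\ell_2(\mathbb N))\otimes\M)$ is reflexive for $1<p<\infty$, the bounded net $(C_{T_\gamma,b})$ admits a subnet converging weakly in this space. The matrix entries of that weak limit agree with those of $C_{T,b}$ by the convergence just proven, so $C_{T,b}$ itself lies in $L_p(B(L_2(\mathbb R^n))\otimes\M)$; Lemma \ref{lem5.1.9AAA} then delivers the asserted inequality. The main technical point is the entry-wise convergence step: after invoking (\ref{Tid}) one must check that the rewritten scalar pairings genuinely involve functions in the correct Lebesgue spaces so that the scalar WOT hypothesis on $L_p$ and on $L_{p'}$ applies on the nose.
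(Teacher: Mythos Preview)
Your argument is correct and follows essentially the same route as the paper's proof: reduce to entry-wise weak convergence via the identity~(\ref{Tid}), then invoke Lemma~\ref{lem5.1.9AAA}. The paper uses indicator functions $\mathbbm 1_I,\mathbbm 1_J$ of cubes in place of your Haar basis, but the computation is the same; your reflexivity step to ensure $C_{T,b}\in L_p$ before applying Lemma~\ref{lem5.1.9AAA} is a small extra care that the paper omits (relying implicitly on the proof of that lemma, which via finite-rank truncations already yields membership in $L_p$ when the supremum is finite).
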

\begin{proof}
	First we show that for any finite cubes $I, J\subset \mathbb{R}^n $ and $x\in L_{p'}(\M)$, one has
	$$  \lim_\gamma\tau \bigg( \big\la \mathbbm{1}_I, C_{T_\gamma, b}( \mathbbm{1}_J)\big\ra x \bigg)= \tau \bigg( \big\la \mathbbm{1}_I, C_{T, b}( \mathbbm{1}_J)\big\ra x \bigg). $$
	Note that by \eqref{Tid}
	\begin{equation*}
		\begin{aligned}
			\tau \bigg( \big\la \mathbbm{1}_I, C_{T_\gamma, b}( \mathbbm{1}_J)\big\ra x \bigg)&= \tau \bigg( \big\la \mathbbm{1}_I, T_\gamma( b\mathbbm{1}_J)- bT_\gamma( \mathbbm{1}_J) \big\ra x \bigg)\\
			&= \tau \big( \big\la \mathbbm{1}_I, T_\gamma( bx\mathbbm{1}_J)\big\ra\big)- \tau \big( \big\la x^* b^*\mathbbm{1}_J, T_\gamma( \mathbbm{1}_J) \big\ra \big)\\
			&=\big\la \mathbbm{1}_I, T_\gamma( \tau(bx) \mathbbm{1}_J)\big\ra- \big\la \tau(x^* b^*)\mathbbm{1}_J, T_\gamma( \mathbbm{1}_J) \big\ra.
		\end{aligned}
	\end{equation*}
	This implies that
	\begin{equation*}
		\begin{aligned}
			\lim_\gamma\tau \bigg( \big\la \mathbbm{1}_I, C_{T_\gamma, b}( \mathbbm{1}_J)\big\ra x \bigg)&= \big\la \mathbbm{1}_I, T( \tau(bx) \mathbbm{1}_J)\big\ra- \big\la \tau(x^* b^*)\mathbbm{1}_J, T( \mathbbm{1}_J) \big\ra\\
			& =\tau \bigg( \big\la \mathbbm{1}_I, C_{T, b}( \mathbbm{1}_J)\big\ra x \bigg).
		\end{aligned}
	\end{equation*}
	Hence, the desired result follows from Lemma \ref{lem5.1.9AAA}.
\end{proof}

The remaining of the section is devoted to the proof of Theorem \ref{thm6.4}. For simplicity, we will still denote $B(L_2(\mathbb{R}^n))\otimes \mathcal{M}$ by $\mathcal{N}$, where $\mathbb{R}$ is replaced by $\mathbb{R}^n$.

\begin{proof}[Proof of Theorem \ref{thm6.4}]
	From  Proposition \ref{T0est} and Lemma \ref{Comparison}, we have
	\begin{equation*}
		\|[\pi_{T(1)}^{\omega}, M_b]\|_{L_p(\mathcal{N})}\lesssim_{n,p}\|T(1)\|_{BMO(\mathbb{R}^n)} \|b\|_{\pmb{B}_p(\mathbb{R}^n,L_p(\mathcal{M}))}
	\end{equation*}
	and
	\begin{equation*}
		\|[(\pi_{T^*(1)}^{\omega})^*, M_b]\|_{L_p(\mathcal{N})}\lesssim_{n,p} \|T^*(1)\|_{BMO(\mathbb{R}^n)} \|b\|_{\pmb{B}_p(\mathbb{R}^n,L_p(\mathcal{M}))}.
	\end{equation*}
	Hence by Theorem \ref{CZdec}, it remains to estimate $\|[S_\omega^{ij}, M_b]\|_{L_p(\mathcal{N})}$ for any $i, j\in \mathbb{N}\cup\{0\}$. By the triangle inequality
	\begin{equation*}
		\begin{aligned}
			\|[S_\omega^{ij},M_b]\|_{L_p(\mathcal{N})}{}&\le \|[S_\omega^{ij},\pi_b]\|_{L_p(\mathcal{N})}+\|[S_\omega^{ij},\varLambda_b]\|_{L_p(\mathcal{N})}
			+\|[S_\omega^{ij},R_b]\|_{L_p(\mathcal{N})}.
		\end{aligned}
	\end{equation*}
	Here the operators $\pi_b$, $\varLambda_b$ and $R_b$ are associated with the dyadic system $\mathcal{ D}^\omega$.
	
	From Theorem \ref{thm1.2} and Lemma \ref{TLambdab}, we know that
	\begin{equation*}
		\begin{aligned}
			\|\pi_b\|_{L_p(\mathcal{N})}\lesssim_{n, p}\|b\|_{\pmb{B}_p^{\omega, 2^n}(\mathbb{R}^n,\mathcal{M})},\quad \|\varLambda_b\|_{L_p(\mathcal{N})}\lesssim_{n, p}\|b\|_{\pmb{B}_p^{\omega, 2^n}(\mathbb{R}^n,\mathcal{M})}.
		\end{aligned}
	\end{equation*}
	Meanwhile, recall that $S_\omega^{ij}\in B(L_2(\mathbb{R}^n,L_2(\mathcal{M})))$ is contractive. Thus, using Lemma \ref{Comparison}, one gets
	\begin{equation*}
		\begin{aligned}
			\|[S_\omega^{ij},\pi_b]\|_{L_p(\mathcal{N})}&\lesssim \|S_\omega^{ij}\|\|\pi_b\|_{L_p(\mathcal{N})}
			\lesssim_{n,p} \|b\|_{\pmb{B}_p^{\omega, 2^n}(\mathbb{R}^n,\mathcal{M})}\lesssim_{n} \|b\|_{\pmb{B}_p(\mathbb{R}^n,L_p(\mathcal{M}))}.
		\end{aligned}
	\end{equation*}
	Similarly,
	$$ \|[S_\omega^{ij},\varLambda_b]\|_{L_p(\mathcal{N})}\lesssim_{n,p} \|b\|_{\pmb{B}_p(\mathbb{R}^n,L_p(\mathcal{M}))}.$$
	
	For any $i, j\in \mathbb{N}\cup\{0\}$, we will show that $\|[S^{ij}_\omega,R_b]\|_{L_p(\mathcal{N})}$ increases with polynomial growth with respect to $i$ and $j$ uniformly on $\omega$. Then from Theorem \ref{CZdec} and the triangle inequality, the desired result will follow.
	
	Without loss of generality, we assume $\omega=0$.
	Let $\varPhi=[S^{ij}_0,R_b]$. Then
	\begin{equation}\label{Ndef}
		\begin{aligned}
			\varPhi(f)=\sum_{K\in\mathcal{D}^0}\sum_{\substack{I,J\in\mathcal{D}^0;I,J\subseteq K\\ \ell(I)=2^{-i}\ell(K)\\\ell(J)=2^{-j}\ell(K)}}\sum_{\xi,\eta\in\{0,1\}^n_0}a^{\xi\eta}_{IJK}\langle H^\xi_I,R_b(f)\rangle H^\eta_J-\sum_{k\in \mathbb{Z}}b_{k-1}d_k(S^{ij}_0(f)).
		\end{aligned}
	\end{equation}
	Note that for any $k\in\mathbb{Z}$ and $\xi\in\{0,1\}^n_0$, if $I\in\mathcal{D}^0_k$, then $d_{k+1}H^\xi_I=H^\xi_I$. Hence, for any $I\in \mathcal{ D}_k^0$,
	\begin{equation*}
		\begin{aligned}
			\langle H^\xi_I,R_b(f)\rangle{}&=\biggl\langle H^\xi_I,\sum_{l\in\mathbb{Z}}b_{l-1}d_lf\biggr\rangle
			=\biggl\langle d_{k+1}H^\xi_I,\sum_{l\in\mathbb{Z}}b_{l-1}d_lf\biggr\rangle\\
			&=\biggl\langle H^\xi_I,d_{k+1}\biggl(\sum_{l\in\mathbb{Z}}b_{l-1}d_lf\biggr)\biggr\rangle
			=\langle H^\xi_I,b_{k}d_{k+1}f\rangle\\
			&=\langle H^\xi_I,b_{k}\mathbbm{1}_Id_{k+1}f\rangle
			=\biggl\langle H^\xi_I,\biggl\langle \frac{\mathbbm{1}_I}{|I|},b\biggr\rangle\mathbbm{1}_Id_{k+1}f\biggr\rangle\\
			&=\biggl\langle \frac{\mathbbm{1}_I}{|I|},b\biggr\rangle\langle H^\xi_I,d_{k+1}f\rangle=\biggl\langle \frac{\mathbbm{1}_I}{|I|},b\biggr\rangle\langle H^\xi_I,f\rangle.
		\end{aligned}
	\end{equation*}
	For the second term in \eqref{Ndef}, one has
	\begin{equation*}
		\begin{aligned}
			\sum_{k\in \mathbb{Z}}b_{k-1}d_k(S^{ij}_0(f)){}&=\sum_{k\in \mathbb{Z}}b_{k-1}d_k\biggl(\sum_{K\in\mathcal{D}^0}\sum_{\substack{I,J\in\mathcal{D}^0;I,J\subseteq K\\ \ell(I)=2^{-i}\ell(K)\\\ell(J)=2^{-j}\ell(K)}}\sum_{\xi,\eta}a_{IJK}^{\xi\eta}\langle H^\xi_I,f\rangle H^\eta_J\biggr)\\&=\sum_{k\in \mathbb{Z}}\sum_{K\in\mathcal{D}^0_{k-1-j}}\sum_{\substack{I,J\in\mathcal{D}^0;I,J\subseteq K\\ \ell(I)=2^{-i}\ell(K)\\\ell(J)=2^{-j}\ell(K)}}\sum_{\xi,\eta}a_{IJK}^{\xi\eta}b_{k-1}\langle H^\xi_I,f\rangle H^\eta_J\\
			&=\sum_{k\in \mathbb{Z}}\sum_{K\in\mathcal{D}^0_{k-1-j}}\sum_{\substack{I,J\in\mathcal{D}^0;I,J\subseteq K\\ \ell(I)=2^{-i}\ell(K)\\\ell(J)=2^{-j}\ell(K)}}\sum_{\xi,\eta}a_{IJK}^{\xi\eta}\biggl\langle \frac{\mathbbm{1}_J}{|J|},b\biggr\rangle\langle H^\xi_I,f\rangle  H^\eta_J\\
			&=\sum_{K\in\mathcal{D}^0}\sum_{\substack{I,J\in\mathcal{D}^0;I,J\subseteq K\\ \ell(I)=2^{-i}\ell(K)\\\ell(J)=2^{-j}\ell(K)}}\sum_{\xi,\eta}a_{IJK}^{\xi\eta}\biggl\langle \frac{\mathbbm{1}_J}{|J|},b\biggr\rangle \langle H^\xi_I,f\rangle H^\eta_J.
		\end{aligned}
	\end{equation*}
	Therefore,
	\begin{equation}\label{BKdef}
		\begin{aligned}
			\varPhi(f){}&=\sum_{K\in\mathcal{D}^0}\sum_{\substack{I,J\in\mathcal{D}^0;I,J\subseteq K\\ \ell(I)=2^{-i}\ell(K)\\\ell(J)=2^{-j}\ell(K)}}\sum_{\xi,\eta}a_{IJK}^{\xi\eta}\biggl(\biggl\langle \frac{\mathbbm{1}_I}{|I|},b\biggr\rangle-\biggl\langle \frac{\mathbbm{1}_J}{|J|},b\biggr\rangle\biggr)\langle H^\xi_I,f\rangle H^\eta_J
			=:\sum_{K\in\mathcal{D}^0}B_K(f).
		\end{aligned}
	\end{equation}
	Let $b_{IJ}= \langle \frac{\mathbbm{1}_I}{|I|},b\rangle-\langle \frac{\mathbbm{1}_J}{|J|},b\rangle$.
	Since $B_{K_1}$, $B_{K_2}$ have orthogonal ranges when $K_1\neq K_2$, we see
	\[B_{K_1}^*B_{K_2}=0,\quad \forall K_1\neq K_2, K_1,K_2\in\mathcal{D}^0,\]
	which yields $\varPhi^*\varPhi=\sum\limits_{K\in \mathcal{ D}^0}B_K^*B_K$.
	Note that $\forall f\in L_2(\mathbb{R}^n,L_2(\mathcal{M}))$, 
	\begin{equation}\label{BkBK}
		B_K^*B_K(f)=\sum_{\substack{I,\tilde{I},J\in\mathcal{D}^0;I,\tilde{I},J\subseteq K\\ \ell(I)=\ell(\tilde{I})=2^{-i}\ell(K)\\\ell(J)=2^{-j}\ell(K)}}\sum_{\xi,\tilde{\xi},\eta}a_{IJK}^{\xi\eta}\overline{a_{\tilde{I}JK}^{\tilde{\xi}\eta}}b_{\tilde{I}J}^* b_{IJ}\la H^\xi_I,f\rangle H^{\tilde{\xi}}_{\tilde{I}},
	\end{equation}   
	which implies that $\varPhi^*\varPhi$ is a block diagonal matrix with blocks $B_K^*B_K$ for all $K\in \mathcal{ D}^0$.
	Consequently, we have
	\begin{equation}\label{Nf1}
		\begin{aligned}
			\|\varPhi\|_{L_p(\mathcal{N})}^p&=\|\varPhi^*\varPhi\|_{L_{p/2}(\mathcal{N})}^{p/2}
			=\sum_{k\in\mathbb{Z}}\sum_{K\in\mathcal{D}^0_k}\|B^*_KB_K\|_{L_{p/2}(\mathcal{N})}^{p/2}.
		\end{aligned}
	\end{equation}
Denote by 
\begin{equation*}
	[B_K^*B_K]=\biggl((B_K^*B_K)_{(\tilde{I},\tilde{\zeta}),(I,\zeta)}\biggr)_{\tilde{I},I\in \mathcal{D}^0;\tilde{I},I\subseteq K , \ell(\tilde{I})=\ell(I)=2^{-i}\ell(K), \tilde{\zeta},\zeta\in\{0,1\}^n_0}
\end{equation*}
the matrix form of $B_K^*B_K$ with respect to the basis $\{H_I^\zeta\}_{I\in\mathcal{D}^0;I\subseteq K , \ell(I)=2^{-i}\ell(K), \zeta\in\{0,1\}^n_0}$, where $(B_K^*B_K)_{(\tilde{I},\tilde{\zeta}),(I,\zeta)}=\langle  H_{\tilde{I}}^{\tilde{\zeta}},B_K^*B_K H_I^\zeta\rangle$. We also denote the  $2^{in}(2^n-1)\times 2^{in}(2^n-1)$ matrix by
\begin{equation}\label{W}
	W^{K,J,\eta}=\bigg(W_{(\tilde{I},\tilde{\zeta}),(I,\zeta)}^{K,J,\eta}\bigg)_{\tilde{I},I\in \mathcal{D}^0;\tilde{I},I\subseteq K , \ell(\tilde{I})=\ell(I)=2^{-i}\ell(K), \tilde{\zeta},\zeta\in\{0,1\}^n_0},
\end{equation}
where $W_{(\tilde{I},\tilde{\zeta}),(I,\zeta)}^{K,J,\eta}=a_{IJK}^{\zeta\eta}\overline{a_{\tilde{I}JK}^{\tilde{\zeta}\eta}}b_{\tilde{I}J}^* b_{IJ}$. Then by \eqref{BkBK} one has
\begin{equation*}
	[B_K^*B_K]=\sum_{\substack{J\in\mathcal{D}^0;J\subseteq K\\ \ell(J)=2^{-j}\ell(K)}}\sum_{\eta} W^{K,J,\eta}.
\end{equation*}

	Now we divide the proof into two cases: $p\geq 2$ for $n\geq 1$ and $\frac{2}{1+\alpha}< p<2$ for $n=1$.\\
	(1) When $p\geq 2$, using the triangle inequality, one has
	\begin{equation*}
		\begin{aligned}
			\|B^*_KB_K\|_{L_{p/2}(\mathcal{N})}{}&=\|[B_K^*B_K]\|_{L_{p/2}(\mathbb{M}_{2^{in}(2^n-1)}\otimes\mathcal{M})}\le \sum_{\substack{J\in\mathcal{D}^0;J\subseteq K\\ \ell(J)=2^{-j}\ell(K)}}\sum_{\eta}\big\|W^{K,J,\eta}\big\|_{L_{p/2}(\mathbb{M}_{2^{in}(2^n-1)}\otimes\mathcal{M})}.
		\end{aligned}
	\end{equation*}
Notice that
\begin{equation}\label{WVV}
	\begin{aligned}
		\big\|W^{K,J,\eta}\big\|_{L_{p/2}(\mathbb{M}_{2^{in}(2^n-1)}\otimes\mathcal{M})}{}
		&=\bigg\|\sum_{\substack{{I}\in\mathcal{D}^0;{I}\subseteq K \\ \ell({I})=2^{-i}\ell(K)\\ {\zeta}\in\{0,1\}^n_0}}a_{{I}JK}^{{\zeta}\eta}\overline{a_{{I}JK}^{{\zeta}\eta}}b_{{I}J}b_{{I}J}^* \bigg\|_{L_{p/2}(\mathcal{M})}\\
		&\le\sum_{\substack{{I}\in\mathcal{D}^0;{I}\subseteq K \\ \ell({I})=2^{-i}\ell(K)\\ {\zeta}\in\{0,1\}^n_0}}\Big\|a_{{I}JK}^{{\zeta}\eta} b_{{I}J}\Big\|_{L_{p}(\mathcal{M})}^2.
	\end{aligned}
\end{equation}
Besides, note that
$|a_{IJK}^{\xi\eta}|\le 2^{-(i+j)n/2}$ in \eqref{aijkxieta}. Hence by (\ref{Nf1}) and (\ref{WVV}),
\begin{equation}\label{Nf}
	\begin{aligned}
		\|\varPhi\|_{L_p(\mathcal{N})}^p{}&\le \sum_{k\in\mathbb{Z}}\sum_{K\in\mathcal{D}^0_k}\biggl(\sum_{\substack{J\in\mathcal{D}^0;J\subseteq K\\ \ell(J)=2^{-j}\ell(K)}}\sum_{\eta}\big\|W^{K,J,\eta}\big\|_{L_{p/2}(\mathbb{M}_{2^{in}(2^n-1)}\otimes\mathcal{M})}\biggr)^{p/2}\\
		&\le\sum_{k\in\mathbb{Z}}\sum_{K\in\mathcal{D}^0_k}\biggl(\sum_{\substack{J\in\mathcal{D}^0;J\subseteq K\\ \ell(J)=2^{-j}\ell(K)}}\sum_{\eta}\sum_{\substack{I\in\mathcal{D}^0;I\subseteq K \\ \ell(I)=2^{-i}\ell(K)}}\sum_{ \xi}\Big\|a_{IJK}^{\xi\eta} b_{IJ}\Big\|_{L_{p}(\mathcal{M})}^2\biggr)^{p/2}\\
		&\le(2^n-1)^p\sum_{k\in\mathbb{Z}}\sum_{K\in\mathcal{D}^0_k}\biggl(2^{-(i+j)n}\sum_{\substack{J\in\mathcal{D}^0;J\subseteq K\\ \ell(J)=2^{-j}\ell(K)}}\sum_{\substack{I\in\mathcal{D}^0;I\subseteq K\\ \ell(I)=2^{-i}\ell(K)}}\|b_{IJ}\|_{L_p(\mathcal{M})}^2\biggr)^{p/2}.
	\end{aligned}
\end{equation}
	Since $b_{IJ}=b_{IK}-b_{JK}$, by the triangle inequality and the Cauchy-Schwarz inequality, we have
	\begin{equation}
		\begin{aligned}
			{}&\sum_{\substack{J\in\mathcal{D}^0;J\subseteq K\\ \ell(J)=2^{-j}\ell(K)}}\sum_{\substack{I\in\mathcal{D}^0;I\subseteq K\\ \ell(I)=2^{-i}\ell(K)}}\|b_{IJ}\|_{L_p(\mathcal{M})}^2\\
			&\leq \sum_{\substack{J\in\mathcal{D}^0;J\subseteq K\\ \ell(J)=2^{-j}\ell(K)}}\sum_{\substack{I\in\mathcal{D}^0;I\subseteq K\\ \ell(I)=2^{-i}\ell(K)}} 2(\|b_{IK}\|_{L_p(\mathcal{M})}^2+\|b_{JK}\|_{L_p(\mathcal{M})}^2)\\
			&\le 2^{jn+1}\sum_{\substack{I\in\mathcal{D}^0;I\subseteq K\\ \ell(I)=2^{-i}\ell(K)}}\|b_{IK}\|_{L_p(\mathcal{M})}^2+2^{in+1}\sum_{\substack{J\in\mathcal{D}^0;J\subseteq K\\ \ell(J)=2^{-j}\ell(K)}}\|b_{JK}\|_{L_p(\mathcal{M})}^2.
		\end{aligned}
	\end{equation}
	Note that $b_{IK}\cdot \mathbbm{1}_I=(b_{k+i}-b_{k})\cdot \mathbbm{1}_I$, and sum all $I$ and $J$, one has
	\begin{equation}
		\begin{aligned}
			{}&2^{-(i+j)n}\sum_{\substack{J\in\mathcal{D}^0;J\subseteq K\\ \ell(J)=2^{-j}\ell(K)}}\sum_{\substack{I\in\mathcal{D}^0;I\subseteq K\\ \ell(I)=2^{-i}\ell(K)}}\|b_{IJ}\|_{L_p(\mathcal{M})}^2\\
			&\le 2^{kn+1}\bigg(\sum_{\substack{I\in\mathcal{D}^0;I\subseteq K\\ \ell(I)=2^{-i}\ell(K)}}\|b_{IK}\cdot \mathbbm{1}_I\|_{L_2(\mathbb{R}^n,L_p(\mathcal{M}))}^2+\sum_{\substack{J\in\mathcal{D}^0;J\subseteq K\\ \ell(J)=2^{-j}\ell(K)}}\|b_{JK}\cdot \mathbbm{1}_J\|_{L_2(\mathbb{R}^n,L_p(\mathcal{M}))}^2\bigg)\\
			&= 2^{kn+1}\bigg(\|(b_{k+i}-b_{k})\mathbbm{1}_K\|_{L_2(\mathbb{R}^n,L_p(\mathcal{M}))}^2+\|(b_{k+j}-b_{k})\mathbbm{1}_K\|_{L_2(\mathbb{R}^n,L_p(\mathcal{M}))}^2\bigg)\\
			&\le 
			2^{1+2nk/p}\biggl(i\sum_{l=k+1}^{k+i}\|d_lb\cdot \mathbbm{1}_K\|_{L_p(\mathbb{R}^n,L_p(\mathcal{M}))}^2+j\sum_{l=k+1}^{k+j}\|d_l b\cdot \mathbbm{1}_K\|_{L_p(\mathbb{R}^n,L_p(\mathcal{M}))}^2\biggr).
		\end{aligned}
	\end{equation}
	Hence using the convex inequality, we obtain
	\begin{equation*}
		\begin{aligned}
			{}&\|\varPhi\|_{L_p(\mathcal{N})}^p\\
			&\le (2^n-1)^p2^{p}\sum_{k\in\mathbb{Z}}\sum_{K\in\mathcal{D}^0_k}2^{nk}\biggl(i^{p-1}\sum_{l=k+1}^{k+i}\|d_lb\cdot\mathbbm{1}_K\|_{L_p(\mathbb{R}^n,L_p(\mathcal{M}))}^p+j^{p-1}\sum_{l=k+1}^{k+j}\|d_lb\cdot\mathbbm{1}_K\|_{L_p(\mathbb{R}^n,L_p(\mathcal{M}))}^p\biggr)\\
			&=(2^n-1)^p2^{p}(i^{p}+j^{p})\sum_{k\in\mathbb{Z}}2^{nk}\|d_kb\|_{L_p(\mathbb{R}^n,L_p(\mathcal{M}))}^p\\
			&\lesssim_{n,p}(i^{p}+j^{p})\|b\|_{\pmb{B}_p^{0, 2^n}(\mathbb{R}^n,\mathcal{M})}^p\lesssim_{n} (i^{p}+j^{p})\|b\|_{\pmb{B}_p(\mathbb{R}^n,L_p(\mathcal{M}))}^p.
		\end{aligned}
	\end{equation*}
	Since the above estimation is independent of the choose of $\omega$, one has
	$$ \|[S_\omega^{ij}, R_b]\|_{L_p(\mathcal{N})}\lesssim_{n, p} (i^p+j^p)^{1/p}  \|b\|_{\pmb{B}_p(\mathbb{R}^n,L_p(\mathcal{M}))}, $$
	which yields
	$$ \|[S_\omega^{ij}, M_b]\|_{L_p(\mathcal{N})}\lesssim_{n, p} (i^p+j^p+1)^{1/p}  \|b\|_{\pmb{B}_p(\mathbb{R}^n,L_p(\mathcal{M}))}. $$
	Therefore by Lemma \ref{lem5.1.10AAA} and the triangle inequality,
	\begin{equation*}
		\begin{aligned}
			{}&\|[T,M_b]\|_{L_p(\mathcal{N})}\\
			&=\biggl\|\biggl[C_1(T)\mathbb{E}_\omega\sum_{i,j=0\atop \max\{i,j\}>0}^\infty \tau(i,j)S_\omega^{ij}+C_2(T)\mathbb{E}_\omega S_\omega^{00}+\mathbb{E}_\omega \pi^\omega_{T(1)}+\mathbb{E}_\omega (\pi^\omega_{T^*(1)})^*,M_b\biggr]\biggr\|_{L_p(\mathcal{N})}\\
			&\lesssim_T \sum_{i,j=0}^\infty \tau(i,j)\mathbb{E}_\omega\|[S_\omega^{ij},M_b]\|_{L_p(\mathcal{N})}+\mathbb{E}_\omega\|[ \pi^\omega_{T(1)}+ (\pi^\omega_{T^*(1)})^*,M_b]\|_{L_p(\mathcal{N})} \\
			&\lesssim_{n,p, T} \big(1+\|T(1)\|_{BMO(\mathbb{R}^n)}+\|T^*(1)\|_{BMO(\mathbb{R}^n)}\big) \|b\|_{\pmb{B}_p(\mathbb{R}^n,L_p(\mathcal{M}))}.
		\end{aligned}
	\end{equation*}
	This finishes the proof for $p\geq 2$.
\\
(2) When $n=1$ and $\frac{2}{1+\alpha}< p<2$, we use the same method as in the case $p\ge 2$. Note that in this case $p/2<1$, using the triangle inequality for $p/2$-norm,  from \eqref{WVV} and \eqref{Nf} one has
\begin{equation*}
	\begin{aligned}
		\|\varPhi\|_{L_p(\mathcal{N})}^p{}&\le \sum_{k\in\mathbb{Z}}\sum_{K\in\mathcal{D}^0_k}\sum_{\substack{J\in\mathcal{D}^0;J\subseteq K\\ \ell(J)=2^{-j}\ell(K)}}\sum_{\eta}\sum_{\substack{I\in\mathcal{D}^0;I\subseteq K \\ \ell(I)=2^{-i}\ell(K)}}\sum_{ \xi}\Big\|a_{IJK}^{\xi\eta} b_{IJ}\Big\|_{L_{p}(\mathcal{M})}^p.
	\end{aligned}
\end{equation*}
Note that
$|a_{IJK}^{\xi\eta}|\le 2^{-(i+j)/2}$ in \eqref{aijkxieta}, then we estimate 
\begin{equation*}\label{zeta}
	\begin{aligned}
		\|\varPhi\|_{L_p(\mathcal{N})}^p
		{}&\le \sum_{k\in\mathbb{Z}}\sum_{K\in\mathcal{D}^0_k}\frac{1}{2^{(i+j)p/2}}\sum_{\substack{J\in\mathcal{D}^0;J\subseteq K\\ \ell(J)=2^{-j}\ell(K)}}\sum_{\substack{I\in\mathcal{D}^0;I\subseteq K\\ \ell(I)=2^{-i}\ell(K)}}\|b_{IJ}\|_{L_p(\mathcal{M})}^p.
	\end{aligned}
\end{equation*}
Since $b_{IJ}=b_{IK}-b_{JK}$, by the triangle inequality and the Cauchy-Schwarz inequality, we have
\begin{equation}
	\begin{aligned}
		{}&\sum_{\substack{J\in\mathcal{D}^0;J\subseteq K\\ \ell(J)=2^{-j}\ell(K)}}\sum_{\substack{I\in\mathcal{D}^0;I\subseteq K\\ \ell(I)=2^{-i}\ell(K)}}\|b_{IJ}\|_{L_p(\mathcal{M})}^p\\
		&\leq \sum_{\substack{J\in\mathcal{D}^0;J\subseteq K\\ \ell(J)=2^{-j}\ell(K)}}\sum_{\substack{I\in\mathcal{D}^0;I\subseteq K\\ \ell(I)=2^{-i}\ell(K)}} 2^{p-1}(\|b_{IK}\|_{L_p(\mathcal{M})}^p+\|b_{JK}\|_{L_p(\mathcal{M})}^p)\\
		&= 2^{j+p-1}\sum_{\substack{I\in\mathcal{D}^0;I\subseteq K\\ \ell(I)=2^{-i}\ell(K)}}\|b_{IK}\|_{L_p(\mathcal{M})}^p+2^{i+p-1}\sum_{\substack{J\in\mathcal{D}^0;J\subseteq K\\ \ell(J)=2^{-j}\ell(K)}}\|b_{JK}\|_{L_p(\mathcal{M})}^p.
	\end{aligned}
\end{equation}
Note that $b_{IK}\cdot \mathbbm{1}_I=(b_{k+i}-b_{k})\cdot \mathbbm{1}_I$, and sum all $I$ and $J$, one has
\begin{equation}
	\begin{aligned}
		{}&\sum_{\substack{J\in\mathcal{D}^0;J\subseteq K\\ \ell(J)=2^{-j}\ell(K)}}\sum_{\substack{I\in\mathcal{D}^0;I\subseteq K\\ \ell(I)=2^{-i}\ell(K)}}\|b_{IJ}\|_{L_p(\mathcal{M})}^p\\
		&\le 2^{i+j+k+p-1}\big(\|(b_{k+i}-b_{k})\mathbbm{1}_K\|_{L_p(\mathbb{R}^n,L_p(\mathcal{M}))}^p+\|(b_{k+j}-b_{k})\mathbbm{1}_K\|_{L_p(\mathbb{R}^n,L_p(\mathcal{M}))}^p\big).
	\end{aligned}
\end{equation}
Hence we obtain
\begin{equation*}
	\begin{aligned}
		\|\varPhi\|_{L_p(\mathcal{N})}^p{}
		&\le \sum_{k\in\mathbb{Z}}\sum_{K\in\mathcal{D}^0_k}\frac{2^{p-1}i^{p-1}}{2^{(i+j)(p/2-1)-k}}\sum_{l=k+1}^{k+i}\|d_lb\cdot \mathbbm{1}_K\|_{L_p(\mathbb{R}^n,L_p(\mathcal{M}))}^p\\
		&\quad+\sum_{k\in\mathbb{Z}}\sum_{K\in\mathcal{D}^0_k}\frac{2^{p-1}j^{p-1}}{2^{(i+j)(p/2-1)-k}}\sum_{l=k+1}^{k+j}\|d_lb\cdot \mathbbm{1}_K\|_{L_p(\mathbb{R}^n,L_p(\mathcal{M}))}^p\\
		&=2^{p-1} 2^{(i+j)(1-p/2)} (i^p+j^p) \sum_{k\in\mathbb{Z}}2^{k}\|d_kb\|_{L_p(\mathbb{R}^n,L_p(\mathcal{M}))}^p\\
		&\lesssim_{p}2^{(i+j)(1-p/2)}(i^{p}+j^{p})\|b\|_{\pmb{B}_p^{0, 2}(\mathbb{R}^n,\mathcal{M})}^p\lesssim 2^{(i+j)(1-p/2)}(i^{p}+j^{p})\|b\|_{\pmb{B}_p(\mathbb{R}^n,L_p(\mathcal{M}))}^p.
	\end{aligned}
\end{equation*}
Since the above estimation is independent of the choose of $\omega$, one has
$$ \|[S_\omega^{ij}, R_b]\|_{L_p(\mathcal{N})}\lesssim_{p} \big(2^{(i+j)(1-p/2)}(i^p+j^p)\big)^{1/p}  \|b\|_{\pmb{B}_p(\mathbb{R}^n,L_p(\mathcal{M}))}, $$
which yields
\begin{equation*}
	\begin{aligned}
		\|[S_\omega^{ij}, M_b]\|_{L_p(\mathcal{N})}{}&\lesssim_{p} \big(2^{(i+j)(1-p/2)}(i^p+j^p)+1\big)^{1/p}  \|b\|_{\pmb{B}_p(\mathbb{R}^n,L_p(\mathcal{M}))}\\
		&\lesssim 2^{2\max\{i,j\}(1-p/2)/p}(i+j+1) \|b\|_{\pmb{B}_p(\mathbb{R}^n,L_p(\mathcal{M}))}.
	\end{aligned}
\end{equation*}
Since $\frac{2}{1+\alpha}< p<2$, we get
\begin{equation*}
	\begin{aligned}
		\sum_{i,j=0}^\infty &\tau(i,j)\|[S_\omega^{ij}, M_b]\|_{L_p(\mathcal{N})}\\
		&\lesssim_{p} \sum_{i,j=0}^\infty (1+\max\{i,j\})^{2(1+\alpha)+1}2^{\max\{i,j\}\big(2(1-p/2)/p-\alpha\big)}\|b\|_{\pmb{B}_p(\mathbb{R}^n,L_p(\mathcal{M}))}<\infty.
	\end{aligned}
\end{equation*}
Therefore by Lemma \ref{lem5.1.10AAA} and the triangle inequality,
\begin{equation*}
	\begin{aligned}
		{}&\quad\|[T,M_b]\|_{L_p(\mathcal{N})}\\
		&=\biggl\|\biggl[C_1(T)\mathbb{E}_\omega\sum_{i,j=0\atop \max\{i,j\}>0}^\infty \tau(i,j)S_\omega^{ij}+C_2(T)\mathbb{E}_\omega S_\omega^{00}+\mathbb{E}_\omega \pi^\omega_{T(1)}+\mathbb{E}_\omega (\pi^\omega_{T^*(1)})^*,M_b\biggr]\biggr\|_{L_p(\mathcal{N})}\\
		&\lesssim_T \sum_{i,j=0}^\infty \tau(i,j)\mathbb{E}_\omega\|[S_\omega^{ij},M_b]\|_{L_p(\mathcal{N})}+\mathbb{E}_\omega\|[ \pi^\omega_{T(1)}+ (\pi^\omega_{T^*(1)})^*,M_b]\|_{L_p(\mathcal{N})} \\
		&\lesssim_{p, T} \big(1+\|T(1)\|_{BMO(\mathbb{R}^n)}+\|T^*(1)\|_{BMO(\mathbb{R}^n)}\big) \|b\|_{\pmb{B}_p(\mathbb{R}^n,L_p(\mathcal{M}))}.
	\end{aligned}
\end{equation*}
This completes the proof of Theorem \ref{thm6.4}.
\end{proof}

\subsection{Comparison between  Theorem \ref{thm1.5} and Theorem \ref{thm6.4}}
From our proof of Theorem \ref{thm6.4}, we see that when $p\geq 2$ and $\M=\mathbb{C}$, one always has
$$ \|[T,M_b]\|^p_{S_p(L_2(\mathbb{R}^n))}\lesssim_{n, p} \int_{\mathbb{R}^n\times\mathbb{R}^n}\frac{|b(x)-b(y)|^p}{|x-y|^{2n}}dxdy. $$
However, this does not contradict with Theorem \ref{thm1.5} for $p\leq n$ and $n\geq 2$ due to the following fact.
\begin{proposition}\label{0pn}
	Let $n\ge 1, 1\leq p\leq n$ and $X$ be a Banach space. Assume that $b$ is a locally integrable $X$-valued function. Then $b$ is constant if
	$$  \int_{\mathbb{R}^n\times\mathbb{R}^n}\frac{\|b(x)-b(y)\|_{X}^p}{|x-y|^{2n}}dxdy<\8. $$
\end{proposition}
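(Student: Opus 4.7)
The plan is to reduce to the case of a smooth real-valued function via mollification and Hahn--Banach, and then derive a contradiction from a direct polar-coordinate computation.

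First, I would mollify $b$: fix a nonnegative scalar $\phi\in C_c^\infty(\mathbb{R}^n)$ with $\int\phi=1$, set $\phi_\varepsilon(z)=\varepsilon^{-n}\phi(z/\varepsilon)$, and let $b_\varepsilon=b*\phi_\varepsilon$, which is smooth and $X$-valued. Minkowski's integral inequality in Bochner form, combined with the translation invariance of the integrand, yields
\[
\iint\frac{\|b_\varepsilon(x)-b_\varepsilon(y)\|_X^p}{|x-y|^{2n}}\,dxdy\leq \iint\frac{\|b(x)-b(y)\|_X^p}{|x-y|^{2n}}\,dxdy<\infty.
\]

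The key step is to show that every $b_\varepsilon$ is constant. Assuming the contrary, there exist $x_0\in\mathbb{R}^n$ and $\omega_0\in S^{n-1}$ such that $v:=\partial_{\omega_0}b_\varepsilon(x_0)\neq 0$ in $X$. By Hahn--Banach I choose $\xi\in X^*$ with $\|\xi\|=1$ and $\xi(v)=\|v\|_X$, and set $g:=\mathrm{Re}(\xi\circ b_\varepsilon)\colon\mathbb{R}^n\to\mathbb{R}$. Then $g$ is smooth with $\partial_{\omega_0}g(x_0)=\|v\|_X>0$, and since $\|\mathrm{Re}(\xi)\|\leq 1$,
\[
|g(x)-g(y)|\leq \|b_\varepsilon(x)-b_\varepsilon(y)\|_X,
\]
so $g$ inherits the finite seminorm. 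By continuity there exist an open ball $U\ni x_0$, a spherical cap $V\subset S^{n-1}$ around $\omega_0$, a smaller ball $U'\subset U$, and constants $c,t_0>0$ such that $\partial_\omega g\geq c$ on $U$ for every $\omega\in V$, while $x+t\omega\in U$ whenever $x\in U'$, $\omega\in V$, $0<t<t_0$. The fundamental theorem of calculus then gives $|g(x+t\omega)-g(x)|\geq ct$ on this range.

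Passing to polar coordinates $y=x+t\omega$, with $dy=t^{n-1}\,dt\,d\sigma(\omega)$ and $|x-y|^{-2n}dy=t^{-n-1}\,dt\,d\sigma(\omega)$, I obtain
\[
\iint\frac{|g(x)-g(y)|^p}{|x-y|^{2n}}\,dxdy\geq c^p\,m(U')\,\sigma(V)\int_0^{t_0}t^{p-n-1}\,dt,
\]
and the last integral diverges because $p\leq n$ forces $p-n-1\leq -1$; this contradicts the finite seminorm of $g$, so every $b_\varepsilon$ is constant. Finally, $b_\varepsilon\equiv c_\varepsilon\in X$, and $b_\varepsilon\to b$ in $L_1^{\mathrm{loc}}(\mathbb{R}^n,X)$ as $\varepsilon\to 0$; averaging over any fixed ball $B$ yields $c_\varepsilon\to |B|^{-1}\int_B b$, so this common limit $c\in X$ is independent of $B$, and Lebesgue differentiation gives $b=c$ almost everywhere. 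The main obstacle I anticipate is the second step: extracting, from a nonconstant Banach-valued smooth function, a scalar-valued companion with nonvanishing directional derivative that still inherits the finite seminorm. Hahn--Banach applied to $\partial_{\omega_0}b_\varepsilon(x_0)\in X$, together with passage to real parts, handles this cleanly; once the reduction is made, the divergence of $\int_0^{t_0}t^{p-n-1}\,dt$ for $p\leq n$ is the essential scalar computation.
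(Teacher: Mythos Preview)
Your proof is correct and follows the same core strategy as the paper: reduce to a smooth scalar function via a functional and mollification, then derive a contradiction from the divergence of $\int_0^{t_0} t^{p-n-1}\,dt$ when $p\le n$. The scalar computation is essentially identical in both arguments.

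The genuine difference is the order of the two reductions and, consequently, the endgame. The paper first applies a functional $l\in X^*$ to get a scalar $h=l\circ b$, then mollifies $h$; having shown that every $l\circ b$ is constant, it must then argue that $b$ itself is constant, which it does by invoking the Pettis measurability theorem to find a separable subspace $X_0$ containing the essential range of $b$ and a countable separating family of functionals. You instead mollify the $X$-valued $b$ first, apply Hahn--Banach only to the specific derivative $\partial_{\omega_0}b_\varepsilon(x_0)$, and conclude that each $b_\varepsilon$ is a constant $c_\varepsilon$; the passage from $b_\varepsilon$ to $b$ is then immediate via $b_\varepsilon\to b$ in $L_1^{\mathrm{loc}}(\mathbb{R}^n,X)$ and Lebesgue differentiation. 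Your route is more economical in that it avoids Pettis measurability entirely, at the modest cost of needing the Bochner version of Minkowski's integral inequality and of mollification (both standard). The paper's route has the minor advantage that all the analysis happens at the scalar level from the outset.
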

\begin{proof}
	For any $l\in X^*$, let $$h(x)=l( b(x)),\quad \forall x\in \mathbb{R}^n.$$ 
	By assumption,
	\begin{equation*}
		\begin{aligned}
			\int_{\mathbb{R}^n\times\mathbb{R}^n}\frac{|h(x)-h(y)|^p}{|x-y|^{2n}}dxdy\le \|l\|^p_{X^*}\int_{\mathbb{R}^n\times\mathbb{R}^n}\frac{\|b(x)-b(y)\|_{X}^p}{|x-y|^{2n}}dxdy<\infty.
		\end{aligned}
	\end{equation*}
	We first prove that $h$ is constant. We proceed with the proof by contradiction. Assume that $h$ is not constant. Then there exists $\varphi\in C_c^\infty(\mathbb{R}^n)$ such that $h\ast\varphi\in C^\infty(\mathbb{R}^n)$ is not constant either.
	By changing the variables, we have
	\[\int_{\mathbb{R}^n\times\mathbb{R}^n}\frac{|h(x)-h(y)|^p}{|x-y|^{2n}}dxdy=\int_{\mathbb{R}^n}\frac{\|h(x+t)-h(x)\|^p_{L_p(\mathbb{R}^n)}}{|t|^{2n}}dt.\]
	 Since by the Young inequality
	\[\|\varphi\ast h(x+t)-\varphi\ast h(x)\|^p_{L_p(\mathbb{R}^n)}\le \|\varphi\|^p_{L_1(\mathbb{R}^n)}\|h(x+t)-h(x)\|^p_{L_p(\mathbb{R}^n)},\]
	we get
	\[\int_{\mathbb{R}^n}\frac{\|\varphi\ast h(x+t)-\varphi\ast h(x)\|^p_{L_p(\mathbb{R}^n)}}{|t|^{2n}}dt<\infty.\]
	Hence we can assume that $h\in C^\infty(\mathbb{R}^n)$, otherwise we replace $h$ with $h\ast\varphi$.
	
	Since $h$ is not constant, there exists $\tilde{x}=(\tilde{x}_1,\cdots,\tilde{x}_n)\in\mathbb{R}^n$, such that $\nabla h(\tilde{x})\neq 0$. Let $U$ be a unitary matrix in $\mathbb{M}_n$ such that $\nabla h(\tilde{x})\cdot U=(|\nabla h(\tilde{x})|, 0, \cdots, 0)$. We substitute $\tilde{h}(y):=h(y\cdot U)$ for $h$.
	So we can also assume that there exists $\tilde{x}\in\mathbb{R}^n$ with $\nabla h(\tilde{x})=(M, 0, \cdots, 0)$ and $M>0$.
	
	Since $h\in C^\infty(\mathbb{R}^n)$, $\exists\ \delta>0$ such that $\forall \ |y-\tilde{x}|<2\delta$ with
	$$  |\nabla h(y)-\nabla h(\tilde{x})|\leq \dfrac{M}{4}.  $$
	Thus for any $ |x-\tilde{x}|<\delta$ and $|t|<\delta$ with $|t_1|>\frac{|t|}{2}$, by the mean value theorem,
	\begin{equation*}
		\begin{aligned}
			|h(x+t)-h(x)|&=|\nabla h(x+\theta\cdot t)\cdot t| \quad (0<\theta<1)\\
			&\geq |\nabla h(\tilde{x})\cdot t|-|\big(\nabla h(x+\theta\cdot t)-\nabla h(\tilde{x})\big)\cdot t|\\
			&\geq M|t_1|-\dfrac{M|t|}{4}\geq \dfrac{M|t_1|}{2}.
		\end{aligned}
	\end{equation*}
	This yields that
	\begin{equation}\label{phibb66}
		\|h(x+t)-h(x)\|^p_{L_p(\mathbb{R}^n)}\gtrsim _{n, p}\delta^n M^p |t_1|^p.
	\end{equation}
	Consequently, one has
	\begin{equation}\label{bbcont2n}
		\begin{aligned}
			\int_{\mathbb{R}^n}\frac{\|h(x+t)-h(x)\|^p_{L_p(\mathbb{R}^n)}}{|t|^{2n}}dt&\gtrsim_{n, p} \int\limits_{t\in \mathbb{R}^n, |t|<\delta \atop |t_1|>\frac{|t|}{2}} \dfrac{|t_1|^p}{|t|^{2n}}dt\\
			&\gtrsim_{n, p} \int_{0}^\delta \dfrac{r^p}{r^{2n}}\cdot r^{n-1} dr\\
			&= \int_{0}^\delta \dfrac{1}{r^{n+1-p}}dr=\8.
		\end{aligned}
	\end{equation}
	This leads to a contradiction.
Thus $h$ is constant almost everywhere. Namely, there exists a null set $A_l\subset \mathbb{R}^n$, such that
	\begin{equation*}
		\begin{aligned}
			h(x)-h(y)=l(b(x)-b(y))=0,\quad  \forall x,y\in  \mathbb{R}^n\backslash A_l.
		\end{aligned}
	\end{equation*}
	Note that $b$ is a locally integrable $X$-valued function.
	Thus from Pettis measurability theorem in \cite[Theorem II.1.2]{DU}, there exists a closed separable subspace $X_0$ of $X$ such that $b(x)\in X_0$ for all $x\in \mathbb{R}^n\setminus B$ with $B$ a null subset of $\mathbb{R}^n$. Besides, by the Hahn-Banach Theorem, we choose $(l_k)_{k\ge 1}$ in $X^*$ separating the points of $X_0$.   Therefore, $b$ is constant outside the union of the null sets $B$ and $\cup_{k\ge 1}A_{l_k}$.
\end{proof}

\bigskip

\section{Proof of Theorem \ref{thm1.8}}\label{pthm1.8}

In this section we prove Theorem \ref{thm1.8}, which follows the same route as the argument for Theorem \ref{thm6.4}. Denote by $BMO^d_{\mathcal{M}}(\mathbb{R})$ the operator-valued $BMO$ space associated with the $d$-adic martingale consisting of all $\mathcal{M}$-valued functions $b$ that are Bochner integrable on any $d$-adic interval such that
\begin{equation}\label{BMOMd}
	\|b\|_{BMO^d_\mathcal{M}(\mathbb{R})}=\sup_{I\in \mathcal{D}}\biggl(\frac{1}{m(I)}\int_I \Bigl\|b-\big(\frac{1}{m(I)} \int_I b\ d m\big)\Bigr\|_\mathcal{M}^2 dm\biggr)^{1/2}<\infty,
\end{equation} 
where $\mathcal{D}$ is the family of all $d$-adic intervals on $\mathbb{R}$.

During the proof of Theorem \ref{thm1.8}, we also need the $d$-adic martingale square function defined as follows
\begin{equation*}
	S(h)=\biggl(\sum_{k\in\mathbb{Z}} |d_kh|^2\biggr)^{1/2}, \quad \forall h\in L_1(\mathbb{R},L_1(\mathcal{M})), 
\end{equation*}
and the $d$-adic martingale Hardy space $H^{d}_{1,\max}(\mathbb{R})$ defined by
\begin{equation*}
	H^d_{1,\max}(\mathbb{R})=\bigg\{h\in  L_1(\mathbb{R},L_1(\mathcal{M})):\|h\|_{H^d_{1,\max}(\mathbb{R})}=\bigg\|\sup_{m\in\mathbb{Z}}\|\mathbb{E}_mh\|_{L_1(\mathcal{M})}\bigg\|_{L_1(\mathbb{R})}<\infty\bigg\}.
\end{equation*}
Bourgain and Garcia-Cuerva proved independently that $BMO_\mathcal{M}^d(\mathbb{R})$ embeds continuously into the dual of Hardy space $(	H^d_{1,\max}(\mathbb{R}))^*$. We refer the reader to \cite{BJ} for more details.

Firstly we give the following proposition and its corollary, which will be helpful in the proof of Propositions \ref{T2est}.
\begin{proposition}\label{pibsumpibstar}
	Let $1<p<\infty$ and $b\in BMO^d_\mathcal{M}(\mathbb{R})$. Then $\pi_b+(\pi_{b^*})^*$ is bounded on $L_p(\mathbb{R},L_p(\mathcal{M}))$ and 
	\begin{equation*}
		\|\pi_b+(\pi_{b^*})^*\|_{L_p(\mathbb{R},L_p(\mathcal{M}))\to L_p(\mathbb{R},L_p(\mathcal{M}))}\lesssim_{d, p} \|b\|_{BMO^d_\mathcal{M}(\mathbb{R})}.
	\end{equation*}
\end{proposition}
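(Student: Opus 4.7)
The plan is to prove the $L_p$-boundedness of $T_b:=\pi_b+(\pi_{b^*})^*$ by combining an $L_2$ estimate with an endpoint $L_\infty\to BMO$ bound and invoking interpolation. I would first record the duality $T_b^*=T_{b^*}$. Since $\|b^*\|_{BMO^d_\mathcal{M}(\mathbb{R})}=\|b\|_{BMO^d_\mathcal{M}(\mathbb{R})}$ directly from the definition \eqref{BMOMd}, this self-duality reduces the proof to the range $p\in[2,\infty)$; the case $p\in(1,2]$ will then follow by taking adjoints.

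For the case $p=2$, I would expand the bilinear form $\langle g,T_bf\rangle$ using the Haar representations \eqref{pib} and \eqref{pistar}, producing a double sum indexed by $(I,i)$ with operator-valued Haar coefficients $\langle h_I^i,b\rangle\in\mathcal{M}$. The crucial feature is that while $\pi_b$ alone would only admit a Hilbertian Carleson estimate (which is known to incur the Nazarov-Treil-Volberg $\log(d+1)$ loss when $\mathcal{M}=\mathbb{M}_d$), the symmetric combination $T_b$ permits Cauchy-Schwarz on both sides simultaneously, so that precisely the operator-norm BMO condition \eqref{BMOMd} is strong enough to close the estimate. This step uses a noncommutative Carleson-type embedding that controls sums of the form $\sum_k\mathbb{E}_{k-1}(|d_kb|^2)$ and their ``row'' counterpart against $\|b\|^2_{BMO^d_\mathcal{M}(\mathbb{R})}$, together with Abel summation to decompose the pointwise quantities $f_{k-1}f_{k-1}^*$ in terms of their martingale increments.

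To pass from $p=2$ to $p\in(2,\infty)$, I would establish the endpoint bound $T_b:L_\infty(\mathbb{R},\mathcal{M})\to BMO^d_\mathcal{M}(\mathbb{R})$ and then interpolate, using the Bourgain-Garcia-Cuerva embedding $BMO^d_\mathcal{M}(\mathbb{R})\hookrightarrow(H^d_{1,\max}(\mathbb{R}))^*$ recalled in Section~\ref{pre2}. The endpoint estimate is a direct dyadic BMO calculation on each interval $I\in\mathcal{D}$: for $f\in L_\infty(\mathbb{R},\mathcal{M})$, the local oscillation of $T_bf$ on $I$ is bounded via the Haar expansion by $\|f\|_{L_\infty(\mathbb{R},\mathcal{M})}\,\|b\|_{BMO^d_\mathcal{M}(\mathbb{R})}$, where once again the symmetric pairing prevents a dimensional (logarithmic) loss.

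The main obstacle is the $L_2$ step: this is precisely where $\pi_b$ and $(\pi_{b^*})^*$ individually fail to satisfy the desired estimate in the fully noncommutative setting, and where the symmetric structure of $T_b$ must be exploited essentially. Concretely, one needs to match a factor $d_kb$ acting on the left of $f_{k-1}$ in $\pi_b(f)$ with a factor $d_kb$ that, after reorganization of $(\pi_{b^*})^*$, effectively acts on the right of $d_kf$, and then show that the two contributions combine to a bound controlled purely by the strong operator BMO norm \eqref{BMOMd}, uniformly in the size of $\mathcal{M}$. This is the place where the proposition departs from its commutative analogue and requires a genuinely noncommutative martingale inequality.
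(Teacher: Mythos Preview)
Your strategy differs from the paper's and contains a genuine gap in the interpolation step. The paper uses no endpoint--interpolation scheme at all. It simply rewrites the bilinear form as
\[
\langle (\pi_b+(\pi_{b^*})^*)f,g\rangle=\Big\langle b,\ \sum_{k}d_kg\cdot f_{k-1}^*+\sum_{k}g_{k-1}\cdot d_kf^*\Big\rangle,
\]
and then invokes the estimate of Mei \cite{Mei2} or Blasco--Pott \cite{BP08}, which asserts that this pairing is bounded by $\|b\|_{BMO^d_\mathcal{M}}\|f\|_{L_p(\mathbb{R},L_p(\mathcal{M}))}\|g\|_{L_{p'}(\mathbb{R},L_{p'}(\mathcal{M}))}$ for \emph{every} $1<p<\infty$ directly. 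The mechanism there is the product identity $\mathbb{E}_m(\sum_k d_kg\cdot f_{k-1}^*+g_{k-1}\cdot d_kf^*)=g_mf_m^*-\sum_{k\le m}d_kg\cdot d_kf^*$, followed by vector-valued Doob and square-function bounds, yielding $\|h\|_{H^d_{1,\max}}\lesssim_p\|f\|_p\|g\|_{p'}$; the Bourgain--Garcia-Cuerva duality then closes the estimate. Your ``$L_2$ step'' is essentially the special case $p=2$ of this computation.

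The gap is step~3. You propose to interpolate between $T_b:L_2(\mathbb{R},L_2(\mathcal{M}))\to L_2(\mathbb{R},L_2(\mathcal{M}))$ and $T_b:L_\infty(\mathbb{R},\mathcal{M})\to BMO^d_\mathcal{M}(\mathbb{R})$ to reach $L_p(\mathbb{R},L_p(\mathcal{M}))$. But that would require $[L_2(\mathbb{R},L_2(\mathcal{M})),BMO^d_\mathcal{M}(\mathbb{R})]_\theta\subset L_p(\mathbb{R},L_p(\mathcal{M}))$, and no such identification is available: the space $BMO^d_\mathcal{M}$ in \eqref{BMOMd} is a \emph{Banach-space-valued} BMO built from the operator norm $\|\cdot\|_\mathcal{M}$, not a noncommutative martingale BMO, so interpolation with $L_2(L_2(\mathcal{M}))$ has no reason to land in the scale $L_p(L_p(\mathcal{M}))$ where the operator Lebesgue index moves with $p$. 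The embedding $BMO^d_\mathcal{M}\hookrightarrow (H^d_{1,\max})^*$ you cite is only a continuous inclusion and does not produce an interpolation scale either. This obstacle is exactly why the paper---and the references it cites---bound the bilinear form for each $p$ rather than interpolate.
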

\begin{proof}
	For any $f\in L_p(\mathbb{R},L_p(\mathcal{M}))$ and $g\in L_{p'}(\mathbb{R},L_{p'}(\mathcal{M}))$, by \eqref{pistar}
	\begin{equation*}
		\begin{aligned}
			\langle (\pi_b+(\pi_{b^*})^*)(f),g\rangle{}&=\biggl\langle \sum_{k\in\mathbb{Z}}d_kb\cdot f_{k-1}+\sum_{k\in\mathbb{Z}}\mathbb{E}_{k-1}(d_kb\cdot d_kf),g \biggr\rangle\\
			&=\sum_{k\in\mathbb{Z}}\langle d_kb,d_kg\cdot f_{k-1}^*\rangle+\sum_{k\in\mathbb{Z}}\langle d_kb,g_{k-1}\cdot d_kf^*\rangle\\
			&=\biggl\langle b,\sum_{k\in\mathbb{Z}}d_kg\cdot f_{k-1}^*+\sum_{k\in\mathbb{Z}}g_{k-1}\cdot d_kf^*\biggr\rangle.
		\end{aligned}
	\end{equation*}
Using the same method as in \cite[Theorem 1.1]{Mei2} or \cite{BP08}, we obtain that
\begin{equation*}
	\begin{aligned}
		|\langle (\pi_b+(\pi_{b^*})^*)(f),g\rangle|\lesssim_{d, p} \|b\|_{BMO^d_\mathcal{M}(\mathbb{R})}\|f\|_{L_p(\mathbb{R},L_p(\mathcal{M}))}\|g\|_{L_{p'}(\mathbb{R},L_{p'}(\mathcal{M}))}.
	\end{aligned}
\end{equation*}
	Therefore, one has
	\begin{equation*}
		\|\pi_b+(\pi_{b^*})^*\|_{L_p(\mathbb{R},L_p(\mathcal{M}))\to L_p(\mathbb{R},L_p(\mathcal{M}))}\lesssim_{d, p} \|b\|_{BMO^d_\mathcal{M}(\mathbb{R})}.
	\end{equation*}
\end{proof}

We define
	\begin{equation}\label{Thetab}
	\Theta_b=\pi_b+\varLambda_b.
\end{equation}
 The following corollary is about the boundedness of $\Theta_b$, which has been proved in \cite[Proposition A.2]{HM}, but it seems that the proof there contains a gap. We give a detailed proof here and thus fix that gap.
\begin{corollary}\label{Thetabest}
	If $b\in BMO^d_\mathcal{M}(\mathbb{R})$, then $\Theta_b$ is bounded on $L_2(\mathbb{R},L_2(\mathcal{M}))$ and 
	\begin{equation*}
		\|\Theta_b\|_{L_2(\mathbb{R},L_2(\mathcal{M}))\to L_2(\mathbb{R},L_2(\mathcal{M}))}\lesssim_d \|b\|_{BMO^d_\mathcal{M}(\mathbb{R})}.
	\end{equation*}
\end{corollary}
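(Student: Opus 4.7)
The plan is to decompose $\Theta_b$ into a piece handled by Proposition~\ref{pibsumpibstar} and a piece whose summands are genuine martingale differences. Using formula~\eqref{pistar} to write $(\pi_{b^*})^*(f)=\sum_k\mathbb{E}_{k-1}(d_kb\cdot d_kf)$, I would split
\begin{equation*}
\Theta_b=\pi_b+\varLambda_b=\bigl(\pi_b+(\pi_{b^*})^*\bigr)+\mathcal{R}_b,\qquad \mathcal{R}_b(f):=\sum_{k\in\mathbb{Z}}\bigl[d_kb\cdot d_kf-\mathbb{E}_{k-1}(d_kb\cdot d_kf)\bigr].
\end{equation*}
Proposition~\ref{pibsumpibstar} applied with $p=2$ bounds the first bracket by $\|b\|_{BMO^d_\mathcal{M}(\mathbb{R})}$ up to a $d$-dependent constant, so only $\mathcal{R}_b$ needs a separate estimate.

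Each summand $\Delta_k:=d_kb\cdot d_kf-\mathbb{E}_{k-1}(d_kb\cdot d_kf)$ is $\mathcal{F}_k$-measurable with $\mathbb{E}_{k-1}(\Delta_k)=0$, so $\{\Delta_k\}_{k\in\mathbb{Z}}$ is a sequence of pairwise orthogonal martingale differences in $L_2(\mathbb{R},L_2(\mathcal{M}))$. Contractivity of the conditional expectation together with the pointwise bound $\|d_kb(x)d_kf(x)\|_{L_2(\mathcal{M})}\le\|d_kb(x)\|_\mathcal{M}\|d_kf(x)\|_{L_2(\mathcal{M})}$ gives
\begin{equation*}
\|\mathcal{R}_b(f)\|_{L_2}^2=\sum_k\|\Delta_k\|_{L_2}^2\le\sum_k\int_\mathbb{R}\|d_kb(x)\|_\mathcal{M}^2\,\|d_kf(x)\|_{L_2(\mathcal{M})}^2\,dx.
\end{equation*}
Since both $d_kb$ and $d_kf$ are constant on every atom $J\in\mathcal{D}_k$, with $d_kb|_J=b_k|_J-b_{k-1}|_J$, Cauchy--Schwarz on the oscillation of $b$ yields
\begin{equation*}
\|d_kb|_J\|_\mathcal{M}^2\le\frac{1}{|J|}\int_J\|b-b_{k-1}\|_\mathcal{M}^2\,dy\le\frac{|\tilde J|}{|J|}\cdot\frac{1}{|\tilde J|}\int_{\tilde J}\|b-b_{k-1}\|_\mathcal{M}^2\,dy\le d\,\|b\|_{BMO^d_\mathcal{M}(\mathbb{R})}^2,
\end{equation*}
where $\tilde J\in\mathcal{D}_{k-1}$ denotes the parent of $J$ and the last inequality uses \eqref{BMOMd}. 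Hence $\int_J\|d_kb\|_\mathcal{M}^2\|d_kf\|_{L_2(\mathcal{M})}^2\,dx\le d\|b\|_{BMO^d_\mathcal{M}}^2\int_J\|d_kf\|_{L_2(\mathcal{M})}^2\,dx$; summing first over $J\in\mathcal{D}_k$ and then over $k$, together with martingale orthogonality $\sum_k\|d_kf\|_{L_2}^2=\|f\|_{L_2}^2$, delivers $\|\mathcal{R}_b(f)\|_{L_2}\lesssim\sqrt{d}\,\|b\|_{BMO^d_\mathcal{M}}\|f\|_{L_2}$, completing the argument.

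The main subtlety, and plausibly the source of the gap in \cite{HM}, is that Proposition~\ref{pibsumpibstar} only controls the combination $\pi_b+(\pi_{b^*})^*$, not each piece separately: for infinite-dimensional $\mathcal{M}$, $\pi_b$ alone need not be bounded even for $b\in BMO^d_\mathcal{M}$ (cf.\ the discussion preceding Theorem~\ref{thm6.1}). The decomposition is therefore forced to place $(\pi_{b^*})^*$ alongside $\pi_b$ and subtract it from $\varLambda_b$; this subtraction is precisely what converts $d_kb\cdot d_kf$ into a genuine $\mathcal{F}_k$-martingale difference, unlocking the $L_2$-orthogonality that reduces matters to the one-scale Cauchy--Schwarz estimate above. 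A direct attempt on $\varLambda_b$ that ignores this cancellation would instead require a Carleson-type control of $\sum_k\|d_kb\|_\mathcal{M}^2$, which diverges for generic $BMO^d_\mathcal{M}$ symbols.
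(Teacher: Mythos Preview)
Your proof is correct, and the overall decomposition $\Theta_b=(\pi_b+(\pi_{b^*})^*)+\mathcal{R}_b$ coincides exactly with the paper's splitting $\Theta_b=(\pi_b+(\pi_{b^*})^*)+(\varLambda_b-(\pi_{b^*})^*)$; both invoke Proposition~\ref{pibsumpibstar} for the first piece, and your $\mathcal{R}_b$ is the paper's $\tilde\varLambda_b$ from~\eqref{tildevar}.

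The difference lies in how the remainder is bounded. The paper expands $\tilde\varLambda_b$ in the Haar basis using the multiplication rule~\eqref{formula2.2}, observes it is block-diagonal with respect to $\{h_I^i\}$, and estimates each block by embedding it into the circulant-type matrix $B^I$ of~\eqref{BI}, arriving at $\|\tilde\varLambda_b\|\le(d-1)\|b\|_{BMO^d_\mathcal{M}}$. Your argument instead exploits that $\Delta_k=d_k(d_kb\cdot d_kf)$ are orthogonal martingale differences, uses the contraction $\|\Delta_k\|_{L_2}\le\|d_kb\cdot d_kf\|_{L_2}$, and finishes with the pointwise $L_\infty$ bound $\|d_kb\|_\mathcal{M}\le\sqrt{d}\,\|b\|_{BMO^d_\mathcal{M}}$. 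Your route is more elementary and avoids the Haar algebra entirely; the paper's block-diagonal computation, on the other hand, is the same machinery that drives the $S_p$-estimate in Lemma~\ref{TLambdab}, so it is reused rather than devised ad hoc here. Either way, your final paragraph correctly identifies the essential point: only the sum $\pi_b+(\pi_{b^*})^*$ is controlled by $BMO^d_\mathcal{M}$, which forces this particular splitting.
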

\begin{proof}
		We use the same notation as that in Lemma \ref{TLambdab}, and the proof of this lemma is also similar to that of Lemma \ref{TLambdab}. 
	By the triangle inequality, one has
	\begin{equation*}
		\begin{aligned}
			{}&\|\Theta_b\|_{L_2(\mathbb{R},L_2(\mathcal{M}))\to L_2(\mathbb{R},L_2(\mathcal{M}))}\\
			&\le \|\pi_b+(\pi_{b^*})^*\|_{L_2(\mathbb{R},L_2(\mathcal{M}))\to L_2(\mathbb{R},L_2(\mathcal{M}))}+\|\varLambda_b-(\pi_{b^*})^*\|_{L_2(\mathbb{R},L_2(\mathcal{M}))\to L_2(\mathbb{R},L_2(\mathcal{M}))}.
		\end{aligned}
	\end{equation*}
From \eqref{Lambdasum}, we write $\varLambda_b$ as follows:
\begin{equation}\label{var}
	\begin{aligned}
		\varLambda_b(f){}&
		=(\pi_{b^*})^*(f)+\tilde{\varLambda}_b(f), \quad \forall f\in L_2(\mathbb{R},L_2(\mathcal{M})),
	\end{aligned}
\end{equation}
where
\begin{equation*}
	\tilde{\varLambda}_b(f)=\sum_{k\in\mathbb{Z}}\sum_{I\in\mathcal{D}_{k-1}}\sum_{l=1}^{d-1}\sum_{\overline{i+j}=l}\langle h_I^i,b\rangle\langle h_I^j,f\rangle \frac{h_I^l}{|I|^{1/2}}
\end{equation*}
is given in \eqref{tildevar}. Since $\tilde{\varLambda}_b$ is a block diagonal matrix with respect to the basis $\{h_I^i\}_{I\in\mathcal{D}, 1\leq i\leq d-1}$,
one has
\begin{equation*}
	\begin{aligned}
		\|\tilde{\varLambda}_b\|_{L_2(\mathbb{R},L_2(\mathcal{M}))\to L_2(\mathbb{R},L_2(\mathcal{M}))}{}&=\sup_{I\in\mathcal{D}}\biggl\|\biggl(a_{i-j}^I\biggr)_{1\le i,j\le d-1}\biggr\|_{L_\infty(\mathbb{M}_{d-1}\otimes \mathcal{M})},
	\end{aligned}
\end{equation*}
where $a^I_{i-j}=|I|^{-1/2}\langle h_I^{i-j},b\rangle$, and $a_0^I=0$.
With $B^I$ in \eqref{BI}, we have 
\[\biggl\|\biggl(a_{i-j}^I\biggr)_{1\le i,j\le d-1}\biggr\|_{L_\infty(\mathbb{M}_{d-1}\otimes \mathcal{M})}\le \|B^I\|_{L_\infty(\mathbb{M}_{d}\otimes \mathcal{M})},\]
which implies
\begin{equation*}
	\begin{aligned}
		\|\tilde{\varLambda}_b\|_{L_2(\mathbb{R},L_2(\mathcal{M}))\to L_2(\mathbb{R},L_2(\mathcal{M}))}{}&\le \sup_{I\in\mathcal{D}}\|B^I\|_{L_\infty(\mathbb{M}_{d}\otimes \mathcal{M})}.
	\end{aligned}
\end{equation*}
Thus using the triangle inequality, one has
	\begin{equation}\label{bbbbbbb}
		\begin{aligned}
			\quad\|\varLambda_b-(\pi_{b^*})^*\|_{L_2(\mathbb{R},L_2(\mathcal{M}))\to L_2(\mathbb{R},L_2(\mathcal{M}))}{}&\le \sup_{I\in\mathcal{D}}\|a_1^IA+a_2^IA^2+\cdots+a_{d-1}^IA^{d-1}\|_{L_\infty(\mathbb{M}_{d}\otimes \mathcal{M})}\\
			&\le\sup_{I\in\mathcal{D}}\sum_{i=1}^{d-1}\|a_i^IA^i\|_{L_\infty(\mathbb{M}_{d}\otimes \mathcal{M})}\\
			&\le\sup_{I\in\mathcal{D}}\sum_{i=1}^{d-1}\|a_i^I\|_{\mathcal{M}}
			=\sup_{I\in \mathcal{D}}\frac{1}{|I|^{1/2}}\sum_{i=1}^{d-1}\|\langle h_I^i,b\rangle\|_\mathcal{M}.
		\end{aligned}
	\end{equation}
	However,
	\begin{equation*}
		\begin{aligned}
			\|\langle h_I^i,b\rangle\|_\mathcal{M}{}&=\bigg\|\biggl\langle h_I^i, b-\bigg\langle \frac{\mathbbm{1}_I}{|I|},b \bigg\rangle \biggl\rangle\bigg\|_{\mathcal{M}}\\
			&\le \frac{1}{|I|^{1/2}}\int_I \bigg\| b(x)-\bigg\langle \frac{\mathbbm{1}_I}{|I|},b \bigg\rangle \bigg\|_{\mathcal{M}}dx\\
			&\le \biggl(\int_I \bigg\| b(x)-\bigg\langle \frac{\mathbbm{1}_I}{|I|},b \bigg\rangle \bigg\|^2_{\mathcal{M}}dx\biggr)^{1/2}.
		\end{aligned}
	\end{equation*}
	This implies
	\begin{equation}\label{lambdapi}
		\begin{aligned}
			\|\varLambda_b-(\pi_{b^*})^*\|_{L_2(\mathbb{R},L_2(\mathcal{M}))\to L_2(\mathbb{R},L_2(\mathcal{M}))}\le (d-1)\|b\|_{BMO^d_\mathcal{M}(\mathbb{R})}.
		\end{aligned}
	\end{equation}
	Therefore from \eqref{lambdapi} and Proposition \ref{pibsumpibstar}
	\begin{equation*}
		\|\Theta_b\|_{L_2(\mathbb{R},L_2(\mathcal{M}))\to L_2(\mathbb{R},L_2(\mathcal{M}))}\lesssim_d \|b\|_{BMO^d_\mathcal{M}(\mathbb{R})},
	\end{equation*}
	as desired.
\end{proof}

\begin{proposition}\label{T2est}
	If $a\in BMO^d(\mathbb{R})$ and $b\in BMO^d_\mathcal{M}(\mathbb{R})$, then $[\pi_a,M_b]$ and $[\pi_a^*,M_b]$ are both bounded on $L_2(\mathbb{R},L_2(\mathcal{M}))$. Moreover, 
	\begin{equation*}
		\|[\pi_a,M_b]\|_{L_2(\mathbb{R},L_2(\mathcal{M}))\to L_2(\mathbb{R},L_2(\mathcal{M}))}\lesssim_d \|a\|_{BMO^d(\mathbb{R})}\|b\|_{BMO^d_{\mathcal{M}}(\mathbb{R})}
	\end{equation*}
and
\begin{equation*}
	\|[\pi_a^*,M_b]\|_{L_2(\mathbb{R},L_2(\mathcal{M}))\to L_2(\mathbb{R},L_2(\mathcal{M}))}\lesssim_d \|a\|_{BMO^d(\mathbb{R})}\|b\|_{BMO^d_{\mathcal{M}}(\mathbb{R})}.
\end{equation*}
\end{proposition}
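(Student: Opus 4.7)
The strategy is to mimic the structure of the proof of Proposition \ref{T0est}, but with operator-norm estimates in place of Schatten-class ones and exploiting Corollary \ref{Thetabest} in place of the sharp Besov control. I would first split $M_b=\Theta_b+R_b$ (with $R_b$ from \eqref{R_b} and $\Theta_b$ from \eqref{Thetab}) to get
\begin{equation*}
[\pi_a, M_b]=[\pi_a, \Theta_b]+[\pi_a, R_b].
\end{equation*}
Since $a\in BMO^d(\mathbb{R})$ is scalar, $\pi_a$ acts as $\pi_a\otimes\mathrm{Id}_{L_2(\mathcal{M})}$ on $L_2(\mathbb{R}, L_2(\mathcal{M}))$ with the classical norm bound $\lesssim_d\|a\|_{BMO^d(\mathbb{R})}$; combined with $\|\Theta_b\|_{L_2\to L_2}\lesssim_d\|b\|_{BMO^d_\mathcal{M}(\mathbb{R})}$ from Corollary \ref{Thetabest}, the triangle inequality controls $[\pi_a,\Theta_b]$ by $2\|\pi_a\|\|\Theta_b\|$.

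For the remaining commutator, I would combine identities \eqref{piarb} and \eqref{varpsif} from the proof of Proposition \ref{T0est}. Eliminating $\varPsi_{a,b}$ between them yields the clean formula
\begin{equation*}
[\pi_a, R_b]=-\pi_a\Theta_b+\Xi_{a,b},\qquad \Xi_{a,b}(f):=\sum_{k\in\mathbb{Z}}d_k a\cdot\mathbb{E}_{k-1}\!\Bigl(\sum_{l\geq k}d_l b\cdot d_l f\Bigr).
\end{equation*}
The composition $\pi_a\Theta_b$ is bounded as above, so the matter reduces to proving $\|\Xi_{a,b}\|_{L_2\to L_2}\lesssim_d\|a\|_{BMO^d}\|b\|_{BMO^d_\mathcal{M}}$.

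Here the scalar nature of $d_k a$ is decisive: since $\mathbb{E}_{k-1}(\sum_{l\geq k}d_l b\cdot d_l f)$ is $\mathcal{F}_{k-1}$-measurable and $d_k a$ is scalar with $\mathbb{E}_{k-1}(d_k a)=0$, the identity $\mathbb{E}_{k-1}(d_k a\cdot X)=X\,\mathbb{E}_{k-1}(d_k a)=0$ for $\mathcal{F}_{k-1}$-measurable $X$ shows the summands of $\Xi_{a,b}(f)$ are martingale differences at level $k$, hence mutually orthogonal in $L_2(\mathbb{R}, L_2(\mathcal{M}))$. Consequently,
\begin{equation*}
\|\Xi_{a,b}(f)\|_{L_2}^{2}=\sum_{k}\int_{\mathbb{R}}|d_k a|^2\,\Bigl\|\mathbb{E}_{k-1}\!\Bigl(\sum_{l\geq k}d_l b\cdot d_l f\Bigr)\Bigr\|_{L_2(\mathcal{M})}^{2}\,dm.
\end{equation*}
Since $a\in BMO^d(\mathbb{R})$, the weights $\{|d_k a|^2\,dm\}_k$ form a Carleson measure with Carleson constant $\|a\|_{BMO^d}^{2}$, so the martingale Carleson embedding theorem dominates the above sum by $\|a\|_{BMO^d}^{2}$ times $\bigl\|\sup_{k}\|\mathbb{E}_{k-1}(\sum_{l\geq k}d_l b\cdot d_l f)\|_{L_2(\mathcal{M})}\bigr\|_{L_2(\mathbb{R})}^{2}$.

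The main obstacle will be this residual maximal estimate
\begin{equation*}
\Bigl\|\sup_{k}\Bigl\|\mathbb{E}_{k-1}\!\Bigl(\sum_{l\geq k}d_l b\cdot d_l f\Bigr)\Bigr\|_{L_2(\mathcal{M})}\Bigr\|_{L_2(\mathbb{R})}\lesssim_d\|b\|_{BMO^d_\mathcal{M}}\|f\|_{L_2(\mathbb{R}, L_2(\mathcal{M}))},
\end{equation*}
a semicommutative counterpart of the martingale inequality (Lemma \ref{supk}) announced in the introduction. The difficulty is that neither $\pi_b$ nor $\varLambda_b$ is individually bounded on $L_2(\mathbb{R}, L_2(\mathcal{M}))$ for $\mathcal{M}$-valued $b\in BMO^d_\mathcal{M}$ (only the sums $\Theta_b$ and $\pi_b+(\pi_{b^*})^*$ are), so Doob's inequality cannot be invoked on the whole tail at once. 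I would instead pass pointwise to the Kadison--Schwarz majorant $\|\mathbb{E}_{k-1}(X)\|_{L_2(\mathcal{M})}^{2}\leq\mathbb{E}_{k-1}(\|X\|_{L_2(\mathcal{M})}^{2})$, then on each $J\in\mathcal{D}_{k-1}$ peel off the $b$-factor by means of the $BMO^d_\mathcal{M}$ control $\tfrac{1}{|J|}\int_J\|b-\langle b\rangle_J\|_\mathcal{M}^{2}\,dm\leq\|b\|_{BMO^d_\mathcal{M}}^{2}$, leaving a scalar martingale square function of $f$ to which the classical Doob inequality applies. Finally, the bound for $[\pi_a^*,M_b]$ follows from the identity $[\pi_a^*,M_b]^{*}=-[\pi_a,M_{b^*}]$ together with the stability $\|b^*\|_{BMO^d_\mathcal{M}}=\|b\|_{BMO^d_\mathcal{M}}$ under the involution.
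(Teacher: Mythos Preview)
Your reduction agrees with the paper through the identity $[\pi_a,R_b]=-\pi_a\Theta_b+\Xi_{a,b}$ (the paper calls $\Xi_{a,b}$ by $V_{a,b}$), and the handling of $[\pi_a,\Theta_b]$ via Corollary \ref{Thetabest} is identical. From that point the two arguments diverge: the paper bounds $V_{a,b}$ by duality, pairing with $g\in L_2$, rewriting $\langle V_{a,b}f,g\rangle$ as a pairing of $b$ against an $H^d_{1,\max}$ function, and invoking the embedding $BMO^d_\mathcal{M}\hookrightarrow (H^d_{1,\max})^*$; the resulting $H^d_{1,\max}$ norm is then split into three pieces (I), (II), (III). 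Your route---orthogonality of the $k$-summands plus Carleson embedding plus a maximal estimate---is a legitimate alternative and arguably more transparent, and your Carleson step is fine (since each $Y_k:=\|\mathbb{E}_{k-1}(\cdots)\|_{L_2(\mathcal{M})}$ is $\mathcal{F}_{k-1}$-measurable, $Y_k\le\mathbb{E}_{k-1}(\sup_j Y_j)$ reduces it to the standard embedding).

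The gap is in your final maximal bound. Rewriting $\mathbb{E}_{k-1}(\sum_{l\ge k}d_l b\,d_l f)=\mathbb{E}_{k-1}\bigl((b-b_{k-1})(f-f_{k-1})\bigr)$ and then peeling off $b$ with the $L_2$-control $\tfrac{1}{|J|}\int_J\|b-\langle b\rangle_J\|_\mathcal{M}^2\le\|b\|_{BMO^d_\mathcal{M}}^2$ forces Cauchy--Schwarz on the conditional expectation, which leaves you with $\sup_k\bigl(\mathbb{E}_{k-1}\|f-f_{k-1}\|_{L_2(\mathcal{M})}^2\bigr)^{1/2}$. Taking the $L_2(\mathbb{R})$ norm of this is exactly an $L_1$ Doob maximal inequality for the scalar function $\|f\|_{L_2(\mathcal{M})}^2$, which fails. (There is no ``square function of $f$'' appearing here; the quantity is a maximal average of $\|f\|_{L_2(\mathcal{M})}^2$.) The remedy is the one the paper uses in its term (II): apply H\"older with an exponent $r'>2$ on the $b$-factor, bound $(\mathbb{E}_{k-1}\|b-b_{k-1}\|_\mathcal{M}^{r'})^{1/r'}\lesssim_{r'}\|b\|_{BMO^d_\mathcal{M}}$ by the Banach-valued John--Nirenberg inequality, and keep $r<2$ on the $f$-factor so that $\|f\|_{L_2(\mathcal{M})}^r\in L_{2/r}(\mathbb{R})$ with $2/r>1$ and the Doob maximal inequality applies. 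With that adjustment your direct argument goes through.
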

\begin{proof}	
	Recall that $R_b$ is defined in \eqref{R_b}. Note that by the triangle inequality
	\begin{equation*}
		\begin{aligned}
			{}&\|[\pi_a,M_b]\|_{L_2(\mathbb{R},L_2(\mathcal{M}))\to L_2(\mathbb{R},L_2(\mathcal{M}))}\\
			&\le \|[\pi_a,\Theta_b]\|_{L_2(\mathbb{R},L_2(\mathcal{M}))\to L_2(\mathbb{R},L_2(\mathcal{M}))}+\|[\pi_a,R_b]\|_{L_2(\mathbb{R},L_2(\mathcal{M}))\to L_2(\mathbb{R},L_2(\mathcal{M}))}.
		\end{aligned}
	\end{equation*}
	Note also that from Corollary \ref{Thetabest}
	\begin{equation}\label{a11}
		\begin{aligned}
			\quad\|\pi_{a}\Theta_{b}\|_{L_2(\mathbb{R},L_2(\mathcal{M}))\to L_2(\mathbb{R},L_2(\mathcal{M}))}
			&\le \|\pi_{a}\|_{L_2(\mathbb{R},L_2(\mathcal{M}))\to L_2(\mathbb{R},L_2(\mathcal{M}))}\|\Theta_b\|_{L_2(\mathbb{R},L_2(\mathcal{M}))\to L_2(\mathbb{R},L_2(\mathcal{M}))}\\
			&\lesssim_{d} \|a\|_{BMO^d(\mathbb{R})}\|b\|_{BMO^d_{\mathcal{M}}(\mathbb{R})}.
		\end{aligned}
	\end{equation}
	Hence, one has
	\begin{equation*}
		\|[\pi_a,\Theta_b]\|_{L_2(\mathbb{R},L_2(\mathcal{M}))\to L_2(\mathbb{R},L_2(\mathcal{M}))}\lesssim_d \|a\|_{BMO^d(\mathbb{R})}\|b\|_{BMO^d_{\mathcal{M}}(\mathbb{R})}.
	\end{equation*} 
	Now, we estimate $\|[\pi_a,R_b]\|_{L_2(\mathbb{R},L_2(\mathcal{M}))\to L_2(\mathbb{R},L_2(\mathcal{M}))}$. 
	 By \eqref{piarb} and \eqref{varpsif}, $ \forall f\in L_2(\mathbb{R},L_2(\mathcal{M}))$
	\begin{equation*}
		\begin{aligned}
			[\pi_{a}, R_b](f){}&
			=-\pi_a(\varLambda_{{b}}({f}))+\sum_{k\in\mathbb{Z}} d_ka\cdot \mathbb{E}_{k-1}\biggl(\sum_{j\ge k} d_j{b}\cdot d_j{f}\biggr)-\pi_{a}(\pi_b(f)).
		\end{aligned}
	\end{equation*}
	Define 
	\begin{equation}
		V_{a,b}(f)=\sum_{k\in\mathbb{Z}} d_ka\cdot \mathbb{E}_{k-1}\biggl(\sum_{j\ge k} d_j{b}\cdot d_j{f}\biggr), \quad \ \forall  f\in L_2(\mathbb{R},L_2(\mathcal{M})).
	\end{equation}
	Then we have
	\begin{equation}\label{piarbf}
		[\pi_{a}, R_b](f)=-\pi_{a}(\Theta_b(f))+V_{a,b}(f), \quad \forall f \in L_2(\mathbb{R},L_2(\mathcal{M})).
	\end{equation}
	For any $f\in L_2(\mathbb{R},L_2(\mathcal{M}))$ and $g\in L_2(\mathbb{R},L_2(\mathcal{M}))$, 
	\begin{equation}\label{vabf}
		\begin{aligned}
			\langle V_{a,b}(f),g\rangle{}&=\sum_{k\in\mathbb{Z}}\biggl\langle d_ka\cdot \mathbb{E}_{k-1}\biggl(\sum_{j\ge k}d_jb\cdot d_jf\biggr),g\biggr\rangle\\
			&=\sum_{k\in\mathbb{Z}}\biggl\langle \sum_{j\ge k}d_jb\cdot d_jf, \mathbb{E}_{k-1}\biggl(d_k{a^*}\cdot d_kg\biggr)\biggr\rangle\\
			&=\sum_{k\in\mathbb{Z}}\biggl\langle d_kb,\sum_{j\le k}\mathbb{E}_{j-1}(d_ja^*\cdot d_jg)\cdot d_kf^*  \biggr\rangle\\
			&=\biggl\langle b,\sum_{k\in\mathbb{Z}}\sum_{j\le k}\mathbb{E}_{j-1}(d_ja^*\cdot d_jg)\cdot d_kf^*  \biggr\rangle\\
			&=\biggl\langle b,\sum_{k\in\mathbb{Z}}d_k(d_ka^*\cdot d_kg)\cdot f_{k-1}^*  \biggr\rangle+\langle b, W_{a,f,g}\rangle,
		\end{aligned}
	\end{equation}
	where 
	\begin{equation*}
		\begin{aligned}
			W_{a,f,g}=\sum_{k\in\mathbb{Z}}\sum_{j\le k}\mathbb{E}_{j-1}(d_ja^*\cdot d_jg)\cdot d_kf^*-\sum_{k\in\mathbb{Z}}d_k(d_ka^*\cdot d_kg)\cdot f_{k-1}^*.
		\end{aligned}
	\end{equation*}
	Note that
	\begin{equation*}
		\begin{aligned}
			\biggl\langle b,\sum_{k\in\mathbb{Z}}d_k(d_ka^*\cdot d_kg)\cdot f_{k-1}^*  \biggr\rangle{}&=\sum_{k\in\mathbb{Z}}\langle d_kb,d_kg\cdot d_ka^*\cdot f^*_{k-1}\rangle\\
			&=\sum_{k\in\mathbb{Z}}\langle d_kg^*\cdot d_kb,d_ka^*\cdot f^*_{k-1}\rangle\\
			&=\sum_{k\in\mathbb{Z}}\langle d_k(d_kg^*\cdot d_kb),d_ka^*\cdot f^*_{k-1}\rangle\\
			&=\bigl\langle \big((\varLambda_{b^*}-(\pi_b)^*)(g)\big)^*, \pi_{a^*}(f^*)\big\rangle.
		\end{aligned}
	\end{equation*}
	Then by \eqref{lambdapi},
	\begin{equation}\label{vab1}
		\begin{aligned}
			\bigg|\biggl\langle b,\sum_{k\in\mathbb{Z}}d_k(d_ka^*\cdot d_kg)\cdot f_{k-1}^*  \biggr\rangle\bigg|{}&\le \|(\varLambda_{b^*}-(\pi_b)^*)(g)\|_{L_2(\mathbb{R},L_2(\mathcal{M}))}\|\pi_{a^*}(f^*)\|_{L_2(\mathbb{R},L_2(\mathcal{M}))}\\
			&\lesssim_d \|a\|_{BMO^d(\mathbb{R})}\|b\|_{BMO_{\mathcal{M}}^d(\mathbb{R})}\|g\|_{L_2(\mathbb{R},L_2(\mathcal{M}))}\|f\|_{L_2(\mathbb{R},L_2(\mathcal{M}))}.
		\end{aligned}
	\end{equation}
	We now estimate $W_{a, f, g}$. By duality, one has
	\begin{equation*}
		\begin{aligned}
			|\langle b, W_{a,f,g}\rangle|\lesssim \|b\|_{BMO^d_{\mathcal{M}}(\mathbb{R})}\|W_{a,f,g}\|_{H_{1,\max}^d(\mathbb{R})}.
		\end{aligned}
	\end{equation*}
	We calculate directly that for any $m\in \mathbb{Z}$,
	\begin{equation*}
		\begin{aligned}
			&{}\quad\mathbb{E}_m(W_{a,f,g})\\
			&=\sum_{k\le m}\sum_{j\le k}\mathbb{E}_{j-1}(d_ja^*\cdot d_jg)\cdot d_kf^*-\sum_{k\le m}d_k(d_ka^*\cdot d_kg)\cdot f_{k-1}^*\\
			&=\sum_{j\le m}\mathbb{E}_{j-1}(d_ja^*\cdot d_jg)\cdot (f_m^*-f_{j-1}^*)-\sum_{j\le m}d_j(d_ja^*\cdot d_jg)\cdot f_{j-1}^*\\
			&=\sum_{j\le m}\mathbb{E}_{j-1}(d_ja^*\cdot d_jg)\cdot f_m^*-\sum_{j\le m}(d_ja^*\cdot d_jg)\cdot f_{j-1}^*\\
			&=\mathbb{E}_{m}\biggl(\sum_{j\le m}\mathbb{E}_{j-1}(d_ja^*\cdot d_jg)\biggr)\cdot f_m^*-\sum_{j\le m}(d_ja^*\cdot d_jg)\cdot f_{j-1}^*\\
			&=\mathbb{E}_{m}\biggl(\sum_{j\in\mathbb{Z}}\mathbb{E}_{j-1}(d_ja^*\cdot d_jg)\biggr)\cdot f_m^*-\mathbb{E}_{m}\biggl(\sum_{j\ge m+1}\mathbb{E}_{j-1}(d_ja^*\cdot d_jg)\biggr)\cdot f_m^*-\sum_{j\le m}(d_ja^*\cdot d_jg)\cdot f_{j-1}^*\\
			&=\mathbb{E}_{m}\biggl(\sum_{j\in\mathbb{Z}}\mathbb{E}_{j-1}(d_ja^*\cdot d_jg)\biggr)\cdot f_m^*-\mathbb{E}_{m}\biggl(\sum_{j\ge m+1}d_ja^*\cdot d_jg\biggr)\cdot f_m^*-\sum_{j\le m}(d_ja^*\cdot d_jg)\cdot f_{j-1}^*.
		\end{aligned}
	\end{equation*}
	Hence
	\begin{equation*}
		\begin{aligned}
			{}&\quad\|W_{a,f,g}\|_{H_{1,\max}^d(\mathbb{R})}\\
			&=\bigg\|\sup_{m\in\mathbb{Z}}\|\mathbb{E}_m(W_{a,f,g})\|_{L_1(\mathcal{M})}\bigg\|_{L_1(\mathbb{R})}\\
			&\le \bigg\|\sup_{m\in\mathbb{Z}}\biggl\|\mathbb{E}_{m}\biggl(\sum_{j\in\mathbb{Z}}\mathbb{E}_{j-1}(d_ja^*\cdot d_jg)\biggr)\cdot f_m^*\biggr\|_{L_1(\mathcal{M})}\bigg\|_{L_1(\mathbb{R})}\\
			&\quad+\bigg\|\sup_{m\in\mathbb{Z}}\biggl\|\mathbb{E}_{m}\biggl(\sum_{j\ge m+1}d_ja^*\cdot d_jg\biggr)\cdot f_m^*\biggr\|_{L_1(\mathcal{M})}\bigg\|_{L_1(\mathbb{R})}+\bigg\|\sup_{m\in\mathbb{Z}}\biggl\|\sum_{j\le m}(d_ja^*\cdot d_jg)\cdot f_{j-1}^*\biggr\|_{L_1(\mathcal{M})}\bigg\|_{L_1(\mathbb{R})}\\
			&:=\text{(I)}+\text{(II)}+\text{(III)}.
		\end{aligned}
	\end{equation*}
	For the term $(\text{I})$, from \eqref{pistar}, we have 
	$$  \sum_{j\in\mathbb{Z}}\mathbb{E}_{j-1}(d_ja^*\cdot d_jg)=(\pi_a)^*(g).  $$
	Thus
	\begin{equation}\label{I}
		\begin{aligned}
			(\text{I}){}&\le \biggl\|\sup_{m\in\mathbb{Z}}\biggl\|\mathbb{E}_{m}((\pi_a)^*(g))\biggr\|_{L_2(\mathcal{M})}\cdot \sup_{m\in\mathbb{Z}}\|f_m\|_{L_2(\mathcal{M})}\biggr\|_{L_1(\mathbb{R})}\\
			&\le \biggl\|\sup_{m\in\mathbb{Z}}\biggl\|\mathbb{E}_{m}((\pi_a)^*(g))\biggr\|_{L_2(\mathcal{M})}\biggr\|_{L_2(\mathbb{R})}\cdot \biggl\|\sup_{m\in\mathbb{Z}}\|f_m\|_{L_2(\mathcal{M})}\biggr\|_{L_2(\mathbb{R})}\\
			&\lesssim \|(\pi_{a})^*(g)\|_{L_2(\mathbb{R},L_2(\mathcal{M}))}\|f\|_{L_2(\mathbb{R},L_2(\mathcal{M}))}\\
			&\lesssim_d \|a\|_{BMO^d(\mathbb{R})}\|g\|_{L_2(\mathbb{R},L_2(\mathcal{M}))}\|f\|_{L_2(\mathbb{R},L_2(\mathcal{M}))},
		\end{aligned}
	\end{equation}
	where the first and the second inequalities are both due to the Cauchy-Schwarz inequality, and the third is from the  vector-valued Doob maximal inequality for $L_2(\mathcal{M})$-valued functions.
	
	For the term $(\text{II})$,
	one uses the Cauchy-Schwarz inequality to obtain
	\begin{equation*}
		\begin{aligned}
			{}&\quad\sup_{m\in\mathbb{Z}}\biggl\|\mathbb{E}_{m}\biggl(\sum_{j\ge m+1}d_ja^*\cdot d_jg\biggr)\cdot f_m^*\biggr\|_{L_1(\mathcal{M})}\\
			&
			=\sup_{m\in\mathbb{Z}}\big\|\mathbb{E}_{m}\big((a^*-a^*_m)(g-g_m)\big)\cdot f_m^*\big\|_{L_1(\mathcal{M})}\\
			&\le \sup_{m\in\mathbb{Z}}\big\|\mathbb{E}_{m}\big((a^*-a^*_m)(g-g_m)\big)\big\|_{L_2(\mathcal{M})}\cdot \sup_{m\in\mathbb{Z}}\|f_m\|_{L_2(\mathcal{M})}.
		\end{aligned}
	\end{equation*}
	Let $r=3/2$. We have
	\begin{equation*}
		\begin{aligned}
			{}&\quad\big\|\mathbb{E}_{m}\big((a^*-a^*_m)(g-g_m)\big)\big\|_{L_2(\mathcal{M})}\\
			&\le \mathbb{E}_{m}\big\|(a^*-a^*_m)(g-g_m)\big\|_{L_2(\mathcal{M})}\\
			&=\mathbb{E}_{m}\big(|a-a_m|\cdot\|g-g_m\|_{L_2(\mathcal{M})}\big)\\
			&\le \bigl(\mathbb{E}_{m}\big(|a-a_m|^{r'}\big)\bigr)^{1/r'}\bigl(\mathbb{E}_{m}\big(\|g-g_m\|_{L_2(\mathcal{M})}^{r}\big)\bigr)^{1/r}\\
			&\lesssim \|a\|_{BMO^d(\mathbb{R})}\biggl(\bigl(\mathbb{E}_{m}\|g\|_{L_2(\mathcal{M})}^{r}\bigr)^{1/r}+\bigl(\mathbb{E}_{m}\|g_m\|_{L_2(\mathcal{M})}^{r}\bigr)^{1/r}\biggr)\\
			&=\|a\|_{BMO^d(\mathbb{R})}\biggl(\bigl(\mathbb{E}_{m}\|g\|_{L_2(\mathcal{M})}^{r}\bigr)^{1/r}+\|g_m\|_{L_2(\mathcal{M})}\biggr).
		\end{aligned}
	\end{equation*}
	Hence
	\begin{equation}\label{II}
		\begin{aligned}
			(\text{II}){}&\le \bigg\|\sup_{m\in\mathbb{Z}}\|f_m\|_{L_2(\mathcal{M})}\bigg\|_{L_2(\mathbb{R})}\cdot\biggl\|\sup_{m\in\mathbb{Z}}\big\|\mathbb{E}_{m}\big((a^*-a^*_m)(g-g_m)\big)\big\|_{L_2(\mathcal{M})}\biggr\|_{L_2(\mathbb{R})} \\
			&\lesssim \|f\|_{L_2(\mathbb{R},L_2(\mathcal{M}))}\|a\|_{BMO^d(\mathbb{R})}\cdot \biggl\|\sup_{m\in\mathbb{Z}}\biggl(\bigl(\mathbb{E}_{m}\|g\|_{L_2(\mathcal{M})}^{r}\bigr)^{1/r}+\|g_m\|_{L_2(\mathcal{M})}\biggr)\bigg\|_{L_2(\mathbb{R})}\\
			&\le \|f\|_{L_2(\mathbb{R},L_2(\mathcal{M}))}\|a\|_{BMO^d(\mathbb{R})}\cdot \biggl(\biggl\|\sup_{m\in\mathbb{Z}}\mathbb{E}_{m}\|g\|_{L_2(\mathcal{M})}^{r}\bigg\|^{1/r}_{L_{2/r}(\mathbb{R})}+\biggl\|\sup_{m\in\mathbb{Z}}\|g_m\|_{L_2(\mathcal{M})}\bigg\|_{L_2(\mathbb{R})}\biggr)\\
			&\lesssim_r \|f\|_{L_2(\mathbb{R},L_2(\mathcal{M}))}\|a\|_{BMO^d(\mathbb{R})}\cdot \big(\big\|\|g\|_{L_2(\mathcal{M})}^{r}\big\|^{1/r}_{L_{2/r}(\mathbb{R})}+\|g\|_{L_2(\mathbb{R},L_2(\mathcal{M}))}\big)\\
			&\lesssim \|f\|_{L_2(\mathbb{R},L_2(\mathcal{M}))}\|a\|_{BMO^d(\mathbb{R})}\|g\|_{L_2(\mathbb{R},L_2(\mathcal{M}))},
		\end{aligned}
	\end{equation}
	where in the first inequality we have used the Cauchy-Schwarz inequality, the second and the fourth are both from the vector-valued Doob maximal inequality, and the third is from the triangle inequality.
	
	For the term $\text{(III)}$, note that
	$$ \sum_{j\le m}(d_ja^*\cdot d_jg)\cdot f_{j-1}^*=\sum_{j\le m}d_jg\cdot (d_ja^*\cdot f_{j-1}^*).  $$
	This implies that,
	\begin{equation}\label{III}
		\begin{aligned}
			(\text{III}){}&\leq \biggl\| \sup_{m\in\mathbb{Z}}\biggl\| \biggl(\sum_{j\leq m}|d_jg|^2\biggr)^{1/2}\biggr\|_{L_2(\mathcal{M})}\biggl\|\biggl(\sum_{j\leq m}|d_ja\cdot f_{j-1}|^2\biggr)^{1/2}\biggr\|_{L_2(\mathcal{M})}\biggr\|_{L_1(\mathbb{R})}\\
			&= \biggl\| \biggl\|\biggl(\sum_{j\in\mathbb{Z}}|d_jg|^2\biggr)^{1/2}\biggr\|_{L_2(\mathcal{M})}\biggl\|\biggl(\sum_{j\in\mathbb{Z}}|d_ja\cdot f_{j-1}|^2\biggr)^{1/2}\biggr\|_{L_2(\mathcal{M})}\biggr\|_{L_1(\mathbb{R})}\\
			&\lesssim \|g\|_{L_2(\mathbb{R},L_2(\mathcal{M}))}\|\pi_a(f)\|_{L_2(\mathbb{R},L_2(\mathcal{M}))}\\
			&\lesssim_d \|a\|_{BMO^d(\mathbb{R})}\|g\|_{L_2(\mathbb{R},L_2(\mathcal{M}))}\|f\|_{L_2(\mathbb{R},L_2(\mathcal{M}))},
		\end{aligned}
	\end{equation}
	where in the third inequality we have used the Cauchy-Schwarz inequality.

	Hence from \eqref{I}, \eqref{II} and \eqref{III} we deduce
	\begin{equation}\label{vab2}
		\begin{aligned}
			|\langle b, W_{a,f,g}\rangle|{}&\le \|b\|_{BMO^d_{\mathcal{M}}(\mathbb{R})}\|W_{a,f,g}\|_{H_{1,\max}^d(\mathbb{R})}\\
			&\lesssim_d \|a\|_{BMO^d(\mathbb{R})}\|b\|_{BMO^d_{\mathcal{M}}(\mathbb{R})}\|g\|_{L_2(\mathbb{R},L_2(\mathcal{M}))}\|f\|_{L_2(\mathbb{R},L_2(\mathcal{M}))}.
		\end{aligned}
	\end{equation}
	Then by \eqref{vabf}, \eqref{vab1} and \eqref{vab2}, we get
	\begin{equation*}
		\begin{aligned}
			|\langle V_{a,b}(f),g\rangle|\lesssim_d \|a\|_{BMO^d(\mathbb{R})}\|b\|_{BMO^d_{\mathcal{M}}(\mathbb{R})}\|g\|_{L_2(\mathbb{R},L_2(\mathcal{M}))}\|f\|_{L_2(\mathbb{R},L_2(\mathcal{M}))},
		\end{aligned}
	\end{equation*}
	which yields
	\begin{equation*}
		\begin{aligned}
			\|V_{a,b}\|_{L_2(\mathbb{R},L_2(\mathcal{M}))\to L_2(\mathbb{R},L_2(\mathcal{M}))}\lesssim_d \|a\|_{BMO^d(\mathbb{R})}\|b\|_{BMO^d_{\mathcal{M}}(\mathbb{R})}.
		\end{aligned}
	\end{equation*}
	Therefore
	\begin{equation*}
		\begin{aligned}
			\|[\pi_a,M_b]\|_{L_2(\mathbb{R},L_2(\mathcal{M}))\to L_2(\mathbb{R},L_2(\mathcal{M}))}\lesssim_d \|a\|_{BMO^d(\mathbb{R})}\|b\|_{BMO^d_{\mathcal{M}}(\mathbb{R})}.
		\end{aligned}
	\end{equation*}
Recall that 
	\begin{equation*}
	[\pi_a^*,M_b]^*=-[\pi_a,M_{b^*}].
\end{equation*}
Hence
\begin{equation*}
	\|[\pi_a^*,M_b]\|_{L_2(\mathbb{R},L_2(\mathcal{M}))\to L_2(\mathbb{R},L_2(\mathcal{M}))}\lesssim_d \|a\|_{BMO^d(\mathbb{R})}\|b\|_{BMO^d_{\mathcal{M}}(\mathbb{R})}.
\end{equation*}

\end{proof}

We define the operator-valued martingale $BMO$ space $BMO^{\omega, 2^n}_\M(\mathbb{R}^n)$ on $\mathbb{R}^n$ similarly as in \eqref{BMOMd}. More precisely, $BMO^{\omega, 2^n}_\M(\mathbb{R}^n)$ associated with dyadic system $\mathcal{ D}^\omega$ on $\mathbb{R}^n$ consists of all $\mathcal{M}$-valued functions $b$ that are Bochner integrable on any $d$-adic interval such that
\begin{equation}\label{BMOMd1}
	\|b\|_{BMO^{\omega, 2^n}_\mathcal{M}(\mathbb{R}^n)}=\sup_{Q\in \mathcal{D}^\omega}\biggl(\frac{1}{m(Q)}\int_Q \biggl\|b-\bigg(\frac{1}{m(Q)} \int_Q b\ d m\bigg)\biggr\|_\mathcal{M}^2 dm\biggr)^{1/2}<\infty.
\end{equation} 
Then Corollary \ref{Thetabest} and Proposition \ref{T2est} also hold for the dyadic system $\mathcal{ D}^\omega$. It is easy to verify that if $b\in BMO_\M(\mathbb{R}^n) $, then $b\in BMO^{\omega, 2^n}_\M(\mathbb{R}^n)$ and
$$ 	\|b\|_{BMO^{\omega, 2^n}_\M(\mathbb{R}^n)}\leq 	\|b\|_{BMO_\M(\mathbb{R}^n)}. $$
Thus we come to the proof of Theorem \ref{thm1.8}.

\begin{proof}[Proof of Theorem \ref{thm1.8}]
	We use the same notation as that in the proof of Theorem \ref{thm6.4}. From  Proposition \ref{T2est} we have
	\begin{equation*}
		\|[\pi_{T(1)}^{\omega}, M_b]\|_{L_2(\mathbb{R}^n,L_2(\mathcal{M}))\to L_2(\mathbb{R}^n,L_2(\mathcal{M}))}\lesssim_{n}\|T(1)\|_{BMO(\mathbb{R}^n)} \|b\|_{BMO_{\mathcal{M}}(\mathbb{R}^n)}
	\end{equation*}
	and
	\begin{equation*}
		\|[(\pi_{T^*(1)}^{\omega})^*, M_b]\|_{L_2(\mathbb{R}^n,L_2(\mathcal{M}))\to L_2(\mathbb{R}^n,L_2(\mathcal{M}))}\lesssim_{n}\|T^*(1)\|_{BMO(\mathbb{R}^n)} \|b\|_{BMO_{\mathcal{M}}(\mathbb{R}^n)}.
	\end{equation*}
	It suffices to estimate $\|[S_\omega^{ij}, M_b]\|_{L_2(\mathbb{R}^n,L_2(\mathcal{M}))\to L_2(\mathbb{R}^n,L_2(\mathcal{M}))}$ for any $i, j\in \mathbb{N}\cup\{0\}$.
	Note that by the triangle inequality
	\begin{equation}\label{thm1.88}
		\begin{aligned}
			{}&\quad\|[S_\omega^{ij},M_b]\|_{L_2(\mathbb{R}^n,L_2(\mathcal{M}))\to L_2(\mathbb{R}^n,L_2(\mathcal{M}))}\\&\le \|[S_\omega^{ij},\Theta_b]\|_{L_2(\mathbb{R}^n,L_2(\mathcal{M}))\to L_2(\mathbb{R}^n,L_2(\mathcal{M}))}+\|[S_\omega^{ij},R_b]\|_{L_2(\mathbb{R}^n,L_2(\mathcal{M}))\to L_2(\mathbb{R}^n,L_2(\mathcal{M}))}.
		\end{aligned}
	\end{equation}
	From Corollary \ref{Thetabest} one has
	\begin{equation*}
		\begin{aligned}
			\|[S_\omega^{ij},\Theta_b]\|_{L_2(\mathbb{R}^n,L_2(\mathcal{M}))\to L_2(\mathbb{R}^n,L_2(\mathcal{M}))}\lesssim_{n} \|b\|_{BMO_\mathcal{M}(\mathbb{R}^n)}.
		\end{aligned}
	\end{equation*}
	Now, we estimate $\|[S_\omega^{ij},R_b]\|_{L_2(\mathbb{R}^n,L_2(\mathcal{M}))\to L_2(\mathbb{R}^n,L_2(\mathcal{M}))}$. Take any $f$ with $\|f\|_{L_2(\mathbb{R}^n,L_2(\mathcal{M}))}=1$. From \eqref{BKdef}, \eqref{aijkxieta} and by the Cauchy-Schwarz inequality,
	\begin{equation}\label{form}
		\begin{aligned}
			{}&\quad\ \|[S_\omega^{ij},R_b]f\|^2_{L_2(\mathbb{R}^n,L_2(\mathcal{M}))}\\
			&=\biggl\|\sum_{K\in\mathcal{D}^\omega}\sum_{\substack{I,J\in\mathcal{D}^\omega;I,J\subseteq K\\ \ell(I)=2^{-i}\ell(K)\\\ell(J)=2^{-j}\ell(K)}}\sum_{\xi,\eta}a_{IJK}^{\xi\eta}b_{IJ}\langle H^\xi_I,f\rangle H^\eta_J\biggr\|^2_{L_2(\mathbb{R}^n,L_2(\mathcal{M}))}\\
			&=\sum_{K\in\mathcal{D}^\omega}\sum_{\substack{J\in\mathcal{D}^\omega;J\subseteq K\\ \ell(J)=2^{-j}\ell(K)}}\sum_{\eta}\biggl\|\sum_{\substack{I\in\mathcal{D}^\omega;I\subseteq K\\ \ell(I)=2^{-i}\ell(K)}}\sum_{\xi}a_{IJK}^{\xi\eta}b_{IJ}\langle H^\xi_I,f\rangle \biggr\|^2_{L_2(\mathcal{M})}\\
			&\le \sum_{K\in\mathcal{D}^\omega}\sum_{\substack{J\in\mathcal{D}^\omega;J\subseteq K\\ \ell(J)=2^{-j}\ell(K)}}\sum_{\eta}\biggl(\sum_{\substack{I\in\mathcal{D}^\omega;I\subseteq K\\ \ell(I)=2^{-i}\ell(K)}}\sum_{\xi}|a_{IJK}^{\xi\eta}|^2\biggr)\biggl(\sum_{\substack{I\in\mathcal{D}^\omega;I\subseteq K\\ \ell(I)=2^{-i}\ell(K)}}\sum_{\xi}\|b_{IJ}\langle H_I^\xi,f\rangle\|_{L_2(\mathcal{M})}^2\biggr)\\
			&\leq \sum_{K\in\mathcal{D}^\omega}\sum_{\substack{J\in\mathcal{D}^\omega;J\subseteq K\\ \ell(J)=2^{-j}\ell(K)}} (2^n-1)^2 2^{-jn}\biggl(\sum_{\substack{I\in\mathcal{D}^\omega;I\subseteq K\\ \ell(I)=2^{-i}\ell(K)}}\sum_{\xi}\|b_{IJ}\langle H_I^\xi,f\rangle\|_{L_2(\mathcal{M})}^2\biggr),
		\end{aligned}
	\end{equation}
	where $b_{IJ}= \langle \frac{\mathbbm{1}_I}{|I|},b\rangle-\langle \frac{\mathbbm{1}_J}{|J|},b\rangle$.
	Note that if $\tilde{I}$ is the parent of $I$, then
	\begin{equation*}
		\begin{aligned}
			\bigg\|\biggl\langle \frac{\mathbbm{1}_I}{|I|},b\biggr\rangle-\biggl\langle\frac{\mathbbm{1}_{\tilde{I}}}{|\tilde{I}|},b\biggr\rangle\bigg\|_{\mathcal{M}}{}
			&\le \frac{1}{|I|}\int_I \biggl\|b(t)-\biggl\langle\frac{\mathbbm{1}_{\tilde{I}}}{|\tilde{I}|},b\bigg\rangle\biggr\|_{\mathcal{M}}dt\\
			&\leq\frac{2^{n/2}}{|\tilde{I}|^{1/2}}\biggl(\int_{\tilde{I}}\biggl\|b(t)-\biggl\langle\frac{\mathbbm{1}_{\tilde{I}}}{|\tilde{I}|},b\bigg\rangle\biggr\|^2_{\mathcal{M}}dt\biggr)^{1/2}\\
			&\le 2^{n/2}\|b\|_{BMO_\mathcal{M}(\mathbb{R}^n)}.
		\end{aligned}
	\end{equation*}
	This implies that by the triangle inequality,
	\begin{equation*}
		\|b_{IJ}\|_{\mathcal{M}}\leq\|b_{IK}\|_{\mathcal{M}}+\|b_{JK}\|_{\mathcal{M}} \le  2^{n/2}(i+j)\|b\|_{BMO_\mathcal{M}(\mathbb{R}^n)}.
	\end{equation*}
	As a consequence, one has from \eqref{form} that
	\begin{equation*}
		\begin{aligned}
			&\ \ \ \|[S_\omega^{ij},R_b]f\|^2_{L_2(\mathbb{R}^n,L_2(\mathcal{M}))}\\
			&\leq 2^n(2^n-1)^2(i+j)^2\|b\|^2_{BMO_\mathcal{M}(\mathbb{R}^n)} \sum_{K\in\mathcal{D}^\omega}\biggl(\sum_{\substack{I\in\mathcal{D}^\omega;I\subseteq K\\ \ell(I)=2^{-i}\ell(K)}}\sum_{\xi}\|\langle H_I^\xi,f\rangle\|_{L_2(\mathcal{M})}^2\biggr)\\
			&\lesssim_n (i+j)^2\|b\|^2_{BMO_\mathcal{M}(\mathbb{R}^n)}.
		\end{aligned}
	\end{equation*}
	By \eqref{thm1.88}, we have
	\begin{equation*}
		\begin{aligned}
			\|[S_\omega^{ij},M_b]\|_{L_2(\mathbb{R}^n,L_2(\mathcal{M}))\to L_2(\mathbb{R}^n,L_2(\mathcal{M}))}\lesssim_n (i+j+1)\|b\|_{BMO_\mathcal{M}(\mathbb{R}^n)}.
		\end{aligned}
	\end{equation*}
	Therefore, by the triangle inequality, we conclude
	\begin{equation*}
		\begin{aligned}
			\|[T,M_b]\|_{L_2(\mathbb{R}^n,L_2(\mathcal{M}))\to L_2(\mathbb{R}^n,L_2(\mathcal{M}))}{}
			&\lesssim_T \sum_{i,j=0}^\infty \tau(i, j)\mathbb{E}_\omega\|[S_\omega^{ij},M_b]\|_{L_2(\mathbb{R}^n,L_2(\mathcal{M}))\to L_2(\mathbb{R}^n,L_2(\mathcal{M}))}\\
			&\quad+\mathbb{E}_\omega\|[ \pi^\omega_{T(1)}+ (\pi^\omega_{T^*(1)})^*,M_b]\|_{L_2(\mathbb{R}^n,L_2(\mathcal{M}))\to L_2(\mathbb{R}^n,L_2(\mathcal{M}))}\\
			&\lesssim_{n,T} (1+\|T(1)\|_{BMO(\mathbb{R}^n)}+\|T^*(1)\|_{BMO(\mathbb{R}^n)})\|b\|_{BMO_\mathcal{M}(\mathbb{R}^n)}.
		\end{aligned}
	\end{equation*}
	This completes the proof.
\end{proof}

\bigskip

\section{Complex median method}\label{proofdivide}	

This section is devoted to the proof of the complex median method, i.e. Theorem \ref{divideS}. We proceed the proof with some fundamental lemmas. In the sequel, we will always assume that $(\Omega,\mathcal{F},\mu)$ is a measure space. Besides, suppose that $I\in \mathcal{F}$ is of finite measure, and $b$ is always a measurable function on $I$.
\begin{lem}\label{S1S2S}
	There exists a line $l\subset \mathbb{C}$ that divides $\mathbb{C}$ into two closed half-planes $S_1$ and $S_2$ whose intersection is $l$, such that 
	\begin{equation*}
		\mu(\{x\in I:b(x)\in S_i\})\ge \frac{1}{2}\mu(I),\quad i\in\{1, 2\}.
	\end{equation*}
\end{lem}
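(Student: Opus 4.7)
The plan is to reduce the two-dimensional problem to a one-dimensional one by projecting the values of $b$ onto an arbitrary fixed direction (for concreteness, the real axis) and invoking the standard existence of a median for a real-valued measurable function on a space of finite measure.

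First, I would introduce the real-valued measurable function $g : I \to \mathbb{R}$ defined by $g(x) = \mathrm{Re}(b(x))$ and study its distribution function
\begin{equation*}
F(t) = \mu\bigl(\{x \in I : g(x) \le t\}\bigr), \quad t \in \mathbb{R}.
\end{equation*}
Since $\mu(I) < \infty$, the function $F$ is non-decreasing, right-continuous, with $F(-\infty) = 0$ and $F(+\infty) = \mu(I)$. Next, I would set
\begin{equation*}
t^* = \inf\bigl\{t \in \mathbb{R} : F(t) \ge \tfrac{1}{2}\mu(I)\bigr\}
\end{equation*}
and verify that $t^*$ is a median of $g$. Right-continuity of $F$ immediately yields $F(t^*) \ge \tfrac{1}{2}\mu(I)$, i.e., $\mu(\{g \le t^*\}) \ge \tfrac{1}{2}\mu(I)$.

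For the reverse inequality, the definition of $t^*$ gives $F(t) < \tfrac{1}{2}\mu(I)$ for every $t < t^*$, hence $\mu(\{g > t\}) > \tfrac{1}{2}\mu(I)$. Letting $t \uparrow t^*$ along a sequence and using continuity from above of the finite measure $\mu|_I$ along the decreasing family $\{g > t\}$, whose intersection over all $t < t^*$ equals $\{g \ge t^*\}$, one obtains $\mu(\{g \ge t^*\}) \ge \tfrac{1}{2}\mu(I)$. This closed-boundary inclusion is the one point that needs a small argument — without it, the two complementary open half-planes would partition $\mathbb{C}$ and at least one might fall below $\tfrac{1}{2}\mu(I)$.

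Finally, I would take $l = \{z \in \mathbb{C} : \mathrm{Re}(z) = t^*\}$ and set
\begin{equation*}
S_1 = \{z \in \mathbb{C} : \mathrm{Re}(z) \le t^*\}, \qquad S_2 = \{z \in \mathbb{C} : \mathrm{Re}(z) \ge t^*\}.
\end{equation*}
Then $l = S_1 \cap S_2$, and the pre-images $\{x \in I : b(x) \in S_i\}$ coincide with $\{g \le t^*\}$ and $\{g \ge t^*\}$ respectively, both of which have measure at least $\tfrac{1}{2}\mu(I)$ by the previous step. The whole argument is elementary; no genuine obstacle arises, only the small care concerning the two-sided closed inclusion through the definition of $t^*$ as an infimum.
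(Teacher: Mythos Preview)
Your proof is correct and essentially identical to the paper's own argument: both take the vertical line $\{\mathrm{Re}(z)=t\}$, form the distribution function of $\mathrm{Re}(b)$, define the median as the infimum of $\{t:F(t)\ge\tfrac12\mu(I)\}$, and use right-continuity for one half-plane and a limit from the left for the other. The only cosmetic difference is that the paper phrases the second inequality as $\mu(\{b\in P_\alpha\setminus l_\alpha\})\le\tfrac12\mu(I)$ and passes to the complement, whereas you phrase it via continuity from above on $\{g>t\}\downarrow\{g\ge t^*\}$; these are the same step.
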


\begin{proof}
	For any $x\in \mathbb{R}$, let $l_x$ be the line that is perpendicular to the real axis at $x$. The left side of $l_x$ on the complex plane, including $l_x$, is denoted by $P_x$ (see Figure \ref{picture1}). Let 
	\begin{equation*}
		f(x)=\frac{\mu(\{y\in I:b(y)\in P_x\})}{\mu(I)}, \quad x\in \mathbb{R}.
	\end{equation*}
	It is not hard to check that:\\
    $\bullet$ $f$ is increasing,\\
    $\bullet$ $f$ is right-continuous,\\
	$\bullet$ $\lim_{x\to +\infty}f(x)=1$, $\lim_{x\to -\infty}f(x)=0$.

	Define 
	\begin{equation*}
		\alpha=\inf \Big\{x\in \mathbb{R}: f(x)\ge \frac{1}{2}\Big\}.
	\end{equation*}
	On the one hand, since $f$ is right-continuous, we have 
	\begin{equation}\label{S1122}
		f(\alpha)\ge \frac{1}{2}.
	\end{equation}
	On the other hand, note that for any $\varepsilon>0$, $f(\alpha-\varepsilon)<\frac{1}{2}$. Namely, 
	\begin{equation*}
		\mu(\{x\in I:b(x)\in P_{\alpha-\varepsilon}\})<\frac{1}{2}\mu(I).
	\end{equation*}
	Let $\varepsilon\to 0$, then
	\begin{equation*}
		\mu(\{x\in I:b(x)\in P_{\alpha} \backslash l_{\alpha}\})\le \frac{1}{2}\mu(I).
	\end{equation*}
	Denote by $Q_x$ the right side of $l_x$ on the complex plane, including $l_x$. Thus
	\begin{equation}\label{S2122}
		\mu(\{x\in I:b(x)\in Q_{\alpha}\})\ge \frac{1}{2}\mu(I).
	\end{equation}
	Let $l=l_{\alpha}$, $S_1=P_{\alpha}$ and $S_2=Q_{\alpha}$. Then $l$ divides $\mathbb{C}$ into two parts $S_1$ and $S_2$ whose intersection is $l$. Besides, from \eqref{S1122} and \eqref{S2122} we get
	\begin{equation*}
		\mu(\{x\in I:b(x)\in S_i\})\ge \frac{1}{2}\mu(I),\quad i\in\{1, 2\}.
	\end{equation*}	
\end{proof}

\begin{figure}[H] 
	\centering 
	\includegraphics[width=0.6\textwidth]{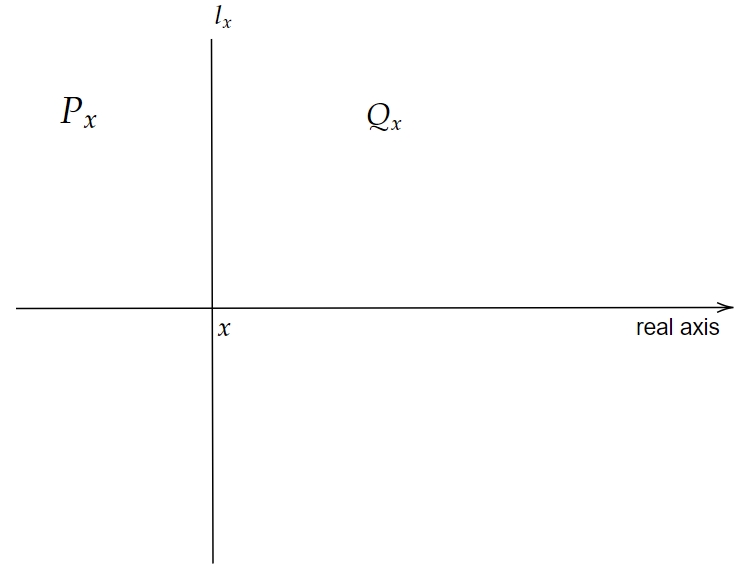} 
	\caption{} 
	\label{picture1} 
\end{figure}

The following lemma is derived by the above one.
\begin{lemma}\label{S1S2S3S4S}
	There exist a line $l$ and two rays $l_1,l_2$ in $\mathbb{C}$ satisfying $l\perp l_1$ and $l\perp l_2$, and they divide $\mathbb{C}$ into four closed quadrants $S_1$, $S_2$, $S_3$ and $S_4$ (see Figure \ref{picture2}), such that
	\begin{equation*}
		\mu(\{x\in I:b(x)\in S_i\})\ge \frac{1}{4}\mu(I),\quad i\in \{1,2,3,4\}.
	\end{equation*}
\end{lemma}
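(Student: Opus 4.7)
\textbf{Plan of proof for Lemma \ref{S1S2S3S4S}.} The strategy is to iterate the one-dimensional median construction from Lemma \ref{S1S2S} twice: first to obtain a vertical splitting line $l$, then to split each resulting half-plane independently with a horizontal ray perpendicular to $l$. The key point, which matches the picture described by the statement, is that the two horizontal rays need not meet $l$ at the same point.

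First, I would apply Lemma \ref{S1S2S} directly to $b$ on $I$ to obtain a line $l\subset\mathbb{C}$ (which, up to rotation, we may take to be vertical, say $l=l_\alpha=\{\mathrm{Re}(z)=\alpha\}$) such that both closed half-planes $P$ (left of $l$) and $Q$ (right of $l$) satisfy
\begin{equation*}
\mu(A)\ge\frac{1}{2}\mu(I),\qquad \mu(B)\ge\frac{1}{2}\mu(I),
\end{equation*}
where $A=\{x\in I:b(x)\in P\}$ and $B=\{x\in I:b(x)\in Q\}$.

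Next, I would apply the horizontal analogue of Lemma \ref{S1S2S} (obtained from Lemma \ref{S1S2S} by rotating the complex plane by $\pi/2$, i.e.\ looking at $\mathrm{Im}(b)$ instead of $\mathrm{Re}(b)$) to the restriction of $b$ to the measurable set $A$. This yields a horizontal line at some height $\beta_1$ that divides $\mathbb{C}$ into an upper half-plane $U_1$ and a lower half-plane $D_1$ with
\begin{equation*}
\mu(\{x\in A:b(x)\in U_1\})\ge\frac{1}{2}\mu(A),\qquad \mu(\{x\in A:b(x)\in D_1\})\ge\frac{1}{2}\mu(A).
\end{equation*}
Running the same argument on $B$ produces a horizontal line at some (possibly different) height $\beta_2$ with half-planes $U_2,D_2$ satisfying the analogous inequalities with $A$ replaced by $B$.

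Finally, I would define $l_1$ to be the horizontal ray starting at $(\alpha,\beta_1)$ and going left (so $l_1\subset P$), and $l_2$ the horizontal ray starting at $(\alpha,\beta_2)$ and going right (so $l_2\subset Q$). Clearly $l\perp l_i$ for $i=1,2$. Then the four closed regions
\begin{equation*}
S_1=P\cap U_1,\quad S_2=P\cap D_1,\quad S_3=Q\cap U_2,\quad S_4=Q\cap D_2
\end{equation*}
cover $\mathbb{C}$ and are bordered exactly by $l$, $l_1$ and $l_2$. Combining the previous two sets of inequalities gives $\mu(\{x\in I:b(x)\in S_i\})\ge \tfrac{1}{2}\cdot\tfrac{1}{2}\mu(I)=\tfrac{1}{4}\mu(I)$ for each $i$, which is the desired conclusion. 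There is no real obstacle here; the only point to check carefully is that the horizontal analogue of Lemma \ref{S1S2S} applies verbatim to the measure $\mu$ restricted to $A$ (resp.\ $B$) so that the resulting bounds transfer from halves of $\mu(A)$ (resp.\ $\mu(B)$) to quarters of $\mu(I)$.
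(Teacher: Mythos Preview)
Your proof is correct and follows essentially the same approach as the paper: apply Lemma \ref{S1S2S} once to get the line $l$, then apply it again (in the perpendicular direction) separately to each of the two resulting half-plane preimages to obtain the rays $l_1,l_2$. The only cosmetic difference is that the paper takes $l$ to be the real axis while you take it vertical, and the paper phrases the second step as ``repeating the proof of Lemma \ref{S1S2S}'' rather than invoking the lemma on the restricted set $A$ (resp.\ $B$) as you do; neither affects the argument.
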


\begin{proof}
	By Lemma \ref{S1S2S}, there exists a line $l$ which divides $\mathbb{C}$ into two closed half-planes $U_1$ and $U_2$, such that 
	\begin{equation*}
		\mu(\{x\in I:b(x)\in U_j\})\ge \frac{1}{2}\mu(I),\quad j\in\{1,2\}.
	\end{equation*} 
	By rotating and translating the axes, we assume that $l$ is the real axis. In terms of $U_1$, repeating the proof of Lemma \ref{S1S2S}, we prove that there exists a ray $l_1$ satisfying $l\perp l_1$, where the origin of $l_1$ is $\alpha_1\in l$, such that $l_1$ divides $U_1$ into two parts $S_1$ and $S_2$ whose intersection is $l_1$. Moreover,
	\begin{equation*}
		\mu(\{x\in I:b(x)\in S_i\})\ge \frac{1}{2}\mu(\{x\in I:b(x)\in U_1\})\ge \frac{1}{4}\mu(I),\quad i\in\{1,2\}.
	\end{equation*}	
	Similarly, in terms of $U_2$, there exists a ray $l_2$ satisfying $l\perp l_2$, where the origin of $l_2$ is $\alpha_2\in l$, such that $l_2$ divides $U_2$ into two parts $S_3$ and $S_4$ whose intersection is $l_2$. Moreover,
	\begin{equation*}
		\mu(\{x\in I:b(x)\in S_i\})\ge \frac{1}{2}\mu(\{x\in I:b(x)\in U_2\})\ge \frac{1}{4}\mu(I),\quad i\in\{3,4\}.
	\end{equation*}	
\end{proof}

\begin{figure}[H]
	\centering 
	\begin{minipage}[b]{0.45\textwidth} 
		\centering 
		\includegraphics[width=1.0\textwidth]{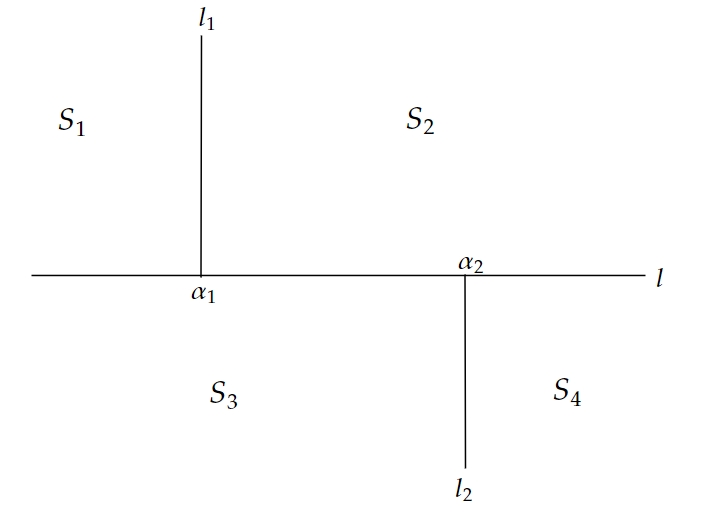} 
		\caption{}
		\label{picture2}
	\end{minipage}
	\begin{minipage}[b]{0.45\textwidth} 
		\centering 
		\includegraphics[width=1.0\textwidth]{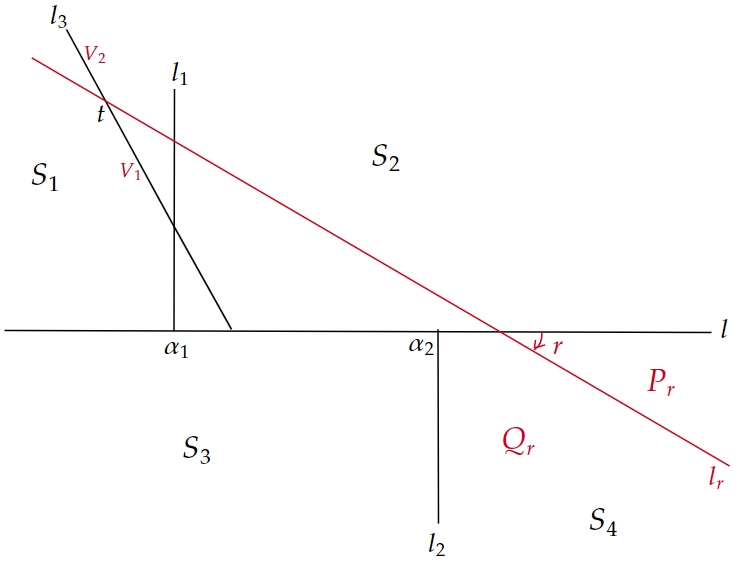}
		\caption{}
		\label{picture7}
	\end{minipage}
\end{figure}

In what follows, we always assume that $\alpha_1\neq \alpha_2$ since Theorem \ref{divideS} clearly holds when $\alpha_1=\alpha_2$. In fact, when $\alpha_1=\alpha_2$, since $l_1\perp l$, we let 
\begin{equation*}
	L_1=l_1,\quad L_2=l,
\end{equation*}
and
\begin{equation*}
	T_j=S_j,\quad j\in\{1,2,3,4\}.
\end{equation*}
Then $L_1$ and $L_2$ are what we need. Besides, we only need to consider the case where $\alpha_1$ is on the left of $\alpha_2$ on the line $l$ as $\alpha_1$ and $\alpha_2$ are symmetric. 

Next, we consider the following particular case which will simplify the proof of Theorem \ref{divideS}.	
\begin{lem}\label{middlecases}
	 Let $S_1$, $S_2$, $S_3$ and $S_4$ be four closed quadrants as in Lemma \ref{S1S2S3S4S}. If there is a ray $l_3$ with origin in $l$ such that 
	\begin{equation*}
		\mu(\{x\in I:b(x)\in l_3\cap S_1\})\ge \frac{1}{2}\mu(\{x\in I:b(x)\in S_1\})
	\end{equation*}
	\begin{equation*}
		(\text{or}\quad	\mu(\{x\in I:b(x)\in l_3\cap S_4\})\ge \frac{1}{2}\mu(\{x\in I:b(x)\in S_4\}),\,\,)
	\end{equation*}
	then there exist two orthogonal lines $l_4$ and $l_5$ such that $l_4$ and $l_5$ divide $\mathbb{C}$ into four closed quadrants $T_1$, $T_2$, $T_3$, $T_4$ (see Figure \ref{picture3}). Moreover,
	\begin{equation*}
		\mu(\{x\in I:b(x)\in T_i\})\ge \frac{1}{16}\mu(I),\quad i\in \{1,2,3,4\}.
	\end{equation*}
\end{lem}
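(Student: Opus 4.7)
The strategy is to recycle the ray $l_3$ itself as one of the two required perpendicular lines. I will take $l_4$ to be the full line extending $l_3$, and choose $l_5$ to be a line perpendicular to $l_4$ positioned so that it splits the $b$-mass carried by $l_3$ into two balanced halves. The key observation is that once the splitting point $p'$ is chosen on $l_3$, the two resulting half-segments of $l_3$ lie on two different half-lines of $l_4$, and each such half-line is the shared boundary of two adjacent closed quadrants. Consequently every one of the four quadrants $T_1,T_2,T_3,T_4$ inherits, via its boundary, a subset of $l_3$ of $b$-mass at least $\tfrac{1}{16}\mu(I)$.

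The concrete execution goes as follows. Let $\beta\in l$ denote the origin of $l_3$, let $v$ be a unit vector along $l_3$, and write $l_3=\{\beta+tv:t\ge 0\}$. Set $M=\mu(\{x\in I:b(x)\in l_3\cap S_1\})$; the hypothesis of the lemma together with Lemma \ref{S1S2S3S4S} yields $M\ge \tfrac{1}{8}\mu(I)$. I then run the one-dimensional right-continuity argument from Lemma \ref{S1S2S} along $l_3$: consider
\[
F(t)=\mu\bigl(\{x\in I:b(x)\in[\beta,\beta+tv]\}\bigr),\qquad t\ge 0,
\]
which is increasing, right-continuous, and satisfies $\lim_{t\to\infty}F(t)=\mu(\{x\in I:b(x)\in l_3\})\ge M$. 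Define $t^{*}=\inf\{t\ge 0:F(t)\ge M/2\}$ and $p'=\beta+t^{*}v$. Right-continuity produces $F(t^{*})\ge M/2$, while the definition of $t^{*}$ forces $F(t^{*}-0)\le M/2$, whence
\[
\mu\bigl(\{x\in I:b(x)\in[\beta,p']\}\bigr)\ge \tfrac{M}{2}\quad\text{and}\quad \mu\bigl(\{x\in I:b(x)\in l_3\cap[p',\infty)\}\bigr)\ge \tfrac{M}{2}.
\]

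Next I will take $l_4$ to be the line containing $l_3$ and let $l_5$ be the line through $p'$ perpendicular to $l_4$. These two orthogonal lines split $\mathbb{C}$ into four closed quadrants $T_1,T_2,T_3,T_4$. By construction, the closed segment $[\beta,p']$ lies on the portion of $l_4$ on one side of $l_5$, which is the common boundary of two adjacent quadrants (say $T_1$ and $T_2$), while the closed half-ray $l_3\cap[p',\infty)$ lies on the opposite half-line of $l_4$ and is the common boundary of the remaining two quadrants ($T_3$ and $T_4$). Since each $T_i$ is closed and thus contains its boundary, and since each of the two boundary pieces carries $b$-mass at least $\tfrac{M}{2}\ge \tfrac{1}{16}\mu(I)$, the required inequality $\mu(\{x\in I:b(x)\in T_i\})\ge \tfrac{1}{16}\mu(I)$ follows for every $i\in\{1,2,3,4\}$.

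The only delicate point will be ensuring that the splitting step is well defined when the distribution of $b$ along $l_3$ has atoms: a priori no exact median along $l_3$ need exist. This is precisely the difficulty resolved by the right-continuity trick of Lemma \ref{S1S2S}, and the closedness of the quadrants $T_i$ is what prevents a possible heavy atom at $p'$ from being lost--such an atom is simply counted in both pairs of adjacent quadrants, which only helps the bound.
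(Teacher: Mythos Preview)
Your proof is correct and is in fact precisely the ``easy proof'' that the paper itself mentions in the Remark immediately following the lemma: extend $l_3$ to a full line, take the perpendicular through the median point $p'$ on $l_3$, and use that each half of $l_3$ lies on the shared closed boundary of two adjacent quadrants. Your handling of atoms via right-continuity and closedness is exactly right.

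The paper, however, deliberately presents a more elaborate argument. After locating the median point $t$ on $l_3$ (your $p'$), instead of taking $l_3$ as one of the two lines, it rotates a line $l_r$ through $t$ and chooses an angle $\beta$ so that $l_\beta$ bisects the mass of the \emph{opposite} quadrant $S_4$ into two pieces $S_{41},S_{42}$, each of mass $\ge\tfrac18\mu(I)$. The perpendicular line is then chosen through a point of the open segment $(\alpha_1,\alpha_2)$ on $l$; the four resulting quadrants each contain either one of $V_1,V_2$ (the two halves of $l_3\cap S_1$) or one of $S_{41},S_{42}$. The paper states explicitly that it keeps this complicated proof because the rotation-and-bisection technique is the essential mechanism used later in Case~3 of the proof of Theorem~\ref{divideS}, where one must find an angle at which two independent interval conditions are met simultaneously. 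Your approach buys simplicity and transparency for this lemma in isolation; the paper's approach buys a rehearsal of the tool it needs next.
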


\begin{proof}
	We only consider the case that $l_3\cap S_1\neq \emptyset$. The proof of the case that $l_3\cap S_4\neq \emptyset$ is the same. From the proof of Lemma \ref{S1S2S}, there exists a point $t\in l_3$, such that $t$ divides $l_3\cap S_1$ into two parts $V_{1}$ and $V_{2}$, such that
	\begin{equation*}
		\mu(\{x\in I:b(x)\in V_i\})\ge \frac{1}{2}\mu(\{x\in I:b(x)\in l_{3}\cap S_1\})\ge \frac{1}{16}\mu(I),\quad i\in\{1,2\}.
	\end{equation*}
	Without loss of generality, we assume that $l$ is the real axis. Then by our assumption that $\alpha_1$ is on the left of $\alpha_2$ on the line $l$, we see $\alpha_1< \alpha_2$. 
	
	Let $l_r$ be the line passing through $t$, such that the clockwise angle from $l$ to $l_r$ is $r$, where $-\frac{\pi}{2}< r \le \frac{\pi}{2}$ (see Figure \ref{picture7}). The line $l_r$ divides $S_4$ into two closed parts, and denote by $P_r$ the upper part and by $Q_r$ the lower part (see Figure \ref{picture7}). Let
	\begin{equation*}
		f(r)=
		\begin{cases}
			\mu(\{x\in I:b(x)\in P_r\})\big/\mu(\{x\in I:b(x)\in S_4\}),&  0< r \le \frac{\pi}{2},\\
			0, & -\frac{\pi}{2}<r< 0,
		\end{cases}
	\end{equation*}
	and
	\begin{equation*}
		f(0)=\begin{cases}
			\mu(\{x\in I:b(x)\in P_0\})\big/\mu(\{x\in I:b(x)\in S_4\}), &\mathrm{Im}(t)=0,\\
			0, &\mathrm{Im}(t)>0.
			\end{cases}
		\end{equation*}
	It is not hard to check that:\\
     $\bullet$ $f$ is increasing,\\
     $\bullet$ $f$ is right-continuous,\\
	 $\bullet$ $f(\frac{\pi}{2})=1$,

	Define 
	\begin{equation*}
		\beta=\inf \Big\{r\in (-\frac{\pi}{2},\frac{\pi}{2}]: f(r)\ge \frac{1}{2}\Big\}.
	\end{equation*}
	So $0\leq \beta<\frac{\pi}{2}$. On the one hand, since $f(r)$ is right-continuous, we have 
	\begin{equation}\label{S112}
		f(\beta)\ge \frac{1}{2}.
	\end{equation}
	On the other hand, note that for any $0<\varepsilon<\beta+\frac{\pi}{2}$, $f(\beta-\varepsilon)<\frac{1}{2}$. Namely,
	\begin{equation*}
		\mu(\{x\in I:b(x)\in P_{\beta-\varepsilon}\})<\frac{1}{2}\mu(\{x\in I:b(x)\in S_4\}).
	\end{equation*} 
	Let $\varepsilon\to 0$, then
	\begin{equation*}
		\mu(\{x\in I:b(x)\in P_\beta \backslash l_\beta\})\le \frac{1}{2}\mu(\{x\in I:b(x)\in S_4\}).
	\end{equation*}
Thus this implies
	\begin{equation}\label{S212}
		\mu(\{x\in I:b(x)\in Q_\beta \})\ge \frac{1}{2}\mu(\{x\in I:b(x)\in S_4\}).
	\end{equation}
	Let $l_4=l_\beta$, $S_{41}=P_\beta$ and $S_{42}=Q_\beta$. From \eqref{S112} and \eqref{S212}, $l_4$ passes through $t$ and divides $S_4$ into two closed parts $S_{41}$ and $S_{42}$, such that 
	\begin{equation*}
		\mu(\{x\in I:b(x)\in S_{4i}\})\ge \frac{1}{2}\mu(\{x\in I:b(x)\in S_{4}\})\ge \frac{1}{8}\mu(I),\quad i\in\{1,2\}.
	\end{equation*}
	Now if $\mathrm{Im}(t)>0$, then $\beta\neq 0$ as $t\in l_4$. Suppose $c$ is the intersection point of $l_4$ and $l$ (see Figure \ref{picture3}). Besides, for any given $\tilde{c}\in (\alpha_1,\alpha_2)$, we can find another line $l_5$ passing through $\tilde{c}$, such that $l_5\perp l_4$. Then  $l_4$ and $l_5$ divide $\mathbb{C}$ into four closed quadrants $T_1$, $T_2$, $T_3$, $T_4$. Moreover, we have
	\begin{equation*}
		\begin{aligned}
			{}&\mu(\{x\in I:b(x)\in T_1\})\ge \mu(\{x\in I:b(x)\in V_2\})\ge \frac{1}{16}\mu(I),\\
			&\mu(\{x\in I:b(x)\in T_2\})\ge \mu(\{x\in I:b(x)\in S_{41}\})\ge \frac{1}{8}\mu(I),\\
			&\mu(\{x\in I:b(x)\in T_3\})\ge \mu(\{x\in I:b(x)\in V_1\})\ge \frac{1}{16}\mu(I),\\
			&\mu(\{x\in I:b(x)\in T_4\})\ge \mu(\{x\in I:b(x)\in S_{42}\})\ge \frac{1}{8}\mu(I).			
		\end{aligned}
	\end{equation*}
	If $\mathrm{Im}(t)=0$ and $\beta>0$, then Lemma \ref{middlecases} follows in a similar way. Finally, if $\mathrm{Im}(t)=0$ and $\beta=0$, then $l_4=l$ and choose $l_5$ to be the line through the point $t$ and orthogonal to $l$, and this proves Lemma \ref{middlecases}.
\end{proof}

\begin{figure}[H] 
	\centering 
	\includegraphics[width=0.6\textwidth]{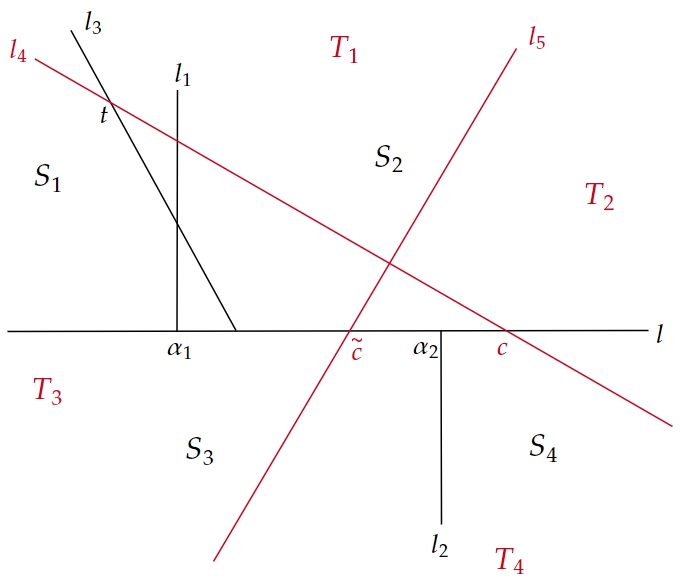} 
	\caption{} 
	\label{picture3} 
\end{figure}

\begin{rem}
	Indeed, we can find a line $l_6$ which passes through the point $t$ and is perpendicular to $l_3$, and then $l_3$ and $l_6$ satisfy Lemma \ref{middlecases}. This is an easy proof of Lemma \ref{middlecases}. But we still keep the previous complicated proof of Lemma \ref{middlecases} since the argument of this complicated proof plays a vital role in the later proof of Theorem \ref{divideS}.
	
	\end{rem}

Now we begin to prove Theorem \ref{divideS}.	
\begin{proof}[Proof of Theorem \ref{divideS}]
	By Lemma \ref{S1S2S3S4S}, there exist a line $l$ and two rays $l_1,l_2$ in $\mathbb{C}$ satisfying $l\perp l_1$ and $l\perp l_2$, and they divide $\mathbb{C}$ into four closed quadrants $S_1$, $S_2$, $S_3$ and $S_4$. Moreover,
	\begin{equation}\label{SiS4}
		\mu(\{x\in I:b(x)\in S_i\})\ge \frac{1}{4}\mu(I),\quad i\in \{1,2,3,4\}.
	\end{equation}
	We denote by $\alpha_1$, $\alpha_2$ the origins of $l_1$ and $l_2$ respectively.
	
	\smallskip
	
	By rotating and translating the axes, we assume that $l$ is the real axis, and $\alpha_1<\alpha_2$. Besides, by the proof of Lemma \ref{S1S2S}, there exists a point $A\le \alpha_1$ in $l$ and a ray $l_A\perp l$, whose origin is $A$, such that $l_A$ divides $S_1$ into two closed parts $R_{1}$ and $R_{2}$ (see Figure \ref{picture4}). Moreover, 
	\begin{equation}\label{muRiS1}
		\mu(\{x\in I:b(x)\in R_{i}\})\ge \frac{1}{2}\mu(\{x\in I:b(x)\in S_1\}),\quad i\in\{1,2\}.
	\end{equation}
	Similarly, there exists a point $B\ge \alpha_2$ in $l$ and a ray $l_B\perp l$, whose origin is $B$, such that $l_B$ divides $S_4$ into two closed parts $R_3$ and $R_4$. Moreover, 
	\begin{equation*}
		\mu(\{x\in I:b(x)\in R_{i}\})\ge \frac{1}{2}\mu(\{x\in I:b(x)\in S_4\}),\quad i\in\{3,4\}.
	\end{equation*}
	Now for any point $A\le x\le B$ and any angle $r\in[0,\pi]$, we
	let $l(x,r)\subset S_1\cup S_2$ be the ray whose origin is $x$, such that the clockwise angle from $l$ to $l(x,r)$ is $r$. Then $l(x, r)$ divides $S_1$ into two closed parts, and denote by $Q_{11}(x,r)$ the lower part and by $Q_{12}(x,r)$ the upper part (see Figure \ref{picture5}).
	
	Similarly, for any point $A\le {x}\le B$ and any angle $\tilde{r}\in[0,\pi]$, let $\tilde{l}({x},\tilde{r})\subset S_3\cup S_4$ be the ray whose origin is ${x}$, such that the clockwise angle from $l$ to $\tilde{l}({x},\tilde{r})$ is $\tilde{r}$. Then $\tilde{l}({x},\tilde{r})$ divides $S_4$ into two closed parts, and denote by $Q_{42}({x},\tilde{r})$ the lower part and by $Q_{41}({x},\tilde{r})$ the upper part (see Figure \ref{picture5}).
	
	\begin{figure}[H]
		\centering 
		\begin{minipage}[b]{0.45\textwidth} 
			\centering 
			\includegraphics[width=1.0\textwidth]{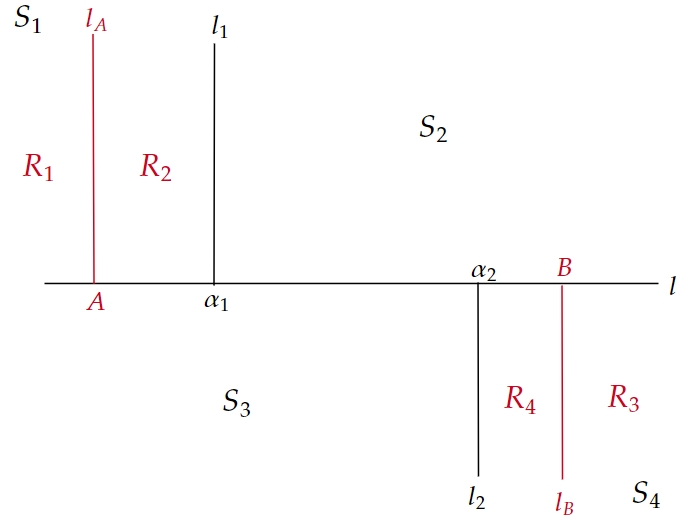} 
			\caption{}
			\label{picture4}
		\end{minipage}
		\begin{minipage}[b]{0.45\textwidth} 
			\centering 
			\includegraphics[width=1.0\textwidth]{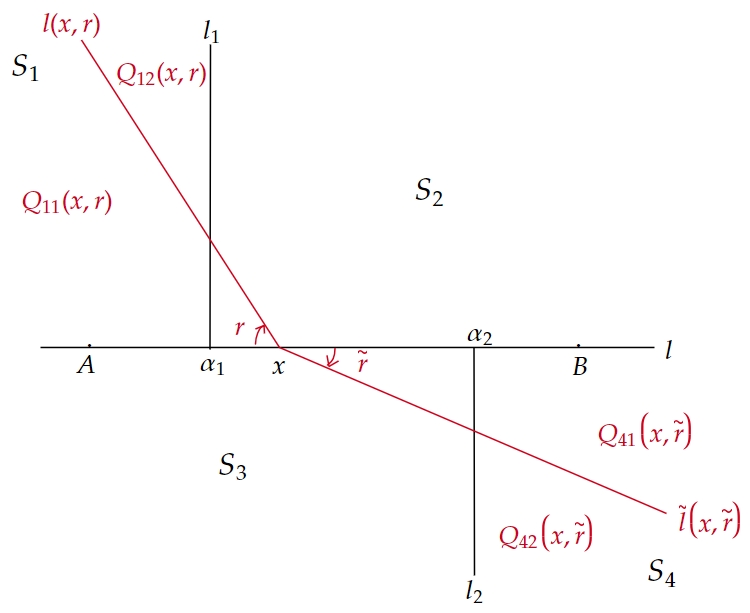}
			\caption{}
			\label{picture5}
		\end{minipage}
	\end{figure}

	Then let
	\begin{equation*}
		f(x,r)=\frac{\mu(\{y\in I:b(y)\in Q_{11}(x,r)\})}{\mu(\{y\in I:b(y)\in S_1\})},\quad g(x,r)=\frac{\mu(\{y\in I:b(y)\in Q_{12}(x,r)\})}{\mu(\{y\in I:b(y)\in S_1\})}
	\end{equation*}
	and
	\begin{equation*}
		\tilde{f}(x,r)=\frac{\mu(\{y\in I:b(y)\in Q_{41}(x,r)\})}{\mu(\{y\in I:b(y)\in S_4\})},\quad \tilde{g}(x,r)=\frac{\mu(\{y\in I:b(y)\in Q_{42}(x,r)\})}{\mu(\{y\in I:b(y)\in S_4\})}.
	\end{equation*}
	It is not hard to check that:\\
	$\bullet$ when the point $x$ is fixed, in terms of the angle $r$, $f$ and $\tilde{f}$ are both increasing and right-continuous functions, $g$ and $\tilde{g}$ are both decreasing and left-continuous functions,\\
	$\bullet$ when the angle $r$ is fixed, in terms of the point $x$, $f$ and $\tilde{g}$ are both increasing and right-continuous functions, $g$ and $\tilde{f}$ are both decreasing and left-continuous functions.

	Besides, define 
	\begin{equation*}
		\begin{aligned}
			r_1(x){}&=\inf\Big\{r:f(x,r)\ge \frac{1}{4}, \ g(x,r)\ge \frac{1}{4}\Big\},\\
			r_2(x)&=\sup\Big\{r:f(x,r)\ge \frac{1}{4}, \ g(x,r)\ge \frac{1}{4}\Big\},\\
			r_3(x)&=\inf\Big\{r:\tilde{f}(x,r)\ge \frac{1}{4}, \  \tilde{g}(x,r)\ge \frac{1}{4}\Big\},\\
			r_4(x)&=\sup\Big\{r:\tilde{f}(x,r)\ge \frac{1}{4}, \  \tilde{g}(x,r)\ge \frac{1}{4}\Big\}.
		\end{aligned}
	\end{equation*}
Note that each $r_i$ is well-defined, since $\Big\{r:f(x,r)\ge \frac{1}{4}, \ g(x,r)\ge \frac{1}{4}\Big\}$ and $\Big\{r:\tilde{f}(x,r)\ge \frac{1}{4}, \  \tilde{g}(x,r)\ge \frac{1}{4}\Big\}$ are not empty by using the same way as in Lemma \ref{S1S2S}. Indeed, these two nonempty sets are even closed intervals. In fact, if $r_1(x)<r_2(x)$, for any $x\in [A,B]$, by the definition of $r_1(x)$, there exists a positive sequence $\{\varepsilon_n\}_{n\ge 1}$ with $\varepsilon_n\to 0$, such that $f(x,r_1(x)+\varepsilon_n)\ge \frac{1}{4}$. Since $f$ is right-continuous with respect to the angle, we then deduce $$f(x,r_1(x))\ge \frac{1}{4}.$$
	Since $r_1(x)< r_2(x)$, one has $$f(x,r_2(x))\ge \frac{1}{4}.$$
	Similarly, we derive that 
	$$g(x,r_2(x))\ge \frac{1}{4} \quad \text{and} \quad g(x,r_1(x))\ge \frac{1}{4}.$$
	Thus for any $x\in [A,B]$,
	\begin{equation}\label{fgr2r1}
		\Big\{r:f(x,r)\ge \frac{1}{4}, \ g(x,r)\ge \frac{1}{4}\Big\}=[r_1(x),r_2(x)].
	\end{equation}
    If $r_1(x)=r_2(x)$, then \eqref{fgr2r1} is trivial.
	Similarly, for any $x\in [A,B]$,
	\begin{equation}\label{fgr3r4}
		\Big\{r:\tilde{f}(x,r)\ge \frac{1}{4}, \ \tilde{g}(x,r)\ge \frac{1}{4}\Big\}=[r_3(x),r_4(x)].
	\end{equation}

	 Furthermore, in the later proof we will always assume that $r_2,r_4< \pi$ and $r_1,r_3>0$ (because the case $r_2=\pi$ or $r_4=\pi$ or $r_1=0$ or $r_3=0$ is the special case as in Lemma \ref{middlecases}, and we omit the details).
	For any given $A\le x_1\le x_2\le B$, since $g$ is decreasing with respect to the angle, by the definition of $r_2(x_1)$, for any $0<\varepsilon<\pi-r_2(x_1)$, we have $g(x_1,r_2(x_1)+\varepsilon)<\frac{1}{4}$. Besides, note that $g$ is decreasing with respect to $x$, then
	\begin{equation*}
		g(x_2,r_2(x_1)+\varepsilon)\le g(x_1,r_2(x_1)+\varepsilon)<\frac{1}{4},
	\end{equation*}
	this implies that $r_2(x_2)\le r_2(x_1)+\varepsilon$. Then by letting $\varepsilon\to 0$, one has $r_2(x_2)\le r_2(x_1)$. Thus $r_2$ is decreasing. Similarly, we obtain:\\
	$\bullet$ $r_1$, $r_2$ are decreasing,\\
	$\bullet$ $r_3$, $r_4$ are increasing.

	\smallskip
	
	In the following we will divide the remaining of the proof into three cases.
	
	\subsubsection*{Case 1: There exists $x_0\in [A,B]$, such that $r_1(x_0)=r_2(x_0)$}	
	Since $f$ is increasing with respect to the angle, by the definition of $r_1(x_0)$, for any $0<\varepsilon<r_1(x_0)$, $f(x_0,r_1(x_0)-\varepsilon)<\frac{1}{4}$. Namely,
	\begin{equation*}
		\mu(\{x\in I:b(x)\in Q_{11}(x_0,r_1(x_0)-\varepsilon)\})<\frac{1}{4}\mu(\{x\in I:b(x)\in S_1\}).
	\end{equation*} 
	Let $\varepsilon\to 0$, then 
	\begin{equation}\label{Q11S1}
		\mu\big(\big\{x\in I:b(x)\in Q_{11}(x_0,r_1(x_0))\big\backslash l(x_0,r_1(x_0))\big\}\big)\le \frac{1}{4}\mu(\{x\in I:b(x)\in S_1\}).
	\end{equation} 
	Similarly, we get
	\begin{equation}\label{Q12S1}
		\mu\big(\big\{x\in I:b(x)\in Q_{12}(x_0,r_2(x_0))\big\backslash l(x_0,r_2(x_0))\big\}\big)\le \frac{1}{4}\mu(\{x\in I:b(x)\in S_1\}).
	\end{equation} 
	Thus from \eqref{Q11S1} and \eqref{Q12S1} one has
	\begin{equation*}
		\mu(\{x\in I:b(x)\in l(x_0,r_1(x_0))\cap S_1 \})\ge \frac{1}{2}\mu(\{x\in I:b(x)\in S_1\}).
	\end{equation*} 
	Hence the desired result is obtained by Lemma \ref{middlecases}.
	
	\bigskip
	
	\subsubsection*{Case 2: There exists $x_0\in [A,B]$, such that $r_3(x_0)=r_4(x_0)$}			
	The proof is the same as in Case 1.
	
	\bigskip
	
	\subsubsection*{Case 3: For any $x\in [A,B]$, $r_1(x)<r_2(x)$ and $r_3(x)<r_4(x)$}
	Our aim is to prove that there exists $y\in [A,B]$, such that
	\begin{equation}\label{achieve}
		\Big\{r:f(y,r)\ge \frac{1}{4}, \ g(y,r)\ge \frac{1}{4}\Big\}\cap \Big\{r:\tilde{f}(y,r)\ge \frac{1}{4}, \  \tilde{g}(y,r)\ge \frac{1}{4}\Big\}\neq \emptyset.
	\end{equation}

	\smallskip
	
	We need the following definition: for any $i\in\{1,2,3,4\}$, define
	\begin{equation*}
		r_i(c-)=\lim_{x\nearrow c}r_i(x),\quad \forall c\in (A,B],
	\end{equation*}
    and
	\begin{equation*}
		r_i(c+)=\lim_{x\searrow c}r_i(x),\quad \forall c\in [A,B).
	\end{equation*}
	Note that $r_i$ is always monotone, thus the above definition makes sense. 
	
	\smallskip
	
	Now we show the following important properties:\\
	$\bullet$ For any $c\in (A,B]$, one has 
	\begin{equation}\label{property1}
		r_1(c-)\le r_2(c)
	\end{equation}
     and 
     \begin{equation}\label{property3}
     	r_3(c)\le r_4(c-),
     \end{equation}
     $\bullet$ For any $c\in [A,B)$, one has
     \begin{equation}\label{property2}
     	r_1(c)\le r_2(c+)
     \end{equation}
     and 
     \begin{equation}\label{property4}
     	r_3(c+)\le r_4(c).
     \end{equation}

	We only prove \eqref{property1}. If $r_1(c-)>r_2(c)$, then for any given $a$ satisfying
	\begin{equation*}
		r_1(c-)>a>r_2(c),
	\end{equation*}
	by the definition of $r_1(c-)$, there exists a sequence $\{x_n\}$ satisfying $x_n\nearrow c$ such that
	\begin{equation*}
		r_1(x_n)>a>r_2(c).
	\end{equation*}
	Since $g$ is decreasing with respect to the angle, by the definition of $r_1(x_n)$, we have $g(x_n,a)\ge \frac{1}{4}$. Then from the fact that $g$ is left-continuous with respect to $x$, one has
	\begin{equation*}
		g(c,a)\ge \frac{1}{4}.
	\end{equation*}
	Besides, since $f$ is increasing with respect to the angle, we have
	\begin{equation*}
		f(c,a)\ge f(c,r_2(c))\ge \frac{1}{4}.
	\end{equation*}
	Thus by the definition of $r_2(c)$, one gets $a\le r_2(c)$. It contradicts $a>r_2(c)$. 
	
	\smallskip

	\bigskip
	
	Now we come back to the proof of \eqref{achieve}. By \eqref{fgr2r1} and \eqref{fgr3r4}, \eqref{achieve} is equivalent to proving that there exists $y\in [A,B]$, such that
	\begin{equation}\label{achieve2}
		[r_1(y),r_2(y)]\cap [r_3(y),r_4(y)]\neq \emptyset. 
	\end{equation}

    \smallskip
    
	If $r_1(x)\le r_4(x)$ for all $x\in [A,B]$, since $l_A$ divides $S_1$ into two closed parts $R_{1}$ and $R_{2}$ satisfying \eqref{muRiS1}, we have $r_2(A)\ge \frac{\pi}{2}$. Besides, note that $A\le \alpha_2$, which implies that $r_3(A)\le \frac{\pi}{2}$. Thus we only have the following four cases:\\
	$\bullet$ $r_3(A)\le r_1(A)\le r_4(A)\le r_2(A)$,\\
	$\bullet$ $r_3(A)\le r_1(A)\le r_2(A)\le r_4(A)$,\\
	$\bullet$ $r_1(A)\le r_3(A)\le r_4(A)\le r_2(A)$,\\
	$\bullet$ $r_1(A)\le r_3(A)\le r_2(A)\le r_4(A)$.\\
	Hence
	\begin{equation*}
		[r_1(A),r_2(A)]\cap [r_3(A),r_4(A)]\neq \emptyset.
	\end{equation*}
    and \eqref{achieve2} is obtained by letting $y=A$.
    
	Now suppose that there exists $x\in [A, B]$ such that $r_1(x)>r_4(x)$.  Then we define 
	\begin{equation*}
		c_0=\sup\{x\in [A,B]:r_1(x)-r_4(x)>0\}.
	\end{equation*}
	We will consider three cases according to the value of $c_0$.
	
	\subsubsection*{Subcase 3.1:  $A<c_0<B$} 
	From the definition of $c_0$ we know that
	\begin{equation}\label{r2r3c0-c0+}
		r_1(c_0-)\ge r_4(c_0-),\quad \text{and}\quad r_1(c_0+)\le r_4(c_0+).
	\end{equation} 
	Besides, for any $c_0<x\le B$, 
	\begin{equation*}
		r_1(x)\le r_4(x).
	\end{equation*}
	There are three subcases in this situation:\\
	(1) if there exists $c_0<x_0\le B$, such that $r_1(x_0)=r_4(x_0)$, then \eqref{achieve2} is obvious by letting $y=x_0$,\\
	(2) if for all $c_0<x\le B$,
		\begin{equation*}
			r_1(x)<r_4(x),
		\end{equation*} 
		but there exists $c_0<x_0\le B$, such that $r_2(x_0)\ge r_3(x_0)$, then \eqref{achieve2} is derived by letting $y=x_0$,\\
	(3) if for any $c_0<x\le B$,
		\begin{equation*}
			r_1(x)<r_4(x)\quad \text{and} \quad r_2(x)<r_3(x),
		\end{equation*} 
		this implies that
		\begin{equation*}
			r_2(c_0+)\le r_3(c_0+).
		\end{equation*}
		On the one hand, from \eqref{property2} and \eqref{property4} one has
		\begin{equation*}
			r_1(c_0)\le r_2(c_0+)\le r_3(c_0+)\le r_4(c_0).
		\end{equation*}
		On the other hand, from \eqref{property1}, \eqref{property3} and \eqref{r2r3c0-c0+} we get
		\begin{equation*}
			r_3(c_0)\le r_4(c_0-)\le r_1(c_0-)\le r_2(c_0).
		\end{equation*}
		Hence \eqref{achieve2} is obtained by letting $y=c_0$.

	\smallskip
	
	\subsubsection*{Subcase 3.2: $c_0=B$} 
	This implies that 
	\begin{equation*}
		r_1(B-)\ge r_4(B-).
	\end{equation*}
	From \eqref{property1} and \eqref{property3} we know that
	\begin{equation*}
		r_3(B)\leq r_4(B-)\le r_1(B-)\leq r_2(B).
	\end{equation*}
	Note that
	\begin{equation*}
		r_1(B)\le \frac{\pi}{2}\le r_4(B).
	\end{equation*}
	Hence \eqref{achieve2} is derived by letting $y=B$.
	
	\smallskip
	
	\subsubsection*{Subcase 3.3:  $c_0=A$} 
	This implies that for any $A<x\le B$,
	\begin{equation*}
		r_1(x)\le r_4(x).
	\end{equation*}
	We need to consider the following three subcases:\\
	(1) if there exists $A<x_0\le B$, such that $r_2(x_0)\ge r_4(x_0)$, then
		\begin{equation*}
			r_1(x_0)\le r_4(x_0)\le r_2(x_0).
		\end{equation*}
		Thus \eqref{achieve2} is obtained by letting $y=x_0$,\\
	(2) if for all $A<x\le B$, 
		\begin{equation*}
			r_2(x)<r_4(x),
		\end{equation*}
		but there exists $A<x_0\le B$, such that $r_2(x_0)\ge r_3(x_0)$, then \eqref{achieve2} is derived by letting $y=x_0$,\\
	(3) if for any $A<x\le B$, 
		\begin{equation*}
			r_2(x)<r_4(x)\quad \text{and} \quad r_2(x)<r_3(x),
		\end{equation*}
		this implies that 
		\begin{equation*}
			r_2(A+)\le r_3(A+).
		\end{equation*}
		Thus from \eqref{property2} and \eqref{property4} we have
		\begin{equation*}
			r_1(A)\leq r_2(A+)\le r_3(A+)\leq r_4(A).
		\end{equation*}
		Note that 
		\begin{equation*}
			r_3(A)\le \frac{\pi}{2}\le r_2(A).
		\end{equation*}
		Hence \eqref{achieve2} is obtained by letting $y=A$.

	Now we have proved \eqref{achieve2}. Thus we choose $r_0(y)\in [r_1(y),r_2(y)]\cap [r_3(y),r_4(y)]\neq \emptyset$, and let the line $L_1$ be the extension of the ray $l({y,r_0(y)})$. Besides, for any given $\tilde{y}\in (\alpha_1,\alpha_2)$, let $L_2$ be the line passing through $\tilde{y}$, such that $L_1\perp L_2$. Then $L_1$ and $L_2$ divide $\mathbb{C}$ into four closed quadrants $T_1$, $T_2$, $T_3$, $T_4$ (see Figure \ref{picture6}). Moreover, from \eqref{SiS4} we have
	\begin{equation*}
		\mu(\{x\in I:b(x)\in T_i\})\ge \frac{1}{4}\mu(\{x\in I:b(x)\in S_i\})\ge \frac{1}{16}\mu(I),\quad i\in \{1,2,3,4\}.
	\end{equation*}
\end{proof}

\begin{figure}[H] 
	\centering 
	\includegraphics[width=0.6\textwidth]{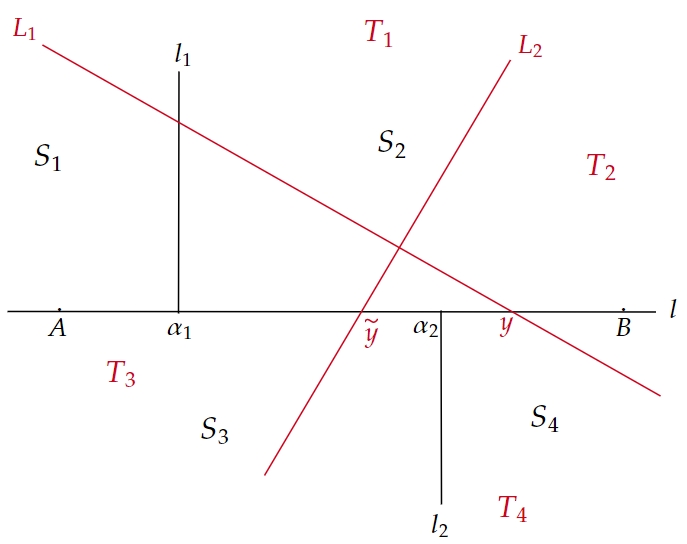} 
	\caption{} 
	\label{picture6} 
\end{figure}

\begin{rem}
	One would try to show Theorem \ref{divideS} first for simple functions, and then use a limit argument for general functions. However, it seems that the proof for simple functions is already complicated. In addition, we do not know how to deal with the limit argument either.
	\end{rem}

\bigskip

\section{Proof of Theorem \ref{nonschatten}: the scalar case}\label{schattenconv}

Our proof of Theorem \ref{nonschatten} relies on the proof of the commutative case $\M=\mathbb{C}$. However, it is also of independent interest to consider the commutative case in its own right. Therefore, we list the statement of Theorem \ref{nonschatten} for $\M=\mathbb{C}$ as well.

\begin{thm}\label{Converse}
	Let $1<p<\infty$ and $T\in B(L_2(\mathbb{R}^n))$ be a singular integral operator with a non-degenerate kernel $K(x,y)$ satisfying \eqref{standard}. Suppose that $b$ is a locally integrable complex-valued function. If $C_{T, b}\in S_p(L_2(\mathbb{R}^n))$, 
	then $b\in \pmb{B}_p(\mathbb{R}^n)$. Furthermore, we have
	\begin{equation*}
		\|b\|_{\pmb{B}_p(\mathbb{R}^n)}\lesssim_{n,p,T} \|C_{T, b}\|_{S_p(L_2(\mathbb{R}^n))}.
	\end{equation*}
	In particular, when $n\geq 2$ and $p\le n$, if $C_{T, b}\in S_p(L_2(\mathbb{R}^n))$, then $b$ is constant.
\end{thm}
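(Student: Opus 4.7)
The plan is to establish the lower bound $\|b\|_{\pmb{B}_p(\mathbb{R}^n)} \lesssim \|C_{T,b}\|_{S_p(L_2(\mathbb{R}^n))}$ by a local pairing argument aggregated over a dyadic grid. The main ingredients will be (i) the complex median method (Theorem \ref{divideS}) to locate, on each cube $Q$, subsets where the values of $b$ are well separated in $\mathbb{C}$; (ii) non-degeneracy of $K(x,y)$, which assigns to each $Q$ a ``twin'' cube $\widehat{Q}$ of comparable size and controlled distance on which $|K(x,y)|$ is comparable to $|x-y|^{-n}$ with a phase varying in a narrow cone; and (iii) the nearly weakly orthogonal (NWO) framework of Rochberg--Semmes \cite{RSe}, which bounds $\bigl(\sum_Q |\langle e_Q, C_{T,b}\,f_Q\rangle|^p\bigr)^{1/p}$ by $\|C_{T,b}\|_{S_p}$ whenever $(e_Q)$ and $(f_Q)$ are NWO systems indexed by the dyadic grid.

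Fix a dyadic cube $Q$. Applying Theorem \ref{divideS} to $b$ on $Q$, I obtain four closed quadrants $T_1,\dots,T_4$, each containing the image of a subset of $Q$ of measure at least $|Q|/16$. Two diagonally opposite quadrants, say $T_1$ and $T_3$, are then separated in $\mathbb{C}$ by a distance $d_Q$ that is comparable, up to absolute constants, to the local $L_p$-oscillation $\bigl(|Q|^{-1}\int_Q |b-c_Q|^p\,dx\bigr)^{1/p}$ for a suitable centering constant $c_Q$. I next invoke non-degeneracy to select the twin cube $\widehat{Q}$, and I apply the complex median method once more on $\widehat{Q}$. After choosing a unimodular rotation $u\in\mathbb{C}$ that aligns the narrow phase cone of $K$ on $Q\times\widehat{Q}$ with a fixed half-plane, a pigeonholing argument pairs two of the $\widehat{Q}$-quadrants against the pair $T_1,T_3$ to produce subsets $E\subset Q$ and $F\subset \widehat{Q}$ with $|E|,|F|\gtrsim |Q|$ on which
\begin{equation*}
\operatorname{Re}\bigl[(b(x)-b(y))\,\overline{u\,K(x,y)}\bigr] \gtrsim \frac{d_Q}{|x-y|^n},\qquad (x,y)\in F\times E.
\end{equation*}
Integrating yields the local lower bound $|\langle \mathbbm{1}_F,\,C_{T,b}\,\mathbbm{1}_E\rangle| \gtrsim d_Q\,|Q|$.

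The hard part will be making the phase selection survive simultaneously on a definite fraction of $E\times F$: unlike the real case, in which oscillation is captured by a single sign and the median method directly matches the sign of the kernel, the complex median method only produces four quadrants, so I must couple the pigeonhole step with the kernel rotation $u$ while respecting the H\"older error $|x-y|^{-n-\alpha}|x-x'|^\alpha$ in \eqref{standard}. Setting $e_Q := |Q|^{-1/2}\mathbbm{1}_E$ and $f_Q := |Q|^{-1/2}\mathbbm{1}_F$, the families $(e_Q)_Q$ and $(f_Q)_Q$ are NWO in the sense of \cite{RSe}, so summing the pointwise lower bounds gives
\begin{equation*}
\sum_Q d_Q^{\,p} \;\lesssim\; \sum_Q |\langle e_Q, C_{T,b}\,f_Q\rangle|^p \;\lesssim\; \|C_{T,b}\|_{S_p}^p.
\end{equation*}
Since $d_Q^{\,p}\approx |Q|^{-1}\int_Q|b-c_Q|^p\,dx$, after averaging over translates of the dyadic lattice (reversing the comparison used in the proof of Lemma \ref{Comparison}) the left-hand side dominates the double integral $\iint |x-y|^{-2n}|b(x)-b(y)|^p\,dx\,dy = \|b\|_{\pmb{B}_p(\mathbb{R}^n)}^p$, as desired.

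Finally, the claim that $b$ must be constant when $n\ge 2$ and $1<p\le n$ is immediate from what precedes: the argument just outlined gives $b\in\pmb{B}_p(\mathbb{R}^n)$, and Proposition \ref{0pn} (with $X=\mathbb{C}$) forces any locally integrable function with finite $\pmb{B}_p$ semi-norm in this range to be constant.
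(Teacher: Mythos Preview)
Your proposal contains a genuine gap at the very first step. You claim that the complex median method on $Q$ produces diagonally opposite quadrants $T_1,T_3$ ``separated in $\mathbb{C}$ by a distance $d_Q$'' comparable to the local $L_p$-oscillation of $b$. But Theorem~\ref{divideS} gives no such separation: the four closed quadrants meet at the single point $L_1\cap L_2$, and nothing prevents the values of $b$ on the preimages of $T_1$ and $T_3$ from clustering arbitrarily close to that point. Consequently there is no uniform lower bound $|b(x)-b(y)|\gtrsim d_Q$ for $(x,y)\in F\times E$, and your displayed pointwise inequality $\operatorname{Re}[(b(x)-b(y))\overline{uK(x,y)}]\gtrsim d_Q/|x-y|^n$ cannot be obtained this way. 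The complex median method is an \emph{angular alignment} tool, not a separation-scale tool.

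The paper's argument (Lemma~\ref{conv1}) avoids this entirely. One applies Theorem~\ref{divideS} only on the \emph{twin} cube $\hat I$, obtaining a center $\alpha_{\hat I}(b)$ and subsets $F_s^I\subset\hat I$ indexed by four angular sectors. One then bounds the Haar coefficient directly by $|I|^{-1}\int_I|b(x)-\alpha_{\hat I}(b)|\,dx$, partitions $I$ into sectors $E_s^I$ (relative to the \emph{same} center $\alpha_{\hat I}(b)$), and uses the angular alignment to get $|b(x)-\alpha_{\hat I}(b)|\le 2|b(x)-b(\hat x)|$ for $x\in E_s^I$, $\hat x\in F_s^I$ (this is \eqref{bxal}). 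The point is that $b(x)-b(\hat x)$ now lies in a fixed $\pi/2$-sector while $K(\hat x,x)$ lies in a narrow sector (by taking the dilation parameter $A$ large), so the product has real part comparable to its modulus; this lets one drop absolute values and bound the $L_1$-oscillation by the pairing $\bigl|\langle\mathbbm{1}_{F_s^I},[T,M_b]\,\mathbbm{1}_{I(q)\cap E_s^I}\rangle\bigr|$. Summing over $I$ and invoking Lemma~\ref{RSNWO} gives $\|b\|_{\pmb B_p^{\omega,2^n}(\mathbb{R}^n)}\lesssim\|C_{T,b}\|_{S_p}$; Proposition~\ref{pdayun} (finitely many shifted grids, not an averaging argument) then upgrades this to $\pmb B_p(\mathbb{R}^n)$. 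Your final paragraph on the constant case via Proposition~\ref{0pn} is correct.
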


In order to prove Theorem \ref{Converse}, we need to describe the Besov space  $\pmb{B}_p(\mathbb{R}^n)$ in terms of the Schatten class membership of commutators involving singular integral operators. In this section, we deal with singular integral operators associated with non-degenerate kernels in Hyt\"{o}nen's sense. We refer the reader to \cite{TH3} for more details about non-degenerate kernels. At first, we show that when $1<p<\8$, $[T, M_b]\in  S_p(L_2(\mathbb{R}^n))$ implies $b\in \pmb{B}^{\omega,2^n}_p(\mathbb{R}^n)$ (see Lemma \ref{conv1}), where $\pmb{B}^{\omega,2^n}_p(\mathbb{R}^n)$ is defined in \eqref{Bpw2nrnm}. Our proof is based on the complex median method, i.e. Theorem \ref{divideS}. Then we show that for $1<p<\8$, $\pmb{B}_p(\mathbb{R}^n)$ is the intersection of several dyadic martingale Besov spaces associated with different translated dyadic systems (see Proposition \ref{pdayun}). This enables us to transfer the martingale setting to the Euclidean setting. We start with non-degenerate kernels.

\subsection{Non-degenerate kernels}
We first give the definition of non-degeneracy of kernels.
\begin{definition}\label{nondege}
	Let $T\in B(L_2(\mathbb{R}^n))$ be a singular integral operator with kernel $K(x,y)$ satisfying standard kernel estimates \eqref{standard}. $K$ is called non-degenerate, if one of the following conditions holds:
	
	(a) for every $y\in \mathbb{R}^n$ and $r>0$, there exists $x\in B(y,r)^c$ such that
	\begin{equation}\label{nondege1}
		|K(x,y)|\ge \frac{1}{c_0r^n},
	\end{equation}
	where $c_0$ is a fixed positive constant.
	
	(b) if $K$ is a homogeneous kernel with
	\begin{equation}\label{nondege2}
		K(x,y)=\frac{\Omega(x-y)}{|x-y|^n},
	\end{equation}
	where $\Omega\in L_1(\mathbb{S}^{n-1})\backslash\{0\}$ and $\Omega(tx)=\Omega(x)$ for all $t>0$ and $x\in \mathbb{R}^n$ (here $\mathbb{S}^{n-1}$ is the sphere of $\mathbb{R}^n$), then there exists a Lebesgue point $\theta_0\in \mathbb{S}^{n-1}$ of $\Omega$ such that
	\begin{equation*}
		\Omega(\theta_0)\neq 0.
	\end{equation*}
	
\end{definition}	

\begin{remark}
	Let $T$ be a Calder\'{o}n-Zygmund transform with the convolution kernel $K(x,y)=\phi(x-y)$. Then the non-degeneracy condition (a) in Definition \ref{nondege} is simplified into the following form: for every $r>0$, there exists $x\in B(0,r)^c$ with
	\begin{equation*}
		|\phi(x)|\ge \frac{1}{c_0r^n},
	\end{equation*}
	We refer to \cite{TH3} for more details.
\end{remark}

In the rest of this section, the kernel $K$ will be always assumed to satisfy \eqref{standard} and to be non-degenerate. From the above definition, we obtain the following property for $K$.
\begin{lem}\label{ball}
	For every $A\ge 3$ and every ball $B=B(x_0,r)$, there is a disjoint ball $\tilde{B}=B(y_0,r)$ at distance $\mathrm{dist}(B,\tilde{B})\approx Ar$ such that
	\begin{equation}\label{kx0y0an}
		|K(y_0,x_0)|\approx \frac{1}{A^nr^n},
	\end{equation}
	and for all $x_1\in B$ and $y_1\in \tilde{B}$, 
	\begin{equation}\label{kx1y1x0y0}
		|K(y_1,x_1)-K(y_0,x_0)|\lesssim\frac{1}{A^{n+\alpha}r^n}.
	\end{equation}
	Furthermore, if $K(y_0,x_0)$ is real and $A$ is sufficiently large,
	then there exists a positive number $ \varrho$ which depends on $A$, $\alpha$ and $n$ and is much less than $1$ such that
	\begin{equation}\label{88888}
		|\mathrm{Im}(K(y_1,x_1))|\le \varrho\mathrm{Re}(K(y_1,x_1))
	\quad\text{and}\quad
		|K(y_1,x_1)|\le 2\mathrm{Re}(K(y_1,x_1))
	\end{equation}
	for any $x_1\in B$ and $y_1\in \tilde{B}$, where $\mathrm{Re}\big(K(y_1,x_1)\big)$ and $\mathrm{Im}\big(K(y_1,x_1)\big)$ are the real and imaginary parts of $K(y_1,x_1)$ respectively.
\end{lem}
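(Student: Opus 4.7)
The plan is to exploit the non-degeneracy of $K$ to locate the reference point $y_0$, and then to propagate control from the pair $(y_0,x_0)$ to nearby pairs $(y_1,x_1)$ by the H\"older-type estimate in \eqref{standard}. Under condition (a) of Definition \ref{nondege}, I would apply the non-degeneracy inequality \eqref{nondege1} with $Ar$ in place of $r$, producing some $y_0\notin B(x_0,Ar)$ with $|K(y_0,x_0)|\ge 1/(c_0 A^n r^n)$. The size part of \eqref{standard} gives $|K(y_0,x_0)|\le C/|y_0-x_0|^n$, so the two inequalities together pin down $|y_0-x_0|\approx Ar$ with constants depending only on $n$ and $c_0$. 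For $A\ge 3$ the ball $\tilde B=B(y_0,r)$ is then disjoint from $B$ and $\mathrm{dist}(B,\tilde B)=|y_0-x_0|-2r\approx Ar$, which gives \eqref{kx0y0an}. Under condition (b), the same conclusion follows by setting $y_0=x_0+Ar\theta_0$, where $\theta_0$ is the Lebesgue point of $\Omega$ furnished by \eqref{nondege2}; the homogeneity relation $\Omega(t\cdot)=\Omega(\cdot)$ forces $K(y_0,x_0)=\Omega(\theta_0)/(Ar)^n$, again of order $1/(A^nr^n)$.

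For \eqref{kx1y1x0y0}, I would split the difference as $K(y_1,x_1)-K(y_0,x_0)=[K(y_1,x_1)-K(y_0,x_1)]+[K(y_0,x_1)-K(y_0,x_0)]$ and apply the smoothness estimate in \eqref{standard} to each bracket. For the first, take $y=x_1$, $x=y_1$, $x'=y_0$ in \eqref{standard}: since $|y_1-y_0|\le r$ and $|y_1-x_1|\gtrsim Ar$, the separation requirement $|x-y|>2|x-x'|$ holds once $A$ exceeds an absolute constant, producing a bound of order $r^\alpha/(Ar)^{n+\alpha}=1/(A^{n+\alpha}r^n)$. The second bracket is treated identically with $y=y_0$, $x=x_1$, $x'=x_0$. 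Adding the two estimates yields \eqref{kx1y1x0y0}.

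For the final assertion, assume without loss of generality that $K(y_0,x_0)$ is a positive real number (the negative case is entirely symmetric, and the sign is selected by the context in which the lemma is applied). Since $\mathrm{Im}\,K(y_0,x_0)=0$, \eqref{kx1y1x0y0} immediately gives $|\mathrm{Im}\,K(y_1,x_1)|\lesssim 1/(A^{n+\alpha}r^n)$. On the other hand, the reverse triangle inequality combined with \eqref{kx1y1x0y0} and \eqref{kx0y0an} yields $\mathrm{Re}\,K(y_1,x_1)\ge K(y_0,x_0)-|K(y_1,x_1)-K(y_0,x_0)|\gtrsim 1/(A^n r^n)$, provided $A$ is large enough that the smoothness correction is dominated by the main term. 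The quotient $|\mathrm{Im}\,K(y_1,x_1)|/\mathrm{Re}\,K(y_1,x_1)\lesssim A^{-\alpha}$ can then be made arbitrarily small; designating this bound by $\varrho$ gives the first inequality in \eqref{88888}, and $|K(y_1,x_1)|\le \mathrm{Re}\,K(y_1,x_1)+|\mathrm{Im}\,K(y_1,x_1)|\le (1+\varrho)\mathrm{Re}\,K(y_1,x_1)\le 2\mathrm{Re}\,K(y_1,x_1)$ gives the second.

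No step poses a genuine obstacle: the lemma is essentially a quantitative unpacking of non-degeneracy together with kernel smoothness. The only bookkeeping item is the choice of ``$A$ sufficiently large'': the same threshold must simultaneously make the separation condition in \eqref{standard} valid, let the main-term lower bound on $\mathrm{Re}\,K(y_1,x_1)$ survive the $O(A^{-\alpha})$ correction, and push the ratio $A^{-\alpha}$ below any prescribed $\varrho<1$. All three conditions are quantitative and uniform in $r$ and $x_0$, so a single threshold $A_0=A_0(n,\alpha,c_0,\varrho)$ suffices.
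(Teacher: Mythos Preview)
Your proposal is correct and follows essentially the same approach as the paper: the same use of non-degeneracy (in both cases (a) and (b)) to locate $y_0$, the same two-term splitting $K(y_1,x_1)-K(y_0,x_0)=[K(y_1,x_1)-K(y_0,x_1)]+[K(y_0,x_1)-K(y_0,x_0)]$ for \eqref{kx1y1x0y0}, and the same comparison of $\mathrm{Re}\,K(y_1,x_1)$ and $\mathrm{Im}\,K(y_1,x_1)$ against the main term $K(y_0,x_0)\approx 1/(A^nr^n)$ to obtain \eqref{88888}. Your explicit tracking of the threshold on $A$ is slightly more detailed than the paper's version, but the arguments are otherwise identical.
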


\begin{proof}
	(1) Assume that $K$ is as in Definition \ref{nondege} (a). For a fixed ball $B=B(x_0,r)$ and $A\ge 3$, thanks to the standard estimate of $K$ and \eqref{nondege1}, there exists a point $y_0\in B(x_0,Ar)^c$ such that
	\begin{equation*}
		\frac{1}{c_0(Ar)^n}\le |K(y_0,x_0)|\le \frac{C}{|x_0-y_0|^n}.
	\end{equation*}
	This implies that 
	\begin{equation*}
		Ar\le |x_0-y_0|\le (c_0C)^{\frac{1}{n}}Ar,\quad |K(y_0,x_0)|\approx \frac{1}{A^nr^n}.
	\end{equation*}
	Let $\tilde{B}=B(y_0,r)$. Since $A\ge 3$, we have $\mathrm{dist}(B,\tilde{B})\approx |x_0-y_0|$. Thus
	$\mathrm{dist}(B,\tilde{B})\approx Ar$.
	
	(2) Assume that $K$ is as in Definition \ref{nondege} (b). For a fixed ball $B=B(x_0,r)$ and $A\ge 3$, let $y_0=x_0+Ar\theta_0$ and $\tilde{B}=B(y_0,r)$. It is not hard to check that $\mathrm{dist}(B,\tilde{B})\approx Ar$ and
	\begin{equation*}
		|K(y_0,x_0)|=\frac{|\Omega(y_0-x_0)|}{|y_0-x_0|^n}=\frac{|\Omega(Ar\theta_0)|}{|Ar\theta_0|^n}=\frac{|\Omega(\theta_0)|}{(Ar)^n}\approx \frac{1}{(Ar)^n}.
	\end{equation*}
This finishes the proof of \eqref{kx0y0an}. 

Now we show \eqref{kx1y1x0y0} and \eqref{88888}. For $x_1\in B$ and $y_1\in \tilde{B}$, from \eqref{standard} we have  
	\begin{equation*}
		\begin{aligned}
			|K(y_1,x_1)-K(y_0,x_0)|{}&\le |K(y_1,x_1)-K(y_0,x_1)|+|K(y_0,x_1)-K(y_0,x_0)|     \\
			&\lesssim \frac{|y_1-y_0|^\alpha}{|y_1-x_1|^{n+\alpha}}+\frac{|x_1-x_0|^\alpha}{|x_1-y_0|^{n+\alpha}}\\
			&\lesssim \frac{r^{\alpha}}{(Ar)^{n+\alpha} }+\frac{r^{\alpha}}{(Ar)^{n+\alpha} }\approx \frac{1}{A^{n+\alpha}r^n}.
		\end{aligned}
	\end{equation*}
	When $A$ is sufficiently large, $K(y_1,x_1)$ will be very close to $K(y_0,x_0)$. Hence if $K(y_0,x_0)$ is real, we deduce that
	\begin{equation*}
		A^nr^n|\mathrm{Re}(K(y_1,x_1))-K(y_0,x_0)|\lesssim\frac{1}{A^{\alpha}}
	\end{equation*}
and
\begin{equation*}
	A^nr^n|\mathrm{Im}(K(y_1,x_1))|\lesssim\frac{1}{A^{\alpha}}.
\end{equation*}
Note that by \eqref{kx0y0an} that
\begin{equation*}
	A^nr^n|K(y_0,x_0)|\approx 1.
\end{equation*}	
Therefore, we deduce that
	\begin{equation*}
		|\mathrm{Im}(K(y_1,x_1))|\le \varrho\mathrm{Re}(K(y_1,x_1))
	\quad \text{and} \quad
		|K(y_1,x_1)|\le 2\mathrm{Re}(K(y_1,x_1))
	\end{equation*}
	for sufficiently small $\varrho$.
\end{proof}

\subsection{A key lemma}
This subsection establishes a key lemma for the proof of Theorem \ref{Converse}. We will need the following lemma due to Rochberg and Semmes in \cite{RSe}.
\begin{lemma}\label{RSNWO}
	Let $1<p<\infty$. Assume that $\{e_I\}_{I\in\mathcal{D}}$ and $\{f_I\}_{I\in\mathcal{D}}$ are function sequences in $L_2(\mathbb{R}^n)$ satisfying $\mathrm{supp}e_I, \,\mathrm{supp}f_I\subseteq I$ and
	$\|e_I\|_{\infty},\,\|f_I\|_{\infty}\le |I|^{-\frac{1}{2}}$. For any bounded compact operator $V$ on $L_2(\mathbb{R}^n)$, one has
	\begin{equation*}
		\begin{aligned}
			\sum_{I\in\mathcal{D}}\big|\langle e_I,V(f_I)\rangle\big|^p\lesssim_{n,p} \|V\|^p_{S_p(L_2(\mathbb{R}^n))}.
		\end{aligned}
	\end{equation*}
\end{lemma}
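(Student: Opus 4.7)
The plan is to reinterpret the statement as asserting boundedness of the diagonal extraction map $\Phi\colon V \mapsto (\langle e_I, Vf_I\rangle)_{I\in\mathcal{D}}$ from $S_p(L_2(\mathbb{R}^n))$ to $\ell_p(\mathcal{D})$, and to establish this via two endpoint cases followed by complex interpolation and duality.

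First, the endpoint $p=\infty$ is immediate: $|\langle e_I, Vf_I\rangle|\le \|V\|_{B(L_2)}\|e_I\|_2\|f_I\|_2\le \|V\|_{B(L_2)}$, so $\Phi$ sends compact operators to $\ell_\infty$ contractively. The hinge of the whole argument is the Hilbert--Schmidt endpoint $p=2$. Writing $A_I = e_I \otimes f_I$ for the rank-one operator $\psi\mapsto e_I\langle f_I,\psi\rangle$, one checks that $\langle V, A_I\rangle_{S_2}=\langle e_I, Vf_I\rangle$, so by Bessel duality in the Hilbert space $S_2$ the desired bound $\sum_I |\langle e_I, Vf_I\rangle|^2 \lesssim \|V\|_{S_2}^2$ is equivalent to $\ell_2\to S_2$ boundedness of the synthesis map $T\colon (c_I)\mapsto \sum_I c_I A_I$. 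Expanding $\|T((c_I))\|_{S_2}^2$ further reduces this to $\ell_2$ boundedness of the Gram matrix $M_{IJ}=\langle e_I, e_J\rangle\langle f_J, f_I\rangle$.

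To control $M$ I would exploit the support and sup-norm hypotheses: $M_{IJ}=0$ unless $I\cap J\neq\emptyset$, which in the dyadic setting forces $I\subseteq J$ or $J\subseteq I$; moreover, a direct estimate using $\|e_I\|_\infty, \|f_I\|_\infty\le|I|^{-1/2}$ yields $|M_{IJ}|\lesssim \min(|I|/|J|,\,|J|/|I|)$. A weighted Schur test with $w_I=|I|^\sigma$ for any fixed $\sigma\in(0,1)$ then handles both the sum over $J\subseteq I$ (geometric in $|J|/|I|$ and convergent since $\sigma<1$) and the sum over $I\subseteq J$ (convergent since $\sigma>0$), yielding $M\in B(\ell_2)$ and hence the $p=2$ endpoint for $\Phi$. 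Complex interpolation between $\Phi\colon S_2\to\ell_2$ and $\Phi\colon K(L_2(\mathbb{R}^n))\to\ell_\infty$ then covers $2\le p<\infty$; for $1<p<2$ I would pass to the adjoint and interpolate the synthesis map $T\colon\ell_p\to S_p$ between the trivial bound $T\colon\ell_1\to S_1$ (which holds since $\|A_I\|_{S_1}\le \|e_I\|_2\|f_I\|_2\le 1$) and $T\colon\ell_2\to S_2$, thereby recovering the remaining range by duality.

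The main obstacle I anticipate is precisely the $p=2$ step: although the off-diagonal entries of $M$ decay like $\min(|I|/|J|, |J|/|I|)$, the naive Schur test with constant weights fails, because at each scale the number of sub-cubes of $I$ exactly cancels the decay factor and produces a logarithmic divergence in the sum over scales. Only by introducing the fractional weight $w_I = |I|^\sigma$ with $\sigma\in(0,1)$ does one obtain simultaneous geometric convergence in both directions, and calibrating such weights is the real technical content of the lemma.
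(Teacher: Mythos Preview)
Your strategy for $p\ge 2$ is correct and gives a pleasant alternative to the paper's route. The weighted Schur test with $w_I=|I|^\sigma$, $\sigma\in(0,1)$, does bound the Gram matrix $M_{IJ}=\langle e_I,e_J\rangle\langle f_J,f_I\rangle$ on $\ell_2$, whence $\Phi:S_2\to\ell_2$ is bounded, and interpolation with the trivial $S_\infty\to\ell_\infty$ endpoint covers $[2,\infty)$. The paper instead proves the dual synthesis bound $T:\ell_q\to S_q$ for every $q>0$: for even $q=2k$ it expands $\|T(c)\|_{S_{2k}}^{2k}$ as a sum over $2k$-tuples of cubes, dominates each inner product $|\langle e_I,e_J\rangle|$ by $\langle |I|^{-1/2}\mathbbm{1}_I,|J|^{-1/2}\mathbbm{1}_J\rangle$, and recognises the resulting sum as $\|C\|_{S_{4k}}^{4k}$ for a genuine martingale paraproduct $C$, to which Theorem~\ref{lem2.1} applies (Lemma~\ref{nonNWOpre1}). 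Then duality yields the analysis bound for all $1<p<\infty$ at once (Theorem~\ref{nonRSNWO}). So for $p\ge 2$ the two arguments reach the same destination by different means: yours is more elementary, the paper's is more structural.

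The gap is in your treatment of $1<p<2$. Interpolating $T:\ell_1\to S_1$ with $T:\ell_2\to S_2$ gives $T:\ell_q\to S_q$ only for $q\in[1,2]$; dualizing this returns $\Phi=T^*:S_{q'}\to\ell_{q'}$ for $q'\in[2,\infty]$, which is precisely the range you already had. To obtain $\Phi:S_p\to\ell_p$ for $p\in(1,2)$ via duality you would need $T:\ell_{p'}\to S_{p'}$ for some $p'>2$, and your endpoints do not supply this. Nor can you simply add the endpoint $T:\ell_\infty\to B(L_2)$: it is false in general, since with $e_I=f_I=|I|^{-1/2}\mathbbm{1}_I$ and $c_I\equiv 1$ one gets $T(c)\mathbbm{1}_{[0,1)}(x)=\sum_{I\subseteq[0,1)}\mathbbm{1}_I(x)=+\infty$ for every $x\in[0,1)$. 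This is exactly where the paper's paraproduct comparison earns its keep: by handling all even integers directly, it produces the synthesis bound for arbitrarily large exponents, which your two-endpoint scheme cannot manufacture from $\ell_1$ and $\ell_2$ alone.
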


In Section \ref{noncomschattenconv}, we will extend Lemma \ref{RSNWO} to the semicommutative setting, that is Theorem \ref{nonRSNWO}. We also give a different but self-contained proof of Theorem \ref{nonRSNWO} in Section \ref{noncomschattenconv}, which directly implies Lemma \ref{RSNWO}. 

The following lemma is explicitly stated in \cite{CJM2013}. In fact, it is due to Mei \cite{Mei4} in the case of $\mathbb{T}^n$. Mei made a remark \cite[Remark 7]{Mei4} for $\mathbb{R}^n$. However, Conde \cite{CJM2013} noted that Mei's remark is not correct, and he finally found the following right substitution.
	\begin{lem}\label{Domegan1}
	There exist $n+1$ dyadic systems $\mathcal{D}^{\omega(1)},\mathcal{D}^{\omega(2)},\cdots,\mathcal{D}^{\omega(n+1)}$ in $\mathbb{R}^n$, where $\omega(i)\in(\{0,1\}^n)^\mathbb{Z}$ for all $ 1\le i\le n+1$, such that for any cube $B\subset\mathbb{R}^n$, there exists some cube $Q\in \bigcup\limits_{i=1}^{n+1} \mathcal{D}^{\omega(i)}$ satisfying
    \begin{equation*}
    	B\subseteq Q\subseteq c_n B,
    \end{equation*}
where $c_n$ only depends on $n$. Moreover, $n+1$ is the optimal number of such dyadic systems.
\end{lem}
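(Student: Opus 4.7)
The plan is to exhibit $n+1$ explicit translation parameters $\omega(1),\dots,\omega(n+1)\in(\{0,1\}^n)^{\mathbb{Z}}$, verify the covering property by a pigeonhole argument at the appropriate scale, and then prove optimality via a dimension/simplex argument.

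First, following the construction in Definition \eqref{Idot}, a dyadic system $\mathcal{D}^{\omega}$ is determined by the cumulative shift $s(\omega,k)=\sum_{j>k}2^{-j}\omega_j\in\mathbb{R}^n$ at each scale $2^{-k}$. I would choose vectors $v_1,\dots,v_{n+1}\in[0,1)^n$ lying at the vertices of a regular simplex inscribed in the torus $\mathbb{R}^n/\mathbb{Z}^n$, with all pairwise coordinate-differences of irrational (or at least non-dyadic) rational type in a strong sense. A convenient concrete choice is $v_i=\frac{i}{n+1}(1,1,\dots,1)\bmod 1$ for $i=1,\dots,n+1$, or any variant that achieves the same separation property: for every $u\in\mathbb{R}^n$, at least one $v_i-u$ has all $n$ coordinates at distance at least $\tfrac{1}{2(n+1)}$ from $\mathbb{Z}$. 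I would then realize these translations at every dyadic scale by letting $\omega(i)$ be the dyadic digit sequence encoding $v_i$ (padded with zeros at sufficiently coarse scales so as to fit into the sum defining \eqref{Idot}, which only requires finite support on the positive side).

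Second, given an arbitrary cube $B\subset\mathbb{R}^n$ of side length $r$, I would select $k\in\mathbb{Z}$ so that $2^{-k}\approx c_n\cdot r$ with $c_n\gtrsim n+1$, so that cubes in $\mathcal{D}_k^{\omega(i)}$ have side length comparable to and larger than $\ell(B)$ by a factor of order $n+1$. By the simplex separation property of the $v_i$'s, there exists an index $i$ such that the center of $B$, taken modulo $2^{-k}\mathbb{Z}^n$ relative to the shifted grid $\mathcal{D}_k^{\omega(i)}$, lies at distance at least $\tfrac{2^{-k}}{2(n+1)}>r/2$ from every face-hyperplane of the grid. This forces $B$ to lie entirely within a single cube $Q\in\mathcal{D}_k^{\omega(i)}$, and by construction $\ell(Q)=2^{-k}\le c_n\,\ell(B)$, giving $B\subseteq Q\subseteq c_n B$.

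Third, for optimality I would proceed by contradiction: assume $n$ systems $\mathcal{D}^{\omega(1)},\dots,\mathcal{D}^{\omega(n)}$ suffice with some constant $c_n$. Fix a scale $2^{-k}$ so small that at that scale, the $n$ systems induce only $n$ translates of the standard grid on the torus $\mathbb{R}^n/2^{-k}\mathbb{Z}^n$. The set of ``bad'' centers of a small cube $B$ of side $\varepsilon\cdot 2^{-k}$ (i.e.\ centers for which some face-hyperplane of some grid meets $B$) forms a union of $n$ families of hyperplanar slabs, one family for each of the $n$ coordinate directions per system. A counting shows that $n$ such systems cannot avoid leaving a point in the torus where at least one slab from each of the $n$ coordinate directions is simultaneously crossed in every system; centering $B$ at such a point forces the smallest containing cube in $\bigcup_{i=1}^n\mathcal{D}^{\omega(i)}$ to jump up at least one dyadic scale, violating the dilation bound $c_n$ once $\varepsilon$ is taken small enough.

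The main obstacle is the covering step: verifying rigorously that the simplex configuration $v_1,\dots,v_{n+1}$ has the claimed uniform separation property, and matching the geometric separation at the torus level with the correct constant $c_n$ so that $B$ truly fits in a single dyadic cube of $\mathcal{D}_k^{\omega(i)}$ rather than straddling two. Conde's observation in \cite{CJM2013} is precisely that the baricentric geometry of a regular $n$-simplex in the $n$-torus is sharp here, and this same geometry feeds the lower bound argument, explaining why $n+1$ is optimal.
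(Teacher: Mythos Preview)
The paper does not prove this lemma; it is quoted from Conde \cite{CJM2013}, who corrected Mei's earlier attempt for $\mathbb{R}^n$. Your high-level strategy---pigeonhole over $n+1$ shifts forming an equispaced diagonal configuration---is exactly the idea behind Mei's construction and Conde's fix, and your verification of the separation property for $v_i=\tfrac{i}{n+1}(1,\dots,1)$ is correct: for each coordinate $j$ the open ``bad'' arc of length $\tfrac{1}{n+1}$ contains at most one of the $n+1$ equispaced points, so at most $n$ indices are ruled out.

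There is, however, a genuine gap in the realization step, and it is precisely the difficulty that forced Conde to revisit Mei's argument. You write that you would ``realize these translations at every dyadic scale by letting $\omega(i)$ be the dyadic digit sequence encoding $v_i$, padded with zeros at sufficiently coarse scales so as to fit into the sum defining \eqref{Idot}, which only requires finite support on the positive side.'' This misreads the construction: for a cube of side $2^{-k}$ the shift is $\sum_{j>k}2^{-j}\omega_j$, an infinite sum over the \emph{fine} scales $j>k$, so there is no finite-support condition and no padding issue at coarse scales. More seriously, a single binary string for $v_i$ does not give the shift $2^{-k}v_i$ at scale $2^{-k}$; it gives $2^{-k}$ times the fractional part of $2^k v_i$, which cycles through a subset of $\{0,\tfrac{1}{n+1},\dots,\tfrac{n}{n+1}\}$ governed by the multiplicative order of $2$ modulo $n+1$. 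You must check that, at every scale, the $n+1$ resulting fractional shifts still form a configuration for which the pigeonhole argument applies (they need not all be distinct a priori, and the good separation constant can degrade). Conde's contribution is exactly to arrange $\omega(1),\dots,\omega(n+1)$ so that this works uniformly in $k$; your plan glosses over this, which is the heart of the matter in passing from $\mathbb{T}^n$ to $\mathbb{R}^n$.

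Your optimality sketch is also too loose. Saying that ``a counting shows'' $n$ systems leave a bad point is not a proof; one must produce, for any $n$ prescribed shifts and any target constant $c$, a specific small cube that every system fails to capture within dilation $c$. Conde's argument constructs such a cube explicitly by tracking how the $n$ shift vectors project onto the $n$ coordinate axes and finding a scale at which all projections are simultaneously close to a grid line. This part is not needed for the applications in the present paper, but if you intend to include it you should supply the actual construction rather than a counting heuristic.
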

\begin{lem}\label{Domegan2}
	Let $\mathcal{D}^{\omega(1)},\mathcal{D}^{\omega(2)},\cdots,\mathcal{D}^{\omega(n+1)}$ be $n+1$ dyadic systems as in Lemma \ref{Domegan1}. For any cube $B\subset \mathbb{R}^n$ of length $2^{-k}$ with $k\in\mathbb{Z}$, let $Q\in\bigcup\limits_{i=1}^{n+1} \mathcal{D}^{\omega(i)}$ such that
	\begin{equation*}
		B\subseteq Q\subseteq c_n B.
	\end{equation*}
  Then for any given $\omega\in(\{0,1\}^n)^\mathbb{Z}$, $Q$ contains only a finite number of dyadic cubes in $\mathcal{D}_k^\omega$, and this number only depends on $n$.
\end{lem}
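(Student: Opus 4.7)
\medskip

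The plan is to reduce the statement to an elementary volume-counting argument. First I would extract the side length of $Q$: since $Q$ is dyadic in some $\mathcal{D}^{\omega(i)}$, we have $\ell(Q)=2^{-j}$ for some $j\in\mathbb{Z}$. The inclusion $B\subseteq Q$ forces $2^{-k}=\ell(B)\leq \ell(Q)=2^{-j}$, i.e.\ $j\leq k$, while the inclusion $Q\subseteq c_n B$ forces $\ell(Q)\leq c_n\,\ell(B)$, i.e.\ $2^{-j}\leq c_n\cdot 2^{-k}$. Consequently
\[
|Q|=2^{-jn}\leq c_n^{\,n}\cdot 2^{-kn}.
\]

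Next I would exploit the tiling structure of $\mathcal{D}_k^\omega$. Cubes in $\mathcal{D}_k^\omega$ are pairwise essentially disjoint, each has volume $2^{-kn}$, and they partition $\mathbb{R}^n$. Hence if $N$ denotes the number of cubes of $\mathcal{D}_k^\omega$ contained in $Q$, summing volumes gives
\[
N\cdot 2^{-kn}=\sum_{\substack{I\in\mathcal{D}_k^\omega\\ I\subseteq Q}}|I|\leq |Q|\leq c_n^{\,n}\cdot 2^{-kn},
\]
so $N\leq c_n^{\,n}$, a bound depending only on $n$.

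This argument is essentially immediate, and I do not anticipate any serious obstacle: the only thing to verify carefully is the simultaneous constraint $2^{-k}\leq \ell(Q)\leq c_n\cdot 2^{-k}$ forced by the double inclusion $B\subseteq Q\subseteq c_n B$, together with the fact that $\mathcal{D}_k^\omega$ really does partition $\mathbb{R}^n$ (so that volumes of disjoint sub-cubes add up inside $Q$). Both facts are built into the definition of the dyadic systems recalled in Section~\ref{Application 3}, so the proof should be short and self-contained.
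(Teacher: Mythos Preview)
Your argument is correct and matches the paper's proof essentially verbatim: both extract the bound $\ell(Q)\le c_n\,2^{-k}$ from $Q\subseteq c_nB$ and then use a volume/counting argument (the paper writes the bound as $\lfloor c_n\rfloor^n$, you write $c_n^{\,n}$, but the content is identical).
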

\begin{proof}
	Fix $k\in\mathbb{Z}$. From Lemma \ref{Domegan1}, we have
	\begin{equation*}
		\ell(Q)\le c_n \ell(B)=c_n2^{-k}.
	\end{equation*}
This implies that $Q$ contain at most $\lfloor c_n\rfloor^n$ dyadic cubes in $\mathcal{D}_k^\omega$.
\end{proof}

Now we come to the following lemma, which is vital for the proof of Theorem \ref{Converse}. It describes the relation between $\|b\|_{\pmb{B}_p^{\omega,2^n}(\mathbb{R}^n)}$ and $\|[T,M_b]\|_{S_p(L_2(\mathbb{R}^n))}$.

\begin{lemma}\label{conv1}
	Let $1<p<\infty$. Suppose that $b$ is a locally integrable complex-valued function. If $[T,M_b]\in S_p(L_2(\mathbb{R}^n))$, then for any $\omega\in(\{0,1\}^n)^\mathbb{Z}$,
	\begin{equation*}
		\begin{aligned}
			\|b\|_{\pmb{B}_p^{\omega,2^n}(\mathbb{R}^n)}\lesssim_{n,p,T}\|[T,M_b]\|_{S_p(L_2(\mathbb{R}^n))}.
		\end{aligned}
	\end{equation*}	
\end{lemma}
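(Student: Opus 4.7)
My plan is to reduce the Besov sum to a sum of local $L_p$-oscillations of $b$, and then to extract these oscillations from diagonal matrix entries of the commutator via the complex median method combined with the non-degeneracy of $K$. Since $H_I^\eta$ has mean zero on $I$ for $\eta\in\{0,1\}^n_0$, H\"older's inequality gives
\[
\frac{|\langle H_I^\eta, b\rangle|}{|I|^{1/2}} \le O_p(b,I) := \Big(\frac{1}{|I|}\int_I|b-\langle b\rangle_I|^p\,dx\Big)^{1/p},
\]
so it will suffice to show $\sum_{I\in \mathcal{D}^\omega} O_p(b,I)^p \lesssim_{n,p,T} \|[T,M_b]\|_{S_p(L_2(\mathbb{R}^n))}^p$; the $(2^n-1)$ from the sum over $\eta$ is harmless.

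For each $I\in\mathcal{D}^\omega$ I will apply Theorem \ref{divideS} to $b|_I$ to obtain a centre $c_I$ and four quadrants $T_1^I,\dots,T_4^I$ whose $b$-preimages each carry mass $\ge|I|/16$; a standard comparison between $c_I$ and the optimal minimiser yields $|I|^{-1}\int_I|b-c_I|^p\,dx \approx_p O_p(b,I)^p$. Next, using Lemma \ref{ball} with a large parameter $A$ to be fixed later, I will attach to each such $I$ a twin cube $\tilde I$ at distance $\approx A\ell(I)$ on which $|K(y,x)|\approx A^{-n}|I|^{-1}$ and $K(y,x)$ stays in a narrow complex cone around its central value; after a global rotation of $b$ one may assume this cone is close to the positive real axis. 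I will then apply Theorem \ref{divideS} also on $\tilde I$ and, via a finite pigeonhole over the sixteen possible pairs of quadrants and the four orientations $e^{ik\pi/2}$, extract subsets $E_I\subseteq I$ and $F_I\subseteq \tilde I$ of measure $\gtrsim|I|$ and a phase $e^{i\theta_I}$ such that
\[
\mathrm{Re}\big(e^{i\theta_I}(b(y)-b(x))\big)\gtrsim O_p(b,I)\qquad \text{for all } y\in E_I,\ x\in F_I.
\]

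With $e_I=|I|^{-1/2}\mathbbm{1}_{E_I}$ and $f_I=|I|^{-1/2}\mathbbm{1}_{F_I}$, the disjointness $E_I\cap F_I=\varnothing$ turns $\langle e_I,[T,M_b]f_I\rangle$ into a genuine integral and, combining the kernel and oscillation bounds, I obtain
\[
\big|\langle e_I,e^{i\theta_I}[T,M_b]f_I\rangle\big|\gtrsim A^{-n}\,O_p(b,I).
\]
To sum this via Lemma \ref{RSNWO}, which requires both test functions to be supported in a common dyadic cube, I will enclose $I\cup\tilde I$ inside a cube $J_I$ belonging to one of the $n+1$ dyadic systems from Lemma \ref{Domegan1}, with $|J_I|\approx A^n|I|$; the rescaled functions $A^{-n/2}e_I,\,A^{-n/2}f_I$ then meet the NWO normalisation relative to $J_I$. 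Grouping the cubes $I$ by the index $i(I)\in\{1,\dots,n+1\}$ of the chosen dyadic system and by $J_I$ itself (with at most $\lesssim A^n$ cubes $I$ sharing a given $J_I$, thanks to Lemma \ref{Domegan2}), $n+1$ applications of Lemma \ref{RSNWO} will give $\sum_I O_p(b,I)^p\lesssim_{n,p,T}\|[T,M_b]\|_{S_p(L_2(\mathbb{R}^n))}^p$ once $A$ is chosen large enough depending only on $n,p$ and $T$.

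The main obstacle lies in the second step: compounding two applications of the complex median, on $I$ and on $\tilde I$, to produce a lower bound on $b(y)-b(x)$ that is aligned in a single complex direction and of size comparable to $O_p(b,I)$. A naive pigeonhole over the sixteen pairs of quadrants does not suffice, since the centres $c_I$ and $c_{\tilde I}$ are a priori unrelated and Theorem \ref{divideS} only guarantees mass $|I|/16$ per quadrant instead of the $|I|/4$ that the real median would supply. This is precisely where the genuinely complex median from Section \ref{proofdivide}, which is not available in the real framework of \cite{LOR,TH3}, will be decisive and will keep the argument valid for complex-valued $b$ and complex-valued kernels $K$.
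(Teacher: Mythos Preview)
Your plan has a genuine gap at the step where you claim to extract subsets $E_I\subseteq I$, $F_I\subseteq\tilde I$ of measure $\gtrsim|I|$ and a phase such that
\[
\mathrm{Re}\big(e^{i\theta_I}(b(y)-b(x))\big)\gtrsim O_p(b,I)\quad\text{for all }y\in E_I,\ x\in F_I.
\]
No pointwise lower bound of this type is available, and the complex median theorem cannot produce one: Theorem~\ref{divideS} only guarantees that each quadrant has \emph{mass} $\ge|I|/16$, it says nothing about the \emph{magnitude} of $b-c_I$ on those quadrants. A simple counterexample is $b=M\cdot\mathbbm{1}_G$ on $I$ with $|G|=\varepsilon|I|$: then $O_p(b,I)\approx M\varepsilon^{1/p}>0$, yet on any subset $E_I$ of measure $\ge|I|/16$ the function $b$ vanishes on a set of measure $\ge|I|/32$, so $b(y)-b(x)$ is zero for a positive proportion of pairs. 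You yourself flag this obstacle in your last paragraph, but asserting that the complex median ``will be decisive'' does not close it; the theorem simply does not furnish lower bounds of the required sort. The same objection applies to the real median, so this is not a matter of refining Theorem~\ref{divideS}.

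The paper's argument avoids this difficulty by a different organisation. It applies the complex median \emph{only once}, on the twin cube $\hat I$, obtaining a centre $\alpha_{\hat I}(b)$ and sets $F_s^I\subset\hat I$ with $|F_s^I|\ge|\hat I|/16$. On $I$ no second median is taken; one simply partitions $I$ into the four sets $E_s^I$ according to the quadrant of $b(x)-\alpha_{\hat I}(b)$. The Haar coefficient is then bounded by the $L_1$-quantity $|I|^{-1}\int_I|b(x)-\alpha_{\hat I}(b)|\,dx$, and the only inequality needed is the \emph{pointwise comparison}
\[
|b(x)-\alpha_{\hat I}(b)|\le 2|b(x)-b(\hat x)|\qquad (x\in E_s^I,\ \hat x\in F_s^I),
\]
which follows because both $b(x)-\alpha_{\hat I}(b)$ and $\alpha_{\hat I}(b)-b(\hat x)$ lie in the same quadrant after rotation. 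This converts the $L_1$-oscillation into a double integral $\iint|b(x)-b(\hat x)|$, which is then aligned with $K(\hat x,x)$ via Lemma~\ref{ball} and recognised as $|\langle e_I,[T,M_b]f_I\rangle|$. No pointwise lower bound against $O_p(b,I)$ is ever invoked; the $p$-th power appears only after summing in $I$ via Lemma~\ref{RSNWO}. Your framework with Lemmas~\ref{Domegan1}--\ref{Domegan2} for the final summation is fine and matches the paper, but the core extraction step must be rewritten along these lines.
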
	

\begin{proof}
Without loss of generality, we assume that $\omega=0$. Recall that
	\begin{equation*}
		\begin{aligned}
			\|b\|^p_{\pmb{B}_p^{0,2^n}(\mathbb{R}^n)}{}&=\sum_{I\in\mathcal{D}^0}\sum_{i=1}^{2^n-1}|I|^{-\frac{p}{2}}|\langle h_I^i,b\rangle|^p.
		\end{aligned}
	\end{equation*}
	For any given $k\in \mathbb{Z}$ and $I\in \mathcal{D}_k^0$, let $c(I)$ be the center of $I$. Let $B=B(c(I),2^{-k}\sqrt{n})$, then $I\subset B$. Due to Lemma \ref{ball}, for any given $A$ which is much greater than $n$, there is a disjoint ball $\tilde{B}=B(y_0,2^{-k}\sqrt{n})$ at distance $\mathrm{dist}(B,\tilde{B})\approx_n 2^{-k}A$, such that
	\begin{equation}\label{minus1}
		|K(y_0,c(I))|\approx_{n,T} \frac{1}{A^n|I|}.
	\end{equation}
	Then we choose a cube $\hat{I}\in \mathcal{D}^0_k$ such that $\hat{I}\subset \tilde{B}$ and $y_0\in \hat{I}$. It is not hard to find that $\mathrm{dist}(I,\hat{I})\approx_n 2^{-k}A$. In the following, we will always assume that $A$ is a sufficiently large number.
	
	By Theorem \ref{divideS}, there exist $\theta={\theta}(\hat{I},b)\in [0,2\pi)$ and $\alpha_{\hat{I}}(b)\in\mathbb{C}$ such that
	if we denote
	\begin{equation*}
		\begin{aligned}
		F_s^I=\bigg\{{x}\in \hat{I}: -\frac{\pi}{4}+\frac{(s-1)\pi}{2}\le \mathrm{arg}\Big(e^{\mathrm{i}\theta}\big(\alpha_{\hat{I}}(b)-b({x})\big)\Big)\le -\frac{\pi}{4}+\frac{s\pi}{2} \quad \text{or}\quad b({x})=\alpha_{
		\hat{I}}(b)\bigg\},
	    \end{aligned}
	\end{equation*}
	where $s\in\{1,2,3,4\}$, and $\mathrm{arg}(z)$ is the argument of a complex number $z$, then $|F_s^I|\ge \frac{1}{16}|\hat{I}|$, and $F_1^I \cup F_2^I \cup F_3^I \cup F_4^I=\hat{I}$.  Note that for any $1\le i\le 2^n-1$,
	\begin{equation*}
		\begin{aligned}
			|I|^{-\frac{1}{2}} |\langle h_I^i,b\rangle|{}
			&=|I|^{-\frac{1}{2}}\bigg|\int_I \overline{h_I^i(x)}b(x)dx\bigg|\\
			&=|I|^{-\frac{1}{2}}\bigg|\int_I \overline{h_I^i(x)}\big(b(x)-\alpha_{\hat{I}}(b)\big)dx\bigg|\\
			&\le \frac{1}{|I|}\int_I |b(x)-\alpha_{\hat{I}}(b)|dx\\
			&=\frac{1}{|I|}\sum_{q=1}^{2^n}\int_{I(q)} |b(x)-\alpha_{\hat{I}}(b)|dx,
		\end{aligned}
	\end{equation*}
    where $I(q)$ is the $q$-th subinterval of $I$.
	Similarly, we define
\begin{equation*}
		E_s^I=\bigg\{x\in I: -\frac{\pi}{4}+\frac{(s-1)\pi}{2}\le \mathrm{arg}\Big(e^{\mathrm{i}{\theta}}\big(b({x})-\alpha_{\hat{I}}(b)\big)\Big)\le -\frac{\pi}{4}+\frac{s\pi}{2} \quad \text{or}\quad b({x})=\alpha_{
			\hat{I}}(b)\bigg\},
	\end{equation*}
	then for any $1\le i\le 2^n-1$,
	\begin{equation}\label{M1M2}
		\begin{aligned}
			|I|^{-\frac{1}{2}} |\langle h_I^i,b\rangle|{}
			&\le\frac{1}{|I|}\sum_{q=1}^{2^n}\int_{I(q)} |b(x)-\alpha_{\hat{I}}(b)|dx\\
			&\le \frac{1}{|I|}\sum_{q=1}^{2^n}\sum_{s=1}^4\int_{I(q)\cap E_s^I} |b(x)-\alpha_{\hat{I}}(b)|dx\\
			&=:\sum_{s=1}^4M_s^I.
		\end{aligned}
	\end{equation}
	Note that for any $s\in \{1,2,3,4\}$, if $x\in I(q)\cap E_s^I$ and $\hat{x}\in F_s^I$, then
	\begin{equation}\label{bxal}
		\begin{aligned}
			|b(x)-\alpha_{\hat{I}}(b)|{}&\le \big|e^{\mathrm{i}{\theta}}\big(b(x)-\alpha_{\hat{I}}(b)\big)\big|+\big|e^{\mathrm{i}{\theta}}\big(\alpha_{\hat{I}}(b)-b(\hat{x})\big)\big|\\
			&\le 2\big|e^{\mathrm{i}{\theta}}\big(b(x)-b(\hat{x})\big)\big|
			= 2|b(x)-b(\hat{x})|.
		\end{aligned}
	\end{equation}
	Thus by \eqref{minus1} and \eqref{bxal} one has
	\begin{equation*}
		\begin{aligned}
			M_s^I{}&\approx \frac{1}{|I|}\sum_{q=1}^{2^n}\int_{I(q)\cap E_s^I} |b(x)-\alpha_{\hat{I}}(b)|dx \cdot \frac{|F_s^I|}{|I|}\\
			&\approx_{n,T} \frac{A^n}{|I|}\sum_{q=1}^{2^n}\int_{I(q)\cap E_s^I} |b(x)-\alpha_{\hat{I}}(b)||K(y_0,c(I))||F_s^I|dx\\
			&= \frac{A^n}{|I|}\sum_{q=1}^{2^n}\int_{I(q)\cap E_s^I}\int_{F_s^I} |b(x)-\alpha_{\hat{I}}(b)||K(y_0,c(I))|d\hat{x}dx\\
			&\le \frac{2A^n}{|I|}\sum_{q=1}^{2^n}\int_{I(q)\cap E_s^I}\int_{F_s^I} |b(x)-b(\hat{x})||K(y_0,c(I))|d\hat{x}dx.
		\end{aligned}
	\end{equation*}
	For any $s\in \{1,2,3,4\}$, $x\in I(q)\cap E_s^I$ and $\hat{x}\in F_s^I$, from Lemma \ref{ball} we have
	\begin{equation*}
		\begin{aligned}
			|K(\hat{x},x)-K(y_0,c(I))|\lesssim_{n,T}  \frac{1}{A^{n+\alpha}|I|}.
		\end{aligned}
	\end{equation*}
	Hence $|K(\hat{x},x)|\ge \frac{1}{2}|K(y_0,c(I))|$ thanks to \eqref{minus1}. Thus
	\begin{equation*}
		\begin{aligned}
			M_s^I{}&\lesssim_{n,T} \frac{4A^n}{|I|}\sum_{q=1}^{2^n}\int_{I(q)\cap E_s^I}\int_{F_s^I} |b(x)-b(\hat{x})||K(\hat{x},x)|d\hat{x}dx.
		\end{aligned}
	\end{equation*}
	
	We first estimate $M_1^I$.
	Let $\theta_1\in [0,2\pi)$ such that $e^{i\theta_1}K(y_0,c(I))$ is  positive. Thus from Lemma \ref{ball} we obtain
	\begin{equation}\label{theta1K}
		|K(\hat{x},x)| \le 2\text{Re}\big(e^{i\theta_1}K(\hat{x},x)\big).
	\end{equation}
	Moreover, note that for any $x\in I(q)\cap E_1^I$ and $\hat{x}\in F_1^I$, the arguments of $e^{\mathrm{i}{\theta}}\big(b({x})-\alpha_{\hat{I}}(b)\big)$ and $e^{\mathrm{i}{\theta}}\big(\alpha_{\hat{I}}(b)-b(\hat{x})\big)$ both belong to $[-\frac{\pi}{4},\frac{\pi}{4}]$, thus
	\begin{equation*}
		-\frac{\pi}{4}\le \mathrm{arg}\Big(e^{\mathrm{i}{\theta}}\big(b(x)-b(\hat{x})\big)\Big)\le \frac{\pi}{4}.
	\end{equation*}
	This implies that
	\begin{equation}\label{b-b1}
		\begin{aligned}
			\big|e^{\mathrm{i}{\theta}}\big(b(x)-b(\hat{x})\big)\big|{}&
			\le 2\mathrm{Re}\Big(e^{\mathrm{i}{\theta}}\big(b(x)-b(\hat{x})\big)\Big)
		\end{aligned}
	\end{equation} 
	and
	\begin{equation}\label{imreb1}
		\Big|\mathrm{Im}\Big(e^{\mathrm{i}{\theta}}\big(b(x)-b(\hat{x})\big)\Big)\Big|\le \mathrm{Re}\Big(e^{\mathrm{i}{\theta}}\big(b(x)-b(\hat{x})\big)\Big).
	\end{equation}
	Thus from \eqref{theta1K} and \eqref{b-b1} we deduce that
	\begin{equation*}
		\begin{aligned}
			M_1^I{}&
			\lesssim_{n,T}\frac{4A^n}{|I|}\sum_{q=1}^{2^n}\int_{I(q)\cap E_1^I}\int_{F_1^I} \big|e^{\mathrm{i}{\theta}}\big(b(x)-b(\hat{x})\big)\big||K(\hat{x},x)|d\hat{x}dx\\
			&\le \frac{8A^n}{|I|}\sum_{q=1}^{2^n}\int_{I(q)\cap E_1^I}\int_{F_1^I} \mathrm{Re}\Big(e^{\mathrm{i}{\theta}}\big(b(x)-b(\hat{x})\big)\Big)|K(\hat{x},x)|d\hat{x}dx\\
			&\le  \frac{16 A^n}{|I|}\sum_{q=1}^{2^n}\int_{I(q)\cap E_1^I}\int_{F_1^I} \mathrm{Re}\Big(e^{\mathrm{i}{\theta}}\big(b(x)-b(\hat{x})\big)\Big)\cdot\text{Re}\big(e^{i\theta_1}K(\hat{x},x)\big)d\hat{x}dx.
		\end{aligned}
	\end{equation*}
	Notice that from Lemma \ref{ball} one has 
	\begin{equation}\label{eitheta1}
		\big|\mathrm{Im}\big(e^{i\theta_1}K(\hat{x},x)\big)\big|\le \varrho\mathrm{Re}\big(e^{i\theta_1}K(\hat{x},x)\big),
	\end{equation}
	where $\varrho$ is a positive number which depends on $A$, $\alpha$ and $n$ and is much less than $1$.
	Then from \eqref{imreb1} and \eqref{eitheta1} we derive that
	\begin{equation*}
		\begin{aligned}
			\mathrm{Re}\Big(e^{\mathrm{i}{\theta}}\big(b(x)-b(\hat{x})\big)\Big)\cdot\text{Re}\big(e^{i\theta_1}K(\hat{x},x)\big)
			&\le 2\mathrm{Re}\Big(e^{\mathrm{i}{\theta}}\big(b(x)-b(\hat{x})\big)\Big)\cdot\text{Re}\big(e^{i\theta_1}K(\hat{x},x)\big)\\
			& \quad \quad \quad \quad-2\mathrm{Im}\Big(e^{\mathrm{i}{\theta}}\big(b(x)-b(\hat{x})\big)\Big)\cdot\text{Im}\big(e^{i\theta_1}K(\hat{x},x)\big)\\
			&=2\mathrm{Re}\Big(e^{\mathrm{i}{\theta}}(b(x)-b(\hat{x}))\cdot e^{i\theta_1}K(\hat{x},x)\Big).
		\end{aligned}
	\end{equation*}
	Hence
	\begin{equation*}\label{M1I}
		\begin{aligned}
			M_1^I{}&
			\lesssim_{n,T} \frac{32A^n}{|I|}\sum_{q=1}^{2^n}\int_{I(q)\cap E_1^I}\int_{F_1^I} \mathrm{Re}\Big(e^{\mathrm{i}{\theta}}(b(x)-b(\hat{x}))\cdot e^{i\theta_1}K(\hat{x},x)\Big)d\hat{x}dx\\
			&\le \frac{32A^n}{|I|}\sum_{q=1}^{2^n}\bigg|\int_{I(q)\cap E_1^I}\int_{F_1^I} e^{\mathrm{i}{\theta}}(b(x)-b(\hat{x}))\cdot e^{i\theta_1}K(\hat{x},x)d\hat{x}dx\bigg|\\
			&=\frac{32A^n}{|I|}\sum_{q=1}^{2^n}\bigg|\int_{I(q)\cap E_1^I}\int_{F_1^I} (b(x)-b(\hat{x}))K(\hat{x},x)d\hat{x}dx\bigg|.
		\end{aligned}
	\end{equation*}
Similarly, the other three terms $M_2^I$, $M_3^I$ and $M_4^I$ can be dealt with in the same way as $M_1^I$ by rotation, and we obtain
	\begin{equation}\label{M2I}
		\begin{aligned}
			M_s^I{}&
			\lesssim_{n,T} \frac{A^n}{|I|}\sum_{q=1}^{2^n}\bigg|\int_{I(q)\cap E_s^I}\int_{F_s^I} (b(x)-b(\hat{x}))K(\hat{x},x)d\hat{x}dx\bigg|,\quad s\in\{1,2,3,4\}.
		\end{aligned}
	\end{equation}
	Hence from \eqref{M1M2} and \eqref{M2I} one has
	\begin{equation}\label{second2}
		\begin{aligned}
			\|b\|^p_{\pmb{B}_p^{0,2^n}(\mathbb{R}^n)}{}
			&= \sum_{k\in\mathbb{Z}}\sum_{I\in\mathcal{D}^0_k}\sum_{i=1}^{2^n-1} |I|^{-\frac{p}{2}} |\langle h_I^i,b\rangle|^p\\
			&\le (2^n-1) \cdot 4^{p-1} \sum_{I\in\mathcal{D}^0}\sum_{s=1}^4 (M_s^I)^p\\
			&\lesssim_{n,p,T} A^{np}\sum_{s=1}^4\sum_{q=1}^{2^n}\sum_{I\in\mathcal{D}^0}\bigg|\Big\langle \frac{|I(q)|^{\frac{1}{2}}\mathbbm{1}_{F_s^I}}{|I|}, [T,M_b]\frac{\mathbbm{1}_{I(q)\cap E_s^I}}{|I(q)|^{\frac{1}{2}}}\Big\rangle\bigg|^p.
		\end{aligned}
	\end{equation}
	For any $I\in\mathcal{D}^0_k$, note that $\mathrm{dist}(I,\hat{I})=2^{-k}C_nA$, where $C_n$ is a constant only depending on $n$. Let $c(\hat{I})$ be the center of $\hat{I}$. We consider the cube $Q(I)$, where the center of $Q(I)$ is $\frac{c(I)+c(\hat{I})}{2}$, and the length of $Q(I)$ is $2^{-k+1}C_nA$. This implies that $I,\hat{I}\subset Q(I)$. Besides, from Lemma \ref{Domegan1} we know that there exists some cube $J(I)\in\bigcup\limits_{i=1}^{n+1} \mathcal{D}^{\omega(i)}$ such that
	\begin{equation*}
		Q(I)\subseteq J(I)\subseteq c_n Q(I).
	\end{equation*}
	Notice that $I\subseteq J(I)$ and
	\begin{equation*}
		\ell(I)\leq \ell(J(I))\le c_n\ell(Q(I))\le 2c_nC_nA\ell(I).
	\end{equation*}
    Now for any $s\in\{1,2,3,4\}$ and $1\le q\le 2^n$, let
	\begin{equation*}
		e_{J(I),s,q}=\frac{|I(q)|^{\frac{1}{2}}\mathbbm{1}_{F_s^I}}{|I|} \quad \text{and} \quad f_{J(I),s,q}=\frac{\mathbbm{1}_{I(q)\cap E_s^I}}{|I(q)|^{\frac{1}{2}}}.
	\end{equation*}
	Then  $\mathrm{supp}e_{J(I),s,q},\,\mathrm{supp}f_{J(I),s,q}\subseteq J(I)$ and $\|e_{J(I),s,q}\|_{\infty},\,\|f_{J(I),s,q}\|_{\infty}\le C|J(I)|^{-\frac{1}{2}}$, where 
	the constant $C$ only depends on $n$ and $A$. Note that from \eqref{second2} one has
	\begin{align*}
		\|b\|^p_{\pmb{B}_p^{0,2^n}(\mathbb{R}^n)}&\lesssim_{n,p,T}A^{np}\sum_{s=1}^4\sum_{q=1}^{2^n}\sum_{I\in\mathcal{D}^0}\Big|\big\langle e_{J(I),s,q}, [T,M_b](f_{J(I),s,q})\big\rangle\Big|^p.
	\end{align*}
	From Lemma \ref{Domegan2}, each $J(I)$ contains only a finite number of dyadic cubes in $\mathcal{D}_k^0$, and this number only depends on $n$ and $A$. Therefore, from Lemma \ref{RSNWO} we get
	\begin{equation*}
		\begin{aligned}
			\|b\|_{\pmb{B}_p^{0,2^n}(\mathbb{R}^n)}
			\lesssim_{n,p,T} \|[T,M_b]\|_{S_p(L_2(\mathbb{R}^n))},
		\end{aligned}
	\end{equation*}
	as long as we fix $A$. (Here we apply Lemma \ref{RSNWO} for dyadic systems $ \mathcal{D}^{\omega(i)}$.)
\end{proof}

\smallskip

\subsection{Proof of Theorem \ref{Converse}}


As mentioned before, we will show that for $1<p<\infty$, the operator-valued Besov space $\pmb{B}_p(\mathbb{R}^n,L_p(\mathcal{M}))$ is the intersection of finite well-chosen martingale Besov spaces, where the number of chosen martingales only depends on $n$. We will use the dyadic covering result in Lemma \ref{Domegan1}.
\begin{prop}\label{pdayun}
	Let $1<p<\infty$. Let $\mathcal{D}^{\omega(1)}, \mathcal{D}^{\omega(2)}, \cdots, \mathcal{D}^{\omega(n+1)}$ be $n+1$ dyadic systems as in Lemma \ref{Domegan1}. Then
	$$  \pmb{B}_p(\mathbb{R}^n,L_p(\mathcal{M}))=\bigcap_{i=1}^{n+1}  \pmb{B}_p^{\omega(i),2^n}(\mathbb{R}^n,\mathcal{M}), $$
	where the norm for the intersection on the right hand side is the maximum of the involved norms.
\end{prop}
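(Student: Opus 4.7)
The plan is to handle the two inclusions separately. The inclusion $\pmb{B}_p(\mathbb{R}^n,L_p(\mathcal{M})) \subseteq \bigcap_{i=1}^{n+1} \pmb{B}_p^{\omega(i),2^n}(\mathbb{R}^n,\mathcal{M})$ is immediate from Lemma \ref{Comparison}: applied with each $\omega = \omega(i)$, it yields $\|b\|_{\pmb{B}_p^{\omega(i),2^n}(\mathbb{R}^n,\mathcal{M})} \lesssim_n \|b\|_{\pmb{B}_p(\mathbb{R}^n,L_p(\mathcal{M}))}$, so the maximum over $i$ is controlled.

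The substantive direction is the reverse inclusion. The strategy is to convert the double integral defining the Besov seminorm into a sum over dyadic cubes from the $n+1$ systems, then localize inside each cube to the corresponding martingale quantity. To every pair $(x,y) \in \mathbb{R}^n\times\mathbb{R}^n$ with $x\neq y$, I associate the cube $B(x,y)$ centered at $\frac{x+y}{2}$ with side length $2|x-y|$. By Lemma \ref{Domegan1}, there exist $i(x,y)\in\{1,\dots,n+1\}$ and $Q(x,y)\in\mathcal{D}^{\omega(i(x,y))}$ such that $B(x,y)\subseteq Q(x,y)\subseteq c_n B(x,y)$; in particular $x,y\in Q(x,y)$ and $|Q(x,y)|\approx_n |x-y|^n$, whence $|x-y|^{-2n}\lesssim_n |Q(x,y)|^{-2}$. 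Partitioning the integration domain according to the value of $(i(x,y),Q(x,y))$ gives
\begin{equation*}
\int_{\mathbb{R}^n\times\mathbb{R}^n}\frac{\|b(x)-b(y)\|_{L_p(\mathcal{M})}^p}{|x-y|^{2n}}\,dx\,dy \;\lesssim_n\; \sum_{i=1}^{n+1}\sum_{Q\in\mathcal{D}^{\omega(i)}}\frac{1}{|Q|^2}\int_{Q\times Q}\|b(x)-b(y)\|_{L_p(\mathcal{M})}^p\,dx\,dy.
\end{equation*}

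To bound the inner sum for a fixed $i$, I would fix $k\in\mathbb{Z}$ and $Q\in\mathcal{D}^{\omega(i)}_k$, and observe that the conditional expectation $b_k$ is constant on $Q$, so that for $x,y\in Q$ one has $b(x)-b(y)=(b(x)-b_k(x))-(b(y)-b_k(y))$. The elementary inequality $\|u-v\|^p\leq 2^{p-1}(\|u\|^p+\|v\|^p)$ then yields
\begin{equation*}
\sum_{Q\in\mathcal{D}^{\omega(i)}_k}\frac{1}{|Q|^2}\int_{Q\times Q}\|b(x)-b(y)\|_{L_p(\mathcal{M})}^p\,dx\,dy \;\leq\; 2^p\cdot 2^{nk}\,\|b-b_k\|_{L_p(\mathbb{R}^n,L_p(\mathcal{M}))}^p.
\end{equation*}
Summing over $k$ and invoking the $\mathbb{R}^n$ analog of Proposition \ref{equbbk} with $d=2^n$ bounds the total by a constant multiple of $\|b\|_{\pmb{B}_p^{\omega(i),2^n}(\mathbb{R}^n,\mathcal{M})}^p$. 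Combining this with the previous display over $i=1,\dots,n+1$ gives $\|b\|_{\pmb{B}_p(\mathbb{R}^n,L_p(\mathcal{M}))}^p \lesssim_{n,p} \sum_{i=1}^{n+1}\|b\|_{\pmb{B}_p^{\omega(i),2^n}(\mathbb{R}^n,\mathcal{M})}^p$, which completes the proof of the equivalence of norms.

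The main obstacle is the geometric covering step: a single translated dyadic system cannot, in general, provide a dyadic cube of comparable size containing both $x$ and $y$ for every pair. This is precisely where Lemma \ref{Domegan1} enters and forces the number $n+1$ into the statement. Once the covering is secured, the remaining local-to-global comparison between the continuous Besov seminorm and the dyadic martingale Besov norm is routine and does not require the noncommutative structure of $\mathcal{M}$ in any essential way.
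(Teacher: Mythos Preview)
Your proof is correct and follows essentially the same route as the paper's: both invoke Lemma \ref{Comparison} for the easy inclusion, and for the hard one both reduce to bounding $\sum_{i}\sum_{Q\in\mathcal{D}^{\omega(i)}}|Q|^{-2}\int_{Q\times Q}\|b(x)-b(y)\|_{L_p(\mathcal{M})}^p\,dx\,dy$ and then localize via $b(x)-b(y)=(b(x)-b_k(x))-(b(y)-b_k(y))$ together with Proposition \ref{equbbk}. The only difference is organizational: the paper first decomposes the Besov integral into dyadic annuli $2^{-(k+2)}<|x-y|\le 2^{-(k+1)}$ and partitions one variable by the standard grid $\mathcal{D}^0_k$, then covers the doubled cube $2I$ using Lemma \ref{Domegan1} and controls the resulting multiplicity via Lemma \ref{Domegan2}; you instead assign each pair $(x,y)$ directly to a single dyadic cube $Q(x,y)$ by a choice function, which makes the argument slightly more streamlined and bypasses the need for Lemma \ref{Domegan2} altogether.
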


\begin{proof}
	By the standard limit argument, it suffices to show that if $b$ is a locally integrable $L_p(\M)$-valued function, and $b\in \pmb{B}_p^{\omega(i),2^n}(\mathbb{R}^n,\mathcal{M})$ for any $1\le i\le n+1$, then
	  \begin{equation}\label{hzhang}
		\|b\|_{\pmb{B}_p(\mathbb{R}^n,L_p(\mathcal{M}))}\approx_{n,p} \sum_{i=1}^{n+1} \|b\|_{\pmb{B}_p^{\omega(i),2^n}(\mathbb{R}^n,\mathcal{M})}.
	\end{equation}
	Note that
	\begin{equation*}
		\begin{aligned}
			\|b\|^p_{\pmb{B}_p(\mathbb{R}^n,L_p(\mathcal{M}))}{}&=\int_{\mathbb{R}^n\times\mathbb{R}^n} \frac{\|b(x)-b(y)\|^p_{L_p(\mathcal{M})}}{|x-y|^{2n}}dxdy\\
			&= \sum_{k\in \mathbb{Z}}\int_{2^{-(k+2)}<|x-y|\le 2^{-(k+1)}} \frac{\|b(x)-b(y)\|^p_{L_p(\mathcal{M})}}{|x-y|^{2n}}dxdy\\
			&\approx_{n} \sum_{k\in \mathbb{Z}}\int_{2^{-(k+2)}<|x-y|\le 2^{-(k+1)}} 2^{2nk}\|b(x)-b(y)\|^p_{L_p(\mathcal{M})}dxdy\\
			&=\sum_{k\in \mathbb{Z}}\sum_{I\in \mathcal{D}^0_k}\frac{1}{|I|^2}\int_{I}\int_{2^{-(k+2)}<|x-y|\le 2^{-(k+1)}} \|b(x)-b(y)\|^p_{L_p(\mathcal{M})}dxdy.
		\end{aligned}
	\end{equation*}
	For any given $k\in \mathbb{Z}$ and $I\in\mathcal{D}^0_k$, we consider the cube $2I$. From Lemma \ref{Domegan1}, we know that there exists $Q(I)\in\bigcup\limits_{i=1}^{n+1} \mathcal{D}^{\omega(i)}$ such that
	\begin{equation*}
		2I\subseteq Q(I)\subseteq c_n\cdot 2I.
	\end{equation*}
	Besides, note that
	\begin{equation*}
		\ell(Q(I))\le 2c_n\ell(I)=c_n2^{-k+1}.
	\end{equation*}
From Lemma \ref{Domegan2} it implies that $Q(I)$ contains only a finite number of dyadic cubes in $\mathcal{D}_k^0$, and the number only depends on $n$.
	Thus
	\begin{equation*}
		\begin{aligned}
			{}&\frac{1}{|I|^2}\int_{I}\int_{2^{-(k+2)}<|x-y|\le 2^{-(k+1)}} \|b(x)-b(y)\|^p_{L_p(\mathcal{M})}dxdy\\
			&\lesssim_n \frac{1}{|Q(I)|^2}\int_{Q(I)\times Q(I)} \|b(x)-b(y)\|^p_{L_p(\mathcal{M})}dxdy.
		\end{aligned}
	\end{equation*}
	Then
	\begin{equation*}
		\begin{aligned}
			\|b\|^p_{\pmb{B}_p(\mathbb{R}^n,L_p(\mathcal{M}))}{}&
			\lesssim_n \sum_{k\in \mathbb{Z}}\sum_{I\in \mathcal{D}^0_k}\frac{1}{|Q(I)|^2}\int_{Q(I)\times Q(I)} \|b(x)-b(y)\|^p_{L_p(\mathcal{M})}dxdy\\
			&\lesssim_n\sum_{i=1}^{n+1}\sum_{Q\in \mathcal{D}^{\omega(i)}}\frac{1}{|Q|^2}\int_{Q\times Q} \|b(x)-b(y)\|^p_{L_p(\mathcal{M})}dxdy.
		\end{aligned}
	\end{equation*}
	Note that for any given $1\le i\le n+1$ and $Q\in \mathcal{D}^{\omega(i)}$,
	\begin{equation*}
		\begin{aligned}
			  {}&\int_{Q\times Q} \|b(x)-b(y)\|^p_{L_p(\mathcal{M})}dxdy\\
			  &=\int_{Q\times Q} \bigg\|b(x)-\bigg\langle \frac{\mathbbm{1}_Q}{|Q|},b\bigg\rangle+\bigg\langle \frac{\mathbbm{1}_Q}{|Q|},b\bigg\rangle-b(y)\bigg\|^p_{L_p(\mathcal{M})}dxdy\\
			  &\lesssim_p |Q|\int_{Q} \bigg\|b(x)-\bigg\langle \frac{\mathbbm{1}_Q}{|Q|},b\bigg\rangle\bigg\|^p_{L_p(\mathcal{M})}dx.
		\end{aligned}
	\end{equation*}
	Then from Proposition \ref{equbbk} we obtain
	\begin{equation*}
		\begin{aligned}
			\|b\|^p_{\pmb{B}_p(\mathbb{R}^n,L_p(\mathcal{M}))}
			&\lesssim_{n,p} \sum_{i=1}^{n+1}\sum_{Q\in \mathcal{D}^{\omega(i)}}\frac{1}{|Q|}\int_{Q} \bigg\|b(x)-\bigg\langle \frac{\mathbbm{1}_Q}{|Q|},b\bigg\rangle\bigg\|^p_{L_p(\mathcal{M})}dx\\
			&=\sum_{i=1}^{n+1}\sum_{k\in\mathbb{Z}}\sum_{Q\in \mathcal{D}_k^{\omega(i)}}\frac{1}{|Q|}\int_{Q} \|b(x)-b_k^{\omega(i)}(x)\|^p_{L_p(\mathcal{M})}dx\\
			&=\sum_{i=1}^{n+1}\sum_{k\in\mathbb{Z}} 2^{nk}\|b-b_k^{\omega(i)}\|^p_{L_p(\mathbb{R}^n,L_p(\mathcal{M}))}\approx_{n,p} \sum_{i=1}^{n+1}\|b\|^p_{\pmb{B}_p^{\omega(i),2^n}(\mathbb{R}^n,\mathcal{M})},
		\end{aligned}
	\end{equation*}
where $$b_k^{\omega(i)}=\sum_{Q\in\mathcal{D}_k^{\omega(i)}}\bigg\langle \frac{\mathbbm{1}_Q}{|Q|},b\bigg\rangle \mathbbm{1}_Q.$$
Therefore, from Lemma \ref{Comparison} and the above inequality, we derive \eqref{hzhang} as desired.
\end{proof}

	 
Now we give the proof of Theorem \ref{Converse}.
\begin{proof}[Proof of Theorem \ref{Converse}]
 For any $\omega\in(\{0,1\}^n)^\mathbb{Z}$, from Lemma \ref{conv1} one has
	\begin{equation*}
		\|b\|_{\pmb{B}_p^{\omega,2^n}(\mathbb{R}^n)}\lesssim_{n,p,T} \|[T,M_{b}]\|_{S_p(L_2(\mathbb{R}^n))}<\infty.
	\end{equation*}
	Hence from Proposition \ref{pdayun} we obtain that $b\in \pmb{B}_p(\mathbb{R}^n)$ and
	\begin{equation*}
		\|b\|_{\pmb{B}_p(\mathbb{R}^n)}\lesssim_{n,p,T} \|[T,M_{b}]\|_{S_p(L_2(\mathbb{R}^n))}<\infty.
	\end{equation*}
	In particular, when $n\geq 2$ and $0<p\le n$, if $[T,M_{b}]\in S_p(L_2(\mathbb{R}^n))$, then $[T,M_{b}]\in S_n(L_2(\mathbb{R}^n))$ as $S_p(L_2(\mathbb{R}^n))\subset S_q(L_2(\mathbb{R}^n))$ for $0<p\leq q\leq \8$. This implies that $b\in \pmb{B}_n(\mathbb{R}^n)$. Hence by Proposition \ref{0pn}, we see that $b$ is constant.	
\end{proof}

\bigskip

\section{Proof of Theorem \ref{nonschatten}}\label{noncomschattenconv}

This section aims to describe operator-valued Besov space  $\pmb{B}_p(\mathbb{R}^n, L_p(\mathcal{M}))$ in terms of the Schatten class membership of operator-valued commutators. Our goal is to prove Theorem \ref{nonschatten}. Without doubt, this semicommutative setting is much more involved. Similar to the previous section, we first show that $[T,M_b]\in L_p(B(L_2(\mathbb{R}^n))\otimes \mathcal{M})$ implies $b\in \pmb{B}_p^{\omega,2^n}(\mathbb{R}^n,\mathcal{M})$ (see Lemma \ref{nonconv1}). Our main ingredients are the duality method, Lemma \ref{conv1} and the semicommutative version of Lemma \ref{RSNWO}. Then Theorem \ref{nonschatten} is derived from Proposition \ref{pdayun}. We proceed with the proof of Theorem \ref{nonRSNWO}, the semicommutative version of Lemma \ref{RSNWO}.
\begin{thm}\label{nonRSNWO}
	Let $1<p<\infty$. Assume that $\{e_I\}_{I\in\mathcal{D}}$ and $\{f_I\}_{I\in\mathcal{D}}$ are function sequences in $L_2(\mathbb{R}^n)$ satisfying $\mathrm{supp}e_I, \,\mathrm{supp}f_I\subseteq I$ and
	$\|e_I\|_{\infty},\,\|f_I\|_{\infty}\le |I|^{-\frac{1}{2}}$.  For any $V\in L_p(B(L_2(\mathbb{R}^n))\otimes \mathcal{M})$, one has
	\begin{equation*}
		\begin{aligned}
			\sum_{I\in\mathcal{D}}\big\|\langle e_I,V(f_I)\rangle\big\|^p_{L_p(\mathcal{M})}\lesssim_{n,p} \|V\|^p_{L_p(B(L_2(\mathbb{R}^n))\otimes \mathcal{M})},
		\end{aligned}
	\end{equation*}
	where $\langle e_I,V(f_I)\rangle= \mathrm{Tr}\otimes Id_{L_p(\M)}  (V\cdot (f_I\otimes e_I) \otimes 1_\M)$ is a partial trace.
\end{thm}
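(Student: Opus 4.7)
The plan is to prove the statement by duality, reducing it to a semicommutative ``upper bound'' estimate for sums of the form $\sum_I \lambda_I \cdot (f_I \otimes e_I)$, in the spirit of Lemma \ref{nonNWOpre}. Fix $(\lambda_I)_I \subset L_{p'}(\mathcal{M})$ with $\sum_I \|\lambda_I\|_{L_{p'}(\mathcal{M})}^{p'} \le 1$. By the definition of the partial trace,
\begin{equation*}
\sum_I \tau\bigl(\lambda_I \, \langle e_I, V(f_I)\rangle\bigr) \;=\; (\mathrm{Tr}\otimes\tau)\bigl(V \cdot A\bigr),\qquad A := \sum_I \lambda_I\cdot (f_I\otimes e_I),
\end{equation*}
where the sum defining $A$ is interpreted as a finite truncation. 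By H\"older's inequality in $L_p(\mathcal{N})$, one has $|(\mathrm{Tr}\otimes\tau)(VA)| \le \|V\|_{L_p(\mathcal{N})} \|A\|_{L_{p'}(\mathcal{N})}$. Taking the supremum over such $(\lambda_I)$ and using the duality $(\ell_p(L_p(\mathcal{M})))^\ast = \ell_{p'}(L_{p'}(\mathcal{M}))$, the theorem reduces to establishing the bound
\begin{equation}\label{eq:plan-Aest}
\Bigl\|\sum_I \lambda_I\cdot (f_I\otimes e_I)\Bigr\|_{L_{p'}(\mathcal{N})}^{p'} \lesssim_{n,p} \sum_I \|\lambda_I\|_{L_{p'}(\mathcal{M})}^{p'}.
\end{equation}

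The second step is to prove \eqref{eq:plan-Aest}, for all $0<q<\infty$ in place of $p'$, by adapting verbatim the argument of Lemmas \ref{nonNWOpre1} and \ref{nonNWOpre} to the present Haar--less setting. For $0<q\le 1$ the triangle inequality for the $q$--norm together with $\|f_I\otimes e_I\|_{S_q(L_2(\mathbb{R}^n))}\le \|f_I\|_2\|e_I\|_2\le 1$ delivers the bound immediately. For $q=2k$ an even integer, expand $\|A\|_{L_q(\mathcal{N})}^q = (\mathrm{Tr}\otimes\tau)(|A|^{2k})$ as in \eqref{mp2}, apply H\"older's inequality to $\tau$ to peel off the scalars $\|\lambda_{I_s}\|_{L_q(\mathcal{M})}$, and control the remaining geometric factors using the pointwise domination $|\langle f_{I_s},f_{J_s}\rangle|\le \langle |I_s|^{-1/2}\mathbbm{1}_{I_s},|J_s|^{-1/2}\mathbbm{1}_{J_s}\rangle$ (and similarly for $e$). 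This reduces matters to the corresponding scalar Schatten estimate for the commutative operator
\begin{equation*}
B \;=\; \sum_I \|\lambda_I\|_{L_q(\mathcal{M})}\cdot \frac{\mathbbm{1}_I}{|I|^{1/2}}\otimes \frac{\mathbbm{1}_I}{|I|^{1/2}},
\end{equation*}
namely $\|B\|_{S_q(L_2(\mathbb{R}^n))}^q \lesssim_{n,q} \sum_I \|\lambda_I\|_{L_q(\mathcal{M})}^q$. This scalar bound is the $n$--dimensional analogue of Lemma \ref{nonNWOpre1}: repeating the $S_{2k}$--calculation once more and applying Theorem \ref{thm1.2} with $\mathcal{M}=\mathbb{C}$, $d=2^n$, to the dyadic martingale paraproduct with symbol having Haar coefficients $\|\lambda_I\|_{L_q(\mathcal{M})}^{1/2}$, one obtains the estimate for all even integers, and complex interpolation extends it to all $q>0$.

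Having the estimate at even integers and at $0<q\le 1$, standard complex interpolation on the scale of noncommutative $L_q$--spaces (exactly as at the end of the proof of Lemma \ref{nonNWOpre}) yields \eqref{eq:plan-Aest} for all $q\in(0,\infty)$, and specializing to $q=p'\in(1,\infty)$ completes the proof. As a byproduct, taking $\mathcal{M}=\mathbb{C}$ throughout gives a self--contained proof of the scalar Rochberg--Semmes lemma \ref{RSNWO}.

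The main obstacle is the scalar commutative estimate $\|B\|_{S_q(L_2(\mathbb{R}^n))}^q \lesssim \sum_I \|\lambda_I\|^q$ for a general dyadic family $\mathcal{D}$ in $\mathbb{R}^n$ without any orthogonality between distinct $f_I\otimes e_I$. This is precisely where one must invoke the $d$--adic martingale paraproduct machinery of Theorem \ref{thm1.2}: the cubes in $\mathcal{D}$ at a fixed scale are disjoint, and Haar--type cancellation at each generation is used to reduce a four--fold iterated trace sum to an $S_{2p}$--norm of a martingale paraproduct, which is controlled by the martingale Besov space in Definition \ref{mbs1}. Once this scalar building block is in place, the passage to the semicommutative setting is routine through H\"older on $\tau$ as in Lemma \ref{nonNWOpre}.
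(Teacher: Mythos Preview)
Your proposal is correct and follows essentially the same route as the paper: the paper's proof is precisely the duality argument you describe, invoking Lemma \ref{nonNWOpre} (the semicommutative upper bound for $A=\sum_I \lambda_I\cdot f_I\otimes e_I$) to control $\|A\|_{L_{p'}(\mathcal{N})}$ and then concluding by the $\ell_p(L_p(\mathcal{M}))$--$\ell_{p'}(L_{p'}(\mathcal{M}))$ duality. The only difference is cosmetic: the paper cites Lemma \ref{nonNWOpre} directly, whereas you re-expand its proof (the even-integer expansion, reduction to the scalar paraproduct via Theorem \ref{thm1.2} with $d=2^n$, and interpolation) inside the argument.
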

\begin{proof}
	For any sequence $\{\lambda_I\}_{I\in\mathcal{D}}\subset L_{p'}(\mathcal{M})$, let
	$$A=\sum_{I\in \mathcal{D}}\lambda_I\cdot f_I\otimes e_I. $$
	From Lemma \ref{nonNWOpre} one has
	\begin{equation*}
		\begin{aligned}
			\tau\bigg(\sum_{I\in \mathcal{D}}\lambda_I\langle e_I,V(f_I)\rangle\bigg){}&=(\mathrm{Tr}\otimes \tau)(AV)\\
			&\le \|A\|_{L_{p'}(B(L_2(\mathbb{R}^n))\otimes \mathcal{M})}\|V\|_{L_p(B(L_2(\mathbb{R}^n))\otimes \mathcal{M})}\\
			&\lesssim_{n, p} \biggl(\sum_{I\in\mathcal{D}}\|\lambda_I\|^{p'}_{L_{p'}(\mathcal{M})}\biggr)^{1/p'}\|V\|_{L_p(B(L_2(\mathbb{R}^n))\otimes \mathcal{M})}.
		\end{aligned}
	\end{equation*}
	Hence by duality we obtain
	\begin{equation*}
		\sum_{I\in\mathcal{D}}\big\|\langle e_I,V(f_I)\rangle\big\|^p_{L_p(\mathcal{M})}\lesssim_{n,p} \|V\|^p_{L_p(B(L_2(\mathbb{R}^n))\otimes \mathcal{M})},
	\end{equation*}
	as desired.
\end{proof}

We proceed with the following main lemma, which reveals the relationship between $\|b\|_{\pmb{B}_p^{\omega,2^n}(\mathbb{R}^n,\mathcal{M})}$ and $\|[T,M_b]\|_{L_p(B(L_2(\mathbb{R}^n))\otimes \mathcal{M})}$.

\begin{lemma}\label{nonconv1}
	Let $1<p<\infty$ and $T\in B(L_2(\mathbb{R}^n))$ be a singular integral operator with a non-degenerate kernel $K(x,y)$ satisfying standard kernel estimates \eqref{standard}. Suppose that $b$ is a locally integrable $L_p(\mathcal{M})$-valued function. If $[T,M_b]\in L_p(B(L_2(\mathbb{R}^n))\otimes \mathcal{M})$, then for any $\omega\in(\{0,1\}^n)^\mathbb{Z}$,
	\begin{equation*}
		\begin{aligned}
			\|b\|_{\pmb{B}_p^{\omega,2^n}(\mathbb{R}^n,\mathcal{M})}\lesssim_{n,p,T}\|[T,M_b]\|_{L_p(B(L_2(\mathbb{R}^n))\otimes \mathcal{M})}.
		\end{aligned}
	\end{equation*}	
\end{lemma}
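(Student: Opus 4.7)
The plan is to use duality to reduce this semicommutative estimate to the scalar case handled by Lemma \ref{conv1}, and then package the resulting bound via the semicommutative Rochberg--Semmes inequality in Theorem \ref{nonRSNWO}. For each pair $(I,\eta) \in \mathcal{D}^\omega \times \{0,1\}^n_0$, I would use the duality $L_p(\mathcal{M})^* = L_{p'}(\mathcal{M})$ to choose a norming element $a_{I,\eta} \in L_{p'}(\mathcal{M})$ with $\|a_{I,\eta}\|_{L_{p'}(\mathcal{M})} = 1$ satisfying
$$\|\langle H_I^\eta, b\rangle\|_{L_p(\mathcal{M})} = \tau\bigl(a_{I,\eta}^* \langle H_I^\eta, b\rangle\bigr) = \int_{\mathbb{R}^n} \overline{H_I^\eta(x)}\, \tilde{b}_{I,\eta}(x)\, dx,$$
where $\tilde{b}_{I,\eta}(x) := \tau\bigl(a_{I,\eta}^* b(x)\bigr)$ is a locally integrable complex-valued function. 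This reduces the estimation of $\|\langle H_I^\eta, b\rangle\|_{L_p(\mathcal{M})}$ to the estimation of a scalar Haar coefficient of $\tilde{b}_{I,\eta}$.

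Next I would apply the argument of Lemma \ref{conv1}, built on Theorem \ref{divideS} and the non-degenerate kernel estimates from Lemma \ref{ball}, directly to the scalar function $\tilde{b}_{I,\eta}$ on the companion cube $\hat{I}$. This yields partitions $I = \bigcup_{s=1}^4 E_s^{I,\eta}$ and $\hat{I} = \bigcup_{s=1}^4 F_s^{I,\eta}$ (all depending on $a_{I,\eta}$) for which the scalar bound
$$|I|^{-1/2}\, |\langle H_I^\eta, \tilde{b}_{I,\eta}\rangle| \lesssim_{n,T} \frac{A^n}{|I|} \sum_{s=1}^{4}\sum_{q=1}^{2^n} \left| \int_{I(q) \cap E_s^{I,\eta}} \int_{F_s^{I,\eta}} \bigl(\tilde{b}_{I,\eta}(x) - \tilde{b}_{I,\eta}(y)\bigr) K(y,x)\, dy\, dx \right|$$
holds for all sufficiently large $A$.

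The crucial observation is that each inner integral can be recast as a partial trace of the operator-valued commutator: by linearity,
$$\int\!\!\int \bigl(\tilde{b}_{I,\eta}(x) - \tilde{b}_{I,\eta}(y)\bigr) K(y,x)\, dy\, dx = \tau\Bigl(a_{I,\eta}^* \bigl\langle \mathbbm{1}_{F_s^{I,\eta}},\, [T, M_b]\bigl(\mathbbm{1}_{I(q) \cap E_s^{I,\eta}}\bigr)\bigr\rangle\Bigr),$$
which, by H\"older's inequality and $\|a_{I,\eta}\|_{L_{p'}(\mathcal{M})} = 1$, is bounded by $\|\langle \mathbbm{1}_{F_s^{I,\eta}},\, [T, M_b]\, \mathbbm{1}_{I(q) \cap E_s^{I,\eta}}\rangle\|_{L_p(\mathcal{M})}$. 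Raising to the $p$-th power and summing over $(I,\eta)$, I would then normalize the indicators as
$$e_{J(I),s,q}^{I,\eta} = \frac{|I(q)|^{1/2}\, \mathbbm{1}_{F_s^{I,\eta}}}{|I|}, \qquad f_{J(I),s,q}^{I,\eta} = \frac{\mathbbm{1}_{I(q) \cap E_s^{I,\eta}}}{|I(q)|^{1/2}},$$
and embed each pair $I \cup \hat{I}$ into a cube $J(I) \in \bigcup_{i=1}^{n+1} \mathcal{D}^{\omega(i)}$ of comparable size via Lemma \ref{Domegan1}. Lemma \ref{Domegan2} guarantees that each $J \in \mathcal{D}^{\omega(i)}$ serves as $J(I)$ for only boundedly many $I$ (depending on $n$ and $A$); grouping terms by $J$ and applying Theorem \ref{nonRSNWO} separately to each of the $n+1$ dyadic systems and each of the finitely many index choices $(s,q,\eta)$ will deliver the desired estimate.

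The main obstacle is to carry the complex median method --- an intrinsically scalar construction --- through the duality reduction in a way that keeps the bump function estimates uniform. The partition sets $E_s^{I,\eta}$ and $F_s^{I,\eta}$, as well as the norming elements $a_{I,\eta}$, depend on both $I$ and $\eta$, so the bump functions in the final step vary with $(I,\eta,s,q)$. The key is that Theorem \ref{nonRSNWO} is robust enough to absorb such dependencies provided only that the support and $L^\infty$ bounds are uniform, which is precisely what the enlargement $J(I)$ from Lemma \ref{Domegan1} together with the finite-multiplicity statement of Lemma \ref{Domegan2} achieves.
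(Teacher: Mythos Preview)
Your proposal is correct and follows essentially the same route as the paper's proof. The only cosmetic difference is that the paper dualizes the full $\ell_p(L_p(\mathcal{M}))$ norm against test sequences $\{\lambda_{I,i}\}$ with $\sum_{I,i}\|\lambda_{I,i}\|_{L_{p'}(\mathcal{M})}^{p'}\le 1$, whereas you pick a specific norming element $a_{I,\eta}$ for each coefficient; both reductions land on the same scalar function $G_{\lambda_{I,i},b}$ (your $\tilde b_{I,\eta}$), the same complex median decomposition with $(I,\eta)$-dependent sets $E_s,F_s$, and the same conclusion via Theorem \ref{nonRSNWO} combined with Lemmas \ref{Domegan1}--\ref{Domegan2}.
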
	

\begin{proof}
Without loss of generality, we assume that $\omega=0$. Note that
	\begin{equation*}
		\begin{aligned}
			\|b\|_{\pmb{B}_p^{0,2^n}(\mathbb{R}^n,\mathcal{M})}{}&=\bigg(\sum_{I\in\mathcal{D}^0}\sum_{i=1}^{2^n-1}|I|^{-\frac{p}{2}}\|\langle h_I^i,b\rangle\|^p_{L_p(\mathcal{M})}\bigg)^{\frac{1}{p}}.
		\end{aligned}
	\end{equation*}
    In the following, we dualize $\|b\|_{\pmb{B}_p^{0,2^n}(\mathbb{R}^n,\mathcal{M})}$ with
    \begin{equation}\label{lambdaIi1}
        \sum_{I\in\mathcal{D}^0}\sum_{i=1}^{2^n-1}\|\lambda_{I,i}\|^{p'}_{L_{p'}(\mathcal{M})}\le 1
    \end{equation}
     to consider
    \begin{equation*}
    	\begin{aligned}
    		\sum_{I\in\mathcal{D}^0}\sum_{i=1}^{2^n-1} |I|^{-\frac{1}{2}}\tau\big(\lambda_{I,i}\cdot\langle h_I^i,b\rangle \big).
    	\end{aligned}
    \end{equation*}
	Now we fix $I\in \mathcal{D}^0$ and $1\le i\le 2^n-1$. 
	Define 
	\begin{equation*}
		\begin{aligned}
			G_{\lambda_{I,i},b}(x)=\tau\big(\lambda_{I,i}\cdot b(x)\big),\quad \forall x\in \mathbb{R}^n.
		\end{aligned}
	\end{equation*}
	Then $G_{\lambda_{I,i},b}$ is a locally integrable complex-valued function. Similarly to the proof of Lemma \ref{conv1}, where $b$ is replaced by $G_{\lambda_{I,i},b}$, we prove that
	\begin{equation*}
		\begin{aligned}
			{}&|I|^{-\frac{1}{2}}\big|\tau\big(\lambda_{I,i}\cdot\langle h_I^i,b\rangle \big)\big|
			= |I|^{-\frac{1}{2}}|\langle h_I^i,G_{\lambda_{I,i},b} \rangle|\\
			&\lesssim_{n,p,T}  \sum_{s=1}^4 \frac{A^n}{|I|}\sum_{q=1}^{2^n}\bigg|\int_{I(q)\cap E_s^I}\int_{F_s^I} (G_{\lambda_{I,i},b}(x)-G_{\lambda_{I,i},b}(\hat{x}))K(\hat{x},x)d\hat{x}dx\bigg|\\
			&=\sum_{s=1}^4 \frac{A^n}{|I|}\sum_{q=1}^{2^n}\bigg|\tau\biggl(\lambda_{I,i}\cdot \int_{I(q)\cap E_s^I}\int_{F_s^I} (b(x)-b(\hat{x}))K(\hat{x},x)d\hat{x}dx\biggr)\bigg|\\
			&\leq \|\lambda_{I,i}\|_{L_{p'}(\mathcal{M})}\cdot\sum_{s=1}^4 \frac{A^n}{|I|}\sum_{q=1}^{2^n}\bigg\| \int_{I(q)\cap E_s^I}\int_{F_s^I} (b(x)-b(\hat{x}))K(\hat{x},x)d\hat{x}dx\bigg\|_{L_p(\mathcal{M})}\\
			&=\|\lambda_{I,i}\|_{L_{p'}(\mathcal{M})}\cdot A^{n}\sum_{s=1}^4\sum_{q=1}^{2^n}\bigg\|\Big\langle \frac{|I(q)|^{\frac{1}{2}}\mathbbm{1}_{F_s^I}}{|I|}, [T,M_b]\frac{\mathbbm{1}_{I(q)\cap E_s^I}}{|I(q)|^{\frac{1}{2}}}\Big\rangle\bigg\|_{L_p(\mathcal{M})},
		\end{aligned}
	\end{equation*}
	where $E_s^I$ and $F_s^I$ depend on $G_{\lambda_{I,i},b}$, and $A$ is a sufficiently large number. Thus by duality and \eqref{lambdaIi1},
	\begin{equation*}
		\begin{aligned}
			{}&\bigg|\sum_{I\in\mathcal{D}^0}\sum_{i=1}^{2^n-1} |I|^{-\frac{1}{2}}\tau\big(\lambda_{I,i}\cdot\langle h_I^i,b\rangle \big)\bigg|\\
			&\lesssim_{n, p,T} \sum_{I\in\mathcal{D}^0}\sum_{i=1}^{2^n-1}  \|\lambda_{I,i}\|_{L_{p'}(\mathcal{M})}\cdot A^{n}\sum_{s=1}^4\sum_{q=1}^{2^n}\bigg\|\Big\langle \frac{|I(q)|^{\frac{1}{2}}\mathbbm{1}_{F_s^I}}{|I|}, [T,M_b]\frac{\mathbbm{1}_{I(q)\cap E_s^I}}{|I(q)|^{\frac{1}{2}}}\Big\rangle\bigg\|_{L_p(\mathcal{M})}\\
			&\le
			\bigg(\sum_{I\in\mathcal{D}^0}\sum_{i=1}^{2^n-1}\|\lambda_{I,i}\|^{p'}_{L_{p'}(\mathcal{M})}\bigg)^{\frac{1}{p'}}
			 \cdot\\
			& \quad\quad\quad\quad\bigg(\sum_{I\in\mathcal{D}^0}\sum_{i=1}^{2^n-1}\bigg(A^{n}\sum_{s=1}^4\sum_{q=1}^{2^n}\bigg\|\Big\langle \frac{|I(q)|^{\frac{1}{2}}\mathbbm{1}_{F_s^I}}{|I|}, [T,M_b]\frac{\mathbbm{1}_{I(q)\cap E_s^I}}{|I(q)|^{\frac{1}{2}}}\Big\rangle\bigg\|_{L_p(\mathcal{M})}\bigg)^p\bigg)^{\frac{1}{p}}\\
			&\lesssim_{n,p} A^n\bigg(\sum_{I\in\mathcal{D}^0}\sum_{s=1}^4\sum_{q=1}^{2^n}\bigg\|\Big\langle \frac{|I(q)|^{\frac{1}{2}}\mathbbm{1}_{F_s^I}}{|I|}, [T,M_b]\frac{\mathbbm{1}_{I(q)\cap E_s^I}}{|I(q)|^{\frac{1}{2}}}\Big\rangle\bigg\|_{L_p(\mathcal{M})}^p\bigg)^{\frac{1}{p}}
		\end{aligned}
	\end{equation*}
	For each $I\in\mathcal{D}^0_k$, note that $\mathrm{dist}(I,\hat{I})=2^{-k}C_nA$, where $C_n$ is a constant only depending on $n$. Let $c(\hat{I})$ be the center of $\hat{I}$. We consider the cube $Q(I)$, such that the center of $Q(I)$ is $\frac{c(I)+c(\hat{I})}{2}$, and the length of $Q(I)$ is $2^{-k+1}C_nA$. This implies that $I,\hat{I}\in Q(I)$. Besides, from Lemma \ref{Domegan1} we know that there exists some cube $J(I)\in\bigcup\limits_{i=1}^{n+1} \mathcal{D}^{\omega(i)}$ such that
		\begin{equation*}
			Q(I)\subseteq J(I)\subseteq c_n Q(I).
		\end{equation*}
		Now for any $s\in\{1,2,3,4\}$ and $1\le q\le 2^n$, let
		\begin{equation*}
			e_{J(I),s,q}=\frac{|I(q)|^{\frac{1}{2}}\mathbbm{1}_{F_s^I}}{|I|} \quad \text{and} \quad f_{J(I),s,q}=\frac{\mathbbm{1}_{I(q)\cap E_s^I}}{|I(q)|^{\frac{1}{2}}}.
		\end{equation*}
		Then  $\mathrm{supp}e_{J(I),s,q},\,\mathrm{supp}f_{J(I),s,q}\subseteq J(I)$ and $\|e_{J(I),s,q}\|_{\infty},\,\|f_{J(I),s,q}\|_{\infty}\le C|J(I)|^{-\frac{1}{2}}$, where 
		the constant $C$ only depends on $n$ and $A$. Notice that each $J(I)$ contains only a finite number of dyadic cubes in $\mathcal{D}_k^0$ and does not depend on $G_{\lambda_{I,i},b}$. Hence from Theorem \ref{nonRSNWO} we get
	\begin{equation*}
		\begin{aligned}
			\bigg|\sum_{I\in\mathcal{D}^0}\sum_{i=1}^{2^n-1} |I|^{-\frac{1}{2}}\tau\big(\lambda_{I,i}\cdot\langle h_I^i,b\rangle \big)\bigg|
			\lesssim_{n,p,T} \|[T,M_b]\|_{L_p(B(L_2(\mathbb{R}^n))\otimes \mathcal{M})}.
		\end{aligned}
	\end{equation*}
    Therefore, by duality,
	\begin{equation*}
		\begin{aligned}
			\|b\|_{\pmb{B}_p^{0,2^n}(\mathbb{R}^n,\mathcal{M})}\lesssim_{n,p,T}\|[T,M_b]\|_{L_p(B(L_2(\mathbb{R}^n))\otimes \mathcal{M})},
		\end{aligned}
	\end{equation*}
	as desired.
\end{proof}


Now we give the proof of Theorem \ref{nonschatten}.
\begin{proof}[Proof of Theorem \ref{nonschatten}]
	For any $\omega\in(\{0,1\}^n)^\mathbb{Z}$, from Lemma \ref{nonconv1} one has
	\begin{equation*}
		\|b\|_{\pmb{B}_p^{\omega,2^n}(\mathbb{R}^n,\mathcal{M})}\lesssim_{n,p,T}\|[T,M_b]\|_{L_p(B(L_2(\mathbb{R}^n))\otimes \mathcal{M})}.
	\end{equation*}
	Hence from Proposition \ref{pdayun} we obtain that $b\in \pmb{B}_p(\mathbb{R}^n, L_p(\mathcal{M}))$ and
	\begin{equation*}
		\|b\|_{\pmb{B}_p(\mathbb{R}^n, L_p(\mathcal{M}))}\lesssim_{n,p,T} \|[T,M_{b}]\|_{L_p(B(L_2(\mathbb{R}^n))\otimes \mathcal{M})}.
	\end{equation*}
	In particular, when $n\geq 2$ and $1<p\leq n$, if $[T,M_b]\in L_p(B(L_2(\mathbb{R}^n))\otimes \mathcal{M})$, then $b\in \pmb{B}_p(\mathbb{R}^n, L_p(\mathcal{M}))$, and $b$ is constant by Proposition \ref{0pn}.
\end{proof}

\bigskip

\section{Proof of Theorem \ref{nonbound}}\label{noncomboundconv}

This section aims to get a lower bound of boundedness of operator-valued commutators in terms of operator-valued $BMO$ spaces which are defined in Section \ref{pre2}. To achieve this, we will establish a weak-factorization type decomposition in the semicommutative setting following a similar argument as in \cite[Lemma 2.3.1]{TH3}.

For any cube $Q\subset \mathbb{R}^n$, define
\begin{equation*}
	\begin{aligned}
		L_{1,Q}(\mathcal{M},L_2^c(\mathbb{R}^n))&=\bigg\{f\in L_1(\mathcal{M},L_2^c(\mathbb{R}^n)): \text{supp}\,f\subseteq Q\bigg\},\\
		L^0_{1,Q}(\mathcal{M},L_2^c(\mathbb{R}^n))&=\bigg\{f\in L_{1,Q}(\mathcal{M},L_2^c(\mathbb{R}^n)): \int_Q f(x)dx=0 \bigg\}.
	\end{aligned}
\end{equation*}
The following proposition is the weak-factorization type decomposition in the noncommutative setting. Note also that from the proof of Lemma \ref{ball}, the balls $B$ and $\tilde{B}$ can be replaced by cubes $Q$ and $\tilde{Q}$.
\begin{proposition}\label{nondecom1}
	Suppose that $Q$ and $\tilde{Q}$ are two cubes which satisfy the condition in Lemma \ref{ball}. If $f\in 	L^0_{1,Q}(\mathcal{M},L_2^c(\mathbb{R}^n))$, then there is a decomposition
		\begin{equation*}
			f=gT(h)-h(T^*(g))^*+\tilde{f},
		\end{equation*}
	where $g=\mathbbm{1}_{\tilde{Q}}$, $h\in L_{1,Q}(\mathcal{M},L_2^c(\mathbb{R}^n))$ and $\tilde{f}\in L^0_{1,\tilde{Q}}(\mathcal{M},L_2^c(\mathbb{R}^n))$ satisfy
	\begin{equation*}
		\|h\|_{L_{1}(\mathcal{M},L_2^c(\mathbb{R}^n))}\lesssim_{n,T} A^n\|f\|_{L_{1}(\mathcal{M},L_2^c(\mathbb{R}^n))},\quad \|\tilde{f}\|_{L_{1}(\mathcal{M},L_2^c(\mathbb{R}^n))}\lesssim_{n,T} \varepsilon_A\|f\|_{L_{1}(\mathcal{M},L_2^c(\mathbb{R}^n))}.
	\end{equation*}
    In addition, $A$ is sufficiently large, and $\varepsilon_A$ is a positive number which depends on $A$ and is much less than $1$.
\end{proposition}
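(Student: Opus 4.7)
The plan is to mimic Hyt\"{o}nen's scalar weak-factorization with a careful operator-valued Cauchy--Schwarz. Set $g=\mathbbm{1}_{\tilde Q}$ and introduce the scalar function
\[
G(x):=(T^{*}g)^{*}(x)=\int_{\tilde Q}K(y,x)\,dy,\qquad x\in Q.
\]
Lemma \ref{ball} combined with $|\tilde Q|\approx r^{n}$ yields $G(x)=K(y_{0},x_{0})|\tilde Q|\bigl(1+O(A^{-\alpha})\bigr)$, so $|G(x)|\approx A^{-n}$ on $Q$. I would then define
\[
h(x):=-\mathbbm{1}_{Q}(x)\,\frac{f(x)}{G(x)},\qquad \tilde f:=f-gT(h)+h(T^{*}g)^{*}.
\]
Because $G$ is scalar, $|h(x)|^{2}=|G(x)|^{-2}|f(x)|^{2}\lesssim A^{2n}|f(x)|^{2}$ in the operator ordering; integrating in $x$ and applying the operator square root delivers $\|h\|_{L_{1}(\mathcal{M},L_{2}^{c}(\mathbb{R}^{n}))}\lesssim_{n,T} A^{n}\|f\|_{L_{1}(\mathcal{M},L_{2}^{c}(\mathbb{R}^{n}))}$.

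The support and mean-zero properties of $\tilde f$ are built into the construction. Since $g$ vanishes on $Q$ and $h(x)G(x)=-f(x)$ on $Q$, the identity defining $\tilde f$ gives $\tilde f\equiv 0$ on $Q$; as $Q$ and $\tilde Q$ are disjoint and $h$ is supported on $Q$, one is left with $\tilde f(y)=-T(h)(y)\mathbbm{1}_{\tilde Q}(y)$, so $\mathrm{supp}\,\tilde f\subseteq\tilde Q$. For the mean-zero condition Fubini gives
\[
\int_{\tilde Q}\tilde f(y)\,dy=-\int_{Q}h(x)\,G(x)\,dx=\int_{Q}f(x)\,dx=0,
\]
using the hypothesis $f\in L^{0}_{1,Q}(\mathcal{M},L_{2}^{c}(\mathbb{R}^{n}))$.

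The crux is the smallness estimate $\|\tilde f\|_{L_{1}(\mathcal{M},L_{2}^{c}(\mathbb{R}^{n}))}\lesssim_{n,T}A^{-\alpha}\|f\|_{L_{1}(\mathcal{M},L_{2}^{c}(\mathbb{R}^{n}))}$, which will furnish $\varepsilon_{A}=CA^{-\alpha}$. Substituting the definition of $h$ rewrites, for $y\in\tilde Q$,
\[
\tilde f(y)=\int_{Q}\frac{K(y,x)}{G(x)}f(x)\,dx=\int_{Q}\Bigl(\tfrac{1}{|\tilde Q|}+E(x,y)\Bigr)f(x)\,dx,
\]
where the two-variable quotient expansion based on Lemma \ref{ball} produces an error kernel $E$ satisfying $|E(x,y)|\lesssim (A^{\alpha}|\tilde Q|)^{-1}$ uniformly for $x\in Q$, $y\in\tilde Q$. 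The mean-zero condition on $f$ annihilates the constant piece $\tfrac{1}{|\tilde Q|}$, leaving $\tilde f(y)=\int_{Q}E(x,y)f(x)\,dx$. The main obstacle is then genuinely noncommutative: $E(x,y)f(x)$ cannot simply be estimated by scalar Cauchy--Schwarz, because $f(x)^{*}f(x')$ and $f(x)^{*}f(x)$ are unrelated operators in $\mathcal{M}$.

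To overcome this, I would unfold
\[
\int_{\tilde Q}|\tilde f(y)|^{2}\,dy=\int_{Q}\!\!\int_{Q}M(x,x')\,f(x)^{*}f(x')\,dx\,dx',\qquad M(x,x'):=\int_{\tilde Q}\overline{E(x,y)}\,E(x',y)\,dy,
\]
so that $M$ is the scalar positive semidefinite kernel of $V^{*}V$, where $V\colon L_{2}(Q)\to L_{2}(\tilde Q)$ is the Hilbert--Schmidt operator with kernel $E$. Diagonalising $M$ in an orthonormal basis of $L_{2}(Q)$ and applying Parseval componentwise in $\mathcal{M}$ yields the operator inequality
\[
\int_{Q}\!\!\int_{Q}M(x,x')\,f(x)^{*}f(x')\,dx\,dx'\leq \|V\|_{\mathrm{op}}^{2}\int_{Q}|f(x)|^{2}\,dx\leq \|V\|_{\mathrm{HS}}^{2}\int_{Q}|f(x)|^{2}\,dx\lesssim A^{-2\alpha}\int_{Q}|f(x)|^{2}\,dx,
\]
where $\|V\|_{\mathrm{HS}}^{2}=\iint|E(x,y)|^{2}\,dxdy\lesssim A^{-2\alpha}|Q|/|\tilde Q|\approx A^{-2\alpha}$. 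Taking the operator square root and then the trace in $\mathcal{M}$ produces the required bound on $\|\tilde f\|_{L_{1}(\mathcal{M},L_{2}^{c}(\mathbb{R}^{n}))}$ with $\varepsilon_{A}=CA^{-\alpha}$, which is arbitrarily small once $A$ is fixed large enough, completing the decomposition.
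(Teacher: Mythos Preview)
Your construction ($g=\mathbbm{1}_{\tilde Q}$, $h=-f/(T^{*}g)^{*}$, $\tilde f=-gT(h)$) and your verification of the support, mean-zero, and $\|h\|$ bounds are exactly what the paper does. The only real difference is in the smallness estimate for $\tilde f$: where you merge everything into a single error kernel $E(x,y)$ and then invoke a spectral diagonalisation of $V^{*}V$, the paper instead splits $T(h)=\mathrm{I}+\mathrm{II}$ (separating the constant $1/(K(y_0,x_0)|Q|)$ from the remainder of $1/G$) and, for each piece and each fixed $y$, applies the elementary operator Cauchy--Schwarz $\bigl|\int_Q a(x)f(x)\,dx\bigr|^{2}\le \bigl(\int_Q|a(x)|^{2}dx\bigr)\int_Q|f(x)|^{2}dx$ for scalar $a$. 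This already yields the operator inequality $\int_{\tilde Q}|\tilde f(y)|^{2}dy\lesssim A^{-2\alpha}\int_Q|f|^{2}$ without any diagonalisation. Your detour is correct but unnecessary: since you immediately discard $\|V\|_{\mathrm{op}}$ for $\|V\|_{\mathrm{HS}}$, the same bound follows from the pointwise inequality $|\tilde f(y)|^{2}\le \bigl(\int_Q|E(x,y)|^{2}dx\bigr)\int_Q|f|^{2}$ integrated over $y\in\tilde Q$, which is precisely the paper's route and sidesteps any convergence questions about the operator-valued Parseval identity. In short, what you called the ``main obstacle'' dissolves under the standard Hilbert $C^{*}$-module Cauchy--Schwarz.
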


\begin{proof}
	First we prove that 
	\begin{equation}\label{Tstargx}
		|T^*(g)(x)|\gtrsim_{n,T} \frac{1}{A^n},\quad \forall x\in Q.
	\end{equation}
 For any $x\in Q$,
	\begin{equation*}
		\begin{aligned}
			T^*(g)(x){}&=\int_{\tilde{Q}}(K(y,x))^*dy\\
			&=|{Q}|(K(y_0,x_0))^*+\int_{\tilde{Q}}(K(y,x)-K(y_0,x_0))^*dy,
		\end{aligned}	
	\end{equation*}
where $x_0$ and $y_0$ are the centers of $Q$ and $\tilde{Q}$ respectively.
	On the one hand, from Lemma \ref{ball} we know that
	\begin{equation*}
		|{Q}||K(y_0,x_0)|\approx_{n,T} \frac{|{Q}|}{A^n|{Q}|}=\frac{1}{A^n}.
	\end{equation*}
	On the other hand, using Lemma \ref{ball} again, one has
	\begin{equation*}
		\begin{aligned}
			\bigg|\int_{\tilde{Q}}K(y,x)-K(y_0,x_0)dy\bigg|{}&\le \int_{\tilde{Q}}|K(y,x)-K(y_0,x_0)|dy\lesssim_{n,T} \frac{\varepsilon_A}{A^n},
		\end{aligned}
	\end{equation*}
	where $\varepsilon_A$ is a positive number which depends on $A$ and is much less than $1$. Thus 
	\begin{equation*}
		|T^*(g)(x)|\ge |{Q}||K(y_0,x_0)|-\bigg|\int_{\tilde{Q}}K(y,x)-K(y_0,x_0)dy\bigg|\gtrsim_{n,T} \frac{1}{A^n}.
	\end{equation*}
	
	Now we define
	\begin{equation*}
		h=-\frac{f}{(T^*(g))^*}\quad \text{and}\quad \tilde{f}=-gT(h).
	\end{equation*}
	Then 
	\begin{equation*}
		f=gT(h)-h(T^*(g))^*+\tilde{f}.
	\end{equation*}
    Since $f\in L^0_{1,Q}(\mathcal{M},L_2^c(\mathbb{R}^n))$, we have
	\begin{equation*}
		\|h\|_{L_1(\mathcal{M},L_2^c(\mathbb{R}^n))}\lesssim_{n,T} A^n \|f\|_{L_1(\mathcal{M},L_2^c(\mathbb{R}^n))}
	\end{equation*}
	and $h\in L_{1,Q}(\mathcal{M},L_2^c(\mathbb{R}^n))$.
	
	Finally, we consider $\tilde{f}$. Observe that 
	\begin{equation*}
		\begin{aligned}
			\|\tilde{f}\|_{L_1(\mathcal{M},L_2^c(\mathbb{R}^n))}{}&= 	\|\mathbbm{1}_{\tilde{Q}}\cdot T(h)\|_{L_1(\mathcal{M},L_2^c(\mathbb{R}^n))}.
		\end{aligned}
	\end{equation*}
   We write $T(h)$ as
    \begin{equation*}
    	\begin{aligned}
    		T(h)=T\biggl(-\frac{f}{(T^*(g))^*}\biggr){}&=-T\biggl(\frac{f}{{K(y_0,x_0)}|Q|}\biggr)+T\biggl(\frac{f}{{K(y_0,x_0)}|Q|}-\frac{f}{(T^*(g))^*}\biggr)\\
    		&=:\mathrm{I}+\mathrm{II}.
    	\end{aligned}
    \end{equation*}
    On the one hand, for any $y\in\tilde{Q}$, note that $\int_Q f(x)dx=0$, then by the Cauchy-Schwarz inequality and Lemma \ref{ball},
    \begin{equation*}
    	\begin{aligned}
    		|T(f)(y)|^2{}&=\bigg|\int_{Q} \big(K(y,x)-K(y_0,x_0)\big)f(x)dx\bigg|^2\\
    		&\le \int_{Q} |K(y,x)-K(y_0,x_0)|^2dx \cdot \int_{Q} |f(x)|^2dx\\
    		&\lesssim_{n,T} \int_{Q} \frac{\varepsilon_A^2}{A^{2n}|Q|^2}dx \cdot \int_{Q} |f(x)|^2dx\\
    		&=\frac{\varepsilon_A^2}{A^{2n}|Q|} \cdot \int_{Q} |f(x)|^2dx.
    	\end{aligned}
    \end{equation*}
    Thus
    \begin{equation}\label{Ibou}
    	\begin{aligned}
    		\|\mathbbm{1}_{\tilde{Q}}\cdot\mathrm{I}\|_{L_1(\mathcal{M},L_2^c(\mathbb{R}^n))}{}&=\bigg\|\bigg(\int_{\tilde{Q}} |\mathrm{I}(y)|^2dy\bigg)^{1/2}\bigg\|_{L_1(\mathcal{M})}\\
    		&=\frac{1}{|K(y_0,x_0)||Q|} \bigg\|\bigg(\int_{\tilde{Q}} |T(f)(y)|^2dy\bigg)^{1/2}\bigg\|_{L_1(\mathcal{M})}\\
    		&\lesssim_{n,T} A^n \bigg\|\bigg(\int_{\tilde{Q}} |T(f)(y)|^2dy\bigg)^{1/2}\bigg\|_{L_1(\mathcal{M})}
    		\lesssim_{n,T} \varepsilon_A \|f\|_{L_1(\mathcal{M},L_2^c(\mathbb{R}^n))}.
    	\end{aligned}
    \end{equation}
    On the other hand, for any $x\in Q$, from Lemma \ref{ball} and \eqref{Tstargx} one has
    \begin{equation*}
   	    \begin{aligned}
   		    \bigg|\biggl(\frac{1}{{K(y_0,x_0)}|Q|}-\frac{1}{(T^*(g))^*}\biggr)(x)\bigg|{}
   		    &=\frac{1}{|{K(y_0,x_0)}||Q||{T^*(g)(x)}|}\cdot\Big|(T^*(g)(x))^*-{K(y_0,x_0)}|Q|\Big|\\
   		    &\lesssim_{n,T} A^{2n}\int_{\tilde{Q}} |K(z,x)-K(y_0,x_0)|dz
   		    \lesssim_{n,T} A^n\varepsilon_A.
   	    \end{aligned}
   \end{equation*}
   From the Cauchy-Schwarz inequality and Lemma \ref{ball}, we deduce that for any $y\in \tilde{Q}$, 
    \begin{equation*}
    	\begin{aligned}
    		|\mathrm{II}(y)|^2{}
    		&=\bigg|\int_{Q}K(y,x)\biggl(\frac{1}{{K(y_0,x_0)}|Q|}-\frac{1}{(T^*(g))^*}\biggr)(x)f(x)dx\bigg|^2\\
    		&\le \int_{Q}\bigg|K(y,x)\biggl(\frac{1}{{K(y_0,x_0)}|Q|}-\frac{1}{(T^*(g))^*}\biggr)(x)\bigg|^2 dx\cdot \int_{Q}|f(x)|^2dx\\
    		&\lesssim_{n,T}\int_{Q}\frac{1}{A^{2n}|Q|^2}A^{2n}\varepsilon_A^2 dx\cdot \int_{Q}|f(x)|^2dx\\
    		&=\frac{\varepsilon_A^2}{|Q|}\cdot \int_{Q}|f(x)|^2dx.
    	\end{aligned}
    \end{equation*}
    Thus
    \begin{equation}\label{IIbou}
    	\begin{aligned}
    	    \|\mathbbm{1}_{\tilde{Q}}\cdot\mathrm{II}\|_{L_1(\mathcal{M},L_2^c(\mathbb{R}^n))}=\bigg\|\bigg(\int_{\tilde{Q}} |\mathrm{II}(y)|^2dy\bigg)^{1/2}\bigg\|_{L_1(\mathcal{M})}{}
    	    &\lesssim_{n,T} \varepsilon_A \|f\|_{L_1(\mathcal{M},L_2^c(\mathbb{R}^n))}.
    	\end{aligned}
    \end{equation}
    Hence from \eqref{Ibou} and \eqref{IIbou} we obtain
    \begin{equation*}
    	\|\tilde{f}\|_{L_1(\mathcal{M},L_2^c(\mathbb{R}^n))}\lesssim_{n,T} \varepsilon_A \|f\|_{L_1(\mathcal{M},L_2^c(\mathbb{R}^n))}.
    \end{equation*}
	Besides, note that 
	\begin{equation*}
		\int_Q \tilde{f}(x)dx=-\langle g,T(h)\rangle=-\langle T^*(g),h\rangle=\int_Q f(x)dx=0.
	\end{equation*}
	This implies that $\tilde{f}\in L^0_{1,\tilde{Q}}(\mathcal{M},L_2^c(\mathbb{R}^n))$.
\end{proof}

To prove Theorem \ref{nonbound}, we also need the following proposition.

\begin{proposition}\label{nondecom2}
	Let $T\in B(L_2(\mathbb{R}^n))$ be a singular integral operator with a non-degenerate kernel $K(x,y)$ satisfying standard kernel estimates \eqref{standard}. Suppose that $Q$ and $\tilde{Q}$ are two cubes which satisfy the condition in Lemma \ref{ball}. If $f\in 	L^0_{1,Q}(\mathcal{M},L_2^c(\mathbb{R}^n))$, then there is a decomposition
	\begin{equation*}
		f=g_1T(h_1)-h_1(T^*(g_1))^*+ g_2^*T(h_2)-h_2(T^*(g_2))^*+\tilde{\tilde{f}},
	\end{equation*}
	where $g_1=\mathbbm{1}_{\tilde{Q}}$, $h_2=\mathbbm{1}_{{Q}}$, $g_2\in L_{1,\tilde{Q}}(\mathcal{M},L_2^c(\mathbb{R}^n))$, $h_1\in L_{1,Q}(\mathcal{M},L_2^c(\mathbb{R}^n))$ and $\tilde{\tilde{f}}\in L^0_{1,Q}(\mathcal{M},L_2^c(\mathbb{R}^n))$ satisfy
	\begin{equation}\label{g2h1f}
		\|g_2\|_{L_{1}(\mathcal{M},L_2^c(\mathbb{R}^n))}\lesssim_{n,T} A^n\|f\|_{L_{1}(\mathcal{M},L_2^c(\mathbb{R}^n))},\quad \|h_1\|_{L_{1}(\mathcal{M},L_2^c(\mathbb{R}^n))}\lesssim_{n,T} A^n\|f\|_{L_{1}(\mathcal{M},L_2^c(\mathbb{R}^n))},
	\end{equation}
    and
    \begin{equation}\label{titif}
       \|\tilde{\tilde{f}}\|_{L_{1}(\mathcal{M},L_2^c(\mathbb{R}^n))}\lesssim_{n,T} \varepsilon_A\|f\|_{L_{1}(\mathcal{M},L_2^c(\mathbb{R}^n))},
    \end{equation}
	where $A$ is sufficiently large, and $\varepsilon_A$ is a positive number which depends on $A$ and is much less than $1$.
\end{proposition}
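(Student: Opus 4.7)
The plan is to iterate Proposition \ref{nondecom1} twice: the first application produces the first pair of terms $g_1T(h_1)-h_1(T^*(g_1))^*$, and then a second, ``swapped'' construction absorbs the intermediate remainder and produces the second pair $g_2^*T(h_2)-h_2(T^*(g_2))^*$.

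First, I apply Proposition \ref{nondecom1} directly to $f\in L^0_{1,Q}(\mathcal{M},L_2^c(\mathbb{R}^n))$ with the cube pair $(Q,\tilde{Q})$, obtaining
$$f = g_1T(h_1)-h_1(T^*(g_1))^*+f^{(1)},$$
with $g_1=\mathbbm{1}_{\tilde{Q}}$, $h_1\in L_{1,Q}(\mathcal{M},L_2^c(\mathbb{R}^n))$ satisfying $\|h_1\|\lesssim_{n,T}A^n\|f\|$, and the intermediate remainder $f^{(1)}\in L^0_{1,\tilde{Q}}(\mathcal{M},L_2^c(\mathbb{R}^n))$ satisfying $\|f^{(1)}\|\lesssim_{n,T}\varepsilon_A\|f\|$. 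This yields the first pair of terms in the desired decomposition and sets $h_1$ with the stated bound.

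Second, I treat the remainder $f^{(1)}$ (now supported on $\tilde Q$ rather than on $Q$) by mirroring the construction of Proposition \ref{nondecom1} with the arguments on the other side. The same reasoning that gave \eqref{Tstargx} yields the pointwise lower bound
$$|T(\mathbbm{1}_Q)(y)|\gtrsim_{n,T}\frac{1}{A^n},\qquad y\in\tilde{Q},$$
which lets me set $h_2=\mathbbm{1}_Q$ and define
$$g_2(y)=\frac{(f^{(1)}(y))^*}{\overline{T(\mathbbm{1}_Q)(y)}}\,\mathbbm{1}_{\tilde{Q}}(y),$$
so that $g_2^*(y)T(\mathbbm{1}_Q)(y)=f^{(1)}(y)$ for $y\in\tilde{Q}$ by construction. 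Then
$$\tilde{\tilde{f}}:=f^{(1)}-g_2^*T(h_2)+h_2(T^*(g_2))^*$$
vanishes on $\tilde{Q}$ (by the defining equation) and outside $Q\cup\tilde{Q}$, and equals $\mathbbm{1}_Q(T^*(g_2))^*$ on $Q$, so $\tilde{\tilde{f}}$ is supported in $Q$. The mean-zero condition $\int_Q\tilde{\tilde{f}}=0$ follows from $\int_{\tilde{Q}}f^{(1)}=0$: substituting the explicit formula for $g_2$, I have $\int_Q \mathbbm{1}_Q(T^*(g_2))^*\,dx=\langle T(\mathbbm{1}_Q),g_2\rangle^*=(\int_{\tilde Q}f^{(1)})^*=0$. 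The estimate $\|\tilde{\tilde{f}}\|\lesssim_{n,T}\varepsilon_A\|f\|$ in \eqref{titif} is then obtained by following the pattern of Proposition \ref{nondecom1}: split $(T^*(g_2))^*$ into a principal piece built from $K(y_0,x_0)$ and a remainder, exploit the regularity estimate $|K(y,x)-K(y_0,x_0)|\lesssim\varepsilon_A/(A^n|Q|)$ from Lemma \ref{ball} together with the operator Cauchy--Schwarz inequality, producing the small factor $\varepsilon_A$.

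The main obstacle—precisely the ``noncommutativity issue'' alluded to by the authors—is the column-norm bound on $g_2$ in \eqref{g2h1f}. The direct computation gives
$$g_2(y)^*g_2(y)=\frac{f^{(1)}(y)(f^{(1)}(y))^*}{|T(\mathbbm{1}_Q)(y)|^2}\mathbbm{1}_{\tilde{Q}}(y),$$
whose numerator is a \emph{row}-type product of $f^{(1)}$, in contrast to the column-type data controlling $f$. To circumvent this, I use the explicit representation $f^{(1)}=-\mathbbm{1}_{\tilde{Q}}T(h_1)$ and apply the operator Cauchy--Schwarz inequality to $T(h_1)(y)T(h_1)(y)^*$, combined with the kernel size estimate $\int_Q|K(y,x)|^2\,dx\lesssim_{n,T}(A^{2n}|Q|)^{-1}$ for $y\in\tilde{Q}$, to transfer control back to an integral of $h_1(x)h_1(x)^*$; the pointwise identity $h_1h_1^*\lesssim A^{2n}ff^*\mathbbm{1}_Q$ (valid because $h_1$ is $f$ divided by a scalar function bounded below by $A^{-n}$) then produces the desired $A^n$ factor. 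Choosing $A$ sufficiently large absorbs the extra $\varepsilon_A$ factors produced by the iteration and gives the final bounds \eqref{g2h1f} and \eqref{titif}.
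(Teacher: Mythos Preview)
Your two-step iteration matches the paper's strategy exactly: apply Proposition~\ref{nondecom1} once to get the $(g_1,h_1)$ pair and a remainder on $\tilde Q$, then run the construction again with roles swapped to land back on $Q$. The first step is fine.

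The gap is in your bound \eqref{g2h1f} for $g_2$. Your choice $g_2=(f^{(1)})^*/\overline{T(\mathbbm{1}_Q)}$ gives
\[
g_2^*g_2=\frac{f^{(1)}\,(f^{(1)})^*}{|T(\mathbbm{1}_Q)|^2},
\]
which is a \emph{row} square. Your Cauchy--Schwarz workaround does not cure this: passing through $T(h_1)(y)T(h_1)(y)^*$ and the kernel size estimate leads to $\int_Q h_1h_1^*\lesssim A^{2n}\int_Q ff^*$, so after taking the square root and the $L_1(\mathcal M)$ norm you control $\|g_2\|_{L_1(\mathcal M,L_2^c)}$ by $A^n\|f\|_{L_1(\mathcal M,L_2^{\,r})}$, the \emph{row} norm of $f$. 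There is no general inequality between $\|(\int ff^*)^{1/2}\|_{L_1(\mathcal M)}$ and $\|(\int f^*f)^{1/2}\|_{L_1(\mathcal M)}$, so the argument does not close.

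The paper sidesteps this by not constructing $g_2$ by hand. It invokes Proposition~\ref{nondecom1} a second time as a black box, with $T$ replaced by $T^*$, $f$ replaced by $\tilde f$, and $Q,\tilde Q$ interchanged. The $\tilde h$ produced by that proposition is $\tilde h=-\tilde f/(T(\mathbbm{1}_Q))^*$, hence
\[
\tilde h^*\tilde h=\frac{\tilde f^*\tilde f}{|T(\mathbbm{1}_Q)|^2},
\]
which is the \emph{column} square of $\tilde f$; the bound $\|\tilde h\|_{L_1(\mathcal M,L_2^c)}\lesssim A^n\|\tilde f\|_{L_1(\mathcal M,L_2^c)}$ is then immediate from Proposition~\ref{nondecom1}, and one sets $g_2=-\tilde h$, $h_2=\tilde g=\mathbbm{1}_Q$. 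In other words, the column structure is preserved precisely because Proposition~\ref{nondecom1} always divides $f$ (not $f^*$) by a scalar, and the paper exploits this by applying it verbatim rather than building a ``mirrored'' $g_2$ to match the displayed form $g_2^*T(h_2)$ pointwise on $\tilde Q$. Your construction forces the adjoint onto $g_2$ and thereby flips column to row; the paper's does not.
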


\begin{proof}
	Firstly, due to Proposition \ref{nondecom1}, there exists a decomposition
		\begin{equation*}
		f=g_1T(h_1)-h_1(T^*(g_1))^*+\tilde{f},
	\end{equation*}
	where $g_1=\mathbbm{1}_{\tilde{Q}}$, $h_1\in L_{1,Q}(\mathcal{M},L_2^c(\mathbb{R}^n))$ and $\tilde{f}\in L^0_{1,\tilde{Q}}(\mathcal{M},L_2^c(\mathbb{R}^n))$ satisfy
	\begin{equation*}
		\|h_1\|_{L_{1}(\mathcal{M},L_2^c(\mathbb{R}^n))}\lesssim_{n,T} A^n\|f\|_{L_{1}(\mathcal{M},L_2^c(\mathbb{R}^n))},\quad \|\tilde{f}\|_{L_{1}(\mathcal{M},L_2^c(\mathbb{R}^n))}\lesssim_{n,T} \varepsilon_A\|f\|_{L_{1}(\mathcal{M},L_2^c(\mathbb{R}^n))},
	\end{equation*}
	where $A$ is sufficiently large, and $\varepsilon_A$ is a positive number which depends on $A$ and is much less than $1$.
	
    Secondly, we use Proposition \ref{nondecom1} again, where $T$ is replaced by $T^*$, $f$ is replaced by $\tilde{f}$, and $Q$ and $\tilde{Q}$ are replaced by $\tilde{Q}$ and $Q$ respectively, then
    \begin{equation*}
    	\tilde{f}=\tilde{g}(T^*(\tilde{h}))^*-\tilde{h}^*{T(\tilde{g})}+\tilde{\tilde{f}},
    \end{equation*}
    where $\tilde{g}=\mathbbm{1}_{{Q}}$, $\tilde{h}\in L_{1,\tilde{Q}}(\mathcal{M},L_2^c(\mathbb{R}^n))$ and $\tilde{\tilde{f}}\in L^0_{1,{Q}}(\mathcal{M},L_2^c(\mathbb{R}^n))$ satisfy
    \begin{equation*}
    	\|\tilde{h}\|_{L_{1}(\mathcal{M},L_2^c(\mathbb{R}^n))}\lesssim_{n,T} A^n\|\tilde{f}\|_{L_{1}(\mathcal{M},L_2^c(\mathbb{R}^n))}\lesssim_{n,T} A^n\|f\|_{L_{1}(\mathcal{M},L_2^c(\mathbb{R}^n))}
    \end{equation*}
    and
    \begin{equation*}
	     \|\tilde{\tilde{f}}\|_{L_{1}(\mathcal{M},L_2^c(\mathbb{R}^n))}\lesssim_{n,T} \varepsilon_A\|\tilde{f}\|_{L_{1}(\mathcal{M},L_2^c(\mathbb{R}^n))}\lesssim_{n,T} \varepsilon_A\|f\|_{L_{1}(\mathcal{M},L_2^c(\mathbb{R}^n))}.
    \end{equation*}
    Finally, let $g_2=-\tilde{h}$ and $h_2=\tilde{g}$, then the proof is complete.
\end{proof}

Now we begin to prove Theorem \ref{nonbound}.
\begin{proof}[Proof of Theorem \ref{nonbound}]
	Recall that
	\begin{equation*}
		\|b\|_{BMO_{cr}(\mathbb{R}^n,\mathcal{M})}=\max\big\{\|b\|_{BMO_c(\mathbb{R}^n,\mathcal{M})},\|b\|_{BMO_r(\mathbb{R}^n,\mathcal{M})}\big\}.
	\end{equation*}
	By the duality in Theorem \ref{HBMOdual},
	\begin{equation}\label{bbmomh1}
		\begin{aligned}
			\|b\|_{BMO_r(\mathbb{R}^n,\mathcal{M})}{}&=\|b^*\|_{BMO_c(\mathbb{R}^n,\mathcal{M})}\approx\sup_{\|m\|_{H_{1,c}(\mathbb{R}^n,\mathcal{M})}=1}|\langle b^*,m\rangle|.
		\end{aligned}
	\end{equation}
	According to the atomic decomposition of $H_{1,c}(\mathbb{R}^n,\mathcal{M})$ in Theorem \ref{atomH1},
 we only need to estimate $|\langle b^*,f\rangle|$ when $f$ is an $\mathcal{M}^c$-atom.
	
	Note that $f\in L^0_{1,Q}(\mathcal{M},L_2^c(\mathbb{R}^n))$ for some cube $Q$, then from Proposition \ref{nondecom2}, we decompose $f$ into
	\begin{equation*}
		f=g_1T(h_1)-h_1(T^*(g_1))^*+ g_2^*T(h_2)-h_2(T^*(g_2))^*+\tilde{\tilde{f}},
	\end{equation*}
	where $g_1,g_2,h_1,h_2$ and $\tilde{\tilde{f}}$ satisfy the condition in Proposition \ref{nondecom2}, and $\tilde{Q}$ is another disjoint cube with length $\ell(Q)$ at distance $\mathrm{dist}(Q,\tilde{Q})\approx A\ell(Q)$. In the following $A$ is assumed to be a sufficiently large number. Then we calculate
	\begin{equation}\label{bf000}
		\begin{aligned}
			\langle b^*,f\rangle{}&=\big\langle b^*,g_1T(h_1)-h_1(T^*(g_1))^*\big\rangle+\big\langle b^*, g_2^*T(h_2)-h_2(T^*(g_2))^*\big\rangle+\langle b^*,\tilde{\tilde{f}}\rangle\\
			&=\big\langle g_1,bT(h_1)-T(bh_1)\big\rangle+\big\langle g_2,bT(h_2)-T(bh_2)\big\rangle+\langle b^*,\tilde{\tilde{f}}\rangle\\
			&=-\big\langle g_1,[T,M_b](h_1)\big\rangle-\big\langle g_2,[T,M_b](h_2)\big\rangle+\langle b^*,\tilde{\tilde{f}}\rangle.
		\end{aligned}
	\end{equation} 
	By the Cauchy-Schwarz inequality,
	\begin{equation*}
		\begin{aligned}
			\bigg|\int_{\mathbb{R}^n}  g_1(x)\cdot [T,M_b](h_1)(x)dx\bigg|{}&\le \bigg(\int_{\mathbb{R}^n}|g_1(x)|^2dx\bigg)^{1/2}\cdot \bigg(\int_{\mathbb{R}^n}\big|[T,M_b](h_1)(x)\big|^2dx\bigg)^{1/2}.
		\end{aligned}
	\end{equation*}
	This implies that
	\begin{equation}\label{g1tbh1666}
		\big|\big\langle g_1,[T,M_b](h_1)\big\rangle\big|\le \|g_1\|_{L_2(\mathbb{R}^n)}\|[T,M_b](h_1)\|_{L_1(\mathcal{M},L_2^c(\mathbb{R}^n))}.
	\end{equation}
	By duality,
	\begin{equation*}
		\|[T,M_b](h_1)\|_{L_1(\mathcal{M},L_2^c(\mathbb{R}^n))}\le\sup_{\|s\|_{L_\infty(\mathcal{M},L_2^c(\mathbb{R}^n))}= 1} \big\|[T,M_b]^*(s)\big\|_{L_\infty(\mathcal{M},L_2^c(\mathbb{R}^n))}\|h_1\|_{L_1(\mathcal{M},L_2^c(\mathbb{R}^n))}.
	\end{equation*}
    We regard $\int_{\mathbb{R}^n} \big|[T,M_b]^*(s)(x)\big|^2dx$ as a left multiplication operator on $L_2(\mathcal{M})$, then
    \begin{equation*}
    	\begin{aligned}
    		\|[T,M_b]^*(s)\|_{L_\infty(\mathcal{M},L_2^c(\mathbb{R}^n))}{}&=\bigg\|\int_{\mathbb{R}^n} \big|[T,M_b]^*(s)(x)\big|^2 dx\bigg\|_{\mathcal{M}}^{1/2}\\
    		&=\sup_{\|t\|_{L_2(\mathcal{M})}= 1}\bigg(\int_{\mathbb{R}^n} \Big\langle\big|[T,M_b]^*(s)(x)\big|^2t,t\Big\rangle_{L_2(\mathcal{M})} dx\bigg)^{1/2}\\
    		&=\sup_{\|t\|_{L_2(\mathcal{M})}= 1}\bigg(\int_{\mathbb{R}^n} \big\|[T,M_b]^*(s)(x)t\big\|^2_{L_2(\mathcal{M})} dx\bigg)^{1/2}.
    	\end{aligned}
    \end{equation*}
	Note that $[T,M_b]^*(s)(x)t=[T,M_b]^*(st)(x)$, where we regard $t$ as a constant operator on $\mathbb{R}^n$, thus by the assumption that $[T,M_b]^*$ is bounded on $L_2(\mathbb{R}^n,L_2(\mathcal{M}))$, 
	\begin{equation*}
		\begin{aligned}
			{}&\|[T,M_b]^*(s)\|_{L_\infty(\mathcal{M},L_2^c(\mathbb{R}^n))}{}\\
			&= \sup_{\|t\|_{L_2(\mathcal{M})}= 1}\bigg(\int_{\mathbb{R}^n} \big\|[T,M_b]^*(st)(x)\big\|^2_{L_2(\mathcal{M})} dx\bigg)^{1/2}\\
			&\le \|[T,M_{b}]\|_{L_2(\mathbb{R}^n,L_2(\mathcal{M}))\to L_2(\mathbb{R}^n,L_2(\mathcal{M}))}\sup_{\|t\|_{L_2(\mathcal{M})}= 1}\bigg(\int_{\mathbb{R}^n} \|s(x)t\|^2_{L_2(\mathcal{M})} dx\bigg)^{1/2}\\
			&=\|[T,M_{b}]\|_{L_2(\mathbb{R}^n,L_2(\mathcal{M}))\to L_2(\mathbb{R}^n,L_2(\mathcal{M}))}\bigg\|\int_{\mathbb{R}^n} |s(x)|^2 dx\bigg\|_{\mathcal{M}} ^{1/2}.
		\end{aligned}
	\end{equation*} 
	Thus 
	\begin{equation}\label{g1tbh1777}
		\|[T,M_b](h_1)\|_{L_1(\mathcal{M},L_2^c(\mathbb{R}^n))}\le \|h_1\|_{L_1(\mathcal{M},L_2^c(\mathbb{R}^n))}\|[T,M_{b}]\|_{L_2(\mathbb{R}^n,L_2(\mathcal{M}))\to L_2(\mathbb{R}^n,L_2(\mathcal{M}))}.
	\end{equation}
	Then from \eqref{g2h1f}, \eqref{g1tbh1666} and \eqref{g1tbh1777}  we have
	\begin{equation}\label{bf111}
		\begin{aligned}
			\big|\big\langle g_1,[T,M_b](h_1)\big\rangle\big|{}&\le \|g_1\|_{L_2(\mathbb{R}^n)}\|h_1\|_{L_1(\mathcal{M},L_2^c(\mathbb{R}^n))}\|[T,M_{b}]\|_{L_2(\mathbb{R}^n,L_2(\mathcal{M}))\to L_2(\mathbb{R}^n,L_2(\mathcal{M}))}\\
			&\lesssim_{n,T} A^n|Q|^{1/2}\|f\|_{L_1(\mathcal{M},L_2^c(\mathbb{R}^n))}\|[T,M_{b}]\|_{L_2(\mathbb{R}^n,L_2(\mathcal{M}))\to L_2(\mathbb{R}^n,L_2(\mathcal{M}))}\\
			&\le A^n\|[T,M_{b}]\|_{L_2(\mathbb{R}^n,L_2(\mathcal{M}))\to L_2(\mathbb{R}^n,L_2(\mathcal{M}))},
		\end{aligned}
	\end{equation}
	where the last inequality is from the definition of $\mathcal{M}^c$-atoms. Similarly, one has
	\begin{equation}\label{bf222}
		\big|\big\langle g_2,[T,M_b](h_2)\big\rangle\big|\lesssim_{n,T} A^n\|[T,M_{b}]\|_{L_2(\mathbb{R}^n,L_2(\mathcal{M}))\to L_2(\mathbb{R}^n,L_2(\mathcal{M}))}.
	\end{equation}
     Moreover, since $\tilde{\tilde{f}}\in L^0_{1,Q}(\mathcal{M},L_2^c(\mathbb{R}^n))$, 
     by duality and \eqref{titif},
	\begin{equation}\label{bf333}
		\begin{aligned}
			|\langle b^*,\tilde{\tilde{f}}\rangle|{}&=\bigg|\Big\langle b^*-\big\langle \frac{\mathbbm{1}_Q}{m(Q)},b^*\big\rangle,\tilde{\tilde{f}}\Big\rangle\bigg|\\
			&\le \Big\|\left(b^*-\big\langle \frac{\mathbbm{1}_Q}{m(Q)},b^*\big\rangle\right)\mathbbm{1}_Q\Big\|_{L_\infty(\mathcal{M},L_2^c(\mathbb{R}^n))}\|\tilde{\tilde{f}}\|_{L_1(\mathcal{M},L_2^c(\mathbb{R}^n))}\\ &=|Q|^{1/2} \big\|MO(b^*;Q)\big\|_{\mathcal{M}}\|\tilde{\tilde{f}}\|_{L_1(\mathcal{M},L_2^c(\mathbb{R}^n))}\\
			&\lesssim_{n,T} \varepsilon_A |Q|^{1/2}\big\|MO(b^*;Q)\big\|_{\mathcal{M}}\|f\|_{L_1(\mathcal{M},L_2^c(\mathbb{R}^n))}\\
			&\le \varepsilon_A \big\|MO(b^*;Q)\big\|_{\mathcal{M}},
		\end{aligned}
	\end{equation}
     where $MO(b^*;Q)$ is defined in \eqref{MOBQ}, and $\varepsilon_A$ is a positive number which depends on $A$ and is much less than $1$.
	Thus from \eqref{bf000}, \eqref{bf111}, \eqref{bf222} and \eqref{bf333} we have
	\begin{equation*}
		\begin{aligned}
			|\langle b^*,f\rangle|{}&\lesssim_{n,T} A^n\|[T,M_{b}]\|_{L_2(\mathbb{R}^n,L_2(\mathcal{M}))\to L_2(\mathbb{R}^n,L_2(\mathcal{M}))}+\varepsilon_A \big\|MO(b^*;Q)\big\|_{\mathcal{M}}.
		\end{aligned}
	\end{equation*}
	From \eqref{bbmomh1}, we obtain
	\begin{equation*}
		\begin{aligned}
			\big\|MO(b^*;Q)\big\|_{\mathcal{M}}{}&\lesssim \sup_{\|f\|_{H_{1,c}(\mathbb{R}^n,\mathcal{M})}=1}|\langle b^*,f\rangle|\\
		    &\lesssim_{n,T} A^n\|[T,M_{b}]\|_{L_2(\mathbb{R}^n,L_2(\mathcal{M}))\to L_2(\mathbb{R}^n,L_2(\mathcal{M}))}+\varepsilon_{A}\big\|MO(b^*;Q)\big\|_{\mathcal{M}}.
		\end{aligned}
	\end{equation*}
   By letting $\varepsilon_{A}$ be sufficiently less than 1, we deduce
     \begin{equation*}
     	\begin{aligned}
     	    \|b\|_{BMO_r(\mathbb{R}^n,\mathcal{M})}=\sup_{Q\subset \mathbb{R}^n \atop Q \operatorname{cube}}\big\|MO(b^*;Q)\big\|_{\mathcal{M}}\lesssim_{n,T} \|[T,M_{b}]\|_{L_2(\mathbb{R}^n,L_2(\mathcal{M}))\to L_2(\mathbb{R}^n,L_2(\mathcal{M}))}.	
     	\end{aligned}
     	\end{equation*}
In the same way, we have
	\begin{equation*}
		\|b\|_{BMO_c(\mathbb{R}^n,\mathcal{M})}\lesssim_{n,T} \|[T,M_{b}]\|_{L_2(\mathbb{R}^n,L_2(\mathcal{M}))\to L_2(\mathbb{R}^n,L_2(\mathcal{M}))}.
	\end{equation*}
	Therefore,
	\begin{equation*}
		\|b\|_{BMO_{cr}(\mathbb{R}^n,\mathcal{M})}\lesssim_{n,T} \|[T,M_{b}]\|_{L_2(\mathbb{R}^n,L_2(\mathcal{M}))\to L_2(\mathbb{R}^n,L_2(\mathcal{M}))}.
	\end{equation*}	
	This completes the proof.
\end{proof}

\bigskip

\section{Appendix}\label{appendix}

We give a new proof of the following theorem in \cite[Theorem 1.1]{HLW} or \cite[Theorem 3.1]{CPP2012}.
\begin{thm}\label{thm1.7}
	Let $1<p<\infty$ and $T\in B(L_2(\mathbb{R}^n))$ be a singular integral operator with a kernel $K(x,y)$ satisfying the standard estimates \eqref{standard}. If $b\in BMO(\mathbb{R}^n)$, then $C_{T, b}$ is bounded on $L_p(\mathbb{R}^n)$ and
	$$ \|C_{T, b}\|_{L_p(\mathbb{R}^n)\rightarrow L_p(\mathbb{R}^n)}\lesssim_{n, p,T}\big(1+\|T(1)\|_{BMO(\mathbb{R}^n)}+\|T^*(1)\|_{BMO(\mathbb{R}^n)}\big)\|b\|_{BMO(\mathbb{R}^n)}. $$
\end{thm}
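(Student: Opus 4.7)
The plan is to follow the same architecture as the proof of Theorem \ref{thm1.8}, replacing the $L_2$-boundedness arguments with their $L_p$ analogues. First I would invoke Hyt\"onen's dyadic representation (Theorem \ref{CZdec}) to reduce the estimate for $[T, M_b]$ to uniform (in $\omega$) bounds for the three building blocks: $[S_\omega^{ij}, M_b]$, $[\pi_{T(1)}^\omega, M_b]$ and $[(\pi_{T^*(1)}^\omega)^*, M_b]$. The sum over the complexity parameters $(i,j)$ is weighted by $\tau(i,j) \lesssim (1+\max\{i,j\})^{2(n+\alpha)} 2^{-\alpha \max\{i,j\}}$, so any bound that grows only polynomially in $(i+j)$ will be absorbed.

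For the paraproduct pieces, an $L_p$ analogue of Proposition \ref{T2est} must be produced: if $a, b \in BMO^d(\mathbb{R}^n)$, then $[\pi_a, M_b]$ and $[\pi_a^*, M_b]$ are bounded on $L_p(\mathbb{R}^n)$ with norm controlled by $\|a\|_{BMO^d}\|b\|_{BMO^d}$. I would repeat the identity $[\pi_a, R_b] = -\pi_a\Theta_b + V_{a,b}$ from the proof of Proposition \ref{T2est}, and then invoke the boundedness of $\pi_a + (\pi_{a^*})^*$ on $L_p$ together with the new martingale inequalities announced as Lemma \ref{supk} and Proposition \ref{T1est}. The scalar case is cleaner than the operator-valued one handled in the appendix since no noncommutativity obstructions arise; one can use the scalar Doob maximal inequality, the square function characterization of $L_p$ and duality against dyadic $H_1^d$ (instead of the atomic decomposition of operator-valued Hardy spaces).

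For $[S_\omega^{ij}, M_b]$ I would split $M_b = \pi_b + \Lambda_b + R_b$ along the dyadic system $\mathcal{D}^\omega$, so that
\begin{equation*}
\|[S_\omega^{ij}, M_b]\|_{L_p \to L_p} \leq \|[S_\omega^{ij}, \Theta_b]\|_{L_p \to L_p} + \|[S_\omega^{ij}, R_b]\|_{L_p \to L_p}.
\end{equation*}
The first term is handled immediately by the $L_p$-boundedness of $\Theta_b = \pi_b + \Lambda_b$ (the scalar case of Corollary \ref{Thetabest}, derivable by combining Proposition \ref{pibsumpibstar} with the block-diagonal control of $\Lambda_b - \pi_{b^*}^*$) together with $\|S_\omega^{ij}\|_{L_p \to L_p} \lesssim_{n,p} i+j$. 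For the second term I would reproduce the computation leading to formula \eqref{BKdef}, writing $[S_\omega^{ij}, R_b] f = \sum_K B_K f$ with coefficients involving $b_{IJ} = \langle \mathbbm{1}_I/|I|, b\rangle - \langle \mathbbm{1}_J/|J|, b\rangle$; the telescoping bound $\|b_{IJ}\|_\infty \lesssim (i+j) \|b\|_{BMO}$ then controls the kernel size, and one extracts $L_p$ boundedness either via a square function argument (using that the $B_K$ have pairwise orthogonal ranges together with the $L_p$ boundedness of the dyadic square function) or via interpolation between weak $(1,1)$ and $L_2$ bounds.

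The main obstacle I anticipate is precisely this last step: producing an $L_p$ estimate for $[S_\omega^{ij}, R_b]$ with only polynomial growth in $(i+j)$, uniformly in the parameter $\omega$. In the Schatten setting (Theorem \ref{thm6.4}) one could exploit that $B_K^* B_K$ is block diagonal to reduce to an $L_{p/2}$ norm; here, for the operator norm on $L_p$, the direct replacement would lose too much. The martingale inequalities alluded to as Lemma \ref{supk} and Proposition \ref{T1est}---which likely control quantities of the form $\|\sum_k d_k a \cdot \mathbb{E}_{k-1}(d_k b \cdot d_k f)\|_{L_p}$ and the associated maximal process---should provide exactly the polynomial control needed. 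Once these are in hand, the triangle inequality and the rapid decay of $\tau(i,j)$ complete the proof with the stated $1 + \|T(1)\|_{BMO} + \|T^*(1)\|_{BMO}$ prefactor.
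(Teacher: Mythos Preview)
Your outline is essentially the paper's proof. The paper uses Proposition \ref{T1est} (with Lemma \ref{supk} as its key ingredient) for the paraproduct commutators $[\pi^\omega_{T(1)},M_b]$ and $[(\pi^\omega_{T^*(1)})^*,M_b]$, handles $[S_\omega^{ij},\pi_b]$ and $[S_\omega^{ij},\varLambda_b]$ via the $L_p$ boundedness of $\pi_b$ and $\varLambda_b$ (Lemma \ref{TLambdab1}) combined with $\|S_\omega^{ij}\|_{L_p\to L_p}\lesssim i+j$, and for $[S_\omega^{ij},R_b]$ takes exactly your second option: the $L_2$ bound from the block-diagonal structure of $\varPhi^*\varPhi$, a weak $(1,1)$ bound obtained by observing that $|A_{IJK}^{\xi\eta}|\lesssim(i+j)\|b\|_{BMO}|a_{IJK}^{\xi\eta}|$ so $\varPhi$ is a constant multiple of a dyadic shift, and then interpolation plus duality. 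One small correction to your final paragraph: Lemma \ref{supk} and Proposition \ref{T1est} are not what resolves the $[S_\omega^{ij},R_b]$ step---they are used only for the paraproduct commutators, while the shift--$R_b$ commutator is handled purely by the interpolation argument you already identified.
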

The idea of the proof is the same as that of Theorem \ref{thm6.4}. 
Via Hyt\"{o}nen's dyadic martingale technique, we will derive Theorem \ref{thm1.7}. We also establish the boundedness of commutators involving martingale paraproducts and pointwise multiplication operator (see Proposition \ref{T1est}).

From \eqref{BMOd} we see that
\begin{equation*}
	\begin{aligned}
		\|b\|_{BMO^d(\mathbb{R})}{}&
		=\sup_{I\in\mathcal{D}}\frac{1}{|I|^{1/2}}\biggl(\sum_{J\subseteq I}\sum_{i=1}^{d-1}|\langle h_J^i,b\rangle|^2 \biggr)^{1/2}.
	\end{aligned}
\end{equation*}
In \cite{CP}, Chao and Peng showed that for $1<p<\8$, $\pi_b$ is bounded from $L_p(\mathbb{R})$ to $L_p(\mathbb{R})$ if and only if $b\in BMO^d(\mathbb{R})$.

Recall that the operator $\varLambda_b$ is defined in Lemma \ref{TLambdab}. 

\begin{lem}\label{TLambdab1}
	Let $1<p<\8$. If $b\in BMO^d(\mathbb{R})$, then $\varLambda_b$ is bounded on $L_p(\mathbb{R})$.
\end{lem}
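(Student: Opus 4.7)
The plan is to decompose $\varLambda_b$ following the identity (\ref{var}) derived in the proof of Lemma \ref{TLambdab}, which applies equally to scalar $b$ and yields
\[
\varLambda_b = (\pi_{\bar b})^* + \tilde\varLambda_b,
\]
where $\tilde\varLambda_b$ is the operator defined by \eqref{tildevar}. The two summands will be treated by different techniques.

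For the first summand, I would appeal to Chao and Peng's characterization in \cite{CP}: for $1<p<\infty$, the martingale paraproduct $\pi_b$ is bounded on $L_p(\mathbb R)$ if and only if $b\in BMO^d(\mathbb R)$. Applying this to $\pi_{\bar b}$ at exponent $p'$ and then taking adjoints gives the bound
$\|(\pi_{\bar b})^*\|_{L_p\to L_p}\lesssim_{d,p}\|b\|_{BMO^d(\mathbb R)}$. This step is immediate.

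The second summand $\tilde\varLambda_b$ is where the real work lies. I would exploit its block-diagonal structure with respect to the Haar basis: for each $I\in\mathcal D$, $\tilde\varLambda_b$ preserves the span of $\{h_I^i\}_{i=1}^{d-1}$ and acts there by a matrix $B_I\in\mathbb M_{d-1}$ whose off-diagonal entries are $|I|^{-1/2}\langle h_I^{\overline{i-j}},b\rangle$. The scalar BMO estimate $\sum_{i=1}^{d-1}|\langle h_I^i,b\rangle|^2\le |I|\,\|b\|_{BMO^d(\mathbb R)}^2$ gives the uniform bound $\sup_I\|B_I\|_{\mathrm{op}}\lesssim_d \|b\|_{BMO^d(\mathbb R)}$. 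Since $\tilde\varLambda_b$ commutes with the martingale differences $d_k$ (each level is preserved), I would split
\[
\tilde\varLambda_b = \sum_{\substack{1\le i,j\le d-1\\ i+j\not\equiv 0\,(\mathrm{mod}\, d)}}U_{i,j},\qquad
U_{i,j}(f)=\sum_{I\in\mathcal D}|I|^{-1/2}\langle h_I^i,b\rangle\,\langle h_I^j,f\rangle\,h_I^{\overline{i+j}},
\]
and show that each $U_{i,j}$ is bounded on $L_p(\mathbb R)$. The key point is that the $i$-th Haar projection $P_i(b):=\sum_I\langle h_I^i,b\rangle h_I^i$ has BMO norm at most $\|b\|_{BMO^d(\mathbb R)}$, so $U_{i,j}$ is a ``cross-channel'' paraproduct with $P_i(b)$ as symbol, amenable to an adaptation of Chao-Peng's argument via the identity $h_I^j\cdot h_I^i=|I|^{-1/2}h_I^{\overline{i+j}}$ from \eqref{formula2.2}.

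The principal obstacle is making the Chao-Peng argument work for the cross-channel operators $U_{i,j}$, whose input and output Haar indices differ. I would resolve this by duality: it suffices to estimate $|\langle U_{i,j}f,g\rangle|$ against $\|f\|_{L_p}\|g\|_{L_{p'}}$, and the pairing unfolds, after using the Haar multiplication rule, into a trilinear form in $(P_i(b),f,g)$ of the same shape as the classical dyadic paraproduct; the $H^1$-$BMO$ duality combined with the square function estimate for $f$ and $g$ then yields the bound with constant $\lesssim_{d,p}\|b\|_{BMO^d(\mathbb R)}$. Since the admissible index set is finite, summing over $(i,j)$ completes the proof. As a sanity check, in the special case $d=2$ the admissible set is empty, $\tilde\varLambda_b\equiv 0$, and the lemma reduces to the contribution of $(\pi_{\bar b})^*$ alone.
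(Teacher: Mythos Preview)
Your decomposition $\varLambda_b=(\pi_{\bar b})^*+\tilde\varLambda_b$ and the treatment of the first summand via Chao--Peng match the paper exactly. The handling of $\tilde\varLambda_b$, however, is genuinely different.

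The paper argues by interpolation: it first bounds $\tilde\varLambda_b$ on $L_2$ using the block-diagonal structure (the circulant-matrix estimate already appearing in Corollary~\ref{Thetabest}), then proves a weak-type $(1,1)$ inequality via a Calder\'on--Zygmund decomposition, the key observation being that $\tilde\varLambda_b$ preserves supports, so $\mathrm{supp}\,\tilde\varLambda_b(h_j)\subseteq I_j$ for each bad-part piece $h_j$. Marcinkiewicz interpolation and duality then give all $1<p<\infty$. (A remark after the lemma also sketches a second route: show $\varLambda_b:H_1^d\to L_1$ and interpolate with $L_2$.)

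Your direct $L_p$ estimate on each $U_{i,j}$ via $H_1^d$--$BMO^d$ duality does work, with one caveat: the trilinear form $\sum_I\langle h_I^i,b\rangle\,|I|^{-1/2}\langle h_I^j,f\rangle\,\overline{\langle h_I^{\overline{i+j}},g\rangle}$ is not quite ``the same shape'' as the paraproduct form, because the $f$-factor is a Haar \emph{coefficient} rather than an \emph{average} $\langle\mathbbm 1_I/|I|,f\rangle$. The correct adaptation replaces the maximal function of $f$ by the conditioned square function $s(f)^2=\sum_k\mathbb E_{k-1}|d_kf|^2$: one checks $S\bigl(\sum_I c_I h_I^i\bigr)\le s(f)\,s(g)$ pointwise, and then H\"older plus $\|s(\cdot)\|_p\approx\|\cdot\|_p$ finishes. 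Your route avoids interpolation for $\tilde\varLambda_b$ entirely, at the price of invoking the conditioned-square-function inequality (equivalently, Haar unconditionality in $L_p$); the paper's route is more self-contained in this respect but less direct.
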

\begin{proof}
	We use the same notation as that in Corollary \ref{Thetabest}, and the proof of this lemma is also similar to that of Corollary \ref{Thetabest}. 
	It has been shown in \cite{CP} that
	\begin{equation}\label{bbbbb}
		\|(\pi_{b^*})^*\|_{L_p(\mathbb{R})\to L_p(\mathbb{R})}=	\|\pi_{b^*}\|_{L_{p'}(\mathbb{R})\to L_{p'}(\mathbb{R})}\approx_{d, p} \|b\|_{BMO^d(\mathbb{R})}.
	\end{equation}
	It remains to estimate $\|\varLambda_b-(\pi_{b^*})^*\|_{L_p(\mathbb{R})\to L_p(\mathbb{R})}$. 
	
	At first, we show the boundedness of $\varLambda_b-(\pi_{b^*})^*$ for $p=2$.
	Using the same notation as in Corollary \ref{Thetabest} and from \eqref{var} and \eqref{bbbbbbb}, we have
	\begin{equation}\label{br}
		\begin{aligned}
			\quad\|\tilde{\varLambda}_b\|_{L_2(\mathbb{R})\to L_2(\mathbb{R})}{}&\le \sup_{I\in\mathcal{D}}\|a_1^IA+a_2^IA^2+\cdots+a_{d-1}^IA^{d-1}\|_{S_\infty(\mathbb{M}_{d})}\\&\le\sup_{I\in\mathcal{D}}\sum_{i=1}^{d-1}\|a_i^IA^i\|_{S_\infty(\mathbb{M}_{d})}\le\sup_{I\in\mathcal{D}}\sum_{i=1}^{d-1}|a_i^I|\\
			&=\sup_{I\in \mathcal{D}}\frac{1}{|I|^{1/2}}\sum_{i=1}^{d-1}|\langle h_I^i,b\rangle|\le (d-1)\|b\|_{BMO^d(\mathbb{R})}.
		\end{aligned}
	\end{equation}
	Hence, we obtain that $\tilde{\varLambda}_b$ is bounded on $L_2(\mathbb{R})$.
	
	Next, we prove that $\tilde{\varLambda}_b$ satisfies weak type (1,1). Assume $f\in L_1(\mathbb{R})$ and let $\lambda>0$. In the same way as in \cite[Lemma 2.7]{Per}, we have the Calder\'{o}n-Zygmund decomposition $f=g+h$ with
	\begin{enumerate}
		\item $\|g\|_{L_\infty(\mathbb{R})}\le d\lambda$, $\|g\|_{L_1(\mathbb{R})}\le \|f\|_{L_1(\mathbb{R})}$;
		\item $h=\sum\limits_{j}h_j$, where $h_j=\bigg(f-\biggl\langle \frac{\mathbbm{1}_{I_j}}{|I_j|},f\biggr\rangle\bigg)\mathbbm{1}_{I_j}=\sum\limits_{J\subseteq I_j}\sum\limits_{l=1}^{d-1}\langle h_J^l,f\rangle h_J^l$ and $\{I_j\}$ form a sequence of disjoint $d$-adic intervals such that $\sum\limits_{j}|I_j|\le \frac{\|f\|_{L_1(\mathbb{R})}}{\lambda}$.
	\end{enumerate}
	We see that $\tilde{\varLambda}_b$ is of strong type (2,2). In particular, \eqref{br} implies that
	\begin{equation}\label{lamg}
		\begin{aligned}
			|\{|\tilde\varLambda_b(g)|>\lambda/2\}|{}&\le 4(d-1)\|b\|_{BMO^d(\mathbb{R})} \frac{\|g\|_{L_2(\mathbb{R})}^2}{\lambda^2}\\
			&\le 4(d-1)\|b\|_{BMO^d(\mathbb{R})} \frac{\|g\|_{L_\infty(\mathbb{R})}\|f\|_{L_1(\mathbb{R})}}{\lambda^2}\\
			&\le 4d(d-1)\|b\|_{BMO^d(\mathbb{R})}\frac{\|f\|_{L_1(\mathbb{R})}}{\lambda}.
		\end{aligned}
	\end{equation}
	On the other hand, from \eqref{tildevar} we deduce that $\mathrm{supp} \tilde{\varLambda}(h_j)\subseteq I_j$, and
	\begin{equation}\label{lamh}
		|\{|\tilde{\varLambda}_b(h)|>\lambda/2\}|\le |\cup_jI_j|\le \frac{\|f\|_{L_1(\mathbb{R})}}{\lambda}.
	\end{equation}
	Then from \eqref{lamg} and \eqref{lamh}, we conclude that
	\begin{equation*}
		\begin{aligned}
			|\{|\tilde{\varLambda}_b(f)|>\lambda\}|{}&\le |\{|\tilde{\varLambda}_b(g)|>\lambda/2\}|+|\{|\tilde{\varLambda}_b(h)|>\lambda/2\}| \\
			&\le (4d(d-1)\|b\|_{BMO^d(\mathbb{R})}+1)\frac{\|f\|_{L_1(\mathbb{R})}}{\lambda}.
		\end{aligned}
	\end{equation*}
	Hence $\tilde{\varLambda}_b$ is of weak type (1,1). Using interpolation and duality argument, we obtain that $\tilde{\varLambda}_b$ is bounded on $ L_p(\mathbb{R})$ for $1<p<\8$.
\end{proof}

\begin{rem}
	There is another easy proof for the boundedness of $\varLambda_b$ on $L_p(\mathbb{R})$ when $1<p\neq 2<\8$. By the duality between the $d$-adic martingale Hardy space $H^d_1(\mathbb{R})$ (see the definition in \eqref{defnh1d}) and the $d$-adic martingale $BMO$ space $BMO^d(\mathbb{R})$, we see that if $b\in BMO^d(\mathbb{R})$, then $\varLambda_b$ is bounded from $H^d_1(\mathbb{R})$ to $L_1(\mathbb{R})$. Using the boundedness of $\varLambda_b$ on $L_2(\mathbb{R})$ and by interpolation, we conclude that $\varLambda_b$ is bounded on $L_p(\mathbb{R})$ for $1<p\leq 2$. The boundedness of $\varLambda_b$ on $L_p(\mathbb{R})$ for $2\leq p<\8$ follows from the duality.
\end{rem}

We now provide the following useful lemma.
\begin{lem}\label{supk}
	Let $1<p<\infty$, $f\in L_p(\mathbb{R})$ and $b\in BMO^d(\mathbb{R})$. Then 
	\begin{equation*}
		\biggl\|\sup_{k\in\mathbb{Z}}\bigl|\mathbb{E}_{k-1}\bigl(\sum_{j\ge k} d_jb\cdot d_jf\bigr)\bigr|\biggr\|_{L_p(\mathbb{R})}\lesssim_{p} \|b\|_{BMO^d(\mathbb{R})}\|f\|_{L_p(\mathbb{R})}.
	\end{equation*}
\end{lem}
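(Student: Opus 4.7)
The plan is to first collapse the tail sum $\sum_{j\ge k}d_jb\cdot d_jf$ inside $\mathbb{E}_{k-1}$ into a single simple expression, and then estimate the maximal function by Doob together with a John--Nirenberg bound for $b\in BMO^d(\mathbb{R})$.

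First, I will establish the pointwise identity
\begin{equation*}
\mathbb{E}_{k-1}\bigl(\sum_{j\ge k}d_jb\cdot d_jf\bigr)=\mathbb{E}_{k-1}(bf)-b_{k-1}f_{k-1}=\mathbb{E}_{k-1}\bigl((b-b_{k-1})(f-f_{k-1})\bigr).
\end{equation*}
This follows by expanding $bf=(\sum_j d_jb)(\sum_l d_lf)$ and splitting the double sum over $(j,l)$ according to whether the indices are $\leq k-1$ or $\geq k$. The mixed terms vanish under $\mathbb{E}_{k-1}$ because either $\mathbb{E}_{k-1}(d_lf)=0$ when $l\ge k$ or, for the off-diagonal pieces with $j,l\ge k$ and $j\neq l$, conditioning at the appropriate intermediate level gives zero; the $j,l<k$ piece yields exactly $b_{k-1}f_{k-1}$. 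The equality of the two expressions on the right is then immediate since $\mathbb{E}_{k-1}((b-b_{k-1})f_{k-1})=\mathbb{E}_{k-1}(b_{k-1}(f-f_{k-1}))=0$.

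Next I will choose an auxiliary exponent $q$ with $1<q<p$ and apply the conditional H\"older inequality to obtain
\begin{equation*}
\bigl|\mathbb{E}_{k-1}((b-b_{k-1})(f-f_{k-1}))\bigr|\le\bigl(\mathbb{E}_{k-1}|b-b_{k-1}|^{q'}\bigr)^{1/q'}\bigl(\mathbb{E}_{k-1}|f-f_{k-1}|^{q}\bigr)^{1/q}.
\end{equation*}
The $BMO^d(\mathbb{R})$ John--Nirenberg inequality gives $(\mathbb{E}_{k-1}|b-b_{k-1}|^{q'})^{1/q'}\lesssim_{q'}\|b\|_{BMO^d(\mathbb{R})}$ uniformly in $k$, so the task reduces to controlling, in $L_p(\mathbb{R})$, the quantity
\begin{equation*}
\sup_{k\in\mathbb{Z}}\bigl(\mathbb{E}_{k-1}|f-f_{k-1}|^{q}\bigr)^{1/q}\le\sup_{k\in\mathbb{Z}}\bigl(\mathbb{E}_{k-1}|f|^q\bigr)^{1/q}+\sup_{k\in\mathbb{Z}}|f_{k-1}|.
\end{equation*}

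Finally I will invoke Doob's maximal inequality twice. For the second term, $\|\sup_k|f_{k-1}|\|_{L_p(\mathbb{R})}\lesssim_p\|f\|_{L_p(\mathbb{R})}$. For the first, applying Doob to the submartingale $(\mathbb{E}_{k-1}|f|^q)_k$ at the exponent $p/q>1$ yields
\begin{equation*}
\Bigl\|\sup_k\bigl(\mathbb{E}_{k-1}|f|^q\bigr)^{1/q}\Bigr\|_{L_p(\mathbb{R})}=\Bigl\|\sup_k\mathbb{E}_{k-1}|f|^q\Bigr\|_{L_{p/q}(\mathbb{R})}^{1/q}\lesssim_{p/q}\|f\|_{L_p(\mathbb{R})}.
\end{equation*}
Combining these two maximal bounds with the pointwise estimate gives the claimed inequality. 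The only subtle point is choosing $q\in(1,p)$ so that $p/q>1$; this is possible for every $p>1$, e.g.\ $q=(1+p)/2$. No other obstacle is expected: the identity in the first step is an elementary martingale computation, and John--Nirenberg plus Doob are standard.
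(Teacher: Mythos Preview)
Your proof is correct and follows essentially the same route as the paper: rewrite the tail sum as $\mathbb{E}_{k-1}\bigl((b-b_{k-1})(f-f_{k-1})\bigr)$, apply conditional H\"older, control the $b$-factor uniformly by John--Nirenberg, and bound the $f$-factor by Doob's maximal inequality at an exponent strictly below $p$. The only cosmetic difference is the labeling of the H\"older exponents (the paper puts the large exponent $q=2p/(p-1)$ on $b$ and $q'=2p/(p+1)$ on $f$, while you put $q=(1+p)/2$ on $f$), but the mechanism is identical.
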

\begin{proof}
	By the Hölder inequality, one has
	\begin{equation*}
		\begin{aligned}
			\biggl|\mathbb{E}_{k-1}\biggl(\sum\limits_{j\ge k}d_j{b}\cdot d_j{f}\biggr)\biggr|{}&=|\mathbb{E}_{k-1}(b-b_{k-1})(f-f_{k-1})|\\
			&\le \big(\mathbb{E}_{k-1}|b-b_{k-1}|^q\big)^{1/q}\cdot \big(\mathbb{E}_{k-1}|f-f_{k-1}|^{q'}\big)^{1/{q'}},
		\end{aligned}
	\end{equation*}
	where $q=\frac{2p}{p-1}$ and $\frac{1}{q}+\frac{1}{q'}=1$.
	From the martingale John-Nirenberg inequality we have
	\begin{equation}\label{BMOeq}
		\|b\|_{BMO^d(\mathbb{R})}\approx_q \sup_{k\in \mathbb{Z}}\big\|\mathbb{E}_{k-1}|b-b_{k-1}|^q\big\|_{\infty}^{1/q}.
	\end{equation}
	Hence by \eqref{BMOeq},
	\begin{equation*}
		\begin{aligned}
			{}&\quad\biggl\|\sup_{k\in\mathbb{Z}}\biggl|\mathbb{E}_{k-1}\biggl(\sum_{j\ge k} d_jb\cdot d_jf\biggr)\biggr|\biggr\|_{L_p(\mathbb{R})}\\
			&
			\le \biggl\|\sup_{k\in\mathbb{Z}}\big(\mathbb{E}_{k-1}|b-b_{k-1}|^q\big)^{1/q}\cdot \sup_{k\in\mathbb{Z}}\big(\mathbb{E}_{k-1}|f-f_{k-1}|^{q'}\big)^{1/{q'}}\biggr\|_{L_p(\mathbb{R})}\\
			&\lesssim_p \|b\|_{BMO^d(\mathbb{R})}\cdot \biggl\|\sup_{k\in\mathbb{Z}}\big(\mathbb{E}_{k-1}|f-f_{k-1}|^{q'}\big)^{1/{q'}}\biggr\|_{L_p(\mathbb{R})}.
		\end{aligned}
	\end{equation*}
	Note that $|f|^{q'}\in L_{p/{q'}}(\mathbb{R})$ and $p/{q'}>1$, by the Doob maximal inequality, 
	\begin{equation*}
		\begin{aligned}
			\biggl\|\sup_{k\in\mathbb{Z}}\big(\mathbb{E}_{k-1}|f-f_{k-1}|^{q'}\big)^{1/{q'}}\biggr\|_{L_p(\mathbb{R})}{}
			&\lesssim_p \biggl\|\sup_{k\in\mathbb{Z}}\big(\mathbb{E}_{k-1}|f|^{q'}\big)^{1/{q'}}\biggr\|_{L_p(\mathbb{R}^n)}+\bigl\|\sup_{k\in\mathbb{Z}}|f_{k-1}|\bigr\|_{L_p(\mathbb{R})}\\
			&\lesssim_p \|f\|_{L_p(\mathbb{R})}.
		\end{aligned}
	\end{equation*}
	Therefore
	\begin{equation*}
		\biggl\|\sup_{k\in\mathbb{Z}}\biggl|\mathbb{E}_{k-1}\biggl(\sum_{j\ge k} d_jb\cdot d_jf\biggr)\biggr|\biggr\|_{L_p(\mathbb{R})}\lesssim_{p} \|b\|_{BMO^d(\mathbb{R})}\|f\|_{L_p(\mathbb{R})},
	\end{equation*}
	as desired.
\end{proof}

Before proving Theorem \ref{thm1.7}, we give the following two propositions concerning the boundedness of commutators involving martingale paraproducts.

\begin{proposition}\label{T1est}
	Let $1<p<\infty$. If $a,b\in BMO^d(\mathbb{R})$, then $[\pi_a,M_b]$ is bounded on $L_p(\mathbb{R})$ and
	\begin{equation*}
		\|[\pi_a,M_b]\|_{L_p(\mathbb{R})\to L_p(\mathbb{R})}\lesssim_{d,p} \|a\|_{BMO^d(\mathbb{R})} \|b\|_{BMO^d(\mathbb{R})}.
	\end{equation*}
Moreover, $[\pi_a^*,M_b]$ is bounded on $L_p(\mathbb{R})$ and
\begin{equation*}
	\|[\pi_a^*,M_b]\|_{L_p(\mathbb{R})\to L_p(\mathbb{R})}\lesssim_{d,p} \|a\|_{BMO^d(\mathbb{R})}\|b\|_{BMO^d(\mathbb{R})}.
\end{equation*}
\end{proposition}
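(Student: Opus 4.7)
The plan is to follow the decomposition $M_b f = \pi_b(f) + \varLambda_b(f) + R_b(f)$ used in Proposition~\ref{T0est}, so that
$$[\pi_a, M_b] = [\pi_a, \pi_b] + [\pi_a, \varLambda_b] + [\pi_a, R_b].$$
Since $a,b\in BMO^d(\mathbb{R})$, the Chao--Peng theorem (cited in \cite{CP} and recalled just before Lemma~\ref{TLambdab1}) gives that $\pi_a$ and $\pi_b$ are bounded on $L_p(\mathbb{R})$, while Lemma~\ref{TLambdab1} gives the $L_p$-boundedness of $\varLambda_b$, in each case by the corresponding $BMO^d$-norm. Hence the first two commutators on the right are immediately estimated by the triangle inequality, each with norm $\lesssim_{d,p}\|a\|_{BMO^d(\mathbb{R})}\|b\|_{BMO^d(\mathbb{R})}$. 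For the third, I would invoke the identity \eqref{piarb}, namely $[\pi_a, R_b] = -\varPsi_{a,b} - \pi_a\pi_b$, so that after handling $\pi_a\pi_b$ trivially by composition, the whole proof reduces to bounding $\varPsi_{a,b}$ on $L_p(\mathbb{R})$.

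At this stage I would reuse the formula \eqref{varpsif}, which rewrites
$$\varPsi_{a,b}(f) = \pi_a(\varLambda_b(f)) - U_{a,b}(f), \qquad U_{a,b}(f) := \sum_{k\in\mathbb{Z}} d_k a\cdot G_k(f),$$
where $G_k(f) := \mathbb{E}_{k-1}\bigl(\sum_{j\ge k} d_j b \cdot d_j f\bigr)$. The composition $\pi_a\circ\varLambda_b$ is again handled by the two ingredients above, so everything reduces to establishing the single estimate $\|U_{a,b}(f)\|_{L_p(\mathbb{R})} \lesssim_{d,p} \|a\|_{BMO^d(\mathbb{R})}\|b\|_{BMO^d(\mathbb{R})}\|f\|_{L_p(\mathbb{R})}$. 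The difficulty here is that the Schatten-class argument of Proposition~\ref{T0est} used the boundedness of the triangular projection (Lemma~\ref{triangle}), which has no scalar $L_p$-analogue; finding a substitute is the main obstacle.

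The replacement is a pointwise square-function bound that exploits adaptedness. Setting $\varphi := \sup_k |G_k(f)|$, the $\mathcal{F}_{k-1}$-measurability of $G_k(f)$ yields $|G_k(f)| = |\mathbb{E}_{k-1}(G_k(f))| \le \mathbb{E}_{k-1}(\varphi)$. Since each $d_k a\cdot G_k(f)$ is then a genuine martingale difference, Burkholder's square-function equivalence combined with this pointwise bound gives
$$\|U_{a,b}(f)\|_{L_p(\mathbb{R})} \approx_p \bigg\|\Bigl(\sum_k |d_k a|^2|G_k(f)|^2\Bigr)^{1/2}\bigg\|_{L_p(\mathbb{R})} \le \bigg\|\Bigl(\sum_k |d_k a|^2 \mathbb{E}_{k-1}(\varphi)^2\Bigr)^{1/2}\bigg\|_{L_p(\mathbb{R})},$$
and the last expression is exactly $\|S(\pi_a(\varphi))\|_{L_p(\mathbb{R})}$, the square function of $\pi_a(\varphi) = \sum_k d_k a\cdot\mathbb{E}_{k-1}(\varphi)$. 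A second application of Burkholder together with Chao--Peng gives $\|S(\pi_a(\varphi))\|_{L_p} \approx_p \|\pi_a(\varphi)\|_{L_p} \lesssim_{d,p} \|a\|_{BMO^d(\mathbb{R})}\|\varphi\|_{L_p}$, and Lemma~\ref{supk} finally bounds $\|\varphi\|_{L_p} = \|\sup_k|G_k(f)|\|_{L_p}$ by $\|b\|_{BMO^d(\mathbb{R})}\|f\|_{L_p}$. The companion estimate for $[\pi_a^*, M_b]$ will then follow from the identity $[\pi_a^*, M_b]^* = -[\pi_a, M_{b^*}]$ together with $\|b^*\|_{BMO^d(\mathbb{R})} = \|b\|_{BMO^d(\mathbb{R})}$, applied at the conjugate exponent $p'$.
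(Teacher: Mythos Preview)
Your proof is correct and follows essentially the same route as the paper: the same decomposition reduces everything to bounding $U_{a,b}=V_{a,b}$, and both arguments hinge on Lemma~\ref{supk}. The only difference is cosmetic---the paper estimates $V_{a,b}$ by dualizing against $g\in L_{p'}$ and invoking $H_1^d$--$BMO^d$ duality (inequality \eqref{SV3}), whereas you apply Burkholder's inequality directly and recognize the resulting square function as $S(\pi_a(\varphi))$.
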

\begin{proof}
	We use the same notation as that in the proof of Proposition \ref{T2est}. 
	Recall that $R_b$ is defined in \eqref{R_b}. We will first focus on the estimate of the norm $\|[\pi_{a}, R_b]\|_{L_p(\mathbb{R})\to L_p(\mathbb{R})}$. From \eqref{piarbf} we have
	\begin{equation*}
		[\pi_{a}, R_b]=-\pi_{a}\Theta_b+V_{a,b}.
	\end{equation*}
	By Lemma \ref{TLambdab1}, one has
	\begin{equation}\label{a1}
		\begin{aligned}
			\|\pi_{a}\Theta_{b}\|_{L_p(\mathbb{R})\to L_p(\mathbb{R})}{}&\le \|\pi_{a}\|_{L_p(\mathbb{R}^n)\to L_p(\mathbb{R})}\bigl(\|\pi_{b}\|_{L_p(\mathbb{R})\to L_p(\mathbb{R})}+\|\varLambda_{b}\|_{L_p(\mathbb{R})\to L_p(\mathbb{R})}\bigr)\\
			&\lesssim_{d,p} \|a\|_{BMO^d(\mathbb{R})}\|b\|_{BMO^d(\mathbb{R})}.
		\end{aligned}
	\end{equation}
	For any $f\in L_p(\mathbb{R})$ and $g\in L_{p'}(\mathbb{R})$, 
	\begin{equation*}
		\begin{aligned}
			\langle V_{a,b}(f),g\rangle{}&=\sum_{k\in\mathbb{Z}}\biggl\langle d_ka\cdot \mathbb{E}_{k-1}\biggl(\sum_{j\ge k}d_jb\cdot d_jf\biggr),g\biggr\rangle\\
			&=\sum_{k\in\mathbb{Z}}\biggl\langle d_ka,d_kg\cdot \mathbb{E}_{k-1}\biggl(\sum_{j\ge k}d_j\overline{b}\cdot d_j\overline{f}\biggr)\biggr\rangle\\
			&= \biggl\langle a,\sum_{k\in\mathbb{Z}}d_kg\cdot \mathbb{E}_{k-1}\biggl(\sum_{j\ge k}d_j\overline{b}\cdot d_j\overline{f}\biggr)\biggr\rangle.
		\end{aligned}
	\end{equation*}
	To use duality, we need to estimate the following
	\begin{equation}\label{SV3}
		\begin{aligned}
			\biggl\|\sum_{k\in\mathbb{Z}}d_kg\cdot \mathbb{E}_{k-1}\biggl(\sum_{j\ge k}d_j\overline{b}\cdot d_j\overline{f}\biggr)\biggr\|_{H_1^d(\mathbb{R})}{}&=\biggl\|\biggl(\sum_{k\in\mathbb{Z}} |d_kg|^2\biggl|\mathbb{E}_{k-1}\biggl(\sum\limits_{j\ge k}d_j{b}\cdot d_j{f}\biggr)\biggr|^2\biggr)^{1/2}\biggr\|_{L_1(\mathbb{R})}\\
			&\le \biggl\|\biggl(\sum_{k\in\mathbb{Z}} |d_kg|^2\biggr)^{1/2} \cdot\sup_{k\in\mathbb{Z}}\biggl|\mathbb{E}_{k-1}\biggl(\sum\limits_{j\ge k}d_j{b}\cdot d_j{f}\biggr)\biggr|\biggr\|_{L_1(\mathbb{R})}\\
			&
			\le \|S(g)\|_{L_{p'}(\mathbb{R})}\cdot\biggl\|\sup_{k\in\mathbb{Z}}\biggl|\mathbb{E}_{k-1}\biggl(\sum\limits_{j\ge k}d_j{b}\cdot d_j{f}\biggr)\biggr|\biggr\|_{L_p(\mathbb{R})}\\
			&\lesssim_{p} \|b\|_{BMO^d(\mathbb{R})} \|g\|_{L_{p'}(\mathbb{R})}\|f\|_{L_p(\mathbb{R})},
		\end{aligned}
	\end{equation}
	where the third inequality is from the Hölder inequality, and the fourth is from Lemma \ref{supk}.
	Therefore 
	\begin{equation*}
		\begin{aligned}
			|\langle V_{a,b} (f),g\rangle|{}&\lesssim \|a\|_{BMO^d(\mathbb{R})}\biggl\|\sum_{k\in\mathbb{Z}}d_kg\cdot \mathbb{E}_{k-1}\biggl(\sum_{j\ge k}d_j\overline{b}\cdot d_j\overline{f}\biggr)\biggr\|_{H_1^d(\mathbb{R})}\\
			&\lesssim_{p} \|a\|_{BMO^d(\mathbb{R})}\|b\|_{BMO^d(\mathbb{R})} \|g\|_{L_{p'}(\mathbb{R})}\|f\|_{L_p(\mathbb{R})}.
		\end{aligned}
	\end{equation*}
	This implies that
	\begin{equation}\label{a2}
		\begin{aligned}
			\|V_{a,b}\|_{L_p(\mathbb{R})\to L_p(\mathbb{R})}\lesssim_{p} \|a\|_{BMO^d(\mathbb{R})}\|b\|_{BMO^d(\mathbb{R})}.
		\end{aligned}
	\end{equation}	
	From \eqref{a1} and \eqref{a2} we have
	\begin{equation*}
		\|[\pi_{a}, R_b]\|_{L_p(\mathbb{R})\to L_p(\mathbb{R})}\lesssim_{d,p} \|a\|_{BMO^d(\mathbb{R})}\|b\|_{BMO^d(\mathbb{R})}.
	\end{equation*}	
	Recall that
	\begin{equation*}
		\begin{aligned}
			[\pi_{{a}},M_b]=[\pi_{{a}},\pi_b]+[\pi_{{a}},\varLambda_b]+[\pi_{{a}},R_b].
		\end{aligned}
	\end{equation*}
	Since $\pi_a$ is bounded on $L_p(\mathbb{R})$, by the triangle inequality we deduce that
	\begin{equation*}
		\begin{aligned}
			\|[\pi_a,M_b]\|_{L_p(\mathbb{R})\to L_p(\mathbb{R})}\lesssim_{d,p} \|a\|_{BMO^d(\mathbb{R})}\|b\|_{BMO^d(\mathbb{R})}.
		\end{aligned}
	\end{equation*}
	Recall that 
\begin{equation*}
	[\pi_a^*,M_b]^*=-[\pi_a,M_{b^*}].
\end{equation*}
Hence
\begin{equation*}
	\begin{aligned}
		\|[\pi_a^*,M_b]\|_{L_p(\mathbb{R})\to L_p(\mathbb{R})}=\|[\pi_a,M_{b^*}]\|_{L_{p'}(\mathbb{R})\to L_{p'}(\mathbb{R})}\lesssim_{d,p} \|a\|_{BMO^d(\mathbb{R})}\|b\|_{BMO^d(\mathbb{R})}.
	\end{aligned}
\end{equation*}
	This completes the proof.
\end{proof}

We can define the martingale $BMO$ space $BMO^{\omega, 2^n}(\mathbb{R}^n)$ on $\mathbb{R}^n$ by virtue of $H_I^\eta$ similarly as in \eqref{BMOd}. More precisely, $BMO^{\omega, 2^n}(\mathbb{R}^n)$ associated with the dyadic system $\mathcal{ D}^\omega$ on $\mathbb{R}^n$ is the space consisting of all locally integrable functions $b$ such that
\begin{equation}
\|b\|_{BMO^{\omega, 2^n}(\mathbb{R}^n)}=\sup_{I\in\mathcal{D}^\omega}\frac{1}{|I|^{1/2}}\biggl(\sum_{J\subseteq I}\sum_{\eta\in\{0,1\}^n_0}|\langle H_J^\eta,b\rangle|^2 \biggr)^{1/2}<\infty.
\end{equation}
Then Lemma \ref{TLambdab1} and Proposition \ref{T1est} also hold for the dyadic system $\mathcal{ D}^\omega$. It is straightforward to verify that if $b\in BMO(\mathbb{R}^n) $, then $b\in BMO^{\omega, 2^n}(\mathbb{R}^n)$ and
$$ 	\|b\|_{BMO^{\omega, 2^n}(\mathbb{R}^n)}\leq 	\|b\|_{BMO(\mathbb{R}^n)}. $$

We come to the proof of Theorem \ref{thm1.7}.
\begin{proof}[Proof of Theorem \ref{thm1.7}]
We use the same notation as that in the proof of Theorem \ref{thm6.4}. From Proposition \ref{T1est}, we have
\begin{equation*}
	\|[\pi_{T(1)}^{\omega}, M_b]\|_{L_p(\mathbb{R}^n)\to L_p(\mathbb{R}^n)}\lesssim_{n,p}\|T(1)\|_{BMO(\mathbb{R}^n)} \|b\|_{BMO(\mathbb{R}^n)}
\end{equation*}
and
\begin{equation*}
	\|[(\pi_{T^*(1)}^{\omega})^*, M_b]\|_{L_p(\mathbb{R}^n)\to L_p(\mathbb{R}^n)}\lesssim_{n,p} \|T^*(1)\|_{BMO(\mathbb{R}^n)} \|b\|_{BMO(\mathbb{R}^n)}.
\end{equation*}
By Theorem \ref{CZdec}, it suffices to estimate $\|[S_\omega^{ij}, M_b]\|_{L_p(\mathbb{R}^n)\to L_p(\mathbb{R}^n)}$ for any $i, j\in \mathbb{N}\cup\{0\}$.
Note that by the triangle inequality
\begin{equation*}
	\begin{aligned}
		{}&\quad\|[S_\omega^{ij},M_b]\|_{L_p(\mathbb{R}^n)\to L_p(\mathbb{R}^n)}\\&\le \|[S_\omega^{ij},\pi_b]\|_{L_p(\mathbb{R}^n)\to L_p(\mathbb{R}^n)}+\|[S_\omega^{ij},\varLambda_b]\|_{L_p(\mathbb{R}^n)\to L_p(\mathbb{R}^n)}
		+\|[S_\omega^{ij},R_b]\|_{L_p(\mathbb{R}^n)\to L_p(\mathbb{R}^n)}.
	\end{aligned}
\end{equation*}
Here $\pi_b$, $\varLambda_b$ and $R_b$ are with respect to the dyadic system $\mathcal{ D}^\omega$. From \cite{CP} and Lemma \ref{TLambdab1}, we know that
\begin{equation*}
	\begin{aligned}
		\|\pi_b\|_{L_p(\mathbb{R}^n)\to L_p(\mathbb{R}^n)}\lesssim_{n,p}\|b\|_{BMO^{\omega, 2^n}(\mathbb{R}^n)},\quad \|\varLambda_b\|_{L_p(\mathbb{R}^n)\to L_p(\mathbb{R}^n)}\lesssim_{n,p }\|b\|_{BMO^{\omega, 2^n}(\mathbb{R}^n)}.
	\end{aligned}
\end{equation*}
However, $S_\omega^{ij}$ is bounded on $L_p(\mathbb{R}^n)$. Therefore one has
\begin{equation*}
	\begin{aligned}
		\|[S_\omega^{ij},\pi_b]\|_{L_p(\mathbb{R}^n)\to L_p(\mathbb{R}^n)}&\lesssim \|S_\omega^{ij}\|_{L_p(\mathbb{R}^n)\to L_p(\mathbb{R}^n)}\|\pi_b\|_{L_p(\mathbb{R}^n)\to L_p(\mathbb{R}^n)}\\
		&\lesssim_{n,p} (i+j)\|b\|_{BMO^{\omega, 2^n}(\mathbb{R}^n)}\lesssim_{n,p} (i+j)\|b\|_{BMO(\mathbb{R}^n)}.
	\end{aligned}
\end{equation*}
Analogously, we have
$$ \|[S_\omega^{ij},\varLambda_b]\|_{L_p(\mathbb{R}^n)\to L_p(\mathbb{R}^n)}\lesssim_{n,p} (i+j)\|b\|_{BMO(\mathbb{R}^n)}.$$
It remains to estimate $\|[S_\omega^{ij}, R_b]\|_{L_p(\mathbb{R}^n)\to L_p(\mathbb{R}^n)}$ for any $i, j\in \mathbb{N}\cup\{0\}$. We will show that $\|[S^{ij}_\omega,R_b]\|_{L_p(\mathbb{R}^n)\to L_p(\mathbb{R}^n)}$ increases with polynomial growth with respect to $i$ and $j$ uniformly on $\omega$. Then from Theorem \ref{CZdec} and the triangle inequality, the desired result will follow.

We first prove
\begin{equation}\label{22}
	\|[S_\omega^{ij},R_b]\|_{L_2(\mathbb{R}^n)\to L_2(\mathbb{R}^n)}\lesssim_{n} \|b\|_{BMO(\mathbb{R}^n)} .
\end{equation}
Without loss of generality, we can assume $\omega=0$.
Let $\varPhi=[S^{ij}_0,R_b]$. The forms of $\varPhi$ and $B_K$ ($K\in\mathcal{D}^0$) have been given in \eqref{Ndef} and \eqref{BKdef} respectively. From \eqref{BkBK} we know $\varPhi^*\varPhi$ is a block diagonal matrix with blocks $B_K^*B_K$ for all $K\in \mathcal{ D}^0$. Hence
\begin{equation}\label{Nf2}
	\begin{aligned}
		\|\varPhi\|_{L_2(\mathbb{R}^n)\to L_2(\mathbb{R}^n)}=\|\varPhi^*\varPhi\|_{L_2(\mathbb{R}^n)\to L_2(\mathbb{R}^n)}^{1/2}=\sup_{k\in\mathbb{Z}}\sup_{K\in\mathcal{D}^0_k}\|B^*_KB_K\|_{L_2(\mathbb{R}^n)\to L_2(\mathbb{R}^n)}^{1/2}.
	\end{aligned}
\end{equation}
Now we fix $k\in\mathbb{Z}$ and $K\in\mathcal{D}^0_k$. We write $B_K^*B_K$ in the matrix form $[B_K^*B_K]$ with respect to the basis $\{H_Q^\zeta\}$, where $Q\in\mathcal{D}^0, Q\subseteq K$ , $\ell(Q)=2^{-i}\ell(K)$ and $\zeta\in\{0,1\}^n_0$. Then using the triangle inequality, one has
\begin{equation*}
	\begin{aligned}
		\|B^*_KB_K\|_{L_2(\mathbb{R}^n)\to L_2(\mathbb{R}^n)}{}&=\|[B_K^*B_K]\|_{S_\infty(\mathbb{M}_{2^{in}(2^n-1)})}\\&\le \sum_{\substack{J\in\mathcal{D}^0;J\subseteq K\\ \ell(J)=2^{-j}\ell(K)}}\sum_{\eta}\big\|W^{K,J,\eta}\big\|_{S_\infty(\mathbb{M}_{2^{in}(2^n-1)})},
	\end{aligned}
\end{equation*}
where $W^{K,J,\eta}$ is defined in \eqref{W}.
Analogously to \eqref{WVV}, we deduce
\begin{equation*}
	\begin{aligned}
		\big\|W^{K,J,\eta}\big\|_{S_\infty(\mathbb{M}_{2^{in}(2^n-1)})}{}
		&=\bigg|\sum_{\substack{{Q}\in\mathcal{D}^0;{Q}\subseteq K \\ \ell({Q})=2^{-i}\ell(K)\\ {\zeta}\in\{0,1\}^n_0}}a_{{Q}JK}^{{\zeta}\eta}\overline{a_{{Q}JK}^{{\zeta}\eta}}b_{{Q}J}\overline{b_{{Q}J}} \bigg|
		\le\sum_{\substack{{Q}\in\mathcal{D}^0;{Q}\subseteq K \\ \ell({Q})=2^{-i}\ell(K)\\ {\zeta}\in\{0,1\}^n_0}}\bigg|a_{{Q}JK}^{{\zeta}\eta}b_{{Q}J} \bigg|^2,
	\end{aligned}
\end{equation*}
where $b_{IJ}= \langle \frac{\mathbbm{1}_I}{|I|},b\rangle-\langle \frac{\mathbbm{1}_J}{|J|},b\rangle$ for any $I, J\in \mathcal{ D}^0$.
This implies that
\begin{equation*}
	\begin{aligned}
		\|\varPhi\|_{L_2(\mathbb{R}^n)\to L_2(\mathbb{R}^n)}{}&\le \sup_{k\in\mathbb{Z}}\sup_{K\in\mathcal{D}^0_k}\biggl(\sum_{\substack{J\in\mathcal{D}^0;J\subseteq K\\ \ell(J)=2^{-j}\ell(K)}}\sum_{\eta}\sum_{\substack{I\in\mathcal{D}^0;I\subseteq K \\ \ell(I)=2^{-i}\ell(K)}}\sum_{ \xi}\bigg|a_{IJK}^{\xi\eta} b_{IJ}\bigg|^2\biggr)^{1/2}.
	\end{aligned}
\end{equation*}
Note that
$|a_{IJK}^{\xi\eta}|\le 2^{-(i+j)n/2}$ in \eqref{aijkxieta}, $b_{IJ}=b_{IK}-b_{JK}$ and $b_{IK}\cdot \mathbbm{1}_I=(b_{k+i}-b_{k})\cdot \mathbbm{1}_I$. Then by the Cauchy-Schwarz inequality
\begin{equation}
	\begin{aligned}
		{}&\quad\sum_{\substack{J\in\mathcal{D}^0;J\subseteq K\\ \ell(J)=2^{-j}\ell(K)}}\sum_{\eta}\sum_{\substack{I\in\mathcal{D}^0;I\subseteq K\\ \ell(I)=2^{-i}\ell(K)}}\sum_{\xi}\biggl|a_{IJK}^{\xi\eta}b_{IJ}\biggr|^2\\
		&
		\le \frac{(2^n-1)^2}{2^{(i+j)n}}\sum_{\substack{J\in\mathcal{D}^0;J\subseteq K\\ \ell(J)=2^{-j}\ell(K)}}\sum_{\substack{I\in\mathcal{D}^0;I\subseteq K\\ \ell(I)=2^{-i}\ell(K)}}|b_{IJ}|^2\\
		&\leq \frac{(2^n-1)^2}{2^{(i+j)n}}\sum_{\substack{J\in\mathcal{D}^0;J\subseteq K\\ \ell(J)=2^{-j}\ell(K)}}\sum_{\substack{I\in\mathcal{D}^0;I\subseteq K\\ \ell(I)=2^{-i}\ell(K)}} 2(|b_{IK}|^2+|b_{JK}|^2)\\
		&= \frac{(2^{n}-1)^2}{2^{in-1}}\sum_{\substack{I\in\mathcal{D}^0;I\subseteq K\\ \ell(I)=2^{-i}\ell(K)}}|b_{IK}|^2+\frac{(2^{n}-1)^2}{2^{jn-1}}\sum_{\substack{J\in\mathcal{D}^0;J\subseteq K\\ \ell(J)=2^{-j}\ell(K)}}|b_{JK}|^2\\
		&=(2^{n}-1)^22^{kn+1}\bigg(\|(b_{k+i}-b_{k})\mathbbm{1}_K\|_{L_2(\mathbb{R}^n)}^2+\|(b_{k+j}-b_{k})\mathbbm{1}_K\|_{L_2(\mathbb{R}^n)}^2\bigg).
	\end{aligned}
\end{equation}
We also notice that
\begin{equation*}
	\begin{aligned}
		\|(b_{k+i}-b_{k})\mathbbm{1}_K\|_{L_2(\mathbb{R}^n)}^2=\int_K \mathbb{E}_k(|b_{k+i}(t)-b_k(t)|^2) dt\leq |K|\|b\|^2_{BMO^{0,2^n}(\mathbb{R}^n)}.
	\end{aligned}
\end{equation*}
Hence 
\begin{equation}\label{Nest}
	\begin{aligned}
		\|\varPhi\|_{L_2(\mathbb{R}^n)\to L_2(\mathbb{R}^n)}{}&\lesssim_{n} \|b\|_{BMO^{0, 2^n}(\mathbb{R}^n)}\leq\|b\|_{BMO(\mathbb{R}^n)}.
	\end{aligned}
\end{equation}

Now we prove that $\varPhi$ is of weak type (1,1). Assume $f\in L_1(\mathbb{R}^n)$ and let $\lambda>0$. We let $A_{IJK}^{\xi\eta}=a_{IJK}^{\xi\eta}b_{IJ}$,
then by \eqref{BKdef}
\begin{equation}
	\begin{aligned}
		\varPhi(f){}&
		=\sum_{K\in\mathcal{D}^0}\sum_{\substack{I,J\in\mathcal{D}^0;I,J\subseteq K\\ \ell(I)=2^{-i}\ell(K)\\\ell(J)=2^{-j}\ell(K)}}\sum_{\xi,\eta}A_{IJK}^{\xi\eta}\langle H^\xi_I,f\rangle H^\eta_J.
	\end{aligned}
\end{equation}
Note that if $\tilde{I}$ is the parent of $I$, then
\begin{equation*}
	\begin{aligned}
		\bigg|\biggl\langle \frac{\mathbbm{1}_I}{|I|},b\biggr\rangle-\biggl\langle\frac{\mathbbm{1}_{\tilde{I}}}{|\tilde{I}|},b\biggr\rangle\bigg|{}
		&\le \frac{1}{|I|}\int_I \biggl|b(t)-\biggl\langle\frac{\mathbbm{1}_{\tilde{I}}}{|\tilde{I}|},b\bigg\rangle\biggr|dt\\
		&\le\frac{2^n}{|\tilde{I}|}\int_{\tilde{I}} \biggl|b(t)-\biggl\langle\frac{\mathbbm{1}_{\tilde{I}}}{|\tilde{I}|},b\bigg\rangle\biggr|dt\\
		&\le 2^n\|b\|_{BMO^{0, 2^n}(\mathbb{R}^n)}.
	\end{aligned}
\end{equation*}
Together with the triangle inequality, this implies that 
\begin{equation*}
	|A_{IJK}^{\xi\eta}|=|a_{IJK}^{\xi\eta}||b_{IJ}|\le |a_{IJK}^{\xi\eta}||b_{IK}|+|a_{IJK}^{\xi\eta}||b_{JK}|\le 2^n(i+j)\|b\|_{BMO^{0, 2^n}(\mathbb{R}^n)}|a_{IJK}^{\xi\eta}|.
\end{equation*}
Thus the operator $\varPhi$ can be written as a multiple of $S_0^{ij}$. Recall that $S_0^{ij}$ is also of weak type (1,1), and hence for any $\lambda>0$
\begin{equation*}
	|\{|\varPhi(f)|>\lambda\}|\lesssim_n i(i+j) \|b\|_{BMO^{0, 2^n}(\mathbb{R}^n)}\frac{\|f\|_{L_1(\mathbb{R}^n)}}{\lambda}.
\end{equation*}

Therefore using interpolation and duality, $\varPhi=[S^{ij}_0,R_b]$ is bounded on $L_p(\mathbb{R}^n)$. Since the above estimation is independent of the choose of $\omega$, one has
$$ \|[S_\omega^{ij}, R_b]\|_{L_p(\mathbb{R}^n)\to L_p(\mathbb{R}^n)}\lesssim_{n,p} (i+j)^2\|b\|_{BMO(\mathbb{R}^n)}, $$
which yields
$$ \|[S_\omega^{ij}, M_b]\|_{L_p(\mathbb{R}^n)\to L_p(\mathbb{R}^n)}\lesssim_{n,p}  (i+j+1)^2\|b\|_{BMO(\mathbb{R}^n)}. $$
As a consequence, 
\begin{equation*}
	\begin{aligned}
		{}&\quad\|[T,M_b]\|_{L_p(\mathbb{R}^n)\to L_p(\mathbb{R}^n)}\\&=\biggl\|\biggl[C_1(T)\mathbb{E}_\omega\sum_{i,j=0\atop \max\{i,j\}>0}^\infty \tau(i, j)S_\omega^{ij}+C_2(T)\mathbb{E}_\omega S_\omega^{00}+\mathbb{E}_\omega \pi^\omega_{T(1)}+\mathbb{E}_\omega (\pi^\omega_{T^*(1)})^*,M_b\biggr]\biggr\|_{L_p(\mathbb{R}^n)\to L_p(\mathbb{R}^n)}\\
		&\lesssim_T \sum_{i,j=0}^\infty \tau(i, j)\mathbb{E}_\omega\|[S_\omega^{ij},M_b]\|_{L_p(\mathbb{R}^n)\to L_p(\mathbb{R}^n)}+\mathbb{E}_\omega\|[ \pi^\omega_{T(1)}+ (\pi^\omega_{T^*(1)})^*,M_b]\|_{L_p(\mathbb{R}^n)\to L_p(\mathbb{R}^n)}\\
		&\lesssim_{n,p, T} \big(1+\|T(1)\|_{BMO(\mathbb{R}^n)}+\|T^*(1)\|_{BMO(\mathbb{R}^n)}\big)\|b\|_{BMO(\mathbb{R}^n)}.
	\end{aligned}
\end{equation*}
This completes the proof.
\end{proof}	

\bigskip {\textbf{Acknowledgments.}} We thank Professor Quanhua Xu for proposing this subject to us and for many helpful discussions and suggestions. We also thank Professor Tuomas Hyt\"{o}nen and Professor \'{E}ric Ricard for valuable comments and suggestions.

\bibliographystyle{myrefstyle}
\bibliography{ref999}
\end{document}